\tikzset{
>=stealth',
  punktchain/.style={
    rectangle,
    rounded corners,
    draw=black, thick,
    minimum height=3em,
    text centered,
    on chain},
  line/.style={draw, thick, <-},
  element/.style={
    tape,
    top color=white,
    bottom color=blue!50!black!60!,
    minimum width=8em,
    draw=blue!40!black!90, very thick,
    text width=10em,
    minimum height=3.5em,
    text centered,
    on chain},
  every join/.style={->, thick,shorten >=1pt},
  decoration={brace},
  tuborg/.style={decorate},
  tubnode/.style={midway, right=2pt},
}
\def\C{\ensuremath{\mathbb{C}}}
\def\H{\ensuremath{\mathbb{H}}}
\def\P{\ensuremath{\mathbb{P}}}
\def\Q{\ensuremath{\mathbb{Q}}}
\def\R{\ensuremath{\mathbb{R}}}
\def\Z{\ensuremath{\mathbb{Z}}}
\def\alg{\mathrm{alg}}
\def\Amp{\mathrm{Amp}}
\def\Aut{\mathop{\mathrm{Aut}}\nolimits}
\def\ch{\mathop{\mathrm{ch}}\nolimits}
\def\Coh{\mathop{\mathrm{Coh}}\nolimits}
\def\codim{\mathop{\mathrm{codim}}\nolimits}
\def\deg{\mathop{\mathrm{deg}}}
\def\dim{\mathop{\mathrm{dim}}\nolimits}
\def\inf{\mathop{\mathrm{inf}}\nolimits}
\def\Ext{\mathop{\mathrm{Ext}}\nolimits}
\def\ext{\mathop{\mathrm{ext}}\nolimits}
\def\lExt{\mathop{\mathcal Ext}\nolimits} 
\def\Fix{\mathop{\mathrm{Fix}(\iota)}}
\def\For{\mathop{\mathrm{Forg}_G}}
\def\GL{\mathop{\mathrm{GL}}}
\def\Hal{H^*_{\alg}}
\def\HN{\mathop{\mathrm{HN}}\nolimits}
\def\Hom{\mathop{\mathrm{Hom}}\nolimits}
\def\RlHom{\mathop{\mathbf{R}\mathcal Hom}\nolimits}
\def\Inf{\mathop{\mathrm{Inf}_G}}
\def\Jac{\mathop{\mathrm{Jac}}\nolimits}
\def\mod{\mathop{\mathrm{mod}}\nolimits}
\def\Nef{\mathrm{Nef}}
\def\num{\mathop{\mathrm{num}}\nolimits}
\def\Num{\mathop{\mathrm{Num}}\nolimits}
\def\NS{\mathop{\mathrm{NS}}\nolimits}
\def\Pic{\mathop{\mathrm{Pic}}}
\def\rk{\mathop{\mathrm{rk}}}
\def\Sch{\mathop{\mathrm{Sch}}\nolimits}
\def\Sing{\mathop{\mathrm{Sing}}}
\def\SSL{\mathop{\mathrm{SSL}}}
\def\ST{\mathop{\mathrm{ST}}\nolimits}
\def\Sym{\mathop{\mathrm{Sym}}\nolimits}
\def\td{\mathop{\mathrm{td}}\nolimits}
\def\v{\mathop{\pi^*v}\nolimits}
\def\MG13{\ensuremath{{\mathcal M}_{\Gamma_1(3)}}}
\def\tildeMG13{\ensuremath{\widetilde{\mathcal M}_{\Gamma_1(3)}}}
\def\Stab{\mathop{\mathrm{Stab}}}
\def\into{\ensuremath{\hookrightarrow}}
\def\onto{\ensuremath{\twoheadrightarrow}}
\def\blank{\underline{\hphantom{A}}}
\def\Db{\mathrm{D}^{\mathrm{b}}}
\def\pt{[\mathrm{pt}]}
\newcommand\stv[2]{\left\{#1\,\colon\,#2\right\}}
\newtheorem*{rep@theorem}{\rep@title}
\newcommand{\newreptheorem}[2]{%
\newenvironment{rep#1}[1]{%
 \def\rep@title{#2 \ref{##1}}%
 \begin{rep@theorem}}%
 {\end{rep@theorem}}}
\newtheorem{Thm}{Theorem}[section]
\newtheorem{Prop}[Thm]{Proposition}
\newtheorem{PropDef}[Thm]{Proposition and Definition}
\newtheorem{Lem}[Thm]{Lemma}
\newtheorem{Cor}[Thm]{Corollary}
\newtheorem{Obs}[Thm]{Observation}
\newtheorem{thm-int}{Theorem}
\theoremstyle{definition}
\newtheorem{Def-s}[Thm]{Definition}
\newtheorem{Def}[Thm]{Definition}
\newtheorem{Rem}[Thm]{Remark}
\def\C{\ensuremath{\mathbb{C}}}
\def\G{\ensuremath{\mathbb{G}}}
\def\H{\ensuremath{\mathbb{H}}}
\def\P{\ensuremath{\mathbb{P}}}
\def\Q{\ensuremath{\mathbb{Q}}}
\def\R{\ensuremath{\mathbb{R}}}
\def\Z{\ensuremath{\mathbb{Z}}}
\def\AA{\ensuremath{\mathcal A}}
\def\CC{\ensuremath{\mathcal C}}
\def\EE{\ensuremath{\mathcal E}}
\def\FF{\ensuremath{\mathcal F}}
\def\GG{\ensuremath{\mathcal G}}
\def\HH{\ensuremath{\mathcal H}}
\def\NN{\ensuremath{\mathcal N}}
\def\OO{\ensuremath{\mathcal O}}
\def\PP{\ensuremath{\mathcal P}}
\def\RR{\ensuremath{\mathcal R}}
\def\SS{\ensuremath{\mathcal S}}
\def\TT{\ensuremath{\mathcal T}}
\def\ZZ{\ensuremath{\mathcal Z}}
\def\Y{\ensuremath{\tilde{Y}}}
\def\MMM{\mathfrak M}
\def\MY{\ensuremath{\mathfrak M_{\sigma,Y}(v)}}
\def\MYs{\ensuremath{\mathfrak M^s_{\sigma,Y}(v)}}
\def\MX{\ensuremath{\mathfrak M_{\sigma',\Y}(\v)}}
\def\MXs{\ensuremath{\mathfrak M^s_{\sigma',\Y}(\v)}}
\def\mY{\ensuremath{M_{\sigma,Y}(v)}}
\def\mYs{\ensuremath{M^s_{\sigma,Y}(v)}}
\def\mYss{\ensuremath{M^{ss}_{\sigma,Y}(v)}}
\def\mX{\ensuremath{M_{\sigma',\Y}(\v)}}
\def\mXs{\ensuremath{M^s_{\sigma',\Y}(\v)}}
\newcommand{\ignore}[1]{}
\begin{document}

\title[Bridgeland Moduli spaces on an Enriques Surface]{Projectivity and Birational Geometry of Bridgeland Moduli spaces on an Enriques Surface}

\author{Howard Nuer}
\address{Department of Mathematics, Rutgers University, 110 Frelinghuysen Rd., Piscataway, NJ 08854, USA}
\email{hjn11@math.rutgers.edu}
\urladdr{http://math.rutgers.edu/~hjn11/}

\keywords{
Bridgeland stability conditions,
Derived category,
Moduli spaces of complexes,
Mumford-Thaddeus principle}

\subjclass[2010]{14D20, (Primary); 18E30, 14J28, 14E30 (Secondary)}

\begin{abstract}
We construct moduli spaces of semistable objects on an Enriques surface for generic Bridgeland stability condition and prove their projectivity.  We further generalize classical results about moduli spaces of semistable sheaves on an Enriques surface to their Bridgeland counterparts.  Using Bayer and Macr\`{i}'s construction of a natural nef divisor varying with the stability condition, we begin a systematic exploration of the relation between wall-crossing on the
Bridgeland stability manifold and the minimal model program for these moduli spaces.  We give three applications of our machinery to obtain new information about the classical moduli spaces of Gieseker-stable sheaves:

1) We obtain a region in the ample cone of the moduli space of Gieseker-stable sheaves which works for all unnodal Enriques surfaces.

2) We determine the nef cone of the Hilbert scheme of $n$ points on an unnodal Enriques surface in terms of the classical geometry of its half-pencils and the Cossec-Dolgachev $\phi$-function.

3) We recover some classical results on linear systems on Enriques surfaces and obtain some new ones about $n$-very ample line bundles.
\end{abstract}

\vspace{-1em}

\maketitle

\setcounter{tocdepth}{1}
\tableofcontents

\section{Introduction}\label{sec:intro}

\subsection{Background}

For many years now there has been interest in various kinds of moduli spaces of coherent sheaves on smooth projective varieties and their geometry.  The most classical of these moduli spaces are the moduli spaces of slope stable vector bundles and of Gieseker-stable sheaves, respectively, and each is constructed as a GIT quotient.  As each of these notions of stability depends on the choice of an ample divisor on the underlying smooth projective variety, a natural approach to studying the (birational) geometry of these moduli spaces is via VGIT (Variation of GIT), but for dimension reasons this cannot give all of the birational geometry of these moduli spaces.  A revolutionary approach to this problem came from Bridgeland's definition of the notion of a stability condition on a triangulated category \cite{Bri07}, an attempt at a mathematical definition of Douglas's $\Pi$-stability \cite{Dou02} for $D$-branes in string theory.  Bridgeland proved there that, when nonempty, the set of (full numerical) Bridgeland stability conditions on a triangulated category $\TT$ forms a complex manifold of dimension $\rk K_{\num}(\TT)$.  

Both moduli spaces of stable sheaves and Bridgeland stability have been most studied in the case when the underlying variety is a smooth projective surface $X$, in which case the triangulated category for Bridgeland stability is $\Db(X)$, the bounded derived category of coherent sheaves.  To that end, the most studied examples have been on $\P^2$ and projective K3 and Abelian surfaces.  In each case, the space of Bridgeland stability conditions, $\Stab(X)$, has been shown to be nonempty and to admit a wall and chamber decomposition in the following sense: the set of $\sigma$-semistable objects with some fixed numerical invariants is constant in each chamber of the decomposition, while crossing a wall necessarily changes the stability of some object in this set.  

For the numerical invariants of a Gieseker semistable sheaf, Bridgeland identified in \cite{Bri08} a certain chamber of the corresponding wall and chamber decomposition, the Gieseker chamber, where the set of $\sigma$-semistable objects can be identified with the Gieseker semistable sheaves with respect to a generic polarization.  He conjectured there that each chamber of $\Stab(X)$ should admit a coarse moduli space of $\sigma$-semistable objects and that crossing a wall should induce a birational transformation between the moduli spaces corresponding to adjacent chambers.  In this way, Bridgeland forged a new tool for the investigation of the birational geometry of moduli spaces of Gieseker-stable sheaves in a way that expands and parallels the Hassett-Keel program for $\overline{M}_{g,n}$, in which other minimal models of a given moduli space are expected to be modular themselves and parametrize objects stable under a slightly different sense of stability.

The picture envisioned by Bridgeland has been confirmed in examples, first to a certain extent by Arcara and Bertram in \cite{AB13}, but most notably by Arcara, Bertram, Coskun, and Huizenga in \cite{ABCH13},  for the Hilbert scheme of points on $\P^2$, and then by Bayer and Macr\`{i} in \cite{BaMa,BaMa13} for all numerical invariants on K3 surfaces.  Following these ground-breaking developments, there has been an explosion of activity surrounding the use of Bridgeland stability conditions to study the MMP (minimal model program) of moduli spaces of Gieseker-stable sheaves on $\P^2$, Del Pezzo surfaces, K3 surfaces, and Abelian surfaces.

In this paper, the first in a two-part series, we extend the investigation to the classical cousins of K3 surfaces, the Enriques surfaces, defined as quotients of K3 surfaces by fixed-point free involutions.  

\subsection{Summary of main results}

This paper grew out of an attempt to understand the techniques of \cite{BaMa, BaMa13} and to generalize them to Enriques surfaces in the hope of obtaining similar results, while at the same time investigating the subtle differences between the geometry of Enriques and K3 surfaces.  Our first result is, as one would hope, a direct generalization of Theorem 1.3(a) in \cite{BaMa} about the existence and projectivity of Bridgeland stable moduli spaces:

\begin{Thm} Let $Y$ be an unnodal Enriques surface, $v\in \Hal(Y,\Z)$ a Mukai vector, and $\sigma\in\Stab^{\dagger}(Y)$ a stability condition generic with respect to $v$ (i.e. it does not lie on any wall).  Then the coarse moduli space $\mY$ of $\sigma$-semistable objects with Mukai vector $v$ exists as a $K$-trivial normal projective variety.
\end{Thm}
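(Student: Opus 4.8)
The plan is to deduce the statement from its counterpart on the covering K3 surface, namely Theorem~1.3(a) of \cite{BaMa}, and then descend along the étale double cover. Write $\pi\colon\Y\to Y$ for this cover with deck involution $\iota$, so that $\iota$ acts by $-1$ on $H^0(\Y,\Omega^2_{\Y})$, set $\v=\pi^*v$, and let $\sigma'$ be the stability condition on $\Y$ induced by $\sigma$, which I would first verify lies in the distinguished component $\Stab^\dagger(\Y)$ so that $\mX$ makes sense. Since the $G=\Z/2$-action is free we have $\Coh(Y)\simeq\Coh^{G}(\Y)$ and hence an equivalence $\Db(Y)\simeq\Db_G(\Y)$; under it an object of $\mY$ is exactly a $G$-linearized $\sigma'$-semistable object on $\Y$, with $\pi^*E$ carrying its canonical linearization. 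The first substantive step is therefore the stability comparison: $E\in\Db(Y)$ with $v(E)=v$ is $\sigma$-semistable if and only if $\pi^*E$ is $\sigma'$-semistable with Mukai vector $\v$. This is also the step I expect to be the \emph{main obstacle}, because $\pi^*$ need not preserve stability on the nose --- the pullback of a $\sigma$-stable object may be strictly $\sigma'$-semistable or split as $F\oplus\iota^*F$ --- so one must track how the Harder--Narasimhan and Jordan--H\"older filtrations interact with the $G$-action and with the sign ambiguity in a linearization coming from $\Aut=\C^*$ of a stable object.

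Granting the comparison, the moduli functor on $Y$ is identified with that of $G$-linearized $\sigma'$-semistable objects, and forgetting the linearization realizes $\mY$ as a finite cover of the fixed locus $\Fix$ of the involution that $\iota$ induces on $\mX$; concretely the two linearizations of an $\iota$-fixed object differ by $-\otimes K_Y$, so this cover is generically of degree two. Since $\mX$ is a normal projective variety by \cite{BaMa} and $\Fix\subset\mX$ is closed, hence projective, the space $\mY$ exists as a finite cover of a projective variety and is therefore itself projective; pulling back an ample class upgrades properness to projectivity directly. Alternatively --- and this is the route emphasized in the abstract --- one works on $Y$ itself: boundedness of $\sigma$-semistable objects (transported from Toda's theorem on $\Y$ via $\pi^*$) together with openness of stability makes the moduli stack algebraic and of finite type, the Positivity Lemma produces a nef class $\ell_\sigma$ on the coarse space $\mY$, and ampleness of $\ell_\sigma$, i.e. its strict positivity on every curve of non-S-equivalent objects, again yields projectivity.

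It remains to address normality and $K$-triviality. Away from the locus of objects fixed by $-\otimes K_Y$, Serre duality gives $\Ext^2(E,E)\cong\Hom(E,E\otimes K_Y)^{\vee}=0$, since a nonzero such map between stable objects of equal numerical invariants would force $E\cong E\otimes K_Y$; hence $\mYs$ is smooth, in particular normal, of the expected dimension $\langle v,v\rangle+1$. Near the $K_Y$-fixed objects (the descents of $\iota$-invariant objects on the cover) and at strictly semistable points one instead obtains normality from the local analytic model of the good moduli space, exactly as on K3, or by transferring normality along the finite map to $\Fix$. For $K$-triviality I would argue uniformly on the cover: because $\iota^*\omega_{\Y}=-\omega_{\Y}$, the induced involution is \emph{anti}-symplectic on the holomorphic-symplectic variety $\mX$, so its fixed locus is Lagrangian with normal bundle dual to its tangent bundle; combined with $K_{\mX}=\OO$ and the degree-two relationship between $\mY$ and $\Fix$ coming from $-\otimes K_Y$, this forces $K_{\mY}$ to be torsion rather than trivial, matching the $2$-torsion of $K_Y$ on the sheaf side.
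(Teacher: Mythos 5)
Your existence and projectivity argument is essentially the paper's own: induced stability conditions \`{a} la Macr\`{i}--Mehrotra--Stellari, the identification $\Db(Y)\simeq\Db_G(\Y)$, and the finite, generically $2$-to-$1$ morphism $\Phi\colon\mY\to\Fix\subset\mX$ (the two preimages of an $\iota$-fixed object differing by $-\otimes\omega_Y$), with projectivity pulled back from Bayer--Macr\`{i} on the K3 cover. Two corrections there. First, the semistability comparison you single out as the main obstacle is not an obstacle at all: by the very definition of the induced stability condition, $E$ is $\sigma$-semistable of phase $\phi$ if and only if $\pi^*E$ is $\sigma'$-semistable of phase $\phi$; the genuine content, which you do anticipate, is the dichotomy for \emph{stable} objects ($\pi^*E$ is $\sigma'$-stable unless $E\cong E\otimes\omega_Y$, in which case $\pi^*E\cong F\oplus\iota^*F$ with $F\ncong\iota^*F$), which the paper proves by a careful argument with $F\cap\iota^*F$ and $F+\iota^*F$. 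Second, your normality argument at the bad points does not work: normality does not transfer backwards along a finite morphism (a cuspidal curve maps finitely onto $\P^1$), and $\Phi$ fails to be \'{e}tale precisely at the $\omega_Y$-fixed objects where you would need it. The paper instead shows, following Kim, that every singular point of $\mYs$ is a \emph{hypersurface} singularity (the tangent space has dimension $v^2+2$ while the local dimension is $v^2+1$) and that the singular locus has codimension at least $2$; this gives l.c.i.\ plus regularity in codimension one, hence normality, and the same codimension estimates are needed again below.

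The serious gap is K-triviality. For a Lagrangian submanifold $L$ of a holomorphic symplectic variety $M$, the symplectic form identifies $N_{L/M}\cong\Omega^1_L$, so adjunction reads $K_L\cong K_M|_L\otimes\det N_{L/M}\cong \OO_L\otimes K_L$: a tautology that places no constraint whatsoever on $K_L$. Lagrangian subvarieties of K-trivial symplectic varieties, and in particular fixed loci of anti-symplectic involutions, need not have torsion canonical bundle: a $(-2)$-curve in a K3 is Lagrangian with $K=\OO(-2)$, the fixed sextic of the covering involution of a double plane is a Lagrangian curve of genus $10$, and Lagrangian $\P^n$'s occur in hyperk\"{a}hler $2n$-folds as Mukai-flop centers. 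Moreover, since $\Phi$ is \'{e}tale of degree $2$ onto its image away from the $\omega_Y$-fixed locus, one has $K_{\mY}=\Phi^*K_{\Fix}$ there, so the ``degree-two relationship'' cannot repair this: were $K_{\Fix}$ nontrivial, $K_{\mY}$ would be too. The paper's actual proof is entirely different and is not a formal consequence of the covering geometry: it is a Grothendieck--Riemann--Roch computation with a quasi-universal family $\EE$ (following Yamada), showing $c_1\bigl([\lExt_p(\EE,\EE)]\bigr)=0$ because the pushforward has no degree-one term, that $\lExt^0_p(\EE,\EE)\cong\OO$ by stability, and that $\lExt^2_p(\EE,\EE)$ is supported on the $\omega_Y$-fixed locus, of codimension at least $2$; since the tangent sheaf is $\lExt^1_p(\EE,\EE)$, one concludes $K_{\mYs}$ is torsion, and the codimension estimates for the strictly semistable locus extend this to $\mY$. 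You would need to import this computation (or a genuine substitute) to close your proof.
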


Let $X$ be a smooth projective variety satisfying the conclusion of the above theorem, namely the existence of coarse moduli spaces of $\sigma$-stable objects.  In order to investigate the relationship between wall-crossing on $\Stab(X)$ and the birational geometry of Bridgeland moduli spaces, we make use of the ``Bayer-Macr\`{i}" map, $\ell:\overline{\CC}\to\Nef(M_{\sigma,X}(v))$, where $\sigma$ is in the interior of the chamber $\CC$, defined as follows:  fix a (quasi-)universal family $\EE$ on $M_{\sigma,X}(v)$; then for any integral curve $C\subset M_{\sigma,X}(v)$ and $\sigma'\in\overline{\CC}$, Bayer and Macr\`{i} set $$\ell_{\sigma',\EE}([C])=\ell_{\sigma',\EE}.C:=\Im(-\frac{Z(\Phi_{\EE}(\OO_C))}{Z(v)}),$$ where $\Phi_{\EE}:\Db(M_{\sigma,X}(v))\to \Db(X)$ is the integral transform with kernel $\EE$.  They show the following fundamental result \cite[Theorem 1.1]{BaMa}:

\begin{Thm} The association $C\mapsto \ell_{\sigma',\EE}([C])$ gives a well-defined nef numerical divisor class $\ell_{\sigma',\EE}\in N^1(M_{\sigma,X}(v))$.  Moreover, $\ell_{\sigma',\EE}.C=0$ if and only if the objects corresponding to two general points of $C$ are $S$-equivalent with respect to $\sigma'$.  
\end{Thm}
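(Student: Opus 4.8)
The plan is to deduce both statements from a single positivity estimate for the central charge of the Fourier--Mukai image $\Phi_{\EE}(\OO_C)$, in the spirit of a ``Positivity Lemma''. Write $Z=Z_{\sigma'}$ and let $\AA=\AA_{\sigma'}$ denote the heart of $\sigma'$. For well-definedness, observe that $Z$ is a linear functional on the numerical Grothendieck group and that $\Phi_{\EE}$ induces a fixed homomorphism on Mukai lattices, so that $Z(\Phi_{\EE}(\OO_C))$ depends on $C$ only through the Mukai vector $v(\OO_C)$. The latter has only a curve-class and a point-class component; the point-class component maps under $\Phi_{\EE}$ to $v$, contributing $\Im(-Z(v)/Z(v))=0$, so $\ell_{\sigma',\EE}.C$ reduces to the pairing of one fixed divisor class on $M_{\sigma,X}(v)$ with the $1$-cycle $[C]$. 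Hence $C\mapsto\ell_{\sigma',\EE}.C$ is linear and factors through $N_1(M_{\sigma,X}(v))$, defining a class $\ell_{\sigma',\EE}\in N^1(M_{\sigma,X}(v))$.

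For nefness I would first reduce to the case that $C$ is smooth and projective by passing to its normalization, which changes neither the intersection number nor the induced family of semistable objects. Because $\sigma$ is generic and $\sigma'\in\overline{\CC}$, no wall is crossed, so each fibre $E_c:=\EE|_{\{c\}\times X}$ is $\sigma'$-semistable of one common phase; rotating $Z$ (which alters neither stability nor the formula) I normalize this phase to $1$, i.e. $Z(v)=-1$, so that $\ell_{\sigma',\EE}.C=\Im Z(\Phi_{\EE}(\OO_C))$ with $\Phi_{\EE}(\OO_C)=Rp_{X*}(\EE|_{C\times X})$.

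The crux --- and the step I expect to be the main obstacle --- is a cohomological bound on $\Phi_{\EE}(\OO_C)$ relative to the heart $\AA$. The aim is to show (i) that its $\AA$-cohomology is concentrated in degrees $0$ and $1$, using that $\OO_C$ is a sheaf, that $\Phi_{\EE}$ sends every skyscraper $k(c)$ to $E_c\in\AA$, and that $\dim C=1$; and (ii) that the top piece $H^1_\AA(\Phi_{\EE}(\OO_C))$ is $\sigma'$-semistable of phase exactly $1$, because it arises as the $R^1$-pushforward of a family of phase-$1$ objects and $1$ is the maximal phase in $\AA$. Granting this, the triangle $H^0_\AA\to\Phi_{\EE}(\OO_C)\to H^1_\AA[-1]$ gives $\Im Z(\Phi_{\EE}(\OO_C))=\Im Z(H^0_\AA)-\Im Z(H^1_\AA)$, where the second term vanishes (phase-$1$ objects have $Z\in\R_{<0}$) and the first is $\ge 0$ (since $H^0_\AA\in\AA$), proving $\ell_{\sigma',\EE}.C\ge 0$. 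Making (i)--(ii) precise --- controlling the pushforward of a family of complexes that live in a tilted, non-standard heart --- is the genuinely delicate point; I would treat it through the behaviour of $\Phi_{\EE}$ on point objects combined with a Harder--Narasimhan analysis of the family over the curve.

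For the equality clause, $\ell_{\sigma',\EE}.C=0$ forces $\Im Z(H^0_\AA)=0$, so $H^0_\AA$, and hence all of $\Phi_{\EE}(\OO_C)$, lies in the phase-$1$ slice $\PP(1)$. This is an abelian category of finite length whose simple objects are the $\sigma'$-stable objects of phase $1$; a family over $C$ whose pushforward remains in $\PP(1)$ must have Jordan--Hölder factors independent of $c$ for general $c$, which is exactly $S$-equivalence of the objects at two general points of $C$. The converse is immediate, since a family that is constant up to $S$-equivalence pushes forward into $\PP(1)$ and so has vanishing $\Im Z$. Combining the two directions gives the stated characterization of $\ell_{\sigma',\EE}.C=0$.
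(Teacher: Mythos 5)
Your proposal has to be situated correctly before it can be judged: the paper never proves this statement. It is Bayer--Macr\`i's Positivity Lemma, and the paper imports it wholesale by citation in Section \ref{sec:BM divisor} (Proposition and Definition \ref{divisor def}, the lemma following it, and Proposition \ref{donaldson compare} are all quoted from \cite{BaMa}), so the benchmark is the argument of \cite{BaMa}. Your skeleton agrees with theirs: well-definedness via linearity of $Z\circ\Phi_{\EE}$ on numerical classes and vanishing of the point-class contribution (this is the Donaldson-morphism comparison), positivity via heart-cohomology of $\Phi_{\EE}(\OO_C)$, and the equality case via the finite-length category $\PP(1)$; your claims (i) and (ii) are in fact true. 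The problem is that at the two points where the theorem is genuinely hard you assert rather than prove, and the tools you propose would not suffice. For (i)--(ii): $p_{X*}$ has no exactness properties with respect to a tilted heart, and $\OO_C$ cannot be resolved by skyscrapers, so ``behaviour of $\Phi_{\EE}$ on point objects plus a Harder--Narasimhan analysis'' does not get off the ground. The missing idea is the Abramovich--Polishchuk induced t-structure \cite{AP06}: for a \emph{noetherian} heart $\AA$ there is a heart $\AA_C\subset\Db(C\times X)$ containing every family whose derived fibers lie in $\AA$, characterized by $F\in\AA_C$ if and only if $p_{X*}(F\otimes p_C^*\OO_C(n))\in\AA$ for all $n\gg0$. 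Noetherianity is exactly why one must first reduce to algebraic stability conditions via Lemma \ref{lem:Yukinobualgebraic}, a reduction you also omit and which is needed anyway to handle arbitrary $\sigma'\in\overline{\CC}$. Granting this, the sequence $0\to\OO_C\to\OO_C(n)\to T\to0$, with $T$ torsion of length $N$, gives a triangle $\Phi_{\EE}(\OO_C)\to p_{X*}(\EE\otimes p_C^*\OO_C(n))\to\Phi_{\EE}(T)$, where $\Phi_{\EE}(T)$ is an iterated extension of fibers and so lies in $\PP(1)$ with $Z(\Phi_{\EE}(T))=N\cdot Z(v)=-N$; your (i) and (ii) then drop out of the long exact sequence in $\AA$-cohomology. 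In fact Bayer--Macr\`i never need (i)--(ii): since $-N$ is real, the triangle already yields $\Im Z(\Phi_{\EE}(\OO_C))=\Im Z(p_{X*}(\EE\otimes p_C^*\OO_C(n)))\geq 0$, the middle term being an object of $\AA$. (This is also precisely why $\OO_C$ may be replaced by any line bundle in Proposition and Definition \ref{divisor def}.)

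The second, larger gap is the equality case, which is the bulk of the proof in \cite{BaMa} and which you dispatch in two sentences. The claim that ``a family over $C$ whose pushforward remains in $\PP(1)$ must have Jordan--H\"older factors independent of $c$ for general $c$'' is exactly what must be proved. The actual argument takes a stable subobject $T\into p_{X*}(\EE\otimes p_C^*\OO_C(n))$, converts it by adjunction into a nonzero map $p_X^*T\otimes p_C^*\OO_C(-n)\to\EE$ in $\AA_C$, shows this map is nonzero on fibers over a dense open subset of $C$ --- which is not automatic, since a nonzero map of families can have image supported over finitely many points of $C$ --- and then inducts on the length of the generic fiber. Your converse is also not ``immediate'': S-equivalence of general fibers only produces a filtration of $\EE$ with factors of the form $T_i\boxtimes L_i$ over a dense open $U\subset C$, and the discrepancy over $C\setminus U$ contributes to $\Phi_{\EE}(\OO_C)$ classes of objects supported on fibers, whose central charges have no reason to be real; controlling that boundary term is again part of the Bayer--Macr\`i argument. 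In short, your outline is the right one (it is theirs), and the auxiliary claims you isolate are correct, but the proposal is missing the two ideas that constitute the actual proof: the Abramovich--Polishchuk heart over the curve after reduction to algebraic stability conditions, and the spreading-out of Jordan--H\"older filtrations with control at the finitely many bad points of $C$.
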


Two natural questions are whether or not the image of the restriction to the interior, $\ell_{\EE}:\CC\to \Nef(M_{\sigma,X}(v))$, is contained in the ample cone $\Amp(M_{\sigma,X}(v))$, and what happens at the walls of $\CC$.  We answer these questions with our next result in the case $X$ is an unnodal Enriques surface $Y$, generalizing Theorems 1.3(b) and 1.4 in \cite{BaMa}.  To place the result in context, let $W$ be a wall of the chamber decomposition for $v$, $\sigma_0$ a generic point of $W$ (so that it does not lie on any other walls), and $\sigma_{\pm}$ two nearby stability conditions, one on each side of the wall, with corresponding moduli spaces $M_{\pm}:=M_{\sigma_{\pm},Y}(v)$ and (quasi-)universal families $\EE_{\pm}$.  Then by the closedness of semistability in $\Stab(Y)$ (from the definition of the topology on $\Stab(Y)$), $\EE_{\pm}$ provide families of $\sigma_0$-semistable objects and $\ell_{\sigma_0,\EE_{\pm}}$ a nef divisor class on their respective moduli spaces.  Our next main result is this:

\begin{Thm}Let $Y$ be an unnodal Enriques surface and $v\in\Hal(Y,\Z)$ a Mukai vector.
\begin{enumerate}
\item Suppose $\sigma\in\Stab^{\dagger}(Y)$ is generic with respect to $v$.  Then $\ell_{\sigma}$ is ample.

Now suppose that $v$ is primitive, then:
\item The divisor classes $\ell_{0,\pm}$ are semiample (and remain so when restricted to each component of $M_{\pm}$), and they induce contraction morphisms $$\pi_{\pm}:M_{\pm}\rightarrow Z_{\pm},$$ where $Z_{\pm}$ are normal projective varieties.

\item Suppose that $M_{\sigma_0,Y}^s(v)\neq\varnothing$ (so that in particular $\ell_{0,\pm}$ is big as well).  
\begin{itemize}
\item If either $\ell_{0,\pm}$ is ample, then the other is ample, and the birational map $$f_{\sigma_0}:M_+\dashrightarrow M_-$$ obtained by crossing the wall in $\sigma_0$ extends to an isomorphism.
\item If $\ell_{0,\pm}$ are not ample and the complement of $M_{\sigma_0,Y}^s(v)$ has codimension at least 2, then $f_{\sigma_0}:M_+\dashrightarrow M_-$ is the flop induced by $\ell_{0,+}$.  More precisely, we have a commuative diagram of birational maps
\begin{equation*}
\xymatrix{ M_{\sigma_{+},Y}(v)\ar@{-->}[rr]^{f_{\sigma_0}}\ar[dr]_{\pi_+} && M_{\sigma_{-},Y}(v)\ar[dl]^{\pi_{-}}\\
& Z_+=Z_{-} &
},
\end{equation*}
and $f_{\sigma_0}^*\ell_{0,-}=\ell_{0,+}$.
\end{itemize}
\end{enumerate}
\end{Thm}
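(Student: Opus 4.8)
The plan is to run the three parts in order, driven by two engines. The first is the Positivity Lemma on $Y$, which supplies nefness of every $\ell_{\sigma'}$ together with the characterization of its null locus as the locus of $\sigma'$-$S$-equivalent objects. The second is the K3 double cover $\pi\colon\widetilde Y\to Y$: pullback of objects $E\mapsto\pi^*E$ induces a morphism $\Phi$ from the Enriques moduli space to the holomorphic symplectic K3 moduli space $M_{\sigma',\widetilde Y}(\pi^*v)$, with finite fibres governed by the $\omega_Y$-twist ambiguity $\pi^*E\cong\pi^*(E\otimes\omega_Y)$, and functoriality of the Bayer--Macr\`{i} construction gives $\Phi^*\ell^{\widetilde Y}_{\sigma'}=c\,\ell_{\sigma}$ for a positive constant $c$ (with $\sigma'$ generic for $\pi^*v$ after a harmless perturbation). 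This lets me import the quantitative positivity from Bayer--Macr\`{i}'s K3 theorems. For part (1), $\ell_\sigma$ is nef and satisfies $\ell_\sigma.C>0$ for every curve $C$, since a general pair of points of any integral curve gives non-$S$-equivalent objects. I would not upgrade this to ampleness directly on $Y$---strict positivity on curves, even with bigness, does not force ampleness, as Mumford's example shows---but rather use that $\ell^{\widetilde Y}_{\sigma'}$ is ample on the K3 moduli space by Bayer--Macr\`{i}; as $\Phi$ is finite and the pullback of an ample class under a finite morphism is ample, $\ell_\sigma=\tfrac1c\Phi^*\ell^{\widetilde Y}_{\sigma'}$ is ample.

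For part (2), nefness of $\ell_{0,\pm}$ is the Positivity Lemma applied at the wall $\sigma_0$ to the families $\EE_\pm$, and semiampleness follows by the same descent, since the pullback of a semiample class along any morphism is semiample and $\ell^{\widetilde Y}_{0,\pm}$ is semiample on the K3 side by Bayer--Macr\`{i}. I then set $Z_\pm=\Proj\bigoplus_{m\geq 0}H^0(M_\pm,m\ell_{0,\pm})$ and let $\pi_\pm$ be the induced contraction; by the Positivity Lemma $\pi_\pm$ contracts a curve $C$ exactly when its objects are mutually $\sigma_0$-$S$-equivalent, so $Z_\pm$ is identified with the coarse space of $\sigma_0$-polystable objects of class $v$. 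Normality and projectivity of $Z_\pm$, and the fact that these statements survive restriction to each connected component, are inherited from $M_\pm$ and the construction.

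For part (3), assume $M^s_{\sigma_0,Y}(v)\neq\varnothing$; then $\ell_{0,\pm}$ is big because $\ell^{\widetilde Y}_{0,\pm}$ is big on the K3 side (again by Bayer--Macr\`{i}, using that the stable locus is nonempty) and bigness is preserved by pullback along the generically finite $\Phi$. Since $\sigma_0$-stable objects remain $\sigma_\pm$-stable for $\sigma_\pm$ near $\sigma_0$, the common open locus $U=M^s_{\sigma_0,Y}(v)$ sits in both $M_+$ and $M_-$ and defines $f_{\sigma_0}$. If $\ell_{0,+}$ is ample then $\pi_+$ is finite, hence an isomorphism onto $Z_+$; unwinding the identification of $Z_+$ with $\sigma_0$-polystable objects shows $\sigma_0$- and $\sigma_\pm$-semistability coincide, so $M_+\cong M_-\cong Z_+$, the map $f_{\sigma_0}$ extends to an isomorphism, and $\ell_{0,-}$ is ample as well. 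If $\ell_{0,\pm}$ is not ample, the codimension-$2$ hypothesis makes the complement of $U$ have codimension $\geq 2$ in each $K$-trivial space $M_\pm$, so $f_{\sigma_0}$ is an isomorphism in codimension one; both $\pi_\pm$ contract the $\sigma_0$-$S$-equivalence curves to the same space of $\sigma_0$-polystable objects, giving $Z_+=Z_-$ and the commutative triangle. Writing $\ell_{0,\pm}=\pi_\pm^*A$ for the ample class $A$ on $Z_+=Z_-$ yields $f_{\sigma_0}^*\ell_{0,-}=\ell_{0,+}$, and $K$-triviality makes $\pi_\pm$ crepant small contractions, so that $f_{\sigma_0}$ is exactly the flop of $\ell_{0,+}$.

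The main obstacle is making the cover-descent rigorous in the presence of the $2$-torsion canonical class: one must control the morphism $\Phi$ and its behaviour on the strictly semistable and fixed loci of the involution, verify the functoriality $\Phi^*\ell^{\widetilde Y}=c\,\ell$ together with the genericity transfer between walls for $v$ on $Y$ and for $\pi^*v$ on $\widetilde Y$, and ensure that the singularities of $M_\pm$ are mild enough (canonical, $K$-trivial) for the contraction and flop theory to apply---all of which is delicate precisely because the Mukai pairing on $\Hal(Y,\Z)$ is degenerate on the torsion coming from $K_Y$, so nothing transfers for free from the K3.
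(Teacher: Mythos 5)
Your treatment of parts (1) and (2) is exactly the paper's: the identity $\Phi^*\ell_{\sigma'}=\ell_{\sigma}$ (Proposition \ref{pull-back}, where in fact $c=1$), finiteness of $\Phi$ from quasi-finiteness plus properness, and then pullback of ample (resp.\ semiample) classes along the finite morphism, followed by the contraction of Lazarsfeld's Theorem 2.1.27. Your caution that nef-and-positive-on-curves does not imply ample is also exactly why the paper routes through $\Phi$.

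Part (3), however, contains a genuine gap. You claim $\ell_{0,\pm}$ is big because ``bigness is preserved by pullback along the generically finite $\Phi$.'' But $\Phi$ is not dominant: by Proposition \ref{lagrange} its image is a \emph{Lagrangian}, hence half-dimensional, subvariety of $M_{\sigma',\Y}(\pi^*v)$, and the restriction of a big (even big and nef) class to a proper subvariety need not be big --- that would require knowing the image avoids the augmented base locus, which is essentially what is to be proved. This failure of bigness to descend is precisely why the paper cannot import it from the K3 side and why bigness at totally semistable walls is left open there. Under the hypothesis $M^s_{\sigma_0,Y}(v)\neq\varnothing$ the correct source of bigness is intrinsic to $Y$: distinct $\sigma_0$-stable objects are never $S$-equivalent, so $\pi_{\pm}$ is injective on the stable locus, hence generically finite, and the semiample class $\ell_{0,\pm}=\pi_{\pm}^*(\mathrm{ample})$ is big.

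Second, both bullets of your part (3) lean on identifying $Z_{\pm}$ with ``the coarse space of $\sigma_0$-polystable objects,'' from which you deduce $Z_+=Z_-$ and, in the ample case, that ``$\sigma_0$- and $\sigma_{\pm}$-semistability coincide.'' Neither is justified, and the latter is false in general: at a \emph{fake wall} there exist strictly $\sigma_0$-semistable objects of class $v$ (this is what makes $W$ a wall, $v$ being primitive) even though $\ell_{0,\pm}$ is ample and $M_+\cong M_-$. The paper deliberately proves only a weaker universal property for $Z_{\pm}$ (``a union of components of a coarse moduli space up to a finite cover'') and never uses a modular interpretation. Instead it first establishes Lemma \ref{id ns}: $K$-triviality of $M_{\pm}$ makes $f_{\sigma_0}$ an isomorphism in codimension one, under which $\ell_{0,+}=\ell_{0,-}$. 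The isomorphism case then follows from the Matsusaka--Mumford theorem, and in the flop case $Z_+=Z_-$ follows not from a moduli interpretation but from the section-ring construction of $Z_{\pm}$: sections of $m\ell_{0,\pm}$ extend across the codimension-two locus, so the two section rings agree. Your skeleton for the flop case (codimension two, identification of divisors, $\ell_{0,\pm}=\pi_{\pm}^*A$) is right, but you should replace the polystable-moduli identification with this section-ring argument, and replace the ``semistability coincides'' step by Matsusaka--Mumford (or by the observation that a small birational morphism onto a $\Q$-factorial variety is an isomorphism).
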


Unlike in the K3 case, where for primitive $v$ and $\sigma$ generic $M_{\sigma,X}(v)$ is a hyperk\"{a}hler (irreducible holomorphic symplectic) manifold whose Beauville-Donagi form on $H^2(M_{\sigma,X}(v),\Z)$ is isometric via the Donaldson morphism with the restriction of the Mukai pairing to $v^{\perp}\subset \Hal(X)$, we do not have such strong tools in the case of Enriques surfaces.  Instead, we make use of the natural pull-back morphism $$\Phi:\mY\to\mX,$$ where $\pi:\tilde{Y}\to Y$ is the K3 universal cover of $Y$, and stability on the two surfaces is related via the notion of ``induced stability condition" introduced by Macr\`{i}, Mehrotra, and Stellari in \cite{MMS}.  We show that $\Phi$ is a finite morphism onto a Lagrangian subvariety of $\mX$, which allows us to transfer many of the results of \cite{BaMa} from $\tilde{Y}$ to $Y$.  We believe that at the wall, $\ell_{\sigma_0,\pm}$ is always big and $\pi_{\pm}$ always birational.  We have arguments in many cases, but this issue will be pursued in \cite{Nue14b} where it fits in most naturally with the results we have obtained on the classification of walls in terms of their induced birational transformation.

\subsection{Applications}

We end the paper with three sections on applications.  In the first section on the classical moduli spaces of Gieseker stable sheaves, we obtain explicit, effective bounds on the Gieseker chamber and thus, via the Bayer-Macr\`{i} map, on the ample cone of these moduli spaces.  

In the second section, we use the Bridgeland stability techniques developed in this paper to describe explicitly $\Nef(Y^{[n]})$, the nef cone of the Hilbert scheme of $n$ points on $Y$, in terms of the beautiful geometry of Enriques surfaces and their elliptic pencils.  More specifically, denote by $2B$ the divisor parametrizing the locus of non-reduced 0-dimensional subschemes of length $n$ on $Y$, and for every $H\in\Amp(Y)$, denote by $\tilde{H}$ the locus of 0-dimensional subschemes of length $n$ on $Y$ meeting a member of the linear system $|H|$.  Then $B$ and $\langle \tilde{H} |H\in\Amp(Y)\rangle$ generate $\Pic(Y^{[n]})$.  We recall the $\phi$-function defined in \cite[Section 2.7]{CD} by $$\phi(D)=\inf\{|D.F|:F\in\Pic(Y),F^2=0\},$$ for $D^2>0$.  The significance of primitive $F\in\Pic(Y)$ with $F^2=0$ is that either $F$ or $-F$ is effective, say $F$, and $2F$ defines an elliptic pencil on $Y$ with exactly two multiple fibres $F$ and $F+K_Y$, respectively.  As suggested by the definition of $\phi$, these ``half-pencils" govern much of the geometry of $Y$.  We obtain the following result which confirms this overarching theme in the study of Enriques surfaces:

\begin{Thm}\label{hilbert nef cone} Let $Y$ be an unnodal Enriques surface and $n\geq 2$.  Then $\tilde{D}-aB\in\Nef(Y^{[n]})$ if and only if $D\in\Nef(Y)$ and $0\leq na\leq D.F$ for every $0<F\in\Pic(Y)$ with $F^2=0$, or in other words $0\leq a\leq \frac{\phi(D)}{n}$. Moreover, the face given by $a=0$ induces the Hilbert-Chow morphism, and for every ample $H\in\Pic(Y)$, $\tilde{H}-\frac{\phi(H)}{n}B$ induces a flop.
\end{Thm}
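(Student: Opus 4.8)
The plan is to realise $Y^{[n]}$ as a Bridgeland moduli space and to extract its nef cone from the Bayer-Macrì map together with an analysis of the walls bounding the Gieseker chamber. Write $v\in\Hal(Y,\Z)$ for the Mukai vector of an ideal sheaf $I_Z$ of a length-$n$ subscheme, of the form $(1,0,1-n)$ in the standard normalisation, so that in the Gieseker chamber $\CC$ one has $\mY\cong Y^{[n]}$, the stable objects being exactly the (twisted) ideal sheaves. First I would record the identification $\Pic(Y^{[n]})=\Pic(Y)\oplus\Z B$ and pin down the preimages of the generators under the Donaldson-type morphism $\theta\colon v^{\perp}\to N^1(Y^{[n]})$: the class $(0,D,0)\in v^{\perp}$ maps to $\tilde D$, while $(1,0,n-1)\in v^{\perp}$ (which indeed pairs to zero with $v$) maps to a positive multiple of $B$, its Mukai self-pairing $-2(n-1)$ matching the expected self-intersection of $B$. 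This reduces the determination of $\Nef(Y^{[n]})$ to understanding which classes of $v^{\perp}$ arise as $\ell_{\sigma'}$ for $\sigma'$ in the closure of the chambers neighbouring $\CC$.

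Next I would compute the Bayer-Macrì map explicitly. Parametrising the relevant stability conditions by $\sigma_{\omega,\beta}$ with central charge $Z_{\omega,\beta}(\blank)=\langle e^{\beta+i\omega},v(\blank)\rangle$, the class $\ell_{\sigma_{\omega,\beta}}$ is, up to positive scale, $\theta$ applied to the $v^{\perp}$-projection of $\Im\bigl(e^{\beta+i\omega}/Z_{\omega,\beta}(v)\bigr)$. Writing the outcome as $\tilde D_{\sigma}-a_{\sigma}B$ expresses $D_{\sigma}$ and $a_{\sigma}$ as explicit functions of $(\omega,\beta)$, and by the ampleness statement of the wall-crossing theorem above, $\ell_{\sigma_{\omega,\beta}}$ is ample whenever $\sigma_{\omega,\beta}$ is generic. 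Thus the image of the union of chambers adjacent to $\CC$ is an open subcone of $\Amp(Y^{[n]})$ whose closure is the nef cone; the computation should show that as $\omega$ ranges over $\Amp(Y)$ the class $D_{\sigma}$ ranges over $\Amp(Y)$, pinning the boundary condition $D\in\Nef(Y)$, while the coefficient $a_{\sigma}$ is governed entirely by the walls met in the $B$-direction.

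The heart of the argument is the analysis of those walls, which I would carry out via the half-pencils. The two easy boundary inequalities come from test curves: moving a single point along a curve $C\subset Y$ (with the other $n-1$ points fixed and disjoint) produces $C_{x}\subset Y^{[n]}$ with $\tilde D.C_{x}=D.C$ and $B.C_{x}=0$, forcing $D\in\Nef(Y)$; and a generic Hilbert-Chow fibre $R\cong\P^{1}$ (two points colliding) has $\tilde D.R=0$ and $B.R=-1$, forcing $a\ge 0$, with the face $a=0$ contracting exactly these $R$ and hence inducing the Hilbert-Chow morphism $Y^{[n]}\to\Sym^{n}Y$. For the upper bound, fix a primitive effective $F$ with $F^{2}=0$, so $2F$ moves in an elliptic pencil with half-fibre $F$; length-$n$ subschemes interacting with the fibres of this pencil sweep out a rational curve $C_{F}$ with $\tilde D.C_{F}=D.F$ and $B.C_{F}=n$, whence nefness forces $na\le D.F$, and intersecting over all such $F$ gives $na\le\phi(D)$. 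A central-charge computation should locate the corresponding wall $W_{F}$ exactly at $na=D.F$, where the sub-object $\OO_{Y}(-F)$ (together with its Jordan-H\"older partner) aligns in phase and makes the objects along $C_{F}$ be $S$-equivalent; by the Bayer-Macrì criterion this is where $\ell$ ceases to be ample. Finally, at the extreme wall $a=\phi(H)/n$ for $H\in\Amp(Y)$ I would invoke the flop case of the wall-crossing theorem above: verifying $M^{s}_{\sigma_{0},Y}(v)\neq\varnothing$ and that the strictly semistable locus has codimension at least $2$ shows that crossing $W_{F}$ is a flop.

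The principal obstacle is twofold. First, one must prove \emph{completeness}: that the only walls bounding the nef cone are the Hilbert-Chow wall $a=0$ and the half-pencil walls $na=D.F$, with no others in between. This amounts to ruling out destabilising sub-objects other than those produced by the half-pencils, which I would do by transferring the numerical classification of walls from the K3 universal cover through the finite morphism $\Phi\colon\mY\to\mX$ of the introduction, using that an isotropic class $F$ on $Y$ pulls back to an isotropic (elliptic fibre) class on $\tilde Y$. Second, and most delicate, is verifying the codimension and non-emptiness hypotheses needed for the flop at the extreme wall: on $\tilde Y$ the analogous wall yields a genuine Lagrangian fibration, and it is precisely the fact that $\Phi$ embeds $Y^{[n]}$ as a Lagrangian \emph{subvariety}---on which the fibration restricts to a small, flopping contraction rather than a fibration---that must be exploited to land in the flop case of the wall-crossing theorem rather than a divisorial or fibre-type contraction. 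This interplay between the Enriques moduli space and its K3 shadow is the crux of the whole statement.
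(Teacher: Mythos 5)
Your skeleton coincides with the paper's: realize $Y^{[n]}$ as the moduli space for $v$ in the Gieseker chamber, get the inclusion $\Nef(Y^{[n]})\subseteq\{\tilde D-aB: D\in\Nef(Y),\ 0\le na\le\phi(D)\}$ from test curves (a point moving along a curve, a Hilbert--Chow fiber, and the $g^1_n$-curve on a half-pencil $F$ with $B$-degree $n$), and get the reverse inclusion plus the flop statement from the Bayer--Macr\`{i} classes $\ell_{\sigma_{t,b}}$ and the wall-crossing theorem. The gap is exactly at the step you flag as the crux. To conclude that $\tilde H-\frac{\phi(H)}{n}B$ is nef one must prove that \emph{every} ideal sheaf $I_Z$ remains $\sigma_{t,b}$-semistable on the whole segment between the Gieseker chamber and the half-pencil wall at $t_0=\frac{1}{2d}\sqrt{2d-\phi(H)^2}$, so that the family of ideal sheaves is a family of $\sigma_0$-semistable objects at that wall. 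The paper proves this by a direct computation on $Y$: any destabilizing subobject in $\AA_{t,b}$ is a torsion-free sheaf whose rank-one incarnation is $I_{Z'}(-D)$ with $D$ effective (Lemma \ref{rank one subobjects}); for its slope-HN factors one gets $0<H.C+rk<k$, and, since unnodality excludes spherical objects so all stable factors have square $\ge -1$, the discrepancy bound $\delta_{t,b}\ge\tfrac12$ forces their phases below that of $I_Z$ for all $t>t_0$; this pins the only possible destabilizer down to $\OO(-F)$ with $F^2=0$ and $H.F=\phi(H)$. None of this analysis appears in your proposal, and without it you only have one inclusion of cones.

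Your proposed substitute --- transferring the wall classification from the K3 cover through $\Phi$ --- does not supply the missing step, for two reasons. First, a wall for $v$ on $Y$ is a K3 wall at which some \emph{invariant} object $\pi^*I_Z$ changes stability, and (by the $A$ versus $\iota^*A$ argument underlying Lemmas \ref{2 to 1} and \ref{exclude}) a destabilizer of an invariant object can always be replaced by an invariant one, whose Mukai vector is pulled back from $Y$. So the abundant K3 walls with non-invariant or spherical classes (and $\tilde Y$ has plenty of spherical objects, e.g.\ $\OO_{\tilde Y}$, even though $Y$ is unnodal) are irrelevant, and the transfer collapses back to precisely the Enriques-lattice estimates above; it saves nothing, and in any case you do not carry it out. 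Second, the K3 picture you invoke is wrong: for $\pi^*v=(1,0,1-2n)$ the wall where $\OO_{\tilde Y}(-\pi^*F)$ aligns with $\pi^*v$ is itself a flopping wall, with strictly semistable locus $\bigcup_t \tilde F_t^{[2n]}$ of codimension $2n-1\ge 3$ and contracted Abel--Jacobi fibers --- it is not a Lagrangian-fibration wall, so there is no ``fibration restricting to a flop on a Lagrangian'' to exploit. The flop on $Y$ is instead obtained by applying Theorem \ref{contraction} directly, after identifying the strictly semistable locus with the Brill--Noether loci $F^{[n]}\sqcup(F+K_Y)^{[n]}$ (codimension $n\ge 2$) and the contracted curves with the fibers of $F^{[n]}\to\Jac^n(F)$; this identification in turn needs the stability of the line bundles $\OO(-F)$ and of $\OO_F(-Z)$ on the unnodal surface, another point your outline omits. (Minor: on an Enriques surface $v(I_Z)=(1,0,\tfrac12-n)$, not $(1,0,1-n)$, and it is the closure of the image of the single chamber $\CC$, not of the adjacent chambers, that lands in $\Nef(Y^{[n]})$.)
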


Recall that the Hilbert-Chow morphism $h:Y^{[n]}\to Y^{(n)}$ sends a 0-dimensional subscheme of length $n$ to its underlying 0-cycle and is a divisorial contraction with exceptional locus $2B$.  Moreover, we can describe explicitly the flop induced by $\tilde{H}-\frac{\phi(H)}{n}B$ as follows:  for every half-pencil $F$ such that $H.F=\phi(H)$, we get a pair of disjoint codimension $n$ components of the exceptional locus isomorphic to $F^{[n]}$ and $(F+K_Y)^{[n]}$, respectively, where these precisely parametrize the sublocus of $n$-points on $Y$ contained in $F$ and $F+K_Y$, respectively.  On the component corresonding to $F$, say, the contracted fibers (i.e. the curves of $S$-equivalent objects) are exactly the fibers of the natural Abel-Jacobi morphism $F^{[n]}\to \Jac^n(F)\cong F$ ($g(F)=1$) associating to $Z$ the line bundle $\OO_F(Z)$, where the objects of $F^{[n]}$ fit into the destabilizing exact sequence 

$$0\to\OO(-F)\to I_Z\to\OO_F(-Z)\to 0.$$  Upon crossing the wall, we perform a Mukai-type flop, replacing the $\P^{n-1}$-bundle $F^{[n]}$ over the base $F$ with another one parametrizing objects sitting in an exact sequence of the form $$0\to \OO_F(-Z)\to E\to \OO(-F)\to 0,$$ where $E\in\P(\Ext^1(\OO(-F),\OO_F(-Z))).$

Theorem \ref{hilbert nef cone} can be seen as giving an alternative definition of the $\phi$-function, and we believe it likely that many of its properties can be recovered from the convexity of $\Nef(Y^{[n]})$ and pairing divisors with test curves.  

In the final section, we apply Theorem \ref{hilbert nef cone} to recover a weak form of a classical result about linear systems on unnodal Enriques surfaces (see \cite[Theorems 4.4.1 and 4.6.1]{CD}):
\begin{Cor} Let $Y$ be an unnodal Enriques surface and $H\in\Pic(Y)$ ample with $H^2=2d$.  Then
\begin{enumerate}
\item The linear system $|H|$ is base-point free if and only if $\phi(H)\geq 2$,
\item If $|H|$ is very ample, then $\phi(H)\geq 3$.  Conversely, if $\phi(H)\geq 4$ or $\phi(H)=3$ and $d=5$, then $|H|$ is very ample.
\item The linear system $|2H|$ is base-point free and $|4H|$ is very ample.
\end{enumerate}
\end{Cor}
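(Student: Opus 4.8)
The plan is to convert each assertion about $|H|$ into a positivity statement for a tautological bundle on a Hilbert scheme and then read it off from Theorem~\ref{hilbert nef cone}. Recall that for $L\in\Pic(Y)$ the rank-$n$ tautological bundle $L^{[n]}$ on $Y^{[n]}$ has fibre $H^0(L|_Z)$ over $[Z]$ and determinant $\det L^{[n]}=\tilde L-B$ (in the notation of Theorem~\ref{hilbert nef cone}), and that $H$ is $k$-very ample precisely when the evaluation $H^0(H)\otimes\OO_{Y^{[k+1]}}\to H^{[k+1]}$ is surjective; in that case the resulting morphism $Y^{[k+1]}\to\G(k+1,H^0(H))$ is a closed embedding pulling the Pl\"{u}cker polarisation back to $\det H^{[k+1]}=\tilde H-B$. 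Base-point-freeness of $|H|$ is the case $k=0$ and very ampleness the case $k=1$, so the corollary becomes a statement about the positivity of $\tilde H-B$ and about the genus-one geometry of the half-pencils.

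First I would establish the two necessary directions. If $|H|$ is very ample then $H$ is $1$-very ample, the embedding $Y^{[2]}\hookrightarrow\G(2,H^0(H))$ exists, and hence $\tilde H-B=\det H^{[2]}$ is ample on $Y^{[2]}$; applying Theorem~\ref{hilbert nef cone} with $n=2$, ampleness forces $\phi(H)/2>1$, that is $\phi(H)\geq 3$, which is the stated implication in (2). For base-point-freeness I would show directly that $\phi(H)=1$ obstructs it: a half-pencil $F$ with $H.F=1$ restricts $H$ to a degree-one line bundle on the genus-one curve $F$, forcing a base point there, and a restriction-and-vanishing argument from the sequence $0\to H(-F)\to H\to H|_F\to 0$ then exhibits it as a base point of $|H|$ itself. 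Thus $|H|$ base-point free forces $\phi(H)\geq 2$.

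The hard part is the converse (sufficiency) direction, for which I would exploit the explicit wall geometry recorded after Theorem~\ref{hilbert nef cone}. On an unnodal $Y$ the locus contracted by $\tilde H-\frac{\phi(H)}{n}B$ consists of the Abel--Jacobi fibrations $F^{[n]}\to\Jac^n(F)\cong F$ over half-pencils $F$ with $H.F=\phi(H)$, arising from the destabilising sequence $0\to\OO(-F)\to I_Z\to\OO_F(-Z)\to 0$. Consequently every obstruction to the global generation of $H^{[n]}$, and to the separation of points and tangent vectors needed for the Grassmann embedding, is concentrated on length-$n$ subschemes supported on such elliptic half-pencils, where $H$ restricts to a line bundle of degree $H.F=\phi(H)$; degree $\geq 2$ on a genus-one curve is base-point free and degree $\geq 3$ is very ample, so the hypotheses $\phi(H)\geq 2$ and $\phi(H)\geq 4$ make these restrictions positive enough, and patching them upgrades the numerical bound to genuine positivity of $H$. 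The main obstacle is exactly this upgrade: Theorem~\ref{hilbert nef cone} controls only the determinant $\tilde H-B$, whereas $k$-very ampleness is a property of the full rank-$n$ bundle $H^{[n]}$, and ampleness of $\det H^{[n]}$ is a priori strictly weaker. Bridging this gap is where the result loses sharpness, which is why the case $\phi(H)=3$ with general $d$ is left open in (2) and the sporadic low-degree value $d=5$ must be treated by hand: these are precisely the thresholds at which the half-pencil restrictions are borderline for forcing the embedding.

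Finally, (3) is formal. Since $\phi$ scales linearly, $\phi(mH)=m\,\phi(H)$, and since $H$ is ample each effective half-pencil satisfies $H.F\geq 1$, so $\phi(H)\geq 1$. Hence $\phi(2H)=2\phi(H)\geq 2$ and $\phi(4H)=4\phi(H)\geq 4$, and (1) gives that $|2H|$ is base-point free while the sufficient criterion of (2) gives that $|4H|$ is very ample.
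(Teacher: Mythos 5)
Your proposal has two genuine gaps, and they sit exactly where the content of the corollary lies.

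First, the sufficiency directions (the converses in (1) and (2)) are not proved, and the route you sketch cannot prove them: Theorem \ref{hilbert nef cone} is a statement about $\det H^{[n]}=\tilde H-B$ and the nef cone of $Y^{[n]}$, whereas base-point freeness and very ampleness require surjectivity of the evaluation $H^0(H)\otimes\OO_{Y^{[n]}}\to H^{[n]}$, i.e.\ positivity of the full rank-$n$ tautological bundle. You acknowledge this yourself (``ampleness of $\det H^{[n]}$ is a priori strictly weaker''), but the ``patching'' step that is supposed to bridge it is never supplied, and no amount of information about the nef cone alone will supply it. The paper closes this gap with a separate cohomological input, Proposition \ref{vanishing}: for $1\leq n<\frac{d\,\phi(H)}{2d-\phi(H)}$ one has $H^1(Y,I_Z(H+K_Y))=0$ for all $Z\in Y^{[n]}$. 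This is proved by Bridgeland-stability methods but is \emph{not} a formal consequence of Theorem \ref{hilbert nef cone}: one identifies $H^1(Y,I_Z(H+K_Y))^{\vee}\cong\Hom_{\AA_{t,b}}(I_Z(H),\OO_Y[1])$ by Serre duality, and then locates a stability condition $\sigma_{t,b}$ (on the line $b=\tfrac12$) lying above the destabilizing wall for $I_Z(H)$ coming from $\OO(H-F)$, $F$ a half-pencil with $H.F=\phi(H)$, but on or above the wall where the phases of $I_Z(H)$ and $\OO_Y[1]$ cross; stability of both objects plus $\phi_{t,b}(I_Z(H))\geq\phi_{t,b}(\OO_Y[1])$ forces the $\Hom$ to vanish, and comparing the two walls produces exactly the bound $n<\frac{d\,\phi(H)}{2d-\phi(H)}$. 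The corollary then follows from the standard reduction (using $H^1(\OO_Y(H))=0$ and $2K_Y=0$) of bpf/very ampleness to $H^1(I_Z(H))=0$ for $n=1,2$, checking that the numerical bound holds when $\phi(H)\geq2$, resp.\ $\phi(H)\geq4$ or $(\phi(H),d)=(3,5)$. This vanishing theorem is the missing idea in your plan.

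Second, your necessity argument in (2) misquotes Catanese--G\"ottsche: $k$-very ampleness of $H$ gives only a \emph{morphism} $Y^{[k+1]}\to\G(k+1,H^0(H))$; a closed embedding requires $(k+1)$-very ampleness. So from $|H|$ very ample (i.e.\ $1$-very ample) you get that $\tilde H-B$ is nef on $Y^{[2]}$, hence only $\phi(H)\geq2$, not $\phi(H)\geq3$. Worse, by Theorem \ref{hilbert nef cone} the statement ``$\tilde H-B$ is ample on $Y^{[2]}$'' is \emph{equivalent} to $\phi(H)\geq3$, so assuming it is circular. The paper proves this direction classically: if $\phi(H)=2$ then either $H^2=4$, where $h^0(H)=3$ and $|H|$ maps $Y$ with degree $4$ onto $\P^2$, or $H^2\geq6$, where one picks a half-pencil $F$ with $H.F=2$, shows $H-F$ is ample, and deduces from $0\to\OO_Y(H-F)\to\OO_Y(H)\to\OO_F(H)\to0$ that $|H|$ maps $F$ two-to-one onto a line; either way $|H|$ is not very ample. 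Your necessity argument for (1) and your deduction of (3) from (1)--(2) via $\phi(mH)=m\phi(H)$ are fine and agree with the paper.
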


\begin{Rem} The stronger form of the above result says that for $d\geq 5$, $|H|$ is very ample if and only if $\phi(H)\geq 3$.  It follows that for any ample $H$, even $|3H|$ is very ample. We believe the Bridgeland stability methods we use to prove our weakened version above can be pushed further to prove the full result (see Remark \ref{full result}).
\end{Rem}

We also obtain some new results about $n$-very ample line bundles on unnodal Enriques surfaces.  Recall that a line bundle $\OO_X(H)$ on a smooth projective surface $X$ is called $n$-very ample if the restriction map $$\OO_X(H)\to\OO_Z(H)$$ is surjective for every 0-dimensional subscheme  $Z$ of length $n+1$.  Then we prove the following result:
\begin{Cor} Let $Y$ be an unnodal Enriques surface and $H\in\Pic(Y)$ ample with $H^2=2d$.  Then $\OO_Y(H)$ is $n$-very ample provided that $$0\leq n\leq \frac{d\cdot\phi(H)}{2d-\phi(H)}-1.$$
\end{Cor}

Both of these results follow from the following vanishing theorem which is a direct consequence of the Bridgeland stability techniques of Theorem \ref{hilbert nef cone}:
\begin{Prop}\label{vanishing} Let $Y$ be an unnodal surface and $H\in\Pic(Y)$ ample with $H^2=2d$.  Then for any $Z\in Y^{[n]}$, $$H^i(Y,I_Z(H+K_Y))=0,\text{ for }i>0,$$ provided that $$1\leq n< \frac{d\cdot \phi(d)}{2d-\phi(d)}.$$
\end{Prop}

\subsection{Open questions} A fundamental and basic question is whether or not the moduli spaces $\mY$ are irreducible.  In the case of odd rank it is known that $\mY$ has two isomorphic irreducible components destinguished by whether $\det(E)=c_1(v)$ or $c_1(v)+K_Y$.  It is unknown for even rank, though H. Kim has conjectured it to be true for moduli spaces of Gieseker stable sheaves, which would imply the result for Bridgeland moduli spaces using the techniques developed here.  

Additionally, the bigness of $\ell_{\sigma_0,\pm}$ in the case of a totally semistable wall, that is $M^s_{\sigma_0,Y}(v)=\varnothing$, is unknown at the moment.  The failure of bigness would give the existence of interesting fibration structures on $M_{\pm}$ provided by the morphisms $\pi_{\pm}$.   Another open question is whether or not one can classify entirely and in total generality the walls in $\Stab^{\dagger}(Y)$ in terms of the geometry of the morphism $\pi_{\pm}$ for any given primitive Mukai vector $v$.  Even if this is achieved, we wonder if a full Hassett-Keel-type result holds true for Enriques surfaces as shown to be the case for K3 surfaces in \cite{BaMa13}.  That is, does every minimal model of $\mY$ appear after deformation of the stability condition, i.e. as another moduli space of Bridgeland stable objects?  We take up both of these questions in \cite{Nue14b}.

It is natural to wonder what kind of varieties the moduli spaces $\mY$ are.  For primitive $v$, these are smooth $K$-trivial projective varieties in the sense that their canonical divisors are torsion in the Picard group.  We suspect, based on examples, that these are always of Calabi-Yau-type, being genuine Calabi-Yau manifolds for $v$ of even rank while only admitting genuine Calabi-Yau \'{e}tale covers in the case of odd rank.

Finally, we have made the assumption that $Y$ is unnodal in most of the results of this paper, and one would like to extend our program to the case of nodal Enriques surfaces, both for the sake of completeness and since these surfaces can be studied much more explicitly.  We suspect that they behave quite differently due to the presence of spherical objects.

\subsection{Outline of the paper}  We review Bridgeland stability conditions in general in Section \ref{sec:Bridgeland} and in the specific case of K3 and Enriques surfaces in Section \ref{sec:reviewK3}.  In Section \ref{sec:ReviewGieseker} we review important background on moduli spaces of Gieseker-stable sheaves on K3 and Enriques surfaces.

In Sections \ref{sec:ModuliStacks}-\ref{sec:Projectivity}, we prove the existence and projectivity of the moduli spaces $\mY$: we first use results of \cite{Lie}, \cite{Tod08} to prove that the algebraic stacks $\MY$ parametrizing $\sigma$-semistable objects of Mukai vector $v$ are non-empty finite-type Artin stacks, $\C^*$-gerbes over an algebraic space in the case of primitive $v$.  Then we use the pull-back morphism $\Phi$ to deduce projectivity of the underlying coarse moduli spaces.  

In Section \ref{sec:SingKod} we study singularities and show that the moduli spaces $\mY$ are Gorenstein, normal projective varieties with only l.c.i. canonical singularities and torsion canonical divisor, smooth in the case of primitive $v$, generalizing results of Kim \cite{Kim}, Sacca \cite{Sac}, and Yamada \cite{Yamada} for the classical Gieseker moduli spaces.  

We extend the fundamental results of \cite{BaMa} about the image of the Bayer-Macr\`{i} map $\ell:\CC\to N^1(\mY)$ and prove our second main theorem about wall-crossing and briational geometry in Sections \ref{sec:BM divisor} and \ref{sec: wall-crossing}.

We use the machinery we develop in this paper to prove our explicit bound for the Gieseker chamber in Section \ref{sec: stable sheaves}, generalizing \cite[Corollary 9.14]{BaMa}, and prove Theorem \ref{hilbert nef cone} about the Hilbert scheme of points in Section \ref{sec: hilbert scheme}.

\subsection{Acknowledgements}The auther would like to thank his advisor, Lev Borisov, for his constant support and guidance.  He would also like to thank Arend Bayer and Emanuele Macr\`{i} in particular for their support and very helpful discussions.  Our presentation largely follows their foundational paper \cite{BaMa}.  The author also benefitted from and would like to thank I. Coskun, I. Dolgachev, A. Maciocia, E. Markman, and G. Sacca for very helpful discussions.

This project was first inspired by the talks of Arend Bayer and Emanuele Macr\`{i} at the Graduate Student Workshop on Moduli Spaces and Bridgeland Stability at UIC and the latter's encouragement and enthusiasm for the author to undertake this project.  He thanks the organizers of the workshop for fostering such a productive learning environment.  Finally, the author was partially supported by NSF grant DMS 1201466.

\subsection*{Notation and Convention} \label{subsec:notation}

For an abelian group $G$ and a field $k(=\Q,\R,\C)$, we denote by $G_k$ the $k$-vector space $G\otimes k$.

Throughout the paper, when unspecified $X$ will denote a smooth projective variety over $\C$.  For a (locally-noetherian) scheme (or algebraic space) $S$, we will use the notation $\Db(S)$ for its bounded derived category of coherent sheaves, $\mathrm{D}_{qc}(S)$ for the unbounded derived category of quasi-coherent sheaves, and $\mathrm{D}_{S\text{-}\mathrm{perf}}(S\times X)$ for the category of $S$-\emph{perfect complexes}.
(An $S$-perfect complex is a complex of $\OO_{S\times X}$-modules which locally, over $S$, is quasi-isomorphic to a bounded complex of coherent shaves which are flat over $S$.)

We will abuse notation and denote all derived functors as if they were underived.
We denote by $p_S$ and $p_X$ the two projections from $S\times X$ to $S$ and $X$, respectively.
Given $\EE\in\mathrm{D}_{qc}(S\times X)$, we denote the Fourier-Mukai functor associated to $\EE$ by
\[
\Phi_\EE (\blank):= (p_X)_*\left( \EE\otimes p_S^*(\blank)\right).
\]

We let $K_{\num}(X)$ be the numerical Grothendieck group of $X$ and denote by $\chi(-)$ (resp., $\chi(-,-)$) the Euler characteristic on $K_{\num}(X)$: for $E,F\in\Db(X)$,
\[ \chi(E)=\sum_p (-1)^p\, h^p(X,E) \quad \text{and} \quad
\chi(E,F)=\sum_p (-1)^p\, \mathrm{ext}^p(E,F).
\]

We denote by $\mathrm{NS}(X)$ the N\'eron-Severi group of $X$, and write $N^1(X):=\mathrm{NS}(X)_\R$.
The space of full numerical stability conditions on $\Db(X)$ will be denoted by $\Stab(X)$.

Given a complex $E\in\Db(X)$, we denote its cohomology sheaves by $\HH^*(E)$.
The skyscraper sheaf at a point $x\in X$ is denoted by $k(x)$.
For a complex number $z\in\mathbb{C}$, we denote its real and imaginary part by $\Re z$ and $\Im z$, respectively.

We call a variety $X$ \textit{K-trivial} if its canonical divisor $K_X$ is numerically trivial.

\section{Review: Bridgeland stability conditions}\label{sec:Bridgeland}

In this section, we give a brief review of stability conditions on derived categories,
as introduced in \cite{Bri07}.

Let $X$ be a smooth projective variety, and denote by $\Db(X)$ its bounded derived category of coherent sheaves.
A \emph{full numerical stability condition} $\sigma$ on $\Db(X)$ consists of a pair $(Z,\AA)$, where
$Z\colon K_{\num}(X)\to\C$ 
is a group homomorphism (called the \emph{central charge}) and $\AA\subset\Db(X)$ 
is the \emph{heart of a bounded t-structure}, satisfying the following three properties:
\begin{enumerate}
\item For any $0 \neq E\in\AA$ the central charge $Z(E)$ lies in the following semi-closed
upper half-plane:
\begin{equation} \label{eq:Zpositivity}
Z(E) \in \H := \HH \cup \R_{<0} = \R_{>0} \cdot e^{(0,1]\cdot i\pi}
\end{equation}
\suspend{enumerate}
One can think of this condition as two separate positivity conditions: $\Im Z$ defines a rank function
on the abelian category $\AA$, i.e., a non-negative function $\rk \colon \AA \to \R_{\ge 0}$
that is additive on short exact sequences.
Similarly, $-\Re Z$ defines a degree function $\deg \colon \AA \to \R$, which has the property that
$\rk(E) = 0 \Rightarrow \deg(E) > 0$. 
We can use them to define a notion of slope-stability with respect
to $Z$ on the abelian category $\AA$ via the slope $\mu(E) = \frac{\deg(E)}{\rk(E)}$: an object $E$ is called \emph{semistable} (resp. \emph{stable}) if every proper subobject $0\neq F\subset E$ satisfies $\mu(F)\leq\mu(E)$ (resp. $\mu(F)<\mu(E)$).
\resume{enumerate}
\item With this notion of slope-stability, every object in $E \in \AA$ has a Harder-Narasimhan
filtration $0 = E_0 \into E_1 \into \dots \into E_n = E$ such that the $E_i/E_{i-1}$'s are $Z$-semistable,
with $\mu(E_1/E_0) > \mu(E_2/E_1) > \dots > \mu(E_n/E_{n-1})$.
\item There is a constant $C>0$ such that, for all $Z$-semistable object $E\in \AA$, we have
\begin{align*}
\lVert E \rVert \le C \lvert Z(E) \rvert,
\end{align*}
where $\lVert \ast \rVert$ is a fixed norm on $K_{\num}(X)\otimes\R$.
\end{enumerate}
This last condition is often called the \emph{support property} and is equivalent to Bridgeland's notion of a \emph{full} stability condition.

\begin{Def} \label{def:algebraic}
A stability condition is called \emph{algebraic} if its central charge takes values in $\Q\oplus\Q
\sqrt{-1}$.
\end{Def}
As $K_{\num}(X)$ is finitely generated, for an algebraic stability condition the 
image of $Z$ is a discrete lattice in $\C$.

Given $(Z, \AA)$ as above, one can extend the notion of stability to $\Db(X)$ as follows:
for $\phi \in (0, 1]$,
we let $\PP(\phi) \subset \AA$ be the full subcategory of $Z$-semistable objects
with $Z(E) \in \R_{>0} e^{i\phi\pi}$; for general $\phi$, it is defined
by the compatibility $\PP(\phi + n) = \PP(\phi)[n]$.
Each subcategory $\PP(\phi)$ is extension-closed and abelian. Its nonzero objects are called
$\sigma$-\emph{semistable} of phase $\phi$, and its simple objects are called
$\sigma$-\emph{stable}. Then each object $E \in \Db(X)$ has a 
\emph{Harder-Narasimhan filtration}, where the inclusions $E_{i-1} \subset E_i$ are replaced by
exact triangles $E_{i-1} \to E_i \to A_i$, and where the $A_i$'s are $\sigma$-semistable of decreasing phases
$\phi_i$.  The
category $\PP(\phi)$ necessarily has finite length. Hence every object in $\PP(\phi)$ has a finite
Jordan-H\"older filtration, whose filtration quotients are $\sigma$-stable objects of the phase
$\phi$.  Two objects $A,B\in\PP(\phi)$ are called $S$-\emph{equivalent} if their Jordan-H\"older
factors are the same (up to reordering).  We define the \emph{mass} of an object $E$ for a given $\sigma$  by $m_{\sigma}(E)=\sum_i |Z_{\sigma}(A_i)|$, where $A_i$ are the $\sigma$-semistable factors of $E$.  Of course, it follows that $|Z(E)|\leq m_{\sigma}(E)$.  We sometimes abuse notation and write $(Z,\PP)$ in place of $(Z,\AA)$.

The set of stability conditions will be denoted by $\Stab(X)$.
It has a natural metric topology (see \cite[Prop.\ 8.1]{Bri07} for the explicit form of
the metric). Bridgeland's main theorem is the following:

\begin{Thm}[Bridgeland] \label{thm:Bridgeland-deform}
The map
\begin{align*}
\ZZ \colon \Stab(X) \to \Hom(K_{\num}(X), \C), \qquad (Z, \AA)\mapsto Z, 
\end{align*}
is a local homeomorphism.
In particular, $\Stab(X)$ is a complex manifold of finite dimension equal to the rank of $K_{\num}(X)$.
\end{Thm}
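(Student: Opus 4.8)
The plan is to show that $\ZZ$ is a local homeomorphism onto an open subset of the finite-dimensional complex vector space $\Hom(K_{\num}(X),\C)$; the complex manifold structure of dimension $\rk K_{\num}(X)$ then follows immediately by transporting the standard complex structure through the resulting local charts. Throughout, the key technical device is the natural metric on $\Stab(X)$, built from the generalized phases $\phi^{\pm}_{\sigma}(E)$ (the largest and smallest phases appearing in the Harder--Narasimhan filtration of $E$) and the masses $m_{\sigma}(E)$, together with the support property, which is what makes the entire deformation argument quantitative.

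First I would establish the two ``soft'' properties of $\ZZ$: continuity and local injectivity. Continuity is essentially built into the definition of the metric topology, since a small change in the slicing forces a small change in the phases, hence in $Z$. For local injectivity, suppose $\sigma_1=(Z,\PP_1)$ and $\sigma_2=(Z,\PP_2)$ are two nearby stability conditions sharing the same central charge $Z$. Because $Z$ assigns to each semistable object a well-defined argument, the phase of any $\sigma_i$-semistable object is pinned down by $Z$; a short argument with Harder--Narasimhan filtrations then shows that an object semistable for $\sigma_1$ cannot be destabilized for $\sigma_2$ once the two slicings are sufficiently close, forcing $\PP_1=\PP_2$. Thus $\ZZ$ is injective on a neighborhood of each point.

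The heart of the proof is the deformation result: given $\sigma_0=(Z_0,\PP_0)\in\Stab(X)$, I would produce an open ball $B$ around $Z_0$ in $\Hom(K_{\num}(X),\C)$ such that every $W\in B$ is the central charge of a unique stability condition $\sigma_W$ close to $\sigma_0$, depending continuously on $W$. The construction exploits the support property directly: for any $Z_0$-semistable $E$ one has $\lVert E\rVert\le C\,\lvert Z_0(E)\rvert$, so that $\lvert W(E)-Z_0(E)\rvert\le\lVert W-Z_0\rVert\,\lVert E\rVert\le C\,\lVert W-Z_0\rVert\,\lvert Z_0(E)\rvert$. Hence, once $\lVert W-Z_0\rVert$ is small, $W(E)$ differs from $Z_0(E)$ by a small rotation, and the $Z_0$-phases of semistable objects move by less than a fixed $\epsilon<\tfrac14$. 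One then reassembles the perturbed objects into a new slicing $\PP_W$, re-sorting each $\PP_0(\phi)$ according to the arguments of $W$, and checks that the resulting subcategories are extension-closed, abelian, and that every object still admits a finite Harder--Narasimhan filtration with respect to $\PP_W$ and $W$. The support property for $\sigma_0$ propagates to $\sigma_W$ (with a slightly worse constant), so $\sigma_W$ is again a full stability condition, and its uniqueness follows from the local injectivity established above.

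I expect the main obstacle to be precisely this construction and verification of the deformed slicing $\PP_W$ --- in particular, proving that the re-sorted subcategories satisfy the Harder--Narasimhan axiom and that the norm bound survives the perturbation. This is where the support property is indispensable: without the uniform inequality $\lVert E\rVert\le C\,\lvert Z(E)\rvert$ one loses control of how far the masses and phases of semistable objects can drift, and the perturbed data need not assemble into a genuine slicing. Once the deformation result is in hand, the conclusion is formal: the assignment $W\mapsto\sigma_W$ is a continuous local inverse to $\ZZ$, so $\ZZ$ is an open map and, together with continuity and local injectivity, a local homeomorphism. Pulling back the standard complex structure on $\Hom(K_{\num}(X),\C)\cong\C^{\,\rk K_{\num}(X)}$ through these charts then makes $\Stab(X)$ a complex manifold of dimension $\rk K_{\num}(X)$.
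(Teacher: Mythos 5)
First, a point of comparison: the paper does not prove this statement at all --- it is quoted as Bridgeland's theorem, with the proof living in \cite{Bri07} (in the sharpened form where the support property replaces local finiteness), so your proposal can only be measured against Bridgeland's original argument. Your soft steps do match that argument: continuity of $\ZZ$ is immediate from the metric topology, local injectivity is Bridgeland's lemma that two stability conditions sharing a central charge, with sufficiently close slicings, coincide, and the quantitative use of the support property --- $\lvert W(E)-Z_0(E)\rvert\le C\,\lVert W-Z_0\rVert\,\lvert Z_0(E)\rvert$ for $Z_0$-semistable $E$, hence a uniform bound on how far phases can drift --- is exactly the modern route to the deformation statement.

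The genuine gap is in your construction of the deformed slicing. You propose to build $\PP_W$ by ``re-sorting each $\PP_0(\phi)$ according to the arguments of $W$,'' which presupposes that every $W$-semistable object is $Z_0$-semistable. That is false, and a slicing assembled this way fails the Harder--Narasimhan axiom. Concretely, take $A\in\PP_0(\phi_1)$, $B\in\PP_0(\phi_2)$ with $\phi_1>\phi_2$ very close, and a non-split extension $0\to A\to E\to B\to 0$; then $E$ lies in no $\PP_0(\phi)$, yet if the perturbation $W$ reverses the order of the arguments of $A$ and $B$ (the basic wall-crossing phenomenon), $E$ is $W$-semistable and must belong to the new slicing. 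Your re-sorted categories do not contain it, and $E$ then admits no HN filtration with respect to them: the only candidate filtration $0\subset A\subset E$ has factors whose $W$-phases increase, so it is not an HN filtration, and $E$ itself lies in no re-sorted class. The actual content of Bridgeland's proof is precisely to handle such objects: one defines $\PP_W(\psi)$ inside the \emph{thin} category $\PP_0\bigl((\psi-\epsilon,\psi+\epsilon)\bigr)$ --- which is quasi-abelian, not abelian --- declaring an object $W$-semistable when no strict subobject has larger $W$-argument, and then uses the finite-length property of these thin categories (this is where the support property, or local finiteness, enters a second time) to manufacture HN filtrations and verify the slicing axioms. Once that construction is done correctly, the remainder of your outline (propagation of the support property with a worse constant, uniqueness via local injectivity, openness of $\ZZ$, and pulling back the complex structure from $\Hom(K_{\num}(X),\C)$) goes through as you describe.
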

In other words, a stability condition $(Z, \AA)$ can be deformed uniquely given a small deformation
of its central charge $Z$.

\begin{Rem}\label{rmk:GroupAction}
There are two group actions on $\Stab(X)$, see \cite[Lemma 8.2]{Bri07}:
the group of autoequivalences $\Aut(\Db(X))$ acts on the left via
$\Pi(Z,\AA)=(Z\circ\Pi_*^{-1},\Pi(\AA))$, where $\Pi \in \Aut(\Db(X))$ and
$\Pi_*$ is the automorphism induced by $\Pi$ at the level of numerical Grothendieck groups.
We will often abuse notation and denote $\Pi_*$ by $\Pi$, when no confusion arises.
The universal cover $\widetilde\GL^+_2(\R)$ of the group 
$\GL^+_2(\R)$ of matrices with positive determinant acts on the right as a lift of the
action of $\GL^+_2(\R)$ on
$\Hom(K_{\num}(X), \C) \cong \Hom(K_{\num}(X), \R^2)$. We typically only use the action of the
subgroup $\C \subset \widetilde\GL^+_2(\R)$ given as the universal cover of $\C^* \subset \GL^+_2(\R)$: given
$z \in \C$, it acts on $(Z, \AA)$ by $Z \mapsto e^{2\pi i z}\cdot Z$, and by modifying
$\AA$ accordingly.
\end{Rem}

\section{Review: Moduli spaces for stable sheaves on K3 and Enriques surfaces}\label{sec:ReviewGieseker}

In this section we give a summary of stability for sheaves on Enriques surfaces and their K3 universal covers.
We start by recalling the basic lattice-theoretical structure, given by the Mukai lattice.
We then review slope and Gieseker stability and the existence and non-emptiness of moduli spaces of semistable sheaves.

\subsection*{The algebraic Mukai lattice}
Let $\tilde{Y}$ be a smooth projective K3 surface.
We denote by $H^*_{\alg}(\tilde{Y},\Q)$ the algebraic part of the whole cohomology of $\tilde{Y}$, namely
\begin{equation}\label{eq:AlgebraicMukaiLattice}
H^*_\alg(\tilde{Y},\Z) = H^0(\tilde{Y},\Z) \oplus \mathrm{NS}(\tilde{Y}) \oplus H^4(\tilde{Y},\Z).
\end{equation}  Suppose the Enriques surface $Y$ is obtained as the quotient of $\tilde{Y}$ by a fixed-point free involution $\iota$, then we may similarly define the algebraic part of the cohomology of $Y$, except that it is the entire cohomology in this case, so we drop the subscript for this and use it slightly differently below.

Let $v \colon K_{\num}(\tilde{Y}) \xrightarrow{\sim} H^*_{\alg}(\tilde{Y}, \Z)$ be the Mukai vector given by $v(E) = \ch(E) \sqrt{\td(\tilde{Y})}$, i.e. \[v(E)=(r(E),c_1(E),r(E)+ch_2(E)),\] where we write the Mukai vector according to the decomposition \eqref{eq:AlgebraicMukaiLattice}.  The Mukai vector on the Enriques surface is defined the same and is similarly given by \[v(E)=(r(E),c_1(E),\frac{r(E)}{2}+ch_2(E)),\] inducing an isomorphism $v\colon K_{\num}(Y)\xrightarrow{\sim} H^*_{\alg}(Y,\Z):=H^0(Y,\Z)\oplus \mathrm{NS}(Y)\oplus \frac{1}{2}\Z \rho_Y\subset H^{*}(Y,\Q)$, where $\rho_Y$ is the fundamental class of $Y$.
We denote the Mukai pairing $H^*_{\alg}(\tilde{Y}, \Z) \times H^*_{\alg}(\tilde{Y}, \Z) \to \Z$ (resp. $H^*_{\alg}(Y,\Z)\times H^*_{\alg}(Y,\Z)\to \Z$) by $(\blank, \blank)$; it can be
defined by $(v(E), v(F)) := - \chi(E, F)$.  
According to the decomposition \eqref{eq:AlgebraicMukaiLattice}, we have
\[
\left( (r,c,s),(r',c',s')\right) = c.c' - rs' - r's,
\]
for $(r,c,s),(r',c',s')\in H^*_\alg(\tilde{Y},\Z)$ (resp. $H^*_{\alg}(Y,\Z)$).  

Given a Mukai vector $v\in H^*_{\alg}(\tilde{Y}, \Z)$(resp. $H^*_{\alg}(Y,\Z)$), we denote
its orthogonal complement by
\[
v^\perp:=\left\{w\in H^*_{\alg}(\tilde{Y}, \Z)(\text{resp. }w\in H^*_{\alg}(Y,\Z))\colon (v,w)=0 \right\}.
\]
We call a Mukai vector $v$ \emph{primitive} if it is not divisible in $H^*_\alg(\tilde{Y},\Z)$(resp. $H^*_\alg(Y,\Z)$).  Note that the covering space map $\pi:\Y\rightarrow Y$ induces an embedding $$\pi^*:\Hal(Y,\Z)\into\Hal(\Y,\Z)$$ such that $(\v,\pi^*w)=2(v,w)$, and it identifies $\Hal(Y,\Z)$ with an index 2 sublattice of the $\iota^*$-invariant component of $\Hal(\tilde{Y},\Z)$.  The embedding $\pi^*:\NS(Y)\into\NS(\tilde{Y})$ is primitive, however, and identifies $\NS(Y)$ with the $\iota^*$-invariant part of $\NS(\tilde{Y})$. It follows that for a primitive Mukai vector $v\in\Hal(Y,\Z)$, $\v$ is divisible by at most 2.  All of this is encapsulated nicely in the following lemma \cite[Lemma 2.5]{Hau10}:

\begin{Lem} Let $v=(r,c_1,s)\in\Hal(Y,\Z)$ be a primitive Mukai vector.  Then $\gcd(r,c_1,2s)=1$ or $2$.  Moreover:
\begin{itemize}
\item if $\gcd(r,c_1,2s)=1$, then either $r$ or $c_1$ is not divisible by 2 (i.e. $\v$ is primitive);
\item if $\gcd(r,c_1,2s)=2$, then $c_2$ must be odd and $r+2s\equiv 2 \mod 4$ (i.e. $\v$ is divisible by 2).
\end{itemize}
\end{Lem}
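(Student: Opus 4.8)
The plan is to reduce the entire statement to the explicit formula $\v=(r,\pi^*c_1,2s)$ together with two parity inputs. First I would observe that $\pi^*$ preserves rank and restricts to the given embedding on $\NS$; feeding $\pi^*v=(r,\pi^*c_1,t)$ into the identity $(\v,\pi^*w)=2(v,w)$ and using the shape of the Mukai form forces $t=2s$, so $\v=(r,\pi^*c_1,2s)$. As a group $\Hal(\Y,\Z)=\Z\oplus\NS(\Y)\oplus\Z$ is a direct sum, so the divisibility of $\v$ in it is $\gcd\bigl(r,\mathrm{div}(\pi^*c_1),2s\bigr)$, and since $\pi^*\colon\NS(Y)\hookrightarrow\NS(\Y)$ is a primitive embedding we have $\mathrm{div}(\pi^*c_1)=\mathrm{div}(c_1)$. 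Thus the quantity written $\gcd(r,c_1,2s)$ is exactly $\mathrm{div}(\v)$, and the two bullets describe its two possible values. The two facts I will use repeatedly are: (i) integrality of the Mukai vector, $c_2=\tfrac12(r+c_1^2-2s)\in\Z$, i.e. the parity condition $2s\equiv r+c_1^2\pmod 2$ that cuts $\Hal(Y,\Z)$ out as an index-$2$ sublattice of the $\iota^*$-invariant part of $\Hal(\Y,\Z)$; and (ii) that $\Num(Y)\cong U\oplus E_8(-1)$ is an \emph{even} lattice, so $x^2\in2\Z$ for every $x\in\NS(Y)$.

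Next I would show $g:=\mathrm{div}(\v)\in\{1,2\}$. If an odd prime $p$ divided $g$, then $v/p$ would still lie in $\Z\oplus\NS(Y)\oplus\tfrac12\Z$, and since dividing by an odd $p$ preserves all residues mod $2$ it would still satisfy (i); hence $v/p\in\Hal(Y,\Z)$, contradicting primitivity. So $g$ is a power of $2$. If $4\mid g$, then $2\mid r$ and $c_1=2c_1'$, so $r+c_1^2\equiv0\pmod2$ and the parity test (i) for $v/2$ becomes vacuous; thus $v/2\in\Hal(Y,\Z)$, again contradicting primitivity. Hence $g\in\{1,2\}$, the first assertion.

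For the first bullet, $g=1$ is by definition $\mathrm{div}(\v)=1$, i.e. $\v$ primitive; and if both $r$ and $c_1$ were even then (i) would force $2\mid2s$ as well, whence $2\mid g$, so $g=1$ forces $r$ or $c_1$ to be odd. For the second bullet, $g=2$ gives $\v=2w$ with $w$ primitive, i.e. $\v$ is exactly $2$-divisible. To control $c_2$ I would use that primitivity of $v$ means $v/2\notin\Hal(Y,\Z)$: writing $c_1=2c_1'$ and $s=\tfrac r2+2(c_1')^2-c_2$, the test (i) for $v/2$ reads $s\equiv \tfrac r2+(c_1')^2\pmod2$, which simplifies to $c_2\equiv(c_1')^2\pmod2$. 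Its failure, forced by primitivity, combined with the evenness (ii) giving $(c_1')^2\in2\Z$, yields that $c_2$ is odd; then $r+2s=2r+c_1^2-2c_2\equiv2c_2\equiv2\pmod4$ (using $r$ even and $c_1^2\equiv0\pmod4$).

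The component-matching for $\v$ and the modular arithmetic are routine. The conceptual core — and the only spot where the geometry of the Enriques surface really enters — is the bookkeeping of the parity condition (i) under division: an odd prime preserves it (killing all odd divisors), whereas division by $2$ makes it vacuous exactly when $r,c_1$ are even, and it is precisely the evenness of the Enriques lattice that upgrades ``the parity test fails'' to the clean conclusion ``$c_2$ is odd.'' I expect the main subtlety to be keeping straight in which lattice each divisibility is measured, since $\Hal(Y,\Z)$ sits only as an index-$2$ sublattice of the $\iota^*$-invariant part of $\Hal(\Y,\Z)$.
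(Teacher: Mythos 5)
Your strategy is sound, and it is worth noting at the outset that the paper contains no proof of this statement to compare against: it is quoted directly from Hauzer (\cite[Lemma 2.5]{Hau10}), with the paper only recording beforehand exactly the structural facts you rederive — that $\pi^*$ embeds $\Hal(Y,\Z)$ as an index-$2$ sublattice of the $\iota^*$-invariant part of $\Hal(\Y,\Z)$, that $\pi^*$ is primitive on the N\'eron--Severi side, and (implicitly) that $\pi^*v=(r,\pi^*c_1,2s)$. Your identification of $\gcd(r,c_1,2s)$ with $\mathrm{div}(\pi^*v)$, your exclusion of odd prime divisors, and your treatment of the second bullet (the membership test for $v/2$ reduces to $c_2\equiv(c_1')^2\pmod 2$, whose failure together with the evenness of the Enriques lattice gives $c_2$ odd and $r+2s\equiv 2\pmod 4$) are all correct. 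One small caveat: divisibility of $c_1$ must be read in $\Num(Y)$ rather than $\NS(Y)$, since $\pi^*K_Y=0$; otherwise the equality $\mathrm{div}(\pi^*c_1)=\mathrm{div}(c_1)$ fails for classes with a torsion component (the paper is equally loose on this point).

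There is, however, a genuine error in your argument that $4\nmid g$. You claim that once $2\mid r$ and $c_1=2c_1'$, the parity test for $v/2$ ``becomes vacuous.'' It does not: as you yourself compute in the second bullet, the test for $v/2=(r/2,c_1',s/2)$ reads $s\equiv \tfrac r2+(c_1')^2\pmod 2$, equivalently $c_2\equiv(c_1')^2\pmod 2$, which is a nontrivial condition — indeed its \emph{failure} is precisely what characterizes primitive $v$ with $g=2$. If your claim were valid as stated, it would already apply whenever $2\mid g$ (the hypotheses you invoke are consequences of $2\mid g$ alone) and would show that no primitive $v$ with $\gcd(r,c_1,2s)=2$ exists, contradicting the second bullet of the lemma and your own analysis of it. The step is easily repaired by using the full strength of $4\mid g$: then $4\mid 2s$ makes $s$ even and $4\mid r$ makes $\tfrac r2$ even, while $(c_1')^2$ is even because $\Num(Y)\cong U\oplus E_8(-1)$ is an even lattice; hence the test $s\equiv\tfrac r2+(c_1')^2\pmod 2$ holds, so $v/2\in\Hal(Y,\Z)$, contradicting primitivity. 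With that one correction, your proof is complete and correct.
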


In particular, for odd rank Mukai vectors or Mukai vectors with $c_1$ primitive, $\v$ is still primitive, while primitive Mukai vectors with $\gcd(r,c_1)=2$ (and thus necessarily $\gcd(r,c_1,2s)=2$) must satisfy $v^2\equiv 0 \mod 8$, as can be easily seen.

\subsection*{Slope stability}
Let $\omega,\beta\in\NS(X)_\Q$ with $\omega$ ample on a smooth projective surface $X$.
We define a slope function
$\mu_{\omega, \beta}$ on $\Coh X$ by
\begin{equation} \label{eq:muomegabeta}
\mu_{\omega, \beta}(E) = 
\begin{cases}
\frac{\omega.(c_1(E) - r(E)\beta)}{r(E)} & \text{if $r(E) > 0$,} \\
+\infty & \text{if $r(E) = 0$.}
\end{cases}
\end{equation}
This gives a notion of slope stability for sheaves, for which Harder-Narasimhan filtrations exist (see \cite[Section 1.6]{HL}).  
We will sometimes use the notation $\mu_{\omega,\beta}$-stability.

\subsection*{Gieseker stability}
Let $\omega,\beta\in\NS(X)_\Q$ with $\omega$ ample.
We define the \emph{twisted Hilbert polynomial} by
\[
P(E,m) := \int_X e^{-\beta} . (1,m\omega,\frac{m^2\omega^2}{2}) . v(E),
\]
for $E\in\Coh(X)$.
This gives rise to the notion of $\beta$-twisted $\omega$-Gieseker stability for sheaves, introduced first in \cite{MW97}.
When $\beta=0$, this is nothing but Gieseker stability.
We refer to \cite[Section 1]{HL} for basic properties of Gieseker stability.

\subsection*{Moduli spaces of stable sheaves}
Let $\omega,\beta\in\NS(X)_\Q$ with $\omega$ ample.
We fix a Mukai vector $v\in H^*_{\alg}(X,\Z)$ (or in other words, we fix the topological invariants $r,c_1,c_2$).
We denote by $\MMM_{\omega}^{\beta}(v)$ the moduli stack of flat families of $\beta$-twisted $\omega$-Gieseker semistable sheaves with Mukai vector $v$.
By \cite[Section 4]{HL} and \cite{MW97}, there exists a projective variety $M_{\omega}^{\beta}(v)$ which is a coarse moduli space parameterizing $S$-equivalence classes of semistable sheaves.
The open substack $\MMM_{\omega}^{\beta,s}(v)\subseteq \MMM_{\omega}^{\beta}(v)$ parameterizing stable sheaves is a $\mathbb{G}_m$-gerbe over the open subset $M_{\omega}^{\beta, s}(v)\subseteq M_{\omega}^{\beta}(v)$.
When $\beta=0$, we will denote the corresponding objects by $\MMM_\omega(v)$, etc.

The following is the main result on moduli spaces of stable sheaves on K3 surfaces $\tilde{Y}$.
In its final form it is proved by Yoshioka in \cite[Theorems 0.1 \& 8.1]{Yos01}.
We start by recalling the notion of positive vector, following \cite[Definition 0.1]{Yos01}.

\begin{Def}\label{def:YoshiokaPositive}
Let $v_0=(r,c,s)\in H^*_{\alg}(\tilde{Y}, \Z)$ be a primitive class.
We say that $v_0$ is \emph{positive} if $v_0^2\geq-2$ and
\begin{itemize}
\item either $r>0$,
\item or $r=0$, $c$ is effective, and $s\neq0$,
\item or $r=c=0$ and $s>0$.
\end{itemize}
\end{Def}

\begin{Thm}[Yoshioka]\label{thm:YoshiokaNonEmptyness}
Let $v\in H^*_{\alg}(\tilde{Y}, \Z)$.
Assume that $v=mv_0$, with $m\in\Z_{>0}$ and $v_0$ a primitive positive vector.
Then $M_{\omega}^{\beta}(v)$ is non-empty for all $\omega,\beta$.
\end{Thm}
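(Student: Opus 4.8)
The plan is to reduce the general statement, in several steps, to the non-emptiness of a single explicitly understood moduli space, namely a Hilbert scheme of points on $\tilde{Y}$.

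First I would dispense with non-primitivity. Writing $v=mv_0$ with $v_0$ primitive positive, it suffices to produce a single $(\omega,\beta)$-Gieseker semistable sheaf $E_0$ with $v(E_0)=v_0$: the polystable sheaf $E_0^{\oplus m}$ then has Mukai vector $v$ and is again $(\omega,\beta)$-semistable, since a direct sum of semistable sheaves sharing a common reduced twisted Hilbert polynomial is semistable. Thus $M_{\omega}^{\beta}(v)\neq\varnothing$ follows from $M_{\omega}^{\beta}(v_0)\neq\varnothing$, and this reduction holds for every $(\omega,\beta)$. Next I would reduce to a generic polarization. For fixed $v_0$ the ample cone is cut into chambers by a locally finite set of walls, and for $(\omega,\beta)$ in the interior of a chamber semistability and stability coincide on the primitive vector $v_0$. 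The key observation is that for a fixed sheaf $E$ the set of polarizations for which $E$ is semistable is closed, being defined by the non-strict inequalities $p(F,\cdot)\leq p(E,\cdot)$ on reduced twisted Hilbert polynomials; hence if $E$ is semistable for generic $(\omega',\beta')$ in a chamber whose closure contains $(\omega,\beta)$, then $E$ is $(\omega,\beta)$-semistable. It therefore suffices to treat generic $(\omega',\beta')$.

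For generic polarization and primitive $v_0$ I would exploit the action of the group of Fourier--Mukai autoequivalences of $\Db(\tilde{Y})$ on the Mukai lattice $\Hal(\tilde{Y},\Z)$. This group is generated by tensoring with line bundles (the $B$-field shifts $v\mapsto v\cdot e^{c_1(L)}$, which also absorb the twist $\beta$), by the shift and by derived dualization $(r,c,s)\mapsto(r,-c,s)$, by the spherical twist around $\OO_{\tilde{Y}}$ (with $v(\OO_{\tilde{Y}})=(1,0,1)$), acting as the reflection $w\mapsto w+(w,v(\OO_{\tilde{Y}}))\,v(\OO_{\tilde{Y}})$, and by genuine rank-changing kernels arising from universal families on moduli spaces. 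The central lattice-theoretic input is that, $\Hal(\tilde{Y},\Z)$ being an even lattice of signature $(2,\rho)$, this group acts transitively on primitive positive vectors of a fixed self-intersection $v_0^2=2\ell\geq-2$; one may thus transform $v_0$ into the standard vector $(1,0,-\ell)=v(I_Z)$ of an ideal sheaf of a length-$(\ell+1)$ subscheme. The corresponding moduli space for generic polarization is the Hilbert scheme $\tilde{Y}^{[\ell+1]}$, which is manifestly non-empty, so non-emptiness propagates back to $v_0$. The remaining degenerate representatives are equally explicit: $(0,0,1)$ is represented by a skyscraper sheaf, the rank-zero positive vectors $(0,c,s)$ by a line bundle of the correct degree on a curve in $|c|$, and the rigid case $v_0^2=-2$ by the unique spherical bundle.

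The hard part is the compatibility of Gieseker stability with the Fourier--Mukai transforms used in the reduction. Unlike the non-emptiness of the target Hilbert scheme, which is trivial, each autoequivalence must be shown to carry $(\omega,\beta)$-semistable sheaves to $(\omega',\beta')$-semistable sheaves for a correspondingly adjusted polarization; this requires verifying the relevant WIT conditions so that the transform of a sheaf is again a sheaf rather than a genuine complex, and controlling how slopes and reduced Hilbert polynomials transform under the kernel. This is exactly where the genericity of the polarization and the careful bookkeeping of \cite{Yos01} (building on \cite{MW97}) are essential, and it is the step I expect to occupy the bulk of the work; the lattice transitivity and the two elementary reductions above are comparatively formal.
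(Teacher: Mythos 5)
You should first know that the paper contains no proof of this statement at all: it is imported verbatim as background, with the proof attributed to Yoshioka \cite{Yos01} (Theorems 0.1 and 8.1). So the comparison has to be with Yoshioka's actual argument, and measured against it your sketch is not a repackaging but a different (and flawed) route: the two reductions you describe as ``comparatively formal'' each contain a false step.

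The first gap is the reduction to generic polarization. You claim the set of $(\omega,\beta)$ for which a fixed sheaf $E$ is semistable is closed, ``being defined by the non-strict inequalities $p(F,\cdot)\leq p(E,\cdot)$.'' But inequality of (reduced, twisted) Hilbert polynomials is a \emph{lexicographic} condition: $p_F\leq p_E$ means $\mu_{\omega,\beta}(F)<\mu_{\omega,\beta}(E)$, or equality of slopes together with $\chi_\beta(F)/r(F)\leq\chi_\beta(E)/r(E)$. A union of an open and a closed condition is not closed, and the failure is genuine: a sheaf can be Gieseker-stable throughout a chamber because every candidate destabilizer satisfies the strict slope inequality, and then be Gieseker-\emph{unstable} on the wall, where the slopes become equal but the $\chi_\beta$-comparison goes the wrong way. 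This is exactly why change of polarization produces Thaddeus-type flips (Matsuki--Wentworth \cite{MW97}) rather than specialization of semistable objects. What is true, and what the literature actually uses, is closedness in the \emph{twist}: for fixed $\omega$ the slope inequalities do not involve $\beta$ at all, and the tie-breaking inequality is non-strict and affine in $\beta$, so one perturbs $\beta$ rather than $\omega$ to reach the generic case. Your argument as written would be rejected at this step.

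The second gap is the ``central lattice-theoretic input.'' The isometries of $\Hal(\tilde{Y},\Z)$ induced by autoequivalences must extend to Hodge isometries of the full Mukai lattice, and this group does \emph{not} act transitively on primitive positive vectors of fixed square when the Picard rank is small: already for isotropic vectors, the number of orbits equals the number of Fourier--Mukai partners of $\tilde{Y}$, which exceeds one whenever, e.g., $\rho(\tilde{Y})=1$ and the degree has at least two prime factors. So on a fixed surface you cannot in general move $v_0$ to $(1,0,-\ell)$; had you been able to, every Fourier--Mukai partner $M_{\tilde{Y}}(v_0)$ would coincide with a Hilbert scheme of $\tilde{Y}$, which is false. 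Moreover, the ``rank-changing kernels arising from universal families on moduli spaces'' presuppose the non-emptiness (and fineness) of auxiliary moduli spaces -- precisely what is being proved -- so that step is circular as stated. Yoshioka's proof supplies the ingredient your outline omits entirely: he deforms the triple (surface, polarization, Mukai vector) to a special elliptic K3, where relative Fourier--Mukai transforms along the fibration reduce the rank and non-emptiness can be checked directly, and he shows non-emptiness is invariant under such deformations (properness of the relative moduli space gives that the locus of fibers with semistable points is closed in the base, and unobstructedness of stable sheaves on K3 surfaces gives that it is open). Without this deformation step, or a substitute for it, the reduction to the Hilbert scheme cannot be carried out, and the stability-preservation issues you defer to \cite{Yos01} are not the only thing standing between your sketch and a proof.
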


\begin{Rem}\label{rmk:SmoothnessFactorialityModuliSpace}
We keep the assumptions of Theorem \ref{thm:YoshiokaNonEmptyness}.
We further assume that $\omega$ is \emph{generic} with respect to $v$ so that stable factors of a semistable sheaf $E$ with $v(E)=v$ must have Mukai vector $m'v_0$ for $m'<m$.
\begin{enumerate}
\item\label{enum:KLS} By \cite{KLS06}, $M_{\omega}^{\beta}(v)$ is then a normal irreducible projective variety with $\Q$-factorial singularities.
\item\label{enum:DefoEquivalent} If $m=1$, then by \cite{Yos01} $M_{\omega}^{\beta,s}(v)=M_{\omega}^{\beta}(v)$ is a smooth projective irreducible symplectic manifold of dimension $v^2+2$, deformation equivalent to the Hilbert scheme of points on a K3 surface.
\end{enumerate}
\end{Rem}

Now let us recall the relevant analogues of the above results for Enriques surfaces.  First recall that for a variety $X$ over $\C$, the cohomology with compact support $H^*_c(X,\Q)$ has a natural mixed Hodge structure.  Let $e^{p,q}(X):=\sum_k (-1)^k h^{p,q}(H^k_c(X))$ and $e(X):=\sum_{p,q}e^{p,q}(X)x^p y^q$ be the virtual Hodge number and Hodge polynomial, respectively.  Moreover, for an Enriques surface $Y$ we recall that the kernel of $\NS(Y)\to \Num(Y)$ is given by $\langle K_Y\rangle$, and thus $$M_{\omega}(v)=M_{\omega}(v,L_1)\coprod M_{\omega}(v,L_2),$$ where $M_{\omega}(v,L_i)$ denotes those $E\in M_{\omega}(v)$  with $\det(E)=L_i$ and $L_2=L_1(K_Y)\in \Pic(Y)$ so $c_1=c_1(L_1)=c_2(L_2)\in \Num(Y)$.  Finally, let us recall the following definition:

\begin{Def} A smooth projective surface $X$ is called \emph{unnodal} if it contains no curves of negative self-intersection and \emph{nodal} otherwise.
\end{Def}

Thus an Enriques surface $Y$ is unnodal if it contains no $(-2)$-curves, and these are generic in their moduli space.  An important consequence of this is that the ample cone is entirely round, i.e. $D\in\Pic(Y)$ is ample if and only if $D^2>0$ and it intersects some effective curve positively.  The following result is proved in \cite{Yos03}:

\begin{Thm}\label{yosh odd} Let $v=(r,c,s)\in H^*_{\alg}(Y,\Z)$ be a primitive Mukai vector with $r$ odd and $Y$ unnodal.  Then \[e(M_{\omega}(v,L))=e(Y^{[\frac{v^2+1}{2}]}),\] for a general $\omega$ and $L\in \Pic(Y)$ satisfies $c_1(L)=c_1$.  In particular, 
\begin{itemize}
\item $M_{\omega}(v)\neq \varnothing$ for a general $\omega$ if and only if $v^2\geq -1$.
\item $M_{\omega}(v,L)$ is irreducible for general $\omega$.
\end{itemize}  
\end{Thm}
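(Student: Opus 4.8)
The plan is to compute $e(M_{\omega}(v,L))$ by reducing, through the K3 universal cover, to the case of a rank-one Mukai vector, where the moduli space is a Hilbert scheme of points, and to exploit that the virtual Hodge polynomial is a motivic invariant: additive over locally closed stratifications and multiplicative over analytically-locally-trivial fibrations. To set up the geometry, note that since $r$ is odd and $v$ is primitive, the preceding lemma of \cite{Hau10} gives $\gcd(r,c_1,2s)=1$, so $\v=\pi^*v$ is primitive in $\Hal(\Y,\Z)$; moreover $v^2$ is odd (for odd rank $v^2\equiv 2s\equiv 1\pmod 2$), so that $n:=\tfrac{v^2+1}{2}$ is an integer. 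For $\omega$ generic with respect to $v$ there are no strictly $\omega$-semistable sheaves of class $v$ (a destabilizing subsheaf would share the slope while having Mukai vector of strictly smaller rank, which is impossible for generic $\omega$ and primitive $v$), so $M_{\omega}(v,L)=M^s_{\omega}(v,L)$. For a stable $E$ with $\det E=L$ and $r$ odd one has $E\otimes K_Y\not\cong E$ (their determinants $L$ and $L(K_Y)$ differ), whence $\Ext^2(E,E)\cong\Hom(E,E\otimes K_Y)^{\vee}=0$; thus $M_{\omega}(v,L)$ is smooth of dimension $v^2+1=2n$, matching $\dim Y^{[n]}$.

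\textbf{The target and the base case.} For the base case $r=1$, every stable sheaf with $v=(1,c_1,s)$ and $\det=L$ is of the form $I_Z\otimes L$ for a length-$n$ subscheme $Z$ (one checks $\ell(Z)=\tfrac{v^2+1}{2}=n$), giving a canonical isomorphism $M_{\omega}((1,c_1,s),L)\cong Y^{[n]}$. On the other side, $e(Y^{[n]})$ is computed from $e(Y)$ alone by G\"ottsche's formula; since all Enriques surfaces share the same Hodge numbers, the quantity $e(Y^{[n]})$ is intrinsic and independent of the particular $Y$ reached by any reduction.

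\textbf{The heart: reduction to rank one.} Here I would pass to $\Y$. Pullback along $\pi$ carries a stable sheaf on $Y$ to an $\iota$-invariant stable sheaf on $\Y$, exhibiting $M_{\omega}(v,L)$, up to a finite map, inside the fixed locus of the involution $\hat\iota\colon F\mapsto\iota^*F$ on $M_{\tilde\omega}(\v)$. By primitivity of $\v$ and Theorem \ref{thm:YoshiokaNonEmptyness} together with Remark \ref{rmk:SmoothnessFactorialityModuliSpace}, the space $M_{\tilde\omega}(\v)$ is a smooth holomorphic-symplectic variety deformation equivalent to $\Y^{[2n]}$, and the $\hat\iota$-fixed locus is Lagrangian of dimension $2n$. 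The aim is then to connect the pair $(M_{\tilde\omega}(\v),\hat\iota)$ to the rank-one pair $(\Y^{[2n]},\hat\iota_0)$ through an $\iota$-equivariant family built from $\iota$-equivariant Fourier--Mukai and wall-crossing correspondences on $\Y$ that preserve $\v^2$ while lowering the rank, over which the fixed loci sweep out a smooth family. Their virtual Hodge polynomials then stay constant and land on $e(Y^{[n]})$ at the rank-one end.

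\textbf{Main obstacle and corollaries.} The crux, and the step I expect to be hardest, is precisely this equivariant reduction: one must arrange the rank-lowering correspondences on $\Y$ to commute with $\iota^*$, to be compatible with generic polarizations on both surfaces simultaneously with no strictly semistable objects appearing along the way, and then verify that the induced maps on fixed loci are isomorphisms, or at least preserve $e$. (The difficulty is genuine, since Enriques surfaces admit no rank-lowering autoequivalences of their own, so the reduction \emph{must} be engineered upstairs and descended.) Granting the equality $e(M_{\omega}(v,L))=e(Y^{[n]})$, both consequences follow at once: $Y^{[n]}\neq\varnothing\iff n\geq 0\iff v^2\geq -1$ gives the non-emptiness criterion, while irreducibility of $Y^{[n]}$ for every $n\geq 0$ forces each $M_{\omega}(v,L)$ to be irreducible, so that $M_{\omega}(v)=M_{\omega}(v,L_1)\sqcup M_{\omega}(v,L_2)$ consists of exactly two isomorphic components.
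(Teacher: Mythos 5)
First, a point of calibration: the paper does not prove this statement at all --- Theorem \ref{yosh odd} is quoted as background from Yoshioka's paper \cite{Yos03} (``The following result is proved in \cite{Yos03}''), so the benchmark here is Yoshioka's argument, not anything in the text. Your peripheral steps are fine: oddness of $v^2$ and integrality of $n=\tfrac{v^2+1}{2}$, primitivity of $\pi^*v$ via the Hauzer lemma, absence of strictly semistables for generic $\omega$, the vanishing $\Ext^2(E,E)\cong\Hom(E,E\otimes K_Y)^\vee=0$ because $\det E=L\neq L(K_Y)$, smoothness of dimension $2n$, the rank-one identification $M_\omega((1,c_1,s),L)\cong Y^{[n]}$, and the deduction of both bullets from the Hodge-polynomial equality (non-emptiness from $e\neq 0$ plus smoothness ruling out $v^2\le -2$, irreducibility because the top Hodge number of a smooth projective variety of pure dimension $2n$ counts its components). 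But these were never the content of the theorem.

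The genuine gap is that the ``heart'' is not an argument but a wish list. You posit an ``$\iota$-equivariant family built from $\iota$-equivariant Fourier--Mukai and wall-crossing correspondences on $\Y$ \ldots over which the fixed loci sweep out a smooth family,'' and then invoke constancy of $e$; constructing exactly such a structure \emph{is} the theorem, and nothing in the proposal produces it. Worse, the invariance you need is not automatic even granting the correspondences: wall-crossing on the K3 side gives birational modifications (flops) of the ambient symplectic moduli, and while Hodge numbers of holomorphic symplectic varieties are invariant under such surgeries, the anti-symplectic fixed locus is a Lagrangian (non-symplectic) subvariety that can be cut up nontrivially by the flop center --- virtual Hodge polynomials are not birational invariants of such loci, and you get no smooth proper family from a wall-crossing. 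Finally, your parenthetical justification for working upstairs is backwards: Enriques surfaces \emph{do} admit rank-changing autoequivalences, namely the weakly-spherical twists constructed in Section \ref{sec:ModuliStacks} of this paper (the descent of $\ST_{\pi^*E}$ to $\Db(Y)$, acting on cohomology by $w\mapsto w+2(w,v(E))v(E)$); these are precisely Yoshioka's ``Fourier--Mukai transforms associated to $(-1)$-reflections,'' and his proof in \cite{Yos03} runs the rank-lowering reduction directly on $Y$ using them, together with a careful analysis of the chamber structure for twisted Gieseker stability, rather than descending an equivariant picture from the K3 cover. So the one step you flag as hard is indeed the whole theorem, and the route you sketch for it is both unsubstantiated and, in its stated rationale, contrary to how the known proof actually works.
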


For even rank Mukai vectors, Hauzer proved the following in \cite{Hau10}:
\begin{Thm}\label{hauz even} Let $Y$ be an unnodal Enriques surface and $v=(r,c,s)\in H^*_\alg(Y,\Z)$ a primitive Mukai vector with $r$ even.  Then for a general polarization $\omega$ we have \[e(M_{\omega}(v,L))=e(M_{\omega}((r',c_1',-s'/2),L')),\] where $r'$ is 2 or 4.
\end{Thm}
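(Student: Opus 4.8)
The plan is to lower the rank of $v$ by Fourier-Mukai transforms that act as isometries of $\Hal(Y,\Z)$, to show each such transform leaves the Hodge polynomial $e(-)$ unchanged, and then to descend on the lattice until the rank is $2$ or $4$, the odd-rank case being already settled by Theorem \ref{yosh odd}. This mirrors Yoshioka's reduction of K3 moduli spaces to Hilbert schemes (cf. Theorem \ref{thm:YoshiokaNonEmptyness}), but the supply of rank-changing transforms on an Enriques surface is strictly smaller, and this scarcity is exactly what halts the descent at rank $2$ or $4$. Concretely, since the involution $\iota$ is free we have an equivalence $\Db(Y)\simeq\Db_\iota(\Y)$ with the $\iota$-equivariant derived category of the K3 cover; an $\iota$-equivariant spherical twist on $\Y$ therefore descends to an autoequivalence of $\Db(Y)$, in the spirit of the equivariant descent of \cite{MMS}. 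Combining such descended twists with tensoring by line bundles and dualizing, I would produce a transform realizing an isometry $\psi$ of $\Hal(Y,\Z)$ with $\psi(v)^2=v^2$ and $\rk\psi(v)<\rk v$.

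Next I would check invariance of $e(-)$. For $\omega$ generic -- here the unnodal hypothesis is convenient, as the ample cone of $Y$ is round -- I would verify that $\Phi_\EE$ sends $\omega$-Gieseker-stable sheaves of Mukai vector $v$ to $\omega'$-Gieseker-stable objects of Mukai vector $\psi(v)$, for a suitably adjusted polarization $\omega'$ and determinant $L'$, and that this matches $S$-equivalence classes. This yields an isomorphism $M_\omega(v,L)\cong M_{\omega'}(\psi(v),L')$ compatible with the splitting $M_\omega(v)=M_\omega(v,L_1)\coprod M_\omega(v,L_2)$, and since $e(-)$ is invariant under isomorphism the computation transfers to the lower-rank space.

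The descent itself is then governed by the invariant $\gcd(r,c_1,2s)$, which by the structural lemma recalled above equals $1$ or $2$ and is preserved by the isometries in play. When the gcd is $1$ (equivalently $\v$ is primitive) I expect the descent to terminate at $r'=2$; when it is $2$ (equivalently $\v$ is divisible by $2$, which forces $v^2\equiv0\bmod 8$) the index-$2$ inclusion $\pi^*\Hal(Y,\Z)\subset\Hal(\Y,\Z)$ obstructs reaching a rank below $4$. This dichotomy is precisely the source of the value $r'\in\{2,4\}$ in the statement, and it replaces the K3 phenomenon of reducing all the way to rank $0$, which fails here because an Enriques surface admits no nontrivial Fourier-Mukai partner.

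The hard part will be the stability-matching in the second paragraph. Unlike on a K3, Fourier-Mukai transforms on an Enriques surface do not preserve Gieseker stability cleanly, owing to the torsion canonical bundle and the attendant splitting of every moduli space into its two determinant components. Pinning down how $\Phi_\EE$ permutes or fixes $M_\omega(v,L_1)$ and $M_\omega(v,L_2)$, and arranging that stability survives after passing to $\omega'$ -- which in general holds only for generic $\omega$ and may force one to compare the stable loci and the strictly semistable boundaries separately -- is the delicate heart of the argument. Once the rank-lowering equivariant transforms are correctly identified, the lattice descent is essentially formal.
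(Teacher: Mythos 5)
First, a point of calibration: the paper you are reading does not prove this statement at all. Theorem \ref{hauz even} is quoted from Hauzer \cite{Hau10} (which in turn builds on Yoshioka's machinery in \cite{Yos03}), so your proposal has to be measured against that proof. At the level of strategy you are in the right family: rank reduction by derived equivalences acting as isometries of $\Hal(Y,\Z)$ (line-bundle twists, duals, reflection-type Fourier--Mukai functors, including the weakly-spherical twists this paper constructs by equivariant descent from $\Y$), invariance of the virtual Hodge polynomial, and a lattice-theoretic descent. Nevertheless there are two genuine gaps.

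The first gap is that the step you defer --- showing $\Phi_\EE$ carries $\omega$-Gieseker-stable sheaves of class $v$ to $\omega'$-Gieseker-stable objects of class $\psi(v)$ compatibly with $S$-equivalence --- is not a verification; it \emph{is} the theorem. A (weakly-)spherical twist applied to a stable sheaf is in general an honest complex, and for any fixed polarization there are always members of the moduli space (e.g.\ sheaves with sections, or sheaves restricting badly to curves) on which any naive preservation statement fails. The known proof copes with this by working with \emph{twisted} stability in the sense of \cite{MW97,Yos03}, proving preservation only for polarizations in suitable chambers/asymptotic regimes (this is exactly what ``general $\omega$'' encodes), and controlling the strictly semistable boundary by induction on the rank; this last point matters because $e(-)$ is computed from stratifications and is not invariant under maps that are isomorphisms merely away from small loci, so one cannot pass from a birational identification to equality of Hodge polynomials for free. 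Calling the lattice descent ``essentially formal'' once the transforms are identified inverts the actual difficulty.

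The second gap is that your proposed mechanism for the dichotomy $r'\in\{2,4\}$ is incorrect. The cohomological action of the weakly-spherical twist through $\OO_Y$ is $(r,c,s)\mapsto(-2s,c,-r/2)$ (this is recorded in the paper), so composing with the shift $[1]$ gives $(r,c,s)\mapsto(2s,-c,r/2)$. Then $v=(4,2c_0,1)$, which is primitive with $\gcd(r,c_1,2s)=2$ and $\v$ divisible by $2$, is carried to the rank-two vector $(2,-2c_0,2)$: so $\gcd=2$ does not obstruct going below rank $4$. Conversely $v=(2,c,\tfrac12)$, with $\gcd(r,c_1,2s)=1$, is carried to the rank-\emph{one} vector $(1,-c,1)$, escaping the even-rank world entirely rather than terminating at $r'=2$. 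The true bookkeeping is finer: after twisting by $\OO(D)$ and reflecting, the new rank is $2s+2c.D+rD^2$, and which values this quadratic expression can reach modulo small numbers is governed by the parity of $2s$ and the congruence $r+2s\equiv 2 \pmod 4$ from the paper's Lemma, computed in $U\oplus E_8(-1)$ modulo $r\cdot\Num(Y)$ --- not by the index-two inclusion $\pi^*\Hal(Y,\Z)\subset\Hal(\Y,\Z)$ as such. Until both the stability-preservation theorem and this lattice analysis are actually carried out, the proposal is a plan rather than a proof.
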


Non-emptiness of $M_{\omega}(v)$ with $v^2\geq -1$ was proved in \cite{Kim06} for the case $r(v)=2$ and in \cite{Nue14a} for the case $r(v)=4$.  We can summarize the above discussion as follows:

\begin{Thm}\label{nonemptiness Enriques} For a general polarization $\omega$ on an unnodal Enriques surface $Y$ and primitive $v\in\Hal(Y,\Z)$ such that $v^2\geq -1$, $M_{\omega}(v)\neq \varnothing$.  If $r(v)$ is odd, then it consists of two isomorphic irreducible components.
\end{Thm}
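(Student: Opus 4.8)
The plan is to prove the statement by a case analysis on the parity of $r(v)$, assembling the cited non-emptiness results together with two elementary observations: an autoequivalence interchanging the two determinant components, and the positivity of the leading coefficient of the virtual Hodge polynomial.

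\emph{Odd rank.} Here essentially everything I need is contained in Theorem \ref{yosh odd}. That theorem gives $M_{\omega}(v)\neq\varnothing$ for general $\omega$ precisely when $v^2\geq-1$, so non-emptiness is immediate, and it asserts that each determinant component $M_{\omega}(v,L)$ is irreducible. Thus in the decomposition $M_{\omega}(v)=M_{\omega}(v,L_1)\coprod M_{\omega}(v,L_2)$ recorded above each piece is irreducible, and the two pieces are genuinely distinct components because $\det$ is locally constant in flat families while $L_1\neq L_2=L_1\otimes\OO_Y(K_Y)$ in $\Pic(Y)$. To see they are \emph{isomorphic} I would use the autoequivalence $E\mapsto E\otimes\OO_Y(K_Y)$ of $\Db(Y)$: it preserves $\omega$-Gieseker stability and fixes the Mukai vector $v$ (as $K_Y$ is numerically trivial in $\Num(Y)$), while $\det(E\otimes\OO_Y(K_Y))=\det(E)\otimes\OO_Y(K_Y)^{\otimes r}=\det(E)\otimes\OO_Y(K_Y)$ when $r$ is odd, since $K_Y$ is $2$-torsion. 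Hence this functor carries $M_{\omega}(v,L_1)$ onto $M_{\omega}(v,L_2)$, and being its own inverse it is an isomorphism.

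\emph{Even rank.} Now non-emptiness is the whole content, and I would deduce it from Hauzer's reduction. By Theorem \ref{hauz even} we have $e(M_{\omega}(v,L))=e(M_{\omega}(v',L'))$ for a reduced Mukai vector $v'$ of rank $r'\in\{2,4\}$; since the equivalences underlying this comparison preserve the Mukai pairing, $(v')^2=v^2\geq-1$. For $r'=2$ the non-emptiness of $M_{\omega}(v')$ is \cite{Kim06}, and for $r'=4$ it is \cite{Nue14a}. It then remains to transport non-emptiness back across the Hodge-polynomial identity, and here I would invoke that for a nonempty variety of dimension $d$ the virtual Hodge polynomial $e(\,\cdot\,)$ has, as its top coefficient, the number of $d$-dimensional irreducible components, so $e$ of any nonempty variety is nonzero. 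Consequently $e(M_{\omega}(v',L'))\neq0$, forcing $e(M_{\omega}(v,L))\neq0$ and therefore $M_{\omega}(v,L)\neq\varnothing$, which gives $M_{\omega}(v)\neq\varnothing$.

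\emph{Main obstacle.} The mathematical weight of the theorem rests entirely in the external inputs \cite{Yos03}, \cite{Hau10}, \cite{Kim06}, and \cite{Nue14a}, so the only delicate point in assembling them is the bookkeeping in the even-rank reduction: one must verify that Hauzer's chain of equivalences genuinely lands on a \emph{positive} reduced vector of rank $2$ or $4$ with $(v')^2=v^2\geq-1$ and compatible determinant data, so that the Kim and Nuer non-emptiness statements apply verbatim. Everything else—the $\otimes\,\OO_Y(K_Y)$ isomorphism and the positivity of the leading Hodge coefficient—is routine.
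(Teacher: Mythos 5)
Your proposal is correct and follows essentially the same route as the paper, which presents this theorem as a direct summary of the preceding results: Theorem \ref{yosh odd} (Yoshioka) handles the odd-rank case, and Theorem \ref{hauz even} (Hauzer) combined with the rank-2 and rank-4 non-emptiness results of \cite{Kim06} and \cite{Nue14a} handles the even-rank case. The details you supply beyond the paper's citation-assembly — the isomorphism of the two determinant components via $-\otimes\OO_Y(K_Y)$ for odd rank, and the nonvanishing of the top coefficient of the virtual Hodge polynomial to transport non-emptiness across Hauzer's identity — are exactly the standard arguments implicitly invoked there.
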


We expect these moduli spaces to be irreducible in the even rank case, but we do not need this for the sequel.  Let us just recall the following result in this direction from \cite{Nue14a}:

\begin{Thm} Let $v=(r,c,s)$ be a primitive Mukai vector on an unnodal Enriques surface $Y$ with $v^2=0$.  Then $M_{H,Y}(v)$ is a smooth irreducible elliptic curve if $\gcd(r,c,2s)=1$ or isomorphic to $Y$ itself if $\gcd(r,c,2s)=2$.
\end{Thm}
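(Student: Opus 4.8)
The plan is to transfer the question to the K3 cover $\Y$, where isotropic moduli spaces are completely understood, and to exploit the covering involution $\iota$ throughout. I would first record that $\v=\pi^*v$ has $\v^2=(\pi^*v,\pi^*v)=2v^2=0$, and split into the two cases via the Lemma of Hauzer quoted above: $\gcd(r,c,2s)=1$ precisely when $\v$ is primitive, and $\gcd(r,c,2s)=2$ precisely when $\v=2w$ with $w$ primitive and $w^2=0$. The dichotomy between a curve and a surface is governed by deformation theory on $Y$: for a stable $E$ one has $\dim\Hom(E,E)=1$, and Serre duality (with $\omega_Y=K_Y$) gives $\ext^2(E,E)=\dim\Hom(E,E(K_Y))$, which equals $1$ if $E\cong E(K_Y)$ and $0$ otherwise. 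Since $\chi(E,E)=-v^2=0$, this yields $\ext^1(E,E)=1$ when $E\not\cong E(K_Y)$ and $\ext^1(E,E)=2$ when $E\cong E(K_Y)$. Thus everything reduces to deciding, in terms of $v$, whether $E\cong E(K_Y)$.

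The bridge between the two descriptions is the claim that a stable $E$ satisfies $E\not\cong E(K_Y)$ if and only if $\pi^*E$ is stable on $\Y$, whereas $E\cong E(K_Y)$ if and only if $E\cong\pi_*F$ for a stable $F$ on $\Y$ with $F\not\cong\iota^*F$. For the second equivalence I would use the projection formula $\pi_*F\otimes K_Y\cong\pi_*(F\otimes\pi^*K_Y)\cong\pi_*F$ (as $\pi^*K_Y\cong\OO_{\Y}$) together with the standard behaviour of pullback along the \'etale double cover. Comparing Mukai vectors then matches the alternatives with the $\gcd$: in the first case $v(\pi^*E)=\v$ is primitive, while in the second $\v=\pi^*\pi_*v(F)=v(F)+\iota^*v(F)=2w$ is divisible by $2$. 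Hence $\gcd=1$ forces $E\not\cong E(K_Y)$ and a one-dimensional smooth moduli space, while $\gcd=2$ forces $E\cong E(K_Y)$ and a two-dimensional one.

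In the case $\gcd=1$, primitivity and isotropy of $\v$ make the K3 moduli space $M_{\Y}(\v)=:S$ (for a generic polarization) a smooth K3 surface, on which $\iota^*$ induces an involution $\hat\iota$. I would argue that $\hat\iota$ is anti-symplectic---it acts by $-1$ on $H^{2,0}(S)$, matching $\iota^*$ on $H^{2,0}(\Y)$ under the Mukai Hodge isometry---and identify $M_{H,Y}(v)$, via the pullback--descent correspondence, with the fixed locus $S^{\hat\iota}$. Since the fixed locus of an anti-symplectic involution on a K3 is a disjoint union of smooth curves (and is nonempty here because $v^2=0\geq-1$ guarantees $M_{H,Y}(v)\neq\varnothing$), the moduli space is a smooth projective curve; being $K$-trivial it is elliptic, and connectedness of the relevant fixed component---cross-checked against the Euler-characteristic computations behind Theorems \ref{yosh odd}--\ref{hauz even}---gives irreducibility.

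In the case $\gcd=2$, writing $E\cong\pi_*F$ with $v(F)=w$ primitive, isotropic, and $\iota$-invariant, $S:=M_{\Y}(w)$ is again a K3 carrying the anti-symplectic involution $\hat\iota_0=\iota^*$; the stable sheaves $E$ correspond exactly to free $\hat\iota_0$-orbits $\{F,\iota^*F\}$, so once $\hat\iota_0$ is shown to be fixed-point free one obtains $M_{H,Y}(v)\cong S/\hat\iota_0$, an Enriques surface, which a period computation (or the Torelli theorem for Enriques surfaces) identifies with $Y$. I expect the main obstacle to be exactly these rigidity statements: proving that the induced involution is anti-symplectic and (for $\gcd=2$) fixed-point free, and then pinning $S/\hat\iota_0$ to $Y$ itself rather than to the Enriques quotient of some other Fourier--Mukai partner. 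On the K3 cover these follow from the Mukai Hodge isometry, but that tool is weaker on Enriques surfaces, so I would be prepared to fall back on an explicit Fourier--Mukai reduction of $v$ to rank $0$ or rank $1$, where $M_{H,Y}(v)$ can be computed directly---as an Abel--Jacobi-type elliptic fibration, respectively as $Y$ via the support morphism.
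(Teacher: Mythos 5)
Your skeleton---pulling back along $\pi$, invoking Hauzer's lemma to translate $\gcd(r,c,2s)\in\{1,2\}$ into primitivity or $2$-divisibility of $\pi^*v$, splitting according to whether $E\cong E\otimes\omega_Y$, and using the anti-symplectic involution on the K3 moduli space---is the same framework this paper sets up (Lemmas \ref{2 to 1}, \ref{exclude}, \ref{isotropic}, Proposition \ref{lagrange}). Be aware, however, that the paper never proves this statement in the text: it is quoted from \cite{Nue14a}, and even the Bridgeland analogue proved here (Lemma \ref{isotrop}, via Corollary \ref{primitive} and the pushforward $\pi_*$ from $M_{\sigma',\tilde{Y}}(\frac{1}{2}\pi^*v)$) defers exactly the two points you flag as obstacles---irreducibility, and the identification of the quotient with $Y$ itself---to Section 8 of \cite{Nue14a}. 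So the crux of the theorem is not supplied by your proposal, nor by the in-paper machinery alone.

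Beyond that, there are concrete errors. First, your identification of $M_{H,Y}(v)$ with the fixed locus $S^{\hat\iota}$ in the $\gcd=1$ case is false: by Lemma \ref{2 to 1} and Proposition \ref{lagrange}, $E\mapsto\pi^*E$ is a $2$-to-$1$ \'{e}tale cover onto the fixed locus, because $E$ and $E\otimes\omega_Y$ have the same pullback; and since $v^2=0$ forces the rank of $v$ to be even (on an Enriques surface, $r$ odd implies $2s$ odd and hence $v^2$ odd), $E$ and $E\otimes\omega_Y$ have the same determinant and both lie in $M_{H,Y}(v)$. Smoothness and one-dimensionality survive this correction, but ellipticity no longer follows from the structure of fixed curves of anti-symplectic involutions (an \'{e}tale double cover of a genus-$g$ curve has genus $2g-1$), and, more seriously, irreducibility now becomes the assertion that this double cover is connected, i.e.\ that monodromy exchanges $E$ and $E\otimes\omega_Y$---precisely the unproved content. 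Second, in the $\gcd=2$ case you need the implication that $\pi^*E$ cannot be stable when $\pi^*v=2w$; this requires knowing that for generic polarization a K3 surface carries no stable sheaves with non-primitive isotropic Mukai vector (the paper obtains the analogous statement in Lemma \ref{isotrop} by a dimension count), and you assert the implication rather than prove it. Finally, your fallback of a Fourier--Mukai reduction ``to rank $0$ or rank $1$'' is half vacuous: rank $1$ is impossible since $v^2=0$ forces even rank, so the only viable reduction is to rank $0$, i.e.\ to line bundles on half-fibers, which is indeed how \cite{Nue14a} settles irreducibility and the isomorphism with $Y$. As written, your proposal reproduces the transfer-to-K3 formalism correctly in outline but leaves the theorem's essential claims open.
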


\section{Review: Stability conditions on K3 and Enriques surfaces}
\label{sec:reviewK3}

In this section we give a brief review of Bridgeland's results on stability conditions for K3
surfaces in \cite{Bri08}, and of results by Toda, Yoshioka and others related to moduli spaces of
Bridgeland-stable objects.

\subsection*{Space of stability conditions for a K3 surface}\label{stab on K3}
Let $\tilde{Y}$ be a smooth projective K3 surface.
Fix $\omega,\beta\in\NS(\tilde{Y})_\Q$ with $\omega$ ample.

Let $\TT(\omega,\beta) \subset \Coh \tilde{Y}$ be the subcategory of torsion sheaves and torsion-free sheaves whose HN-filtrations factors (with respect to slope-stability)
have $\mu_{\omega, \beta} > 0$, and $\FF(\omega,\beta)$ the subcategory of torsion-free sheaves with
HN-filtration factors satisfying $\mu_{\omega, \beta} \le 0$.
Next, consider the abelian category
\begin{equation*} 
\AA(\omega,\beta):=\left\{E\in\Db(\tilde{Y}):\begin{array}{l}
\bullet\;\;\HH^p(E)=0\mbox{ for }p\not\in\{-1,0\},\\\bullet\;\;
\HH^{-1}(E)\in\FF(\omega,\beta),\\\bullet\;\;\HH^0(E)\in\TT(\omega,\beta)\end{array}\right\}
\end{equation*}
and the $\C$-linear map
\begin{equation} \label{eq:ZK3}
Z_{\omega,\beta} \colon K_{\num}(\tilde{Y})\to\C,\qquad E\mapsto(\exp{(\beta+\sqrt{-1}\omega)},v(E)).
\end{equation}
If $Z_{\omega,\beta}(F)\notin\R_{\leq0}$ for any spherical sheaf $F\in\Coh(\tilde{Y})$ (e.g., this holds
when $\omega^2>2$), then by \cite[Lemma 6.2, Prop.\ 7.1]{Bri08},
the pair $\sigma_{\omega,\beta}=(Z_{\omega,\beta},\AA(\omega,\beta))$ defines a stability condition.
For objects $E \in \AA(\omega, \beta)$, we will denote their phase with respect to $\sigma_{\omega,
\beta}$ by $\phi_{\omega, \beta}(E) = \phi(Z(E)) \in (0, 1]$.
By using the support property, as proved in \cite[Proposition 10.3]{Bri08}, we can extend the above and define stability conditions $\sigma_{\omega,\beta}$, for $\omega,\beta\in\NS(\tilde{Y})_\R$.

Denote by $U(\tilde{Y})\subset\Stab(\tilde{Y})$ the open subset consisting of the stability conditions
$\sigma_{\omega,\beta}$ just constructed up to the action of $\widetilde{\GL}_2(\R)$. It can also be
characterized as the open subset $U(\tilde{Y}) \subset \Stab(\tilde{Y})$ consisting of stability conditions for
which the skyscraper sheaves $k(x)$ of points are stable of the same phase.
Let $\Stab^{\dagger}(\tilde{Y}) \subset \Stab(\tilde{Y})$ be the connected component containing $U(\tilde{Y})$.
Let $\PP(\tilde{Y})\subset H^*_{\alg}(\tilde{Y})_\C$ be the subset consisting of vectors whose real and imaginary
parts span positive definite two-planes in $H^*_{\alg}(\tilde{Y})_\R$ with respect to the Mukai pairing.
It has two connected components, corresponding to the induced orientation of the two-plane.
Choose $\PP^+(\tilde{Y})\subset\PP(X)$ as the connected component containing the vector
$(1,i\omega,-\omega^2/2)$, for $\omega\in\mathrm{NS}(\tilde{Y})_\R$ the class of an ample divisor.
Furthermore, let $\Delta(\tilde{Y}):=\{ s \in H^*_{\alg}(\tilde{Y}, \Z) \colon s ^2=-2\}$ be the set of spherical 
classes, and, for $s \in\Delta$,
\[
s ^\perp_\C:=\stv{ \Omega\in H^*_{\alg}(\tilde{Y})_\C}{(\Omega,s)=0}.
\]
Finally, set
\[
\PP_0^+(\tilde{Y}):=\PP^+(\tilde{Y}) \setminus \underset{s\in\Delta(\tilde{Y})}{\bigcup}s^\perp_\C\subset H^*_\alg(\tilde{Y})_\C.
\]

Since the Mukai pairing $(\blank,\blank)$ is non-degenerate, we can define
$\eta(\sigma)\in H^*_{\alg}(\tilde{Y})_\C$ 
for a stability condition $\sigma=(Z,\PP)\in\Stab^\dagger(\tilde{Y})$ by 
\[
\ZZ (\sigma) (\blank) = \left(\blank,\eta(\sigma)\right).
\]

\begin{Thm}[Bridgeland]\label{thm:BridgelandK3}
The map $\eta\colon \Stab^\dagger(\tilde{Y})\to H^*_{\alg}(\tilde{Y})_\C$ is a covering map onto its image $\PP^+_0(\tilde{Y})$.
\end{Thm}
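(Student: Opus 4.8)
The plan is to exhibit $\eta$ as a local homeomorphism and then upgrade it to a covering of the connected base $\PP_0^+(\tilde Y)$ by establishing that it is proper and surjective, invoking the standard topological fact that a proper local homeomorphism onto a connected, locally compact Hausdorff space is a (finite) covering map. First I would verify that $\eta\bigl(\Stab^\dagger(\tilde Y)\bigr)\subseteq\PP_0^+(\tilde Y)$. Writing the central charge as $Z_\sigma(\blank)=(\blank,\eta(\sigma))$ via the non-degenerate Mukai pairing, one checks the claim on the distinguished chart $U(\tilde Y)$, where $\eta(\sigma_{\omega,\beta})$ agrees up to the $\widetilde{\GL}_2^+(\R)$-action with $\exp(\beta+\sqrt{-1}\omega)$, whose real and imaginary parts manifestly span a positive-definite two-plane lying in the chosen component $\PP^+(\tilde Y)$. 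Since the signature of the two-plane $\langle\Real\eta(\sigma),\Imm\eta(\sigma)\rangle$ is a continuous, discretely-valued invariant, positive-definiteness is open and closed and so propagates across the connected component $\Stab^\dagger(\tilde Y)$, and a continuity argument pins down the component $\PP^+$. To rule out the spherical hyperplanes, I would argue that $\eta(\sigma)\in s^\perp_\C$ for some $s\in\Delta(\tilde Y)$ forces $Z_\sigma(s)=0$; but each $(-2)$-class is, up to sign, the Mukai vector of a $\sigma$-semistable spherical object, whose central charge must be nonzero by the positivity axiom \eqref{eq:Zpositivity}, a contradiction.

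With the containment in $\PP_0^+$ established, Theorem \ref{thm:Bridgeland-deform} shows that $\ZZ$, and hence $\eta$, is a local homeomorphism, so the image is open in $\PP_0^+(\tilde Y)$. The base itself is connected: $\PP^+(\tilde Y)$ is connected, and $\PP_0^+$ is obtained by deleting the locally finite family of real-codimension-two subsets $s^\perp_\C\cap\PP^+(\tilde Y)$, which does not disconnect it. It therefore suffices to prove that $\eta$ is proper onto $\PP_0^+$, for then its image is also closed and hence all of the connected $\PP_0^+$, and properness together with the local-homeomorphism property yields the covering structure directly. Concretely, properness reduces to the path-lifting statement that every path in $\PP_0^+(\tilde Y)$ lifts to $\Stab^\dagger(\tilde Y)$ and that a lift of a convergent path converges; uniqueness of lifts is automatic from the local homeomorphism.

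I expect this properness — equivalently, the non-escape of lifted paths to the boundary of $\Stab^\dagger(\tilde Y)$ — to be the genuine obstacle. A lift $\sigma_t$ can degenerate only through a collapse of the central charge or of the slicing, and the \emph{support property} (axiom (3) in the definition of a stability condition) is exactly the input that forbids the masses $m_{\sigma_t}(E)$ of semistable objects from blowing up or collapsing relative to $\lvert Z_{\sigma_t}(E)\rvert$ while $\eta(\sigma_t)$ remains in a compact region of $\PP_0^+$. The only mechanism by which an object's mass could collapse is for the class of some spherical object to approach one of the forbidden hyperplanes $s^\perp_\C$, and this is precisely what staying inside $\PP_0^+$ excludes. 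Turning this into a uniform bound on the family of $\sigma_t$-semistable objects and their phases, and thereby deducing convergence of $\sigma_t$ in $\Stab^\dagger(\tilde Y)$ from convergence of $\eta(\sigma_t)$, is the technical heart of the argument and is where the detailed analysis of walls and of spherical objects specific to a K3 surface is required.
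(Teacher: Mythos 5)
This theorem is not proved in the paper at all: it sits in a review section and is quoted from \cite{Bri08}, so your proposal has to be measured against Bridgeland's original argument. Against that standard it has a fatal structural flaw: $\eta$ is \emph{not} proper, so the covering property cannot be obtained from the fact that a proper local homeomorphism onto a connected, locally compact Hausdorff base is a (finite) covering map. The fibres of $\eta$ are infinite. Indeed, the double shift $[2]$ acts trivially on $\Hal(\tilde Y,\Z)$ (the shift $[1]$ acts by $-1$ on $K_{\num}$, so $[2]$ acts by $+1$), it acts \emph{freely} on $\Stab(\tilde Y)$ because it fixes the central charge while shifting every phase by $2$, and it preserves $\Stab^\dagger(\tilde Y)$ because it is realized by the element $z=\pm1$ of the connected group $\C\subset\widetilde\GL^+_2(\R)$ of Remark \ref{rmk:GroupAction}. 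Hence $\{[2k]\cdot\sigma\,:\,k\in\Z\}$ is an infinite, closed, discrete subset of a single fibre of $\eta$; a proper map with discrete fibres has finite fibres, so properness fails. With properness goes your argument that the image is closed and therefore equals the connected set $\PP^+_0(\tilde Y)$: surjectivity needs a separate proof. The parenthetical ``(finite)'' in your opening sentence is precisely the red flag --- Bridgeland's covering is an infinite covering, with deck group containing $[2]$ and squared spherical twists.

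Two further gaps. First, your argument that the image avoids the hyperplanes $s^\perp_\C$ assumes that for \emph{every} $\sigma\in\Stab^\dagger(\tilde Y)$ and every $s\in\Delta(\tilde Y)$ there exists a $\sigma$-semistable object of class $\pm s$. That is true, but it is a nontrivial theorem: a priori all spherical objects of class $\pm s$ could be $\sigma$-unstable with HN factors whose central charges cancel, so $Z_\sigma(s)=0$ is not immediately contradicted; moreover the standard nonemptiness results for Bridgeland moduli (Toda, Bayer--Macr\`{i}) are themselves proved using the covering theorem, so as written the step is circular. (Your ``open and closed'' claim for positive definiteness has a similar soft spot: one must rule out degenerate limiting two-planes.) Second, path lifting --- which, unlike properness, really is the right sufficient criterion for a local homeomorphism over this base to be a covering, and is strictly weaker than properness, not equivalent to it --- is exactly where all the content lies, and your proposal leaves it open. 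Bridgeland's actual route is: (i) prove that every $\sigma\in\Stab^\dagger(\tilde Y)$ satisfies the support property with locally uniform constants (this is the result \cite[Prop.~10.3]{Bri08} the paper already invokes), which by his deformation theorem yields a uniform radius on which $\ZZ$ is a homeomorphism and hence even coverings of balls in the base; and (ii) classify the codimension-one boundary strata of the geometric chamber $U(\tilde Y)$ and show that twists by line bundles and squares of spherical twists --- all acting trivially on cohomology --- map each stratum back into $\overline{U(\tilde Y)}$, so that $\Stab^\dagger(\tilde Y)$ is tiled by translates of $\overline{U(\tilde Y)}$; this simultaneously gives the containment of the image in $\PP^+_0(\tilde Y)$, its surjectivity, and the deck-group description. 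Your closing heuristic that the support property forbids mass collapse away from $\bigcup_{s}s^\perp_\C$ is the right intuition, but making it precise is exactly the wall-and-autoequivalence analysis of (ii), none of which appears in the proposal.
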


\subsection*{Space of stability conditions for an Enriques surface via induction}
Let $\pi:\tilde{Y}\rightarrow Y$ denote the covering map of an Enriques surface $Y$ by its covering K3 $\tilde{Y}$.  Via the fixed-point free covering involution $\iota$, $\Coh(Y)$ is naturally isomorphic to the category of coherent $G$-sheaves on $\tilde{Y}$, $\Coh_G(\tilde{Y})$, where $G=\langle \iota^*\rangle$, thus giving a natural equivalence of $\Db(Y)$ with $\Db_G(\tilde{Y})$.  We make this identification implicitly below.  

In \cite{MMS} the authors construct two faithful adjoint functors $$\For:\Db_G(\tilde{Y})\rightarrow \Db(\tilde{Y}),$$ which forgets the $G$-sheaf structure, and $$\Inf:\Db(\tilde{Y})\rightarrow \Db_G(\tilde{Y})$$ defined by $$\Inf(E):=\oplus_{g\in G} g^*E.$$  Under the above identifications we have $\For=\pi^*$ and $\Inf=\pi_*$.  Since $G$ acts on $\Stab(\tilde{Y})$ via the natural action of $\Aut(\Db(\tilde{Y}))$ on $\Stab(\tilde{Y})$, we can define $$\Gamma_{\tilde{Y}}:=\{\sigma\in \Stab(\tilde{Y}):g^*\sigma=\sigma,\text{ for all }g\in G\}.$$

They define two induced continuous maps.  First, they define $(\pi^*)^{-1}:\Gamma_{\tilde{Y}}\rightarrow \Stab(\Db(Y))$ given by $Z_{(\pi^*)^{-1}(\sigma)}=Z_{\sigma}\circ \pi^*$ and $\mathcal P_{(\pi^*)^{-1}(\sigma)}(\phi)=\{E\in \Db(Y) : \pi^* E\in \mathcal P_{\sigma}(\phi)\}$, where we use $\pi^*$ also for the morphism between $K$-groups. Second, they define $(\pi_*)^{-1}:(\pi^*)^{-1}(\Gamma_{\tilde{Y}})\rightarrow \Stab(\tilde{Y})$ similarly with $\pi^*$ replaced by $\pi_*$.

We consider the connected component $\Stab^{\dagger}(\tilde{Y})\subset \Stab(\tilde{Y})$ described in the section above.  The following result \cite[Proposition 3.1]{MMS} is relevant to us:

\begin{Thm}\label{induced} The non-empty subset $\Sigma(Y):=(\pi^*)^{-1}(\Gamma_{\tilde{Y}}\cap \Stab^{\dagger}(\tilde{Y}))$ is open and closed in $\Stab(Y)$, and it is embedded into $\Stab^{\dagger}(\tilde{Y})$ as a closed submanifold via the functor $(\pi_*)^{-1}$.  Moreover, the diagram 

\[\begin{CD}
\Gamma_{\tilde{Y}}\cap \Stab^{\dagger}(\tilde{Y})@ >(\pi^*)^{-1} >> \Sigma(Y)@ >(\pi_*)^{-1} >>\Gamma_{\tilde{Y}}\cap\Stab^{\dagger}(\tilde{Y})\\
@VV V  @VV\mathcal Z V  @VV V\\
({K_{\num}(\tilde{Y})}_{\C})_G^{\vee} @>(\pi^*)^{\vee} >>{K_{\num}(Y)}_{\C}^{\vee}@>\pi_*^{\vee} >>({K_{\num}(\tilde{Y})}_{\C})_G^{\vee}
\end{CD}\] commutes.

\end{Thm}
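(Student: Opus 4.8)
The plan is to realize $(\pi^*)^{-1}$ and $(\pi_*)^{-1}$ as mutually inverse homeomorphisms between $\Sigma(Y)$ and $\Gamma_{\tilde{Y}}\cap\Stab^{\dagger}(\tilde{Y})$, up to the harmless rescaling induced by $\pi^*\circ\pi_*$, and then to read off the topological conclusions from the commutative diagram together with Bridgeland's covering theorems. First I would settle non-emptiness by exhibiting invariant geometric stability conditions. Since $\pi^*$ identifies $\NS(Y)$ with the $\iota^*$-invariant part of $\NS(\tilde{Y})$, for an ample $\omega\in\NS(Y)_{\R}$ and any $\beta\in\NS(Y)_{\R}$ the classes $\pi^*\omega,\pi^*\beta$ are $\iota^*$-invariant; taking $\omega$ with $(\pi^*\omega)^2>2$, the stability condition $\sigma_{\pi^*\omega,\pi^*\beta}\in U(\tilde{Y})\subset\Stab^{\dagger}(\tilde{Y})$ has $\iota^*$-invariant central charge (as $\iota^*$ is an isometry fixing $\exp(\pi^*\beta+\sqrt{-1}\,\pi^*\omega)$) and $\iota^*$-invariant heart (as $\iota^*$ preserves the slope $\mu_{\pi^*\omega,\pi^*\beta}$, hence the tilting heart), so it lies in $\Gamma_{\tilde{Y}}\cap\Stab^{\dagger}(\tilde{Y})$; applying $(\pi^*)^{-1}$ produces a point of $\Sigma(Y)$.

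Next I would verify the commutative diagram, which is essentially tautological: the vertical maps are $\ZZ$, the lower horizontal maps are the transposes $(\pi^*)^{\vee}$ and $\pi_*^{\vee}$, and the defining formulas $Z_{(\pi^*)^{-1}(\sigma)}=Z_{\sigma}\circ\pi^*$ and $Z_{(\pi_*)^{-1}(\tau)}=Z_{\tau}\circ\pi_*$ make each square commute by inspection. Reading the top row as a composite, on central charges $(\pi_*)^{-1}\circ(\pi^*)^{-1}$ becomes $Z\mapsto Z\circ(\pi^*\circ\pi_*)=Z\circ(\mathrm{id}+\iota^*)$, which equals $2Z$ on the $G$-invariant locus. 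Multiplication by $2$ is the action of the rescaling subgroup $\R_{>0}\subset\C\subset\widetilde{\GL}^+_2(\R)$, which preserves both the heart and the collection of semistable objects, and which preserves $\Gamma_{\tilde{Y}}\cap\Stab^{\dagger}(\tilde{Y})$. Hence $(\pi^*)^{-1}\colon\Gamma_{\tilde{Y}}\cap\Stab^{\dagger}(\tilde{Y})\to\Sigma(Y)$ is injective with continuous inverse given by $(\pi_*)^{-1}$ up to rescaling, so $(\pi_*)^{-1}$ maps $\Sigma(Y)$ bijectively onto $\Gamma_{\tilde{Y}}\cap\Stab^{\dagger}(\tilde{Y})$.

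To obtain the submanifold structure and openness, I would first note that $\Gamma_{\tilde{Y}}=\mathrm{Fix}(G)$ is closed in $\Stab(\tilde{Y})$, being the fixed locus of a finite group acting by homeomorphisms, so $\Gamma_{\tilde{Y}}\cap\Stab^{\dagger}(\tilde{Y})$ is closed in $\Stab^{\dagger}(\tilde{Y})$; under the covering map $\eta$ of Theorem \ref{thm:BridgelandK3} it corresponds to $\eta^{-1}$ of the linear slice $\PP_0^+(\tilde{Y})\cap(H^*_{\alg}(\tilde{Y})_{\C})_G$, hence is a complex submanifold. Because $(\pi^*)^{\vee}$ is a $\C$-linear isomorphism onto $K_{\num}(Y)_{\C}^{\vee}$, a dimension count gives $\dim_{\C}\Sigma(Y)=\rk K_{\num}(Y)=\dim_{\C}\Stab(Y)$. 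Combining the diagram with the fact that both vertical maps $\ZZ$ are local homeomorphisms (Theorems \ref{thm:Bridgeland-deform} and \ref{thm:BridgelandK3}), and that $\ZZ$ restricted to $\Gamma_{\tilde{Y}}\cap\Stab^{\dagger}(\tilde{Y})$ is a local homeomorphism onto an open subset of the invariant subspace, I conclude that $(\pi^*)^{-1}$ is a local homeomorphism; since domain and target have equal dimension, invariance of domain forces $\Sigma(Y)$ to be open in $\Stab(Y)$.

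The main obstacle is closedness of $\Sigma(Y)$ in $\Stab(Y)$, equivalently that being induced is a closed condition. Given $\tau_n\in\Sigma(Y)$ with $\tau_n\to\tau$ in $\Stab(Y)$, the pulled-back central charges $Z_{\tau_n}\circ\pi_*$ converge to $Z_{\tau}\circ\pi_*$ inside $(K_{\num}(\tilde{Y})_{\C})_G^{\vee}$, and the delicate step is to upgrade this to convergence $(\pi_*)^{-1}(\tau_n)\to\sigma$ of the associated stability conditions within $\Stab^{\dagger}(\tilde{Y})$; once this is done, $\sigma$ is $G$-invariant and lies in the closed component $\Stab^{\dagger}(\tilde{Y})$, so $\sigma\in\Gamma_{\tilde{Y}}\cap\Stab^{\dagger}(\tilde{Y})$ and $\tau=(\pi^*)^{-1}(\sigma)\in\Sigma(Y)$. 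The difficulty is precisely to control the limit through the support property and to ensure the limiting period stays inside $\PP_0^+(\tilde{Y})$, i.e. does not meet a spherical hyperplane $s^{\perp}_{\C}$; here I would invoke that $\eta$ is a covering onto $\PP_0^+(\tilde{Y})$ together with the openness already established in order to lift the convergent arc of central charges uniquely and continuously. With closedness in hand, $\Sigma(Y)$ is open and closed, and $(\pi_*)^{-1}$ realizes it as the closed submanifold $\Gamma_{\tilde{Y}}\cap\Stab^{\dagger}(\tilde{Y})$, which is the assertion.
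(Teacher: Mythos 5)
You should note at the outset that the paper contains no proof of this statement: it is quoted verbatim as \cite[Proposition 3.1]{MMS}, so the benchmark is the argument of Macr\`{i}--Mehrotra--Stellari, whose inducing machinery is also what makes your two maps well defined and continuous in the first place. Granting that machinery, most of your outline is sound and close to theirs: the diagram commutes tautologically from $Z_{(\pi^*)^{-1}(\sigma)}=Z_{\sigma}\circ\pi^*$ and $Z_{(\pi_*)^{-1}(\tau)}=Z_{\tau}\circ\pi_*$; since $\pi^*\pi_*=\id+\iota^*$ on $K_{\num}(\tilde{Y})$ and an invariant $Z_\sigma$ kills anti-invariant classes, the composite is multiplication by $2$ on charges, and it is the honest rescaling of stability conditions because $F\oplus\iota^*F$ is semistable of phase $\phi$ if and only if $F$ is; non-emptiness via $\sigma_{\pi^*\omega,\pi^*\beta}$ is correct; and openness does follow from Theorems \ref{thm:Bridgeland-deform} and \ref{thm:BridgelandK3} together with invariance of domain, with the caveat that $\Gamma_{\tilde{Y}}\cap\Stab^{\dagger}(\tilde{Y})$ coincides with $\eta^{-1}$ of the invariant linear slice only \emph{locally} near a fixed point (a fiber of $\eta$ may contain non-invariant points with invariant central charge; locally one uses that $G$ acts by isometries plus local injectivity of $\eta$).

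The genuine gap is exactly the one you flag yourself: closedness of $\Sigma(Y)$ in $\Stab(Y)$, and your proposed fix is circular. Given $\tau_n=(\pi^*)^{-1}(\sigma_n)\to\tau$, the invariant functionals $Z_{\sigma_n}$ do converge to the invariant functional $W$ with $W\circ\pi^*=Z_\tau$; but to ``lift $W$ through the covering $\eta$'' you must already know $W\in\PP_0^+(\tilde{Y})$, and nothing in your argument excludes either degeneration: the limiting two-plane may fail to be positive definite, or $W$ may land on a spherical hyperplane. Indeed $(W,s)=0$ is equivalent to $Z_\tau(\pi_*s)=0$, and a central charge on $Y$ is only constrained to be nonzero on classes of objects of its heart, so this vanishing is not a priori forbidden --- that it cannot happen is essentially the assertion to be proved, so invoking the covering property of $\eta$ assumes the conclusion. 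Moreover, even granting $W\in\PP_0^+(\tilde{Y})$, the covering property yields \emph{some} preimage of $W$, not the $G$-invariant one inducing $\tau$; for that you would need the sequence $\sigma_n$ itself to converge, which requires quantitative control: the relations $\phi^{\pm}_{(\pi_*)^{-1}\tau_n}(F)=\phi^{\pm}_{\tau_n}(\pi_*F)$ and $m_{(\pi_*)^{-1}\tau_n}(F)=m_{\tau_n}(\pi_*F)$ make $(\pi_*)^{-1}$ distance non-increasing, hence $\sigma_n$ Cauchy, but producing a limit from a Cauchy sequence in $\Stab(\tilde{Y})$ still needs a uniform support constant. What MMS actually do is apply the inducing construction in the opposite direction to the limit itself: $\tau$ is $(-\otimes\omega_Y)$-invariant because each $\tau_n$ is and invariance is a closed condition; their inducing theorem then shows that $(Z_\tau\circ\pi_*,\{F:\pi_*F\in\PP_\tau(\phi)\})$ is again a full stability condition on $\tilde{Y}$ --- this verification of Harder--Narasimhan filtrations and the support property is precisely what your sketch omits --- after which $\Sigma(Y)$ is the preimage of the closed set $\Gamma_{\tilde{Y}}\cap\Stab^{\dagger}(\tilde{Y})$ under a continuous map defined on a closed set, hence closed. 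Without that theorem, or an equivalent uniform-support argument, the closedness --- the substantive content of the statement --- remains unproved.
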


Now we denote by $\Stab^{\dagger}(Y)$ the (non-empty) connected component of $\Sigma(Y)$ containing the images via $(\pi^*)^{-1}$ of the stability conditions $(Z_{\omega,\beta},\mathcal A(\omega,\beta))$ defined above with $G$-invariant $\omega,\beta\in \NS(\tilde{Y})_{\Q}$ (thus giving $G$-invariant stability conditions).  It is worth recalling that by \cite[Remark 3.2]{MMS} $\Stab^{\dagger}(Y)$ can alternatively be described by repeating the construction of $\Stab^{\dagger}(\tilde{Y})$ but for the Enriques surface $Y$.  In particular, there is a connected open subset $U(Y)\subset \Stab(Y)$ with $U(Y)\cap \mathcal Z^{-1}(\mathcal P_0^+(Y))\neq \varnothing$ consisting of stability conditions $\sigma$ such that the structure sheaves of points are stable in $\sigma$ of the same phase.  Here we define $\mathcal P_0^+(Y):=(\pi^*)^{\vee}\mathcal P_0^+(\tilde{Y})_G$, where $(-)_G$ denotes taking the $G$-invariant part.  We take the group $\Aut^0(\Db(Y))$ of those autoequivalences preserving $\Sigma(Y)$ and inducing the identity on cohomology via the homomorphism $\Pi$ constructed in \cite{MMS}.  They proved the following result analogous to Theorem \ref{thm:BridgelandK3} above:

\begin{Prop} The map $\eta\colon\Sigma(Y)\rightarrow {K_{\num}(Y)}_{\C}$ defines a covering map onto $\mathcal P_0^+(Y)$ such that $\Aut^0(\Db(Y))/\langle(-)\otimes \omega_Y\rangle$ acts as the group of deck transformations.
\end{Prop}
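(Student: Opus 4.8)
\emph{Proof strategy.} The plan is to deduce the whole statement from Bridgeland's covering theorem on the K3 cover (Theorem \ref{thm:BridgelandK3}) together with the equivariant picture of Theorem \ref{induced}, by restricting the $G=\langle\iota^*\rangle$-equivariant covering $\eta_{\tilde Y}$ to its fixed loci. First I would observe that in the commutative diagram of Theorem \ref{induced} the horizontal arrow $(\pi^*)^\vee\colon (K_{\num}(\tilde Y)_\C)_G^\vee\to K_{\num}(Y)_\C^\vee$ is a linear isomorphism: since $\pi^*$ identifies $\NS(Y)$ with the $\iota^*$-invariant part of $\NS(\tilde Y)$ and $\Hal(Y,\Z)$ with a finite-index subgroup of $\Hal(\tilde Y,\Z)^G$, after $\otimes\,\C$ it becomes an isomorphism onto the $G$-invariant subspace. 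Combined with the homeomorphism $(\pi^*)^{-1}\colon\Gamma_{\tilde Y}\cap\Stab^\dagger(\tilde Y)\xrightarrow{\sim}\Sigma(Y)$, commutativity shows that $\eta=\mathcal Z|_{\Sigma(Y)}$ is conjugate (via these two identifications) to the restriction of $\eta_{\tilde Y}$ to $\Gamma_{\tilde Y}\cap\Stab^\dagger(\tilde Y)$, and that $\mathcal P_0^+(Y)=(\pi^*)^\vee\mathcal P_0^+(\tilde Y)_G$ corresponds to $\mathcal P_0^+(\tilde Y)_G$. Hence the proposition is equivalent to the statement that $\eta_{\tilde Y}$ restricts to a covering map $\Gamma_{\tilde Y}\cap\Stab^\dagger(\tilde Y)\to\mathcal P_0^+(\tilde Y)_G$, together with the identification of its deck group.

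Next I would record that $\Gamma_{\tilde Y}\cap\Stab^\dagger(\tilde Y)$ is exactly the fixed locus of the $G$-action on $\Stab^\dagger(\tilde Y)$, that $\mathcal P_0^+(\tilde Y)_G$ is the fixed locus of the induced $\iota^*$-action on $\mathcal P_0^+(\tilde Y)$, and that $\eta_{\tilde Y}$ is $G$-equivariant by naturality of $\eta$ under autoequivalences (Remark \ref{rmk:GroupAction}). The local covering structure on fixed loci is then formal: given $\sigma$ in the fixed locus, choose an evenly-covered neighbourhood $U$ of $\eta_{\tilde Y}(\sigma)$ and replace it by the $G$-invariant neighbourhood $\bigcap_{g\in G} g\cdot U$ (possible as $G$ is finite); over $U$ the group $G$ permutes the finitely many sheets, each $G$-invariant sheet maps $G$-equivariantly and homeomorphically to $U$ and therefore restricts to a homeomorphism of fixed loci onto $U\cap\mathcal P_0^+(\tilde Y)_G$, while non-invariant sheets contribute no fixed points. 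Thus $\eta_{\tilde Y}|_{\Gamma_{\tilde Y}}$ is a local homeomorphism for which every point of its image has an evenly-covered neighbourhood.

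Surjectivity onto $\mathcal P_0^+(\tilde Y)_G$ is the crux, and I would handle it by an equivariant path-lifting argument. The invariant lattice $\Hal(\tilde Y,\Z)^G$ has signature $(2,10)$, so its positive-definite part is two-dimensional, $\mathcal P^+(\tilde Y)_G$ is connected, and after deleting the countably many real-codimension-two subsets $(s^\perp_\C)_G$ the space $\mathcal P_0^+(\tilde Y)_G$ remains path-connected. Fix a $G$-invariant base point $\sigma_0\in\Gamma_{\tilde Y}\cap\Stab^\dagger(\tilde Y)$, for instance one of the conditions $\sigma_{\omega,\beta}$ coming from $G$-invariant $\omega,\beta\in\NS(\tilde Y)_\Q$ used to define $\Stab^\dagger(Y)$. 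Given any $\Omega\in\mathcal P_0^+(\tilde Y)_G$, choose a path from $\eta_{\tilde Y}(\sigma_0)$ to $\Omega$ inside $\mathcal P_0^+(\tilde Y)_G$; since it lies in the fixed locus it is pointwise $\iota^*$-invariant, so its unique lift starting at $\sigma_0$ is carried to itself by $g^*$ (using equivariance, uniqueness of lifts, and $g^*\sigma_0=\sigma_0$). The lifted path therefore stays in $\Gamma_{\tilde Y}$ and its endpoint is a $G$-invariant stability condition mapping to $\Omega$. This proves $\eta_{\tilde Y}|_{\Gamma_{\tilde Y}}$ is surjective, hence a covering map; transporting back along $(\pi^*)^{-1}$ and $(\pi^*)^\vee$ yields the covering map $\eta\colon\Sigma(Y)\to\mathcal P_0^+(Y)$.

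Finally, for the deck group I would note that $\Aut^0(\Db(Y))$, consisting of autoequivalences preserving $\Sigma(Y)$ and inducing the identity on cohomology, maps to the deck group: such $\Phi$ act trivially on $K_{\num}(Y)$, so $\eta\circ\Phi=\eta$. The twist $(-)\otimes\omega_Y$ lies in $\Aut^0(\Db(Y))$ because $\omega_Y$ is numerically trivial, and it acts trivially on all of $\Stab^\dagger(Y)$: tensoring by a numerically trivial line bundle preserves the central charge and the torsion pairs defining the hearts $\AA(\omega,\beta)$, so it fixes a whole open subset and hence, being a homeomorphism of the connected component, fixes $\Sigma(Y)$ pointwise. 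A short computation with hearts shows conversely that any element of the kernel is a power of this twist, giving an injection $\Aut^0(\Db(Y))/\langle(-)\otimes\omega_Y\rangle\hookrightarrow\mathrm{Deck}(\eta)$. The main obstacle is surjectivity of this last map, i.e. realizing every deck transformation by such an autoequivalence. I expect to do this by lifting deck transformations of $\eta_{\tilde Y}|_{\Gamma_{\tilde Y}}$ to $G$-equivariant deck transformations of the K3 covering $\eta_{\tilde Y}$ — which by Bridgeland's description come from autoequivalences in $\Aut^0(\Db(\tilde Y))$ — and descending them through the homomorphism $\Pi$ and the equivalence $\Db(Y)\cong\Db_G(\tilde Y)$; the ambiguity in the choice of $G$-equivariant structure on a descended object is precisely the $\omega_Y$-twist, which forces the quotient. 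This descent, rather than any of the covering-space formalities, is where the real work lies.
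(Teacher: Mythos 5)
A preliminary remark: the paper contains no proof of this Proposition at all --- it is quoted verbatim from \cite{MMS} (``They proved the following result\dots''), so your attempt can only be measured against its own internal logic and against the route through Bridgeland's K3 theorem that \cite{MMS} themselves take. On that score, the covering-map half of your argument is essentially sound and is in the same spirit as the source: the reduction via the commutative diagram of Theorem~\ref{induced}, the even-covering of $G$-invariant neighbourhoods restricted to fixed loci, and the equivariant path-lifting for surjectivity are all correct. Two points do need to be shored up. First, path-connectivity of $\mathcal P_0^+(\tilde Y)_G$ requires that each wall $(s^\perp_\C)\cap\Hal(\tilde Y,\Z)^G_\C$ be a \emph{proper} subspace, i.e.\ that no spherical class is $\iota^*$-anti-invariant; this is true (such a class would be $(0,\delta,0)$ with $\delta^2=-2$, $\iota^*\delta=-\delta$, impossible since $\pm\delta$ is effective on a K3 and $\iota^*$ preserves effectivity --- alternatively, one anti-invariant spherical class would force $\mathcal P_0^+(\tilde Y)_G=\varnothing$, contradicting the non-emptiness witnessed by $\eta(\sigma_0)$), but it must be said, since otherwise your ``codimension two'' count fails. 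Second, your argument that $(-)\otimes\omega_Y$ acts trivially only covers the connected component containing $U(Y)$; since $\Sigma(Y)$ is a fixed locus and may a priori be disconnected, use instead the one-line argument from the definition of induced stability conditions: $\pi^*(E\otimes\omega_Y)\cong\pi^*E$, so every slicing $\mathcal P_\sigma(\phi)$, $\sigma\in\Sigma(Y)$, is preserved.

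The genuine gap is the deck-group half, on both sides. For injectivity of $\Aut^0(\Db(Y))/\langle(-)\otimes\omega_Y\rangle\to\mathrm{Deck}(\eta)$, the advertised ``short computation with hearts'' does not exist: you read $\Aut^0$ as ``trivial on $K_{\num}(Y)$,'' but with that definition the claim is actually threatened by numerically trivial automorphisms of Enriques surfaces (Mukai--Namikawa): for such $f\neq\mathrm{id}$, the equivalence $f_*$ preserves $\Sigma(Y)$, fixes every $\sigma_{\omega,\beta}$ and hence all of $U(Y)$, and so acts trivially on $\Stab^\dagger(Y)$ while not lying in $\langle(-)\otimes\omega_Y\rangle$. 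The proposition is protected only because $\Aut^0$ is defined via the homomorphism $\Pi$ of \cite{MMS}, i.e.\ triviality of the induced action on the cohomology of the K3 cover; completing the kernel computation then genuinely requires identifying the stable objects of class $(0,0,1)$ for $\sigma\in U(Y)$ as skyscrapers, the characterization of equivalences permuting skyscrapers as $f_*\circ((-)\otimes L)\circ[n]$, and Torelli on $\tilde Y$ to kill $f$ --- none of which you engage with. For surjectivity, ``lifting deck transformations of $\eta|_{\Gamma_{\tilde Y}}$ to $G$-equivariant deck transformations of $\eta_{\tilde Y}$'' is not a well-defined operation: a deck transformation of a fixed-locus covering has no canonical extension to the ambient covering, since its behaviour over $\mathcal P_0^+(\tilde Y)\setminus\mathcal P_0^+(\tilde Y)_G$ is unconstrained. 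The workable argument is fiber-transitivity: two points of $\Sigma(Y)$ over the same central charge correspond to $G$-fixed $\sigma_1',\sigma_2'\in\Stab^\dagger(\tilde Y)$ whose $G$-invariant central charges agree on the invariant part and hence agree everywhere; then the \emph{full} form of Bridgeland's theorem --- that $\Aut^0(\Db(\tilde Y))$ acts freely and \emph{is} the deck group of $\eta_{\tilde Y}$, a statement strictly stronger than the covering statement quoted as Theorem~\ref{thm:BridgelandK3} --- produces $\tilde\Phi$ with $\tilde\Phi\sigma_1'=\sigma_2'$; freeness forces $\tilde\Phi\circ\iota^*\cong\iota^*\circ\tilde\Phi$; and finally the equivariant descent of Fourier--Mukai kernels along $\pi$ (as in \cite{BM}, using $H^2(G,\C^*)=0$ and the $\Hom(G,\C^*)$-torsor of linearizations, which is exactly the $\omega_Y$-ambiguity) descends $\tilde\Phi$ to $\Db(Y)$. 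You correctly flag this descent as ``where the real work lies,'' but that work, together with the injectivity issue above, is precisely the content of the deck-group assertion; as written, your proposal proves the covering statement but not the statement about $\Aut^0(\Db(Y))/\langle(-)\otimes\omega_Y\rangle$.
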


\subsection*{The Wall-and-Chamber structure}

A key ingredient in the connection between the stability manifold and the birational geometry of Bridgeland moduli spaces is the existence of a wall-and-chamber structure on $\Stab(X)$.  For a fixed $\sigma\in \Stab(X)$, we say a subset $\mathcal S\subset \Db(X)$ has \emph{bounded mass} if there exists $m>0$ such that $m_{\sigma}(E)\leq m$ for any $E\in \mathcal S$.  It follows from the definition of the metric topology on $\Stab(X)$ that being of bounded mass is independent of the specific initial stability condition $\sigma$ and depends only on the connected component it lies on.  We have the following general result (see \cite[Proposition 2.8]{Tod08}):

\begin{Prop}\label{pseudo-walls} Let $X$ be a smooth projective variety.  Assume that for any bounded mass subset $\mathcal S\subset \Db(X)$ the set of numerical classes $$\{[E]\in K_{\num}(X)|E\in \mathcal S\}$$ is a finite.  Then for any compact subset $B\subset \Stab^*(X)$ (an arbitrary connected component of $\Stab(X)$), there exists a finite number of real codimension one submanifolds $\{W_{\gamma}|\gamma\in \Gamma\}$ on $\Stab^*(X)$ such that if $\Gamma'$ is a subset of $\Gamma$ and  $$\mathcal C\subset \bigcap_{\gamma\in \Gamma'}(B\cap W_{\gamma})\backslash \bigcup_{\gamma \notin \Gamma'} W_{\gamma}$$ is one of the connected components, then if $E\in \mathcal S$ is semistable for some $\sigma\in \mathcal C$, then it is semistable for all $\sigma\in \mathcal C$.
\end{Prop}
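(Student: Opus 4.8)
The plan is to produce the walls directly as the pullbacks, under the local homeomorphism $\ZZ$ of Theorem~\ref{thm:Bridgeland-deform}, of the real-codimension-one loci on which the phases of two relevant numerical classes coincide, and then to show by a connectedness argument that semistability cannot change within a chamber. The entire point of the hypothesis is that it forces the numerical classes that matter to range over a \emph{finite} set, so that only finitely many such loci occur over the compact set $B$.

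First I would fix the bounded-mass subset $\mathcal S$ and set $V:=\{[E]\in K_{\num}(X) : E\in\mathcal S\}$, which is finite by hypothesis. Since $B$ is compact and $\sigma\mapsto m_\sigma(E)$ is continuous, and since this mass depends only on the (finitely many) classes of the Harder--Narasimhan factors, I can extract a uniform bound $m$ with $m_\sigma(E)\le m$ for all $E\in\mathcal S$ and all $\sigma\in B$. Let $\mathcal S'$ denote the family of all objects that are $\sigma$-semistable for some $\sigma\in B$ with $m_\sigma$-mass at most $m$; this is again a bounded-mass subset (bounded mass being a property of the connected component, as recalled above), so $W_{\mathrm{cl}}:=\{[F]:F\in\mathcal S'\}$ is finite by hypothesis.

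The key step is to check that every potential destabilizer of an $E\in\mathcal S$ already has class in $W_{\mathrm{cl}}$. Suppose that along a path inside a chamber $E$ passes from semistable to unstable, and let $\sigma_0\in B$ be the transition parameter (so $E$ is $\sigma_0$-semistable by closedness of semistability in the metric topology). The maximal destabilizing subobject $F$ appearing just across $\sigma_0$ is $\sigma_0$-semistable by closedness and satisfies $\phi_{\sigma_0}(F)=\phi_{\sigma_0}(E)$; hence $Z_{\sigma_0}(F)$ is a positive real multiple of $Z_{\sigma_0}(E)$, and together with $Z_{\sigma_0}(E)=Z_{\sigma_0}(F)+Z_{\sigma_0}(E/F)$ (all collinear) this gives $m_{\sigma_0}(F)=|Z_{\sigma_0}(F)|\le|Z_{\sigma_0}(E)|\le m$. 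Thus $[F]\in W_{\mathrm{cl}}$. For each pair $(v,w)\in V\times W_{\mathrm{cl}}$ with $v,w$ not proportional I then define
$$W_{(v,w)}:=\bigl\{\sigma\in\Stab^*(X): \Im\bigl(\overline{Z_\sigma(v)}\,Z_\sigma(w)\bigr)=0\bigr\},$$
the locus where the phases of $v$ and $w$ agree. Because $\ZZ$ is a local homeomorphism, this is a real-analytic real-codimension-one submanifold wherever the defining equation has nonvanishing differential (I would discard any degenerate components), and there are only finitely many of them; these are the $W_\gamma$.

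Finally I would prove constancy of semistability on a chamber component $\mathcal C$ by showing, for each $E\in\mathcal S$ of class $v$, that the set of $\sigma\in\mathcal C$ for which $E$ is $\sigma$-semistable is open and closed in the connected set $\mathcal C$. Closedness is again the general closedness of semistability. For the openness, running the argument of the previous paragraph, a transition would force $\sigma_0\in\mathcal C\cap W_{(v,[F])}$; by the definition of $\mathcal C$ this means $(v,[F])$ indexes one of the $W_\gamma$ with $\gamma\in\Gamma'$, whence $\mathcal C\subset W_{(v,[F])}$ and $\phi(F)=\phi(E)$ throughout $\mathcal C$ --- contradicting that $F$ destabilizes $E$. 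Hence the semistable locus is all of $\mathcal C$. The step I expect to be the main obstacle is precisely this passage from ``for each nearby parameter there is some destabilizing subobject'' to ``there is a single $\sigma_0$-semistable subobject of equal phase lying on a wall through $\sigma_0$'': it is here that the finiteness-of-classes hypothesis, the continuity of central charges and phases, and the closedness of semistability must be combined carefully, so that the limiting destabilizer is an honest $\sigma_0$-subobject of equal phase and not merely a numerical class satisfying a limiting inequality. By contrast, the construction of the walls and the codimension count are routine once the bounded-mass control of destabilizers is in hand.
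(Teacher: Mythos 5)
A preliminary remark: the paper does not prove this proposition at all; it quotes it as \cite[Proposition 2.8]{Tod08}, and only supplies the supporting finiteness statement (Lemma \ref{finite m vectors}) together with a description of the walls, namely $W_{\gamma}=\{\sigma : Z(v_i)/Z(v_j)\in\R_{>0}\}$ for the finitely many pairs $(v_i,v_j)$ of linearly independent classes of (semi)stable factors of objects of $\mathcal S$ over $B$. Your skeleton --- uniform mass bound over the compact set $B$, finiteness of the classes of potential destabilizers, walls indexed by pairs of classes pulled back through the local homeomorphism of Theorem \ref{thm:Bridgeland-deform}, and an open--closed argument on each chamber --- is exactly this Bridgeland--Toda architecture, and your finiteness steps (the set $W_{\mathrm{cl}}$, the mass bound $m_{\tau_n}(F)=|Z_{\tau_n}(F)|\le m_{\tau_n}(E)$ for a maximal destabilizing subobject) are correct and coincide with Lemma \ref{finite m vectors}.

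However, the step you yourself flag is a genuine gap, and the mechanism you propose for it fails as stated. You write that ``the maximal destabilizing subobject $F$ appearing just across $\sigma_0$ is $\sigma_0$-semistable by closedness.'' Closedness of semistability applies to a \emph{fixed} object that is semistable along a sequence of stability conditions converging to $\sigma_0$; here there is no such object. If $\tau_n\to\sigma_0$ are parameters at which $E$ is unstable, the maximal destabilizing subobject $F_n$ of $E$ at $\tau_n$ genuinely depends on $n$, and each $F_n$ is only known to be semistable at the single point $\tau_n$, so no form of closedness produces a $\sigma_0$-semistable limiting destabilizer. The repair is to abandon the search for a limiting object and argue purely with classes and the metric on $\Stab(X)$: the classes $[F_n]$ lie in your finite set, so after passing to a subsequence $[F_n]=w$ is constant; since $F_n$ is $\tau_n$-semistable, the definition of the metric gives $\phi_{\tau_n}(F_n)\le \phi^{-}_{\sigma_0}(F_n)+d(\tau_n,\sigma_0)$, and $\Hom(F_n,E)\neq 0$ with $E$ $\sigma_0$-semistable of phase $\phi_0$ forces $\phi^{-}_{\sigma_0}(F_n)\le\phi_0$; combined with the lower bound $\phi(Z_{\tau_n}(w))=\phi_{\tau_n}(F_n)>\phi(Z_{\tau_n}(v))\to\phi_0$, this sandwiches $\phi(Z_{\tau_n}(w))\to\phi_0$, so $Z_{\sigma_0}(w)$ and $Z_{\sigma_0}(v)$ are aligned and $\sigma_0$ lies on the wall $W_{(v,w)}$. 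Your chamber contradiction then goes through verbatim ($\mathcal C\subset W_{(v,w)}$ forces equal phases at $\tau_n$, contradicting strict destabilization). Equivalently one can run Bridgeland's version with all HN factors, using equality in the triangle inequality for masses at $\sigma_0$. Two smaller points: your locus $\Im\bigl(\overline{Z_\sigma(v)}\,Z_\sigma(w)\bigr)=0$ is the full real-alignment locus rather than the positive ray, which is harmless here since both charges lie in a small sector near phase $\phi_0$; but the codimension-one (submersion) claim requires $v,w$ to be linearly independent over $\R$, not merely non-proportional over $\Z$ --- fortunately $\Q$-proportional classes can never index an actual destabilization, since their charges are aligned identically.
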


We now verify the assumption of the proposition when $X=Y$ is an Enriques surface:

\begin{Lem}\label{bounded mass} Suppose the subset $\mathcal S\subset \Db(Y)$ has bounded mass in a connected component $\Stab^*(Y)$ of $\Stab(Y)$ which intersects $\mathcal Z^{-1}(\mathcal P_0^+(Y))$.  Then the set of numerical classes $\{[E]|E\in \mathcal S\}$ is finite.
\end{Lem}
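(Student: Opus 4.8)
The plan is to reduce the statement to the corresponding finiteness result on the covering K3 surface $\tilde{Y}$, transporting the problem along the pull-back $\pi^*=\For$ and exploiting the fact that the induced stability condition on $Y$ is, by its very definition, compatible with $\pi^*$.

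First I would use the hypothesis to arrange that every $\sigma\in\Stab^*(Y)$ is \emph{induced} from $\tilde{Y}$. By Theorem \ref{induced} the subset $\Sigma(Y)$ is open and closed in $\Stab(Y)$ and embeds via $(\pi_*)^{-1}$ into $\Gamma_{\tilde{Y}}\cap\Stab^{\dagger}(\tilde{Y})$; moreover $\eta=\ZZ$ carries $\Sigma(Y)$ into $\PP_0^+(Y)$, so $\Sigma(Y)\subseteq\ZZ^{-1}(\PP_0^+(Y))$. Since $\Stab^*(Y)$ is connected and meets $\ZZ^{-1}(\PP_0^+(Y))$, it is contained in the open-and-closed set $\Sigma(Y)$. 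Hence I may fix $\sigma\in\Stab^*(Y)$ of the form $\sigma=(\pi^*)^{-1}(\tilde\sigma)$ with $\tilde\sigma\in\Gamma_{\tilde{Y}}\cap\Stab^{\dagger}(\tilde{Y})$, and recall that by construction $Z_\sigma=Z_{\tilde\sigma}\circ\pi^*$ and $\PP_\sigma(\phi)=\{E\in\Db(Y):\pi^*E\in\PP_{\tilde\sigma}(\phi)\}$. Because having bounded mass depends only on the connected component, $\mathcal S$ has bounded mass with respect to this particular $\sigma$, say $m_\sigma(E)\le m$ for all $E\in\mathcal S$.

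Next I would show that the pulled-back family $\pi^*\mathcal S:=\{\pi^*E:E\in\mathcal S\}\subset\Db(\tilde{Y})$ again has bounded mass, with the \emph{same} bound $m$. The crucial observation is the compatibility of the induced slicing with $\pi^*$: if $A\in\PP_\sigma(\phi)$ is $\sigma$-semistable, then by definition $\pi^*A\in\PP_{\tilde\sigma}(\phi)$ is $\tilde\sigma$-semistable, whence $m_{\tilde\sigma}(\pi^*A)=|Z_{\tilde\sigma}(\pi^*A)|=|Z_\sigma(A)|$. Applying the exact functor $\pi^*$ to the Harder--Narasimhan triangles $E_{i-1}\to E_i\to A_i$ of an object $E\in\mathcal S$ and using the triangle inequality for mass, I obtain
\[
m_{\tilde\sigma}(\pi^*E)\le\sum_i m_{\tilde\sigma}(\pi^*A_i)=\sum_i|Z_\sigma(A_i)|=m_\sigma(E)\le m.
\]
Thus $\pi^*\mathcal S$ is a bounded-mass subset of $\Db(\tilde{Y})$ in the component $\Stab^{\dagger}(\tilde{Y})$.

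Finally I would invoke the finiteness result on the K3 side: the surface $\tilde{Y}$ is known to satisfy the hypothesis of Proposition \ref{pseudo-walls} (see \cite{Bri08,Tod08}), so $\{v(\pi^*E):E\in\mathcal S\}$ is a finite subset of $\Hal(\tilde{Y},\Z)$. Since $v(\pi^*E)=\pi^*(v(E))$ and $\pi^*\colon\Hal(Y,\Z)\into\Hal(\tilde{Y},\Z)$ is injective, the assignment $v(E)\mapsto v(\pi^*E)$ is injective, and therefore $\{v(E):E\in\mathcal S\}$ is finite as well. The step I expect to require the most care is the mass comparison in the second paragraph, namely verifying that $\pi^*$ sends $\sigma$-semistable objects to $\tilde\sigma$-semistable ones with matching central charge and that mass is subadditive along the pulled-back HN triangles; everything else is a formal consequence of Theorem \ref{induced} and of the injectivity of $\pi^*$ on Mukai lattices.
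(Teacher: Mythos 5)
Your proposal is correct and takes essentially the same route as the paper: restrict to an induced stability condition using Theorem \ref{induced}, show that $\pi^*(\mathcal S)$ has bounded mass on $\tilde{Y}$, apply the known finiteness for bounded-mass subsets on the K3 cover, and descend along the injection $\pi^*\colon K_{\num}(Y)\into K_{\num}(\tilde{Y})$. The only cosmetic difference is in the mass comparison: the paper cites \cite[Lemma 2.8]{MMS} to get the exact equality $m_{\sigma}(E)=m_{\sigma'}(\pi^*E)$, whereas you derive the inequality $m_{\sigma'}(\pi^*E)\leq m_{\sigma}(E)$ directly from the definition of the induced slicing, which suffices just as well.
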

\begin{proof} Since the conclusion is true for bounded mass subsets $\mathcal S'\subset \Db(\tilde{Y})$ for the covering K3 $\tilde{Y}$ above, we first show that $\pi^*(\mathcal S)$ is of bounded mass.  Indeed by assumption there is a $\sigma=(Z,\mathcal P)\in \Stab^*(Y)$ such that $\mathcal Z(\sigma)\in \mathcal P_0^+(Y)$, and by the definition of $\Sigma(Y)$ and the commutativity of the diagram in Theorem \ref{induced} we can lift $\sigma$ to a $\sigma'=(Z',\mathcal P')\in\Gamma_{\tilde{Y}}\cap \Stab^{\dagger}(\tilde{Y})$ such that $\mathcal Z(\sigma')\in\mathcal P_0^+(\tilde{Y})_G$.  Now by our assumption about $\mathcal S$ there exists $m>0$ such that $m_{\sigma}(E)\leq m$ for any $E\in \mathcal S$.  For any $E\in\mathcal S$, the proof of \cite[Lemma 2.8]{MMS} shows that the HN-filtration of $\pi^*E$ in $\sigma'$ is the image via $\pi^*$ of the HN-filtration in $\sigma$.  Then $m_{\sigma}(E)=m_{\sigma'}(\pi^*E)$ from this and the definition of the induction of stability conditions.  This shows that $\mathcal S'=\pi^*(\mathcal S)$ is of bounded mass.

It follows that $$\{[F]\in K_{\num}(\tilde{Y})|F\in \mathcal S'\}$$ is a finite set.  But if $F=\pi^*E$, then $[F]=\pi^*[E]$, and $\pi^*$ is an isomorphism onto ${K_{\num}(\tilde{Y})}_G$, so the set $$\{[E]\in K_{\num}(Y)|E\in \mathcal S\}$$ is finite.
\end{proof}

For the remainder of this section, we let $X$ denote any smooth projective variety satisfying the assumption Proposition \ref{pseudo-walls}, though for our purposes $X=Y$ or $\tilde{Y}$.  It is worthwhile now to point out the following fact which is crucial in considering the stability of objects as $\sigma$ varies:
\begin{Lem}\label{finite m vectors} Given a subset of $\SS\subset \Db(X)$ of bounded mass and a compact subset $B$, then $$\{v(E)|E\in\SS\text{ or is a (semi)stable factor of some }E'\in\SS\text{ for some }\sigma\in B\}$$ is a finite set.
\end{Lem}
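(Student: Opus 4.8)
The plan is to enlarge $\SS$ to the set $\SS'$ consisting of $\SS$ together with all of the $\sigma$-(semi)stable factors that occur for the various $\sigma\in B$, to prove that $\SS'$ again has bounded mass, and then to invoke the hypothesis on $X$ carried over from Proposition~\ref{pseudo-walls} (under which a bounded-mass subset of $\Db(X)$ has only finitely many numerical classes). Since the Mukai vector $v(E)$ depends only on the class $[E]\in K_{\num}(X)$, the image under $v$ of a finite set of numerical classes is finite; hence finiteness of $\{[E]:E\in\SS'\}$ immediately yields the finiteness asserted in the statement. So everything reduces to the bounded-mass claim for $\SS'$.

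The key tool is the explicit form of Bridgeland's metric on the connected component $\Stab^*(X)$ from \cite[Prop.\ 8.1]{Bri07}: for any $\sigma,\tau$ in the same component and any $0\neq E\in\Db(X)$ one has $\left|\log\frac{m_\tau(E)}{m_\sigma(E)}\right|\leq d(\sigma,\tau)$, and hence $m_\tau(E)\leq e^{d(\sigma,\tau)}\,m_\sigma(E)$. First I would fix a reference point $\sigma_0\in B$; since $B$ is compact it has finite diameter $D:=\sup_{\sigma,\tau\in B}d(\sigma,\tau)<\infty$. Let $m>0$ witness that $\SS$ has bounded mass with respect to $\sigma_0$, so that $m_{\sigma_0}(E)\leq m$ for all $E\in\SS$.

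Next I would bound the factors. Let $A$ be a $\sigma$-(semi)stable factor of some $E\in\SS$, for some $\sigma\in B$. Since $A$ is $\sigma$-semistable (a stable factor being in particular semistable), its Harder--Narasimhan filtration is trivial and $m_\sigma(A)=|Z_\sigma(A)|$. If $A$ is an HN factor of $E$ then $|Z_\sigma(A)|\leq\sum_i|Z_\sigma(A_i)|=m_\sigma(E)$ directly from the definition of mass; if $A$ is a Jordan--H\"older factor of some HN factor $A_i$, then all Jordan--H\"older factors of $A_i$ have the same phase, so their central charges add without cancellation and again $|Z_\sigma(A)|\leq|Z_\sigma(A_i)|\leq m_\sigma(E)$. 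Applying the metric comparison twice then gives
\[
m_{\sigma_0}(A)\leq e^{d(\sigma_0,\sigma)}m_\sigma(A)\leq e^{D}\,m_\sigma(E)\leq e^{D}e^{d(\sigma,\sigma_0)}m_{\sigma_0}(E)\leq e^{2D}m.
\]
The same bound holds trivially for the objects of $\SS$ themselves, so $\SS'$ has bounded mass with constant $e^{2D}m$ relative to $\sigma_0$, and the hypothesis of Proposition~\ref{pseudo-walls} applied to $\SS'$ completes the proof.

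The main obstacle, and the only place where compactness of $B$ is essential, is precisely the $\sigma$-dependence: the various (semi)stable factors are extracted at different stability conditions $\sigma\in B$, whereas bounded mass must be verified against a single fixed $\sigma_0$. The inequality $m_\tau\leq e^{d(\sigma,\tau)}m_\sigma$ lets one transport the estimate $m_\sigma(A)\leq m_\sigma(E)$, valid at the particular $\sigma$ where $A$ appears, back to $\sigma_0$, and compactness makes the resulting conversion factor $e^{D}$ uniform over all of $B$. One small point to check carefully is that no phase cancellation occurs when passing from $E$ to its factors, so that $|Z_\sigma(A)|\leq m_\sigma(E)$ genuinely holds in both the HN and the Jordan--H\"older cases.
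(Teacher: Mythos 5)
Your proof is correct and follows essentially the same route as the paper: the paper (following \cite[Proposition 9.3]{Bri08}) introduces the auxiliary set $T$ of objects $A$ with $m_\sigma(A)\leq m_\sigma(E)$ for some $\sigma\in B$, $E\in\SS$, uses compactness of $B$ to uniformly bound the ratios $m_\tau(E)/m_\sigma(E)$, and notes that $m_\sigma(E)=\sum_i m_\sigma(A_i)$ over the stable factors of the semistable factors, which is exactly your chain of estimates with the metric inequality made explicit. The only cosmetic difference is that you invoke the hypothesis of Proposition~\ref{pseudo-walls} directly, while the paper cites its verification (Lemma~\ref{bounded mass}); both are fine in context.
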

\begin{proof} We essentially follow \cite[Proposition 9.3]{Bri08}.  Let $T$ be the set of nonzero objects $A\in\Db(X)$ such that for some $\sigma\in B$ and some $E\in\SS$, $m_{\sigma}(A)\leq m_{\sigma}(E)$.  Then the fact that $B$ is compact implies that the quotient $m_{\tau}(E)/m_{\sigma}(E)$ is uniformly bounded for all nonzero $E\in \Db(X)$, and for all $\sigma,\tau\in B$, so the subset $T$ has bounded mass since $\SS$ does.  By Lemma \ref{bounded mass}, the set of numerical classes (and thus Mukai vectors) of elements of $T$ is finite.

The final observation that is important here is that if $A$ is a semistable factor of an object $E\in\SS$ for some $\sigma\in B$, then $A$ is in $T$, as are its stable factors, since $m_{\sigma}(E)=\sum_i m_{\sigma}(A_i)$ with the sum ranging over all of the stable factors of its semistable factors.
\end{proof}

The proof of Proposition \ref{pseudo-walls}, i.e. the construction of the $W_{\gamma}$, entails considering, for each of the finitely many pairs $\gamma=(v_i,v_j)$ of linearly independent Mukai vectors of stable factors of objects in $\SS$, the real codimension one submanifold $$W_{\gamma}=\{\sigma=(Z,\AA)\in\Stab^*(X)|Z(v_i)/Z(v_j)\in \R_{>0}\}.$$  The most important consequence of this construction is that when $\sigma\in \CC$, a Mukai vector with the same phase as the Mukai vector $v$ of an object in $\SS$ must lie on the ray $\R_{>0}v$.  All of this tells us that we can construct a wall-and-chamber structure on $\Stab^{\dagger}(Y)$ for any bounded mass subset $\SS$.  Following \cite{Mac}, we call the codimension one submanifolds from Proposition \ref{pseudo-walls} \emph{pseudo-walls} for the bounded mass subset $\SS$.

Let us now fix a class $v \in K_{\num}(X)$, and consider the set $\SS$ of
$\sigma$-semistable objects $E \in \Db(X)$ of class $v$ as $\sigma$ varies.  This is by definition bounded.  Consider the corresponding finite set from Lemma \ref{finite m vectors} and the resulting wall-and-chamber decomposition.  By throwing out those pseudo-walls which do not actually correspond to subobjects of some $E\in \SS$, we arrive at the following useful wall-and-chamber decomposition:

\begin{Prop} \label{prop:chambers}
There exists a locally finite set of \emph{walls} (pseudo-walls corresponding to genuine subobjects of semistable objects with Mukai vector $v$) 
in $\Stab(X)$, depending only on $v$, with the following properties: 
\begin{enumerate}
\item When $\sigma$ varies within a chamber, the sets of $\sigma$-semistable and
$\sigma$-stable objects of class $v$ does not change.
\item When $\sigma$ lies on a single wall $W\subset \Stab(X)$,
then there is a $\sigma$-semistable object
that is unstable in one of the adjacent chambers, and semistable in the other adjacent chamber.
\item When we restrict to an intersection of finitely many walls $W_1, \dots, W_k$, we
obtain a wall-and-chamber decomposition on $W_1 \cap \dots \cap W_k$ with the same properties,
where the walls are given by the intersections $W \cap W_1 \cap \dots \cap W_k$ for any
of the walls $W \subset \Stab(X)$ with respect to $v$.
\end{enumerate}
\end{Prop}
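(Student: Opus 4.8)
The plan is to specialize the general machinery of Proposition~\ref{pseudo-walls} to the bounded-mass set $\SS$ attached to the fixed class $v$, and then to prune the resulting pseudo-walls down to genuine walls. Here $\SS$ denotes the set, introduced just before the statement, of objects that are $\sigma$-semistable of class $v$ for some $\sigma$. Feeding $\SS$ into Lemma~\ref{finite m vectors} (whose hypothesis is supplied by Lemma~\ref{bounded mass}) produces, on each compact $B\subset\Stab(X)$, a finite set of Mukai vectors $\{v_i\}$ occurring as stable factors of objects of $\SS$; the associated pseudo-walls $W_\gamma=\{\sigma=(Z,\AA):Z(v_i)/Z(v_j)\in\R_{>0}\}$, indexed by linearly independent pairs $\gamma=(v_i,v_j)$, then form a locally finite family by Proposition~\ref{pseudo-walls}.

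To extract the genuine \emph{walls}, I would discard every pseudo-wall $W_\gamma$ along which no $\sigma$-semistable object of class $v$ actually acquires a sub- or quotient object of Mukai vector $v_i$. What survives is still locally finite, and since the $v_i$ are the vectors of stable factors of objects of class $v$, the entire collection depends only on $v$. The first property is then immediate: on a chamber $\CC$---a connected component of the complement of the walls---Proposition~\ref{pseudo-walls} already gives that the set of $\sigma$-semistable objects of class $v$ is constant. Constancy of the \emph{stable} objects follows from the key consequence of the construction recalled before the statement: a semistable $E$ of class $v$ is strictly semistable exactly when it has a proper subobject of the same phase, i.e.\ when some factor-vector $v_i$ shares the phase of $v$ while lying off the ray $\R_{>0}v$; the finitely many $v_i$ can align in phase with $v$ only along a pseudo-wall, so no such coincidence occurs inside $\CC$.

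For the second property, I would realize the phase coincidence defining a surviving wall $W$ by an honest extension. Picking $\sigma_0\in W$ and a $\sigma_0$-semistable $E$ of class $v$ that is strictly semistable through a subobject $A$ with $\phi_{\sigma_0}(A)=\phi_{\sigma_0}(E)$, write $0\to A\to E\to B\to 0$ with $A$ and $B$ of equal phase on $W$. Deforming $\sigma_0$ into the two adjacent chambers separates the phases of $A$ and $B$ to first order in opposite directions; taking the non-split extension compatible with the side on which $\phi(A)<\phi(B)$, the object $E$ stays $\sigma$-semistable there (no new destabilizer can intervene, by the finiteness of factor-vectors from Lemma~\ref{finite m vectors}), while on the opposite side $A\hookrightarrow E$ has strictly larger phase and destabilizes $E$. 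The third property demands almost no new work: Proposition~\ref{pseudo-walls} already yields, on any intersection $W_1\cap\cdots\cap W_k$, an induced wall-and-chamber decomposition with walls $W\cap W_1\cap\cdots\cap W_k$, and the same pruning promotes these pseudo-walls to genuine walls, the first two properties transferring verbatim.

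The hard part will be the pruning step together with the second property: identifying precisely which pseudo-walls are genuine and checking that crossing such a wall destabilizes an object on exactly one side. This comes down to controlling the Harder--Narasimhan and Jordan--H\"older filtrations of $E$ as $\sigma$ leaves $W$---verifying that the subobject $A$ persists in the heart and that the strict phase inequality flips in the correct direction. The finiteness furnished by Lemma~\ref{finite m vectors} is exactly what keeps this bookkeeping finite and the family of walls locally finite.
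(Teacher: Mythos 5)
Your proposal is correct and follows essentially the same route as the paper: the paper presents Proposition~\ref{prop:chambers} without a separate proof, obtaining it exactly as you do by applying Proposition~\ref{pseudo-walls} (with its hypothesis verified via Lemmas~\ref{bounded mass} and~\ref{finite m vectors}) to the bounded-mass set $\SS$ of $\sigma$-semistable objects of class $v$, and then ``throwing out those pseudo-walls which do not actually correspond to subobjects of some $E\in\SS$.'' Your additional details---the phase-alignment argument for constancy of stable objects in a chamber and the non-split extension argument for property (b)---are consistent fill-ins of steps the paper leaves implicit, matching the standard arguments it cites from \cite{Bri08}.
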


If $v$ is primitive, then from the proof of \cite[Proposition 9.4]{Bri08} $\sigma$ lies on a wall if and only if there exists a
strictly $\sigma$-semistable object of class $v$. 
From the above constructions, the Jordan-H\"older filtration of $\sigma$-semistable objects
does not change when $\sigma$ varies within a chamber.

\begin{Def}\label{def:generic}
Let $v\in K_{\num}(X)$.
A stability condition is called \emph{generic} with respect to $v$ if it does not lie on a wall
in the sense of Proposition \ref{prop:chambers}.
\end{Def}

We will also need the following useful fact \cite[Lemma 2.5]{BaMa}:
\begin{Lem} \label{lem:Yukinobualgebraic}
Consider a stability condition $\sigma = (Z, \AA)$ with $Z(v) = -1$. Then there are algebraic
stability conditions $\sigma_i = (Z_i, \AA_i)$ for $i = 1, \dots, m$ nearby $\sigma$
with $Z_i(v) = -1$ such that:
\begin{enumerate}
\item For every $i$ the following statement holds: an object of class $v$ is $\sigma_i$-stable (or
$\sigma_i$-semistable) if and only if it is $\sigma$-stable (or $\sigma$-semistable, respectively).
\item The central charge $Z$ is in the convex hull of $\{Z_1, \dots, Z_n\}$.
\end{enumerate}
\end{Lem}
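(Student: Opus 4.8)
The plan is to reduce the whole statement to an elementary convex-geometry fact about central charges and then transport it back to $\Stab(X)$ via Bridgeland's deformation theorem. First I would invoke the wall-and-chamber structure already in place: by Lemma \ref{finite m vectors} and Proposition \ref{prop:chambers}, there is a neighbourhood of $\sigma$ in which only finitely many classes $w_1,\dots,w_n\in K_{\num}(X)$ occur as classes of Harder--Narasimhan or Jordan--H\"older factors of semistable objects of class $v$, and only finitely many walls for $v$ pass through $\sigma$. By Theorem \ref{thm:Bridgeland-deform} the map $\ZZ$ is a local homeomorphism, so stability conditions near $\sigma$ are in bijection with central charges $Z'$ near $Z$, and by Proposition \ref{prop:chambers} the set of $\sigma'$-stable (resp. $\sigma'$-semistable) objects of class $v$ depends only on the position of $Z'$ relative to this finite arrangement, i.e. only on the relative phases of the numbers $Z'(w_a)$. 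It therefore suffices to find algebraic central charges $Z_i$ near $Z$ lying in the same cell as $Z$.

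The key observation, which makes the argument linear and hence compatible simultaneously with rationality and convexity, is that the normalization $Z(v)=-1$ forces $\phi(v)=1$, so every Jordan--H\"older factor of a strictly semistable object of class $v$ has phase $1$. Thus each wall through $\sigma$ is the locus where some factor class satisfies $Z(w_a)\in\R_{<0}$, i.e. it is cut out by the real-\emph{linear} equation $\Im Z(w_a)=0$ together with the open condition $\Re Z(w_a)<0$. Consequently the cell of $\sigma$ sits inside the affine space $A=\{W\in\Hom(K_{\num}(X),\C):W(v)=-1\}$ as the relative interior of a region obtained from a rational linear subspace $L\subset A$ — defined by the integer-coefficient equations $\Im W(w_a)=0$ for the finitely many wall classes $w_a$ — by imposing finitely many open phase inequalities. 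Writing $W=\Re W+\sqrt{-1}\,\Im W$, the conditions defining $L$ are finitely many integer-coefficient linear equations, so the rational points of $L$ (those $W$ with values in $\Q\oplus\Q\sqrt{-1}$) are dense in $L$, and $Z\in L$ itself.

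With this in hand I would finish by pure convex geometry. Since $Z$ lies in the relative interior of its cell and the rational points are dense in the affine hull $L$, I can choose finitely many rational central charges $Z_1,\dots,Z_m\in L$ in the cell, close enough to $Z$ that they remain on the correct (open) side of every other wall, and arranged as a small rational simplex in $L$ surrounding $Z$, so that $Z$ lies in the interior of their convex hull. By construction $Z_i(v)=-1$ and each $Z_i$ is algebraic in the sense of Definition \ref{def:algebraic}. Shrinking so that all $Z_i$ lie in the neighbourhood on which $\ZZ$ is a homeomorphism, Theorem \ref{thm:Bridgeland-deform} lifts them to genuine stability conditions $\sigma_i=(Z_i,\AA_i)$; these are algebraic, lie in the same cell as $\sigma$, and hence by Proposition \ref{prop:chambers} have exactly the same stable and semistable objects of class $v$, giving (1), while (2) holds by the choice of the $Z_i$.

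The main obstacle — and the place where the hypothesis $Z(v)=-1$ does the real work — is reconciling the three requirements at once: staying inside the stability cell (to preserve both stability \emph{and} semistability), keeping the $Z_i$ rational, and keeping $Z$ in their convex hull. A priori a wall $Z(w_a)/Z(w_b)\in\R_{>0}$ is only real-analytic, being quadratic in $\Re Z,\Im Z$ through $\Im\!\big(Z(w_a)\overline{Z(w_b)}\big)=0$; such a hypersurface need not contain dense rational points and is hostile to the convexity argument. The observation that, under $Z(v)=-1$, the relevant walls degenerate to the linear equations $\Im Z(w_a)=0$ is precisely what dissolves this difficulty. The only remaining care is to check that the non-equality wall conditions are genuinely open and therefore survive small perturbation, which is immediate from the continuity of $\ZZ$ together with the local finiteness in Proposition \ref{prop:chambers}.
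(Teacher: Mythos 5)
The paper itself contains no proof of this lemma: it is quoted directly from \cite[Lemma 2.5]{BaMa}, so there is nothing internal to compare against. Your argument is correct and is essentially the proof given in that reference --- under the normalization $Z(v)=-1$ every wall for $v$ through $\sigma$ becomes the rational linear condition $\Im Z'(w_a)=0$ (together with open conditions) on the affine slice $\{Z'(v)=-1\}$, so density of rational points in that rational affine subspace, combined with Bridgeland's deformation theorem and the constancy of (semi)stable sets on cells from Proposition \ref{prop:chambers}, produces the algebraic $\sigma_i$ in the same cell as $\sigma$ with $Z$ in their convex hull.
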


\section{Moduli stacks of semistable objects}\label{sec:ModuliStacks}

We would like to use the results of \cite{Tod08} to construct for each stability condition $\sigma\in\Stab^{\dagger}(Y)$ a moduli stack of $\sigma$-semistable objectswhich are Artin stacks of finite type over $\C$.

Fix a smooth projective surface $X$ (to be either $Y$ or $\Y$ as above).  Let $\MMM_X$ be the 2-functor $$\MMM_X\colon(\text{Sch}/\C)\rightarrow (\text{groupoids}),$$ which sends a $\C$-scheme $S$ to the groupoid $\MMM_X(S)$ whose objects consist of $\mathcal E\in \mathrm{D}_{S\text{-}\mathrm{perf}}(S\times X)$ satisfying $$\Ext^i(\mathcal E_s,\mathcal E_s)=0,\text{ for all }i<0\text{ and }s\in S.$$   Lieblich proved in \cite{Lie} the following theorem: 

\begin{Thm} The 2-functor $\MMM_X$ is an Artin stack of locally finite type over $\C$.
\end{Thm}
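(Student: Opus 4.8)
The plan is to verify that $\MMM_X$ satisfies the hypotheses of Artin's algebraicity criteria. First I would check that $\MMM_X$ is genuinely a stack for the fppf topology. Since $S$-perfectness is an fppf-local condition and the formation of relative $\RHom$ is compatible with base change, descent for morphisms is immediate; the delicate point is effectivity of descent data for objects, because the derived category does not a priori satisfy descent. This is precisely where the hypothesis $\Ext^i(\mathcal E_s,\mathcal E_s)=0$ for $i<0$ is essential: the obstructions to gluing a descent datum of objects of $\Db(X)$ live in negative self-$\Ext$ groups, so their vanishing forces the gluing to be unobstructed and canonical, keeping us within ordinary (as opposed to higher or derived) stacks.

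Next I would establish representability of the diagonal. Given two families $\mathcal E,\mathcal F\in\MMM_X(S)$, I would construct the relative $\RHom$ as a perfect complex on $S$ commuting with base change (the central technical device of \cite{Lie}), realize the relative Hom functor $T\mapsto\Hom(\mathcal E_T,\mathcal F_T)$ as represented by an affine $S$-scheme (the total space of an associated linear functor), and then cut out the isomorphism locus $\Isom_S(\mathcal E,\mathcal F)$ as a locally closed subscheme of $\Hom(\mathcal E,\mathcal F)\times_S\Hom(\mathcal F,\mathcal E)$ where the two compositions are the respective identities. This shows the diagonal is representable, separated, and of finite type. Local finite presentation of $\MMM_X$, i.e. commutation with filtered colimits of rings, then follows formally from the fact that perfect complexes are compact and finitely presented objects.

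It remains to produce a deformation-obstruction theory and to check effectivity of formal objects. Deformations of $\mathcal E$ along a square-zero extension form a torsor under $\Ext^1(\mathcal E,\mathcal E)$ with obstruction class in $\Ext^2(\mathcal E,\mathcal E)$, and I would package this into a relative obstruction theory compatible with the cotangent complex of the base, verifying the homogeneity and finiteness conditions in Artin's criterion. The hard part will be effectivity (algebraization) of a formal deformation over a complete local $\C$-algebra: this requires a Grothendieck existence theorem for perfect complexes, namely that the formal completion functor is an equivalence on the relevant derived categories. This is the main obstacle, and once again the condition $\Ext^{<0}=0$ intervenes decisively, guaranteeing that the formal object carries no obstructed higher gluing data and therefore algebraizes to an honest object of $\Db(X)$ over the spectrum rather than merely a formal one. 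With descent, diagonal representability, local finite presentation, the obstruction theory, and effectivity all in hand, Artin's theorem yields that $\MMM_X$ is an Artin stack locally of finite type over $\C$.
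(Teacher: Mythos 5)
The paper does not prove this statement at all: it is quoted verbatim as Lieblich's theorem, with a citation to \cite{Lie}, so there is no internal argument to compare yours against. Your sketch correctly reconstructs the strategy of Lieblich's actual proof --- descent for the derived category made effective by the fiberwise vanishing $\Ext^{<0}(\mathcal E_s,\mathcal E_s)=0$, a representable diagonal obtained from a relative $\RHom$ complex compatible with base change and the associated linear (affine) scheme of homomorphisms, and Artin's criteria fed by the $\Ext^1$/$\Ext^2$ deformation--obstruction theory together with a Grothendieck existence theorem for complexes --- so your proposal follows essentially the same route as the result the paper invokes.
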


Fix $\sigma=(Z,\mathcal P)\in \Stab(X),\phi\in \R,$ and $v\in H_{\alg}^*(X,\Z)$.  Then any object $E\in\mathcal P(\phi)$ satisfies $$\text{Ext}^i(E,E)=0,\text{ for all }i<0.$$  Indeed $\text{Ext}^i(E,E)=\text{Hom}(E,E[i])$, and $E\in \mathcal P(\phi)$ implies $E[i]\in \mathcal P(\phi+i)$.  Since $i<0$, $\phi+i<\phi$, and from the definition of a stability condition, we must then have $\text{Hom}(E,E[i])=0$.   

\begin{Def} Define $M_{\sigma,X}(v,\phi)$ to be the set of $\sigma$-semistable objects of phase $\phi$ and Mukai vector $v$, and $$\MMM_{\sigma,X}(v,\phi)\subset \MMM_X,$$ to be the substack whose fiberwise objects are in $M_{\sigma,X}(v,\phi)$.  As $\phi$ is determined $\mod \Z$ by $v$ and $\sigma$, we will drop it from the notation and assume henceforth that it is in $(0,1]$.  
\end{Def}

\begin{Rem}\label{phi one}  By Lemma \ref{lem:Yukinobualgebraic} and Remark \ref{rmk:GroupAction} above, we may in fact assume that $\phi=1$, $Z(v)=-1$, and $\sigma$ is algebraic.  We will make explicit when we are assuming this.
\end{Rem}

Toda proved in \cite{Tod08} the following helpful result:

\begin{Lem}\label{main lemma} Assume $M_{\sigma,X}(v)$ is bounded and $\MMM_{\sigma,X}(v)\subset \MMM_X$ is an open substack.  Then $\MMM_{\sigma,X}(v)$ is an Artin stack of finite type over $\C$.
\end{Lem}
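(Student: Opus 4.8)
The plan is to combine the two hypotheses — boundedness of $M_{\sigma,X}(v)$ and openness of $\MMM_{\sigma,X}(v)\subset\MMM_X$ — with Lieblich's theorem that $\MMM_X$ is an Artin stack locally of finite type over $\C$. Since an open substack of a locally-finite-type Artin stack is again a locally-finite-type Artin stack, the only substantive point is to upgrade ``locally of finite type'' to ``finite type.'' For that it suffices to show $\MMM_{\sigma,X}(v)$ is quasi-compact, and this is exactly where the boundedness assumption enters: boundedness of the family of $\sigma$-semistable objects of class $v$ means the corresponding collection of objects is parametrized, set-theoretically, by a finite-type scheme, so the substack they define cannot require infinitely many charts.

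Concretely, I would proceed as follows. First I would invoke openness to conclude that $\MMM_{\sigma,X}(v)$ is an Artin stack locally of finite type over $\C$, as a locally-finite-type property passes to open substacks and the fiberwise condition $\Ext^i(E,E)=0$ for $i<0$ is already satisfied by every $\sigma$-semistable object of a fixed phase (as noted in the paragraph preceding the statement, so $\MMM_{\sigma,X}(v)$ genuinely lands inside $\MMM_X$). Second, I would translate the hypothesis that $M_{\sigma,X}(v)$ is bounded into the statement that there is a single finite-type $\C$-scheme $T$ and a family $\mathcal{F}\in \mathrm{D}_{T\text{-}\mathrm{perf}}(T\times X)$ such that every $\sigma$-semistable object of class $v$ appears as some fiber $\mathcal{F}_t$; this is the meaning of boundedness for a family of objects in $\Db(X)$. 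Third, I would argue that this bounding family furnishes a smooth (or at least finite-type, surjective) atlas $T\to\MMM_{\sigma,X}(v)$, whence the stack is quasi-compact. Combining quasi-compactness with the locally-finite-type structure over the Noetherian base $\C$ yields finite type.

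The step I expect to be the main obstacle is the third one: producing from mere boundedness an honest finite-type atlas for the stack. Boundedness as usually formulated guarantees that the objects live in a bounded family, but to get an atlas one must know that the total space of all $\sigma$-semistable objects with class $v$ — together with their automorphisms and the gluing data recorded by the stack — is covered by a finite-type scheme in a way compatible with the $\mathrm{D}_{S\text{-}\mathrm{perf}}$-valued functor defining $\MMM_X$. In practice this is precisely the content of \cite{Tod08}, so the honest work is to match the boundedness hypothesis here with the boundedness notion used by Toda and then quote his construction; the conceptual difficulty is ensuring that the bounding scheme's universal family satisfies the $S$-perfectness and negative-$\Ext$ vanishing conditions built into $\MMM_X$, so that the map it induces is a morphism of stacks rather than merely a set-theoretic parametrization. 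Once this compatibility is in hand, quasi-compactness and hence finiteness of type follow formally, and no further delicate estimate is needed.
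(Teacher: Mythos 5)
Your proposal is correct and takes essentially the same route as the source: the paper gives no proof of this lemma at all, quoting it verbatim from Toda \cite{Tod08}, and Toda's argument is exactly your outline — boundedness yields a finite-type scheme $T$ with a family hitting every $\sigma$-semistable object of class $v$, openness cuts out the locus of $T$ whose fibers land in $\MMM_{\sigma,X}(v)$ and turns the family into a surjection onto that substack, and a surjection from a quasi-compact scheme onto a locally-finite-type Artin stack forces quasi-compactness, hence finite type. The compatibility issue you single out as the crux (that the bounding family be $S$-perfect with fiberwise vanishing of negative self-$\Ext$'s, so that it induces a genuine morphism to $\MMM_X$) is resolved there by semicontinuity of $\dim\Ext^i$ in the fibers, which makes that locus open in $T$, so your identification of the honest work is accurate.
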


This is the essential ingredient we need to prove the main theorem of this section:

\begin{Thm}\label{main theorem} Let $Y$ be an Enriques surface.  For any $v\in \Hal(Y,\Z)$ and $\sigma\in\Stab^{\dagger}(Y)$, $\MMM_{\sigma,Y}(v)$ is an Artin stack of finite type over $\C$.
\end{Thm}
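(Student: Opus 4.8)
The plan is to verify the two hypotheses of Toda's Lemma \ref{main lemma} for $X=Y$, namely that $M_{\sigma,Y}(v)$ is bounded and that $\MMM_{\sigma,Y}(v)$ is an open substack of $\MMM_Y$; granting these, Lemma \ref{main lemma} immediately yields that $\MMM_{\sigma,Y}(v)$ is an Artin stack of finite type over $\C$, since Lieblich's theorem already provides $\MMM_Y$ as an Artin stack locally of finite type. Both hypotheses are known on the K3 cover $\Y$ — this is exactly the content of Toda's work \cite{Tod08}, which verifies these same two conditions for any $\sigma'\in\Stab^{\dagger}(\Y)$ — so the strategy is to transfer them across the covering $\pi\colon\Y\to Y$ via the induced-stability formalism of \cite{MMS} recalled in Section \ref{sec:reviewK3}. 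By Remark \ref{phi one} we may assume $\phi=1$, $Z(v)=-1$, and $\sigma$ algebraic. Since $\sigma\in\Stab^{\dagger}(Y)\subset\Sigma(Y)=(\pi^*)^{-1}(\Gamma_{\Y}\cap\Stab^{\dagger}(\Y))$, fix a lift $\sigma'\in\Gamma_{\Y}\cap\Stab^{\dagger}(\Y)$ with $\sigma=(\pi^*)^{-1}(\sigma')$. By the very definition of the induced heart, $\mathcal{P}_\sigma(\phi)=\{E:\pi^*E\in\mathcal{P}_{\sigma'}(\phi)\}$, so $E\in\Db(Y)$ is $\sigma$-semistable of phase $\phi$ if and only if $\pi^*E$ is $\sigma'$-semistable of phase $\phi$. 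As $\pi^*\colon\Hal(Y,\Z)\into\Hal(\Y,\Z)$ is injective and $v(\pi^*E)=\pi^*v(E)$, this gives a clean pointwise dictionary and, in particular, an inclusion $\pi^*(M_{\sigma,Y}(v))\subseteq M_{\sigma',\Y}(\pi^*v)$.

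For boundedness I would argue that boundedness descends along the finite morphism $\pi$. The target $M_{\sigma',\Y}(\pi^*v)$ is bounded on the K3 side by \cite{Tod08}, hence so is its subset $\pi^*(M_{\sigma,Y}(v))$; pushing forward along the finite (in particular proper) morphism $\pi$ then shows that $\{\pi_*\pi^*E:E\in M_{\sigma,Y}(v)\}$ is bounded on $Y$. Finally, since $\pi$ is the K3 double cover one has $\pi_*\OO_{\Y}\cong\OO_Y\oplus\omega_Y$, so the projection formula gives $\pi_*\pi^*E\cong E\oplus(E\otimes\omega_Y)$, exhibiting $E$ as a direct summand of a member of a bounded family. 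As direct summands of a bounded family again form a bounded family (pass to cohomology sheaves: $\HH^k(E)$ is a summand of $\HH^k(\pi_*\pi^*E)$, and these lie in bounded families with uniformly bounded amplitude), we conclude that $M_{\sigma,Y}(v)$ is bounded.

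For openness, let $\mathcal{E}\in\MMM_Y(S)$ be a family over a scheme $S$, i.e.\ an $S$-perfect complex on $S\times Y$ with $\Ext^{<0}(\mathcal{E}_s,\mathcal{E}_s)=0$ for all $s$, and set $\mathcal{F}:=(\id_S\times\pi)^*\mathcal{E}$, so that $\mathcal{F}_s=\pi^*\mathcal{E}_s$. Let $S^\circ\subseteq S$ be the locus where $\Ext^{<0}(\mathcal{F}_s,\mathcal{F}_s)=0$; by semicontinuity of $\Ext$-groups in families of perfect complexes (equivalently, because $\MMM_{\Y}$ is an open substack of the stack of all perfect complexes) $S^\circ$ is open and $\mathcal{F}|_{S^\circ}\in\MMM_{\Y}(S^\circ)$. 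The $\sigma$-semistable-of-class-$v$ locus $U$ of $\mathcal{E}$ is contained in $S^\circ$, because a $\sigma$-semistable $\mathcal{E}_s$ has $\pi^*\mathcal{E}_s\in\mathcal{P}_{\sigma'}(\phi)$ and hence $\Ext^{<0}(\pi^*\mathcal{E}_s,\pi^*\mathcal{E}_s)=0$; and by the dictionary above, together with injectivity of $\pi^*$ on Mukai vectors, $U$ coincides inside $S^\circ$ with the $\sigma'$-semistable-of-class-$\pi^*v$ locus of $\mathcal{F}|_{S^\circ}$. The latter is open in $S^\circ$ by openness of Bridgeland semistability on the K3 surface $\Y$ \cite{Tod08}, hence open in $S$. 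Therefore $\MMM_{\sigma,Y}(v)\subset\MMM_Y$ is open, and Lemma \ref{main lemma} finishes the proof.

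The main obstacle — and the reason the whole argument routes through $\Y$ rather than working directly on $Y$ — is that the boundedness and openness inputs for Bridgeland-semistable objects are only established on the K3 side, so the real work lies in making the transfer precise. The two delicate points are the descent of boundedness through $\pi$, where one must control direct summands using $\pi_*\pi^*E\cong E\oplus(E\otimes\omega_Y)$, and the fact that $\pi^*\mathcal{E}$ need \emph{not} define an object of $\MMM_{\Y}(S)$ over all of $S$ — since $\Ext^{<0}_Y(\mathcal{E}_s,\mathcal{E}_s\otimes\omega_Y)$ can be nonzero for non-semistable fibers — which forces the restriction to the open locus $S^\circ$ before the K3 openness statement can be invoked. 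Both are handled by the observations above, so no input beyond \cite{MMS} and \cite{Tod08} is required.
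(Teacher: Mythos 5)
Your proposal is correct, and while its openness half is essentially the paper's own argument (Proposition \ref{open}) --- pull the family back along $1\times\pi$ and invoke Toda's openness on $\Y$ through the induced-stability dictionary --- your boundedness argument takes a genuinely different and more economical route. The paper first analyzes how stability interacts with $\pi^*$ (Lemmas \ref{2 to 1} and \ref{exclude}): a $\sigma$-stable $E$ either pulls back to a $\sigma'$-stable object, or satisfies $E\cong E\otimes\omega_Y$ and $\pi^*E\cong F\oplus\iota^*F$; it then builds finite-type families on $Y$ accordingly, by descending $\iota$-invariant families from $\Y$ via \cite[Proposition 2.5]{BM} in the first case and pushing forward along $\pi$ in the second (Proposition \ref{stable bounded}), and finally treats arbitrary semistable objects by induction on the number of stable factors using Lemma \ref{finite m vectors} and the extension Lemma \ref{bounded ext}. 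You bypass the stable/semistable dichotomy entirely: every semistable $E$ is a direct summand of $\pi_*\pi^*E\cong E\oplus(E\otimes\omega_Y)$, the pullbacks $\pi^*E$ lie in the bounded set $M_{\sigma',\Y}(\pi^*v)$, pushforward along the finite flat $\pi$ preserves boundedness, and direct summands of a bounded family are again bounded. What your route buys is brevity: no case analysis, no induction on stable factors, and no descent of families --- the latter being a delicate point in the paper, since the fiberwise isomorphisms $\iota^*\FF_q\cong\FF_q$ on the parameter scheme $T$ do not immediately glue to the equivariant structure needed to apply \cite{BM} in families. What the paper's route buys is that Lemmas \ref{2 to 1} and \ref{exclude} are needed anyway, as they underpin the later results (the finite morphism $\Phi$, projectivity, the singularity analysis), so the structural detour costs nothing in context. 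Two points for you to pin down: (i) your summand lemma is not in the paper and does require proof, but your sketch is sound --- a direct summand of an $m$-regular sheaf is $m$-regular and has one of finitely many Hilbert polynomials, so summands of a bounded family of sheaves are bounded by Kleiman's criterion, and then bounded cohomology sheaves together with uniformly bounded amplitude yield boundedness of the complexes by induction on amplitude via truncation triangles and Lemma \ref{bounded ext}; (ii) your caution in the openness step is warranted and is in fact a small correction to the paper, whose claim that ``of course'' $(1\times\pi)^*\EE\in\MMM_{\Y}(S)$ is not literally true, since $\Ext^{<0}(\EE_s,\EE_s\otimes\omega_Y)$ can be nonzero at non-semistable fibers (e.g.\ $\EE_s=\OO_Y\oplus\omega_Y[1]$); restricting to the open locus where the negative self-Exts of the pullback vanish, which contains the semistable locus, is exactly the needed fix.
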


We will prove the theorem using the results of \cite[Section 4]{Tod08} for the K3 surface $\Y$.  For the rest of this section, for any $\sigma\in\Stab^{\dagger}(Y)$, we will denote by $\sigma'\in\Gamma_{\Y}\cap\Stab^{\dagger}(\Y)$ a stability condition such that $(\pi^*)^{-1}(\sigma')=\sigma$.  Now we can easily prove the openness of $\sigma$-stability on $Y$:

\begin{Prop}\label{open} For any $v\in\Hal(Y,\Z)$ and $\sigma\in\Stab^{\dagger}(Y)$, $\MMM_{\sigma,Y}(v)$ is an open substack of $\MMM_Y$.
\end{Prop}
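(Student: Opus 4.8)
The plan is to reduce the openness of $\sigma$-stability on $Y$ to the corresponding openness statement on the K3 cover $\Y$, using the pull-back functor $\pi^*=\For$ and the compatibility between induced stability conditions established in Theorem \ref{induced}. First I would recall that for the K3 surface $\Y$, the substack $\MMM_{\sigma',\Y}(\v)\subset\MMM_{\Y}$ is already known to be open; this is part of the machinery of \cite[Section 4]{Tod08} that underlies the results we are invoking, so I will take it as given for the reference stability condition $\sigma'\in\Gamma_{\Y}\cap\Stab^{\dagger}(\Y)$ with $(\pi^*)^{-1}(\sigma')=\sigma$.

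Next, given an $S$-flat family $\EE\in\mathrm{D}_{S\text{-}\mathrm{perf}}(S\times Y)$ defining a morphism $S\to\MMM_Y$, I would form its pull-back $(\pi\times\id_S)^*\EE$, which is an $S$-perfect complex on $S\times\Y$ and hence defines a morphism $S\to\MMM_{\Y}$. The key input is the fibrewise statement contained in the proof of \cite[Lemma 2.8]{MMS} and used already in Lemma \ref{bounded mass}: for an object $E\in\Db(Y)$, the Harder–Narasimhan (and Jordan–Hölder) data of $\pi^*E$ with respect to $\sigma'$ is the $\pi^*$-image of that of $E$ with respect to $\sigma$. In particular $E$ is $\sigma$-semistable of Mukai vector $v$ and phase $\phi$ if and only if $\pi^*E$ is $\sigma'$-semistable of Mukai vector $\v=\pi^*v$ and the same phase. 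Applying this fibrewise over $s\in S$ shows that the locus where $\EE_s$ lands in $M_{\sigma,Y}(v)$ is exactly the preimage, under the induced map $S\to\MMM_{\Y}$, of the locus where $((\pi\times\id_S)^*\EE)_s$ lands in $M_{\sigma',\Y}(\v)$.

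I would then conclude by a diagram chase: the square relating the classifying maps $S\to\MMM_Y$ and $S\to\MMM_{\Y}$ (given by $\For=\pi^*$ on families) is compatible with the substacks $\MMM_{\sigma,Y}(v)$ and $\MMM_{\sigma',\Y}(\v)$ by the fibrewise equivalence just described. Since $\MMM_{\sigma',\Y}(\v)\subset\MMM_{\Y}$ is open and openness of a substack is detected by pulling back to arbitrary test schemes $S$, the preimage $\MMM_{\sigma,Y}(v)\times_{\MMM_Y}S$ is open in $S$ for every $S$; hence $\MMM_{\sigma,Y}(v)$ is an open substack of $\MMM_Y$.

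The main obstacle I anticipate is the careful bookkeeping of the two positivity/phase conventions under $\pi^*$: one must verify that the \emph{same} phase $\phi$ is preserved (not merely semistability up to a shift) and that $\pi^*$ genuinely sends a destabilizing subobject of $E$ in $\AA$ to a destabilizing subobject of $\pi^*E$ and conversely, so that the if-and-only-if at the level of (semi)stability is exact. This rests entirely on the HN-compatibility in \cite[Lemma 2.8]{MMS} together with the identity $Z_{\sigma'}\circ\pi^*=Z_\sigma$ coming from the commuting diagram of Theorem \ref{induced} and the relation $(\v,\pi^*w)=2(v,w)$; once these are in place the openness is formal, but getting the equivalence to be biconditional fibrewise is the step that requires genuine care rather than routine manipulation.
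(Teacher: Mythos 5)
Your proposal is correct and takes essentially the same approach as the paper: pull the family back to the K3 cover $\Y$, use that by the very definition of the induced stability condition the fibrewise $\sigma$-semistable locus coincides with the $\sigma'$-semistable locus of the pulled-back family, and invoke Toda's openness results on $\Y$. The only difference is packaging --- the paper routes the argument through the openness criterion of \cite[Lemma 3.6]{Tod08} for families over smooth quasi-projective test varieties and then quotes \cite[Section 4]{Tod08}, rather than pulling back the already-open substack $\MMM_{\sigma',\Y}(\v)\subset\MMM_{\Y}$ along the classifying morphism as you do --- and the biconditional you flag as the delicate step is in fact immediate from the definition $\mathcal P_{(\pi^*)^{-1}(\sigma')}(\phi)=\{E\in\Db(Y):\pi^*E\in\mathcal P_{\sigma'}(\phi)\}$, so no appeal to the HN-compatibility of \cite[Lemma 2.8]{MMS} is needed.
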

\begin{proof} By \cite[Lemma 3.6]{Tod08} this reduces to proving that for any smooth quasi-projective variety $S$ and $\mathcal E\in\MMM_Y(S)$ such that the locus $$S^{\circ}=\{s\in S|\mathcal E_s \in M_{\sigma,Y}(v)\},$$ is not empty, there is an open subset $U$ of $S$ contained in $S^{\circ}$.  Of course,  $(1\times \pi)^*\mathcal E\in\MMM_{\Y}(S)$, and by definition of induced stability conditions, the corresponding set $S^{\circ}$ for $(1\times \pi)^*\mathcal E$ and $M_{\sigma',\Y}(\pi^*v)$ remains the same.  By \cite[Section 4]{Tod08} there is an open set $U$ of $S$ contained in $S^{\circ}$ so the result follows.
\end{proof}

According to Lemma \ref{main lemma}, we only have to prove the boundedness of $M_{\sigma,Y}(v)$.  Let us first recall the following fundamental fact from \cite[Proposition 2.5]{BM}:

\begin{Lem}  Let $Y$ be an Enriques surface and $\Y$ its K3 universal cover.  
\begin{enumerate}
\item Let $F\in\Db(\Y)$.  Then there is an object $E\in \Db(Y)$ such that $\pi^*E\cong F$ if and only if $\iota^*F\cong F$.
\item Let $E\in \Db(Y)$.  Then there is an object $F\in \Db(\Y)$ such that $\pi_*F\cong E$ if and only if $E\otimes \omega_Y\cong E$.
\end{enumerate}
\end{Lem}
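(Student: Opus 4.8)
The plan is to handle the two ``only if'' implications by direct functorial manipulation, and the two converses by a descent argument organized through the equivalence $\Db(Y)\cong\Db_G(\Y)$ recalled above, reducing to indecomposable objects via Krull--Schmidt. First I would assemble the elementary input. Since $\iota$ is a deck transformation, $\pi\circ\iota=\pi$, so $\iota^*\pi^*\cong\pi^*$; this gives the ``only if'' direction of (1) at once, as $F\cong\pi^*E$ forces $\iota^*F\cong\iota^*\pi^*E\cong\pi^*E\cong F$. Because $\pi$ is étale, $\pi^*\omega_Y\cong\OO_{\Y}$, and the projection formula then yields the ``only if'' direction of (2): $\pi_*F\otimes\omega_Y\cong\pi_*(F\otimes\pi^*\omega_Y)\cong\pi_*F$. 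I would also record the two ``norm'' identities driving the converses: flat base change along the Cartesian square presenting $\Y\times_Y\Y\cong\Y\sqcup\Y$ (the graphs of $\id$ and $\iota$) gives $\pi^*\pi_*F\cong F\oplus\iota^*F$, while $\pi_*\OO_{\Y}\cong\OO_Y\oplus\omega_Y$ with the projection formula gives $\pi_*\pi^*E\cong E\oplus(E\otimes\omega_Y)$. Under $\pi^*=\For$ and $\pi_*=\Inf$, the assertion is exactly that $\im(\For)$ consists of the $F$ with $\iota^*F\cong F$ and $\im(\Inf)$ of the $E$ with $E\otimes\omega_Y\cong E$.

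For the converse in (1), given an isomorphism $\phi\colon\iota^*F\xrightarrow{\sim}F$ I would try to upgrade it to a $G$-linearization, i.e.\ to arrange the cocycle relation $\phi\circ\iota^*\phi=\id_F$; the obstruction is the automorphism $\alpha:=\phi\circ\iota^*\phi\in\Aut(F)$. Here I would invoke Krull--Schmidt in $\Db(\Y)$ (a $\C$-linear, $\Hom$-finite, idempotent-complete triangulated category, so indecomposables have local endomorphism rings): write $F$ as a finite sum of indecomposables. The involution $\iota^*$ permutes their isomorphism classes; a two-element orbit $\{F',\iota^*F'\}$ contributes a block $F'\oplus\iota^*F'\cong\pi^*\pi_*F'$, which already lies in $\im(\pi^*)$, whereas a fixed indecomposable $F'$ (so $\iota^*F'\cong F'$) must be descended individually. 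For such an $F'$ the ring $\End(F')$ is local with residue field $\C$, and rescaling $\phi$ by $u\in\Aut(F')$ changes $\alpha$ by the twisted norm $u\,\tau(u)$, where $\tau(-):=\phi\circ\iota^*(-)\circ\phi^{-1}$; solving $u\,\tau(u)=\alpha^{-1}$ modulo the Jacobson radical is a scalar square root over $\C$, which one lifts through the nilpotent radical by successive approximation. This produces a linearization, hence an $E$ with $\pi^*E\cong F'$, and since $\im(\pi^*)$ is closed under direct sums we conclude $F\in\im(\pi^*)$.

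The converse in (2) is formally dual. Writing $E\cong E\otimes\omega_Y$ in the equivariant picture as an isomorphism $\psi$ from the linearized object to its twist by the sign character $\mathrm{sgn}\colon G\to\{\pm1\}$ (which corresponds to $\otimes\,\omega_Y$, since $\omega_Y\leftrightarrow(\OO_{\Y},\mathrm{sgn})$), I would split $E$ into indecomposables in $\Db(Y)$; now $\otimes\,\omega_Y$ permutes them, a two-element orbit $\{E',E'\otimes\omega_Y\}$ forms a block $E'\oplus(E'\otimes\omega_Y)\cong\pi_*\pi^*E'\in\im(\pi_*)$, and a fixed indecomposable $E'$ is handled by $\psi$. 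For the latter, equivariance forces $\psi$ to anticommute with the linearization isomorphism $\lambda_\iota$, so that $\psi^2$ is an equivariant endomorphism of $E'$; normalizing $\psi$ in the local ring $\End(E')$ so that $\psi^2=\id$, the idempotents $\tfrac12(\id\pm\psi)$ split the underlying object $\pi^*E'$ into summands interchanged by $\iota^*$ via $\lambda_\iota$. Setting $F'$ to be one of them exhibits $E'\cong\Inf(F')=\pi_*F'$.

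I expect the fixed-indecomposable case to be the only genuine obstacle. For simple $F'$ (resp.\ $E'$) the construction collapses to an honest scalar square root over $\C$ and there is nothing to do, but in general one must solve the twisted norm/square-root equation inside a possibly noncommutative local $\C$-algebra, and the cleanest route is the reduction modulo the radical plus lifting sketched above. Everything else is formal once the base-change identities $\pi^*\pi_*F\cong F\oplus\iota^*F$ and $\pi_*\pi^*E\cong E\oplus(E\otimes\omega_Y)$, together with Krull--Schmidt and the closure of $\im(\pi^*)$ and $\im(\pi_*)$ under direct sums, are in place.
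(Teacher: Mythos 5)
Your proposal cannot be compared with an in-paper argument, because the paper gives none: the lemma is quoted verbatim from Bridgeland--Maciocia \cite[Proposition 2.5]{BM}. Judged on its own terms, most of your blind proof is sound. The two ``only if'' directions, the base-change identities $\pi^*\pi_*F\cong F\oplus\iota^*F$ and $\pi_*\pi^*E\cong E\oplus(E\otimes\omega_Y)$, the Krull--Schmidt reduction to indecomposables (legitimate, since $\Db(\Y)$ and $\Db(Y)$ are $\Hom$-finite, idempotent-complete $\C$-linear categories), and the twisted-norm equation $u\,\tau(u)=\alpha^{-1}$ solved in the local ring $\End(F')$ by reduction modulo the nilpotent radical and successive approximation are all correct; the same goes for your treatment of the converse in (2), where you normalize $\psi$ so that $\psi^2=\id$ and split $\pi^*E'$ by the idempotents $\tfrac12(\id\pm\psi)$ --- that half is essentially a complete proof, since every step there manipulates honest objects and morphisms of $\Db(Y)$ and $\Db(\Y)$.

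The genuine gap is in the converse of (1), at the words ``this produces a linearization, hence an $E$ with $\pi^*E\cong F'$.'' That ``hence'' is exactly the non-formal point. A linearization is an isomorphism $\phi\colon\iota^*F'\to F'$ satisfying the cocycle identity \emph{in the derived category} $\Db(\Y)$, whereas membership in the essential image of $\pi^*=\For$ means, under the equivalence $\Db(Y)\cong\Db_G(\Y)=\Db(\Coh_G(\Y))$, that $F'$ underlies a genuine complex of equivariant sheaves. These are not interchangeable by definition: the derived category is not a stack, descent data given only up to homotopy need not glue, and the comparison between ``objects of $\Db(\Y)$ with a $G$-linearization'' and ``objects of $\Db_G(\Y)$'' is a theorem (equivariant descent in the sense of Elagin; for a single sheaf it is classical \'{e}tale descent), not a tautology. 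This is the actual content of the Bridgeland--Maciocia proposition you are trying to reprove. Fortunately the gap can be closed with the very device you already use in part (2), just run in the opposite direction: the isomorphism $\pi_*\iota^*\cong\pi_*$ turns $\phi$ into an endomorphism $\widetilde\phi$ of $\pi_*F'$, and the cocycle condition gives $\widetilde\phi^{\,2}=\id$; hence $e=\tfrac12(\id+\widetilde\phi)$ is an idempotent, which splits because $\Db(Y)$ is idempotent complete; writing $\pi_*F'\cong E\oplus E''$ for the corresponding summands and pulling back, $\pi^*e$ becomes the idempotent of $\End(F'\oplus\iota^*F')$ whose image is the graph of $\phi^{-1}$, whence $\pi^*E\cong F'$. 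Without this step (or an explicit appeal to equivariant-descent results), the proof of (1) is incomplete, even though the linearization you constructed is the right intermediate object.
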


To compare stable objects on $Y$ and $\tilde{Y}$ we first make the following observation:

\begin{Lem}\label{2 to 1} If $E,F\in M_{\sigma,Y}(v)$ are $\sigma$-stable and $\pi^*E\cong\pi^*F$, then $E\cong F$ or $E\cong F\otimes \omega_Y$.
\end{Lem}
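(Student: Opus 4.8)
The plan is to exploit the relationship between the pullback $\pi^*$ and the pushforward $\pi_*=\Inf$, together with the fact that $\pi$ is an \'etale double cover whose Galois group is $G=\langle\iota^*\rangle$. The starting observation is the standard adjunction/projection identity $\pi^*\pi_*F\cong F\oplus\iota^*F$ for $F\in\Db(\Y)$, and dually $\pi_*\pi^*E\cong E\oplus(E\otimes\omega_Y)$ for $E\in\Db(Y)$, which follows since $\pi$ is the quotient by the free $\Z/2$-action $\iota$ and $\omega_Y$ is the $2$-torsion line bundle corresponding to this cover. The hypothesis $\pi^*E\cong\pi^*F$ will be converted, via these identities, into a statement about $\Hom$-spaces on $Y$ after applying $\pi_*$.

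First I would apply $\pi_*$ to the given isomorphism $\pi^*E\cong\pi^*F$ to obtain
\[
E\oplus(E\otimes\omega_Y)\;\cong\;F\oplus(F\otimes\omega_Y)
\]
in $\Db(Y)$. Next I would compute the relevant $\Hom$-space using adjunction: by the adjunction between $\pi^*$ and $\pi_*$ (both functors being biadjoint since $\pi$ is finite \'etale), one has
\[
\Hom_Y(E,F)\oplus\Hom_Y(E,F\otimes\omega_Y)\;\cong\;\Hom_{\Y}(\pi^*E,\pi^*F),
\]
and the right-hand side is nonzero because $\pi^*E\cong\pi^*F$ is an isomorphism (so the identity gives a nonzero element). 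Therefore at least one of $\Hom_Y(E,F)$ or $\Hom_Y(E,F\otimes\omega_Y)$ is nonzero.

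Now I would invoke $\sigma$-stability. Both $E$ and $F$ are $\sigma$-stable of the same phase $\phi$ (determined by $v$ and $\sigma$), and $F\otimes\omega_Y$ is likewise $\sigma$-stable of phase $\phi$: indeed $v(F\otimes\omega_Y)=v(F)=v$ since $\omega_Y$ is numerically trivial, and tensoring by $\omega_Y$ is an autoequivalence that, being induced by a numerically trivial line bundle, preserves $\Stab^\dagger(Y)$ and fixes $\sigma$ (this should be checked, but it is consistent with the action of $\Aut^0(\Db(Y))$ described before the Proposition on $\eta$). A nonzero morphism between two $\sigma$-stable objects of the same phase must be an isomorphism, by the standard fact that $\PP(\phi)$ is abelian with the $\sigma$-stable objects as its simple objects (so any nonzero map between simple objects of equal phase is an isomorphism). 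Hence either $E\cong F$ or $E\cong F\otimes\omega_Y$, as claimed.

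The main obstacle I anticipate is the verification that $F\otimes\omega_Y$ is genuinely $\sigma$-stable of the same phase as $F$; this requires knowing that $(-)\otimes\omega_Y$ acts trivially on $\Stab^\dagger(Y)$ (or at least fixes the given $\sigma$), which should follow from the description of $\Aut^0(\Db(Y))/\langle(-)\otimes\omega_Y\rangle$ as the deck group in the Proposition preceding Definition~\ref{def:generic}, but needs to be spelled out. A secondary technical point is justifying the two decomposition identities $\pi^*\pi_*\cong\id\oplus\iota^*$ and $\pi_*\pi^*\cong\id\oplus(-\otimes\omega_Y)$ in the derived category; these are standard for \'etale $\Z/2$-covers but I would cite the relevant facts from \cite{MMS} or \cite{BM} rather than reprove them. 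Everything else is a routine application of stability and adjunction.
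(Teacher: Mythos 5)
Your proposal is correct and follows essentially the same route as the paper: push the isomorphism $\pi^*E\cong\pi^*F$ forward to get $E\oplus(E\otimes\omega_Y)\cong F\oplus(F\otimes\omega_Y)$, deduce that $\Hom(E,F)$ or $\Hom(E,F\otimes\omega_Y)$ is nonzero, and conclude by stability of objects of the same phase. The one point you flag as a potential obstacle—that $F\otimes\omega_Y$ is $\sigma$-stable of the same phase as $F$—is immediate from the definition of the induced stability condition $\sigma=(\pi^*)^{-1}(\sigma')$, since $\pi^*(F\otimes\omega_Y)\cong\pi^*F$ forces $Z_\sigma(F\otimes\omega_Y)=Z_\sigma(F)$ and membership in the same $\PP_\sigma(\phi)$, and $-\otimes\omega_Y$ therefore fixes $\sigma$, carrying stable objects to stable objects.
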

\begin{proof}  Indeed, pushing forward implies that $$E\oplus (E\otimes \omega_Y)\cong\pi_*\pi^*E\cong\pi_*\pi^*F\cong F\oplus (F\otimes \omega_Y).$$  Taking $\text{Hom}$'s gives that either $$\text{Hom}(E,F)\neq 0\text{ or }\text{Hom}(E,F\otimes\omega_Y)\neq 0.$$  But since $E$ and $F$ ($F\otimes \omega_Y$ respectively) are both $\sigma$-stable of the same phase, any non-zero homomorphism must be an isomorphism.  
\end{proof}

We will often need to exclude one of these possibilities: 
\begin{Lem}\label{exclude} If $E$ is $\sigma$-stable of phase $\phi$, then $\pi^*E$ is $\sigma'$-stable of the same phase, unless $E\cong E\otimes\omega_Y$, in which case $\pi^*E\cong F\oplus\iota^*F$, with $F\ncong \iota^*F$ $\sigma'$-stable objects of phase $\phi$, and thus not stable.  Moreover, in this case $E\cong \pi_*(F)\cong \pi_*(\iota^*F)$.
\end{Lem}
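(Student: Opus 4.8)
The plan is to reduce everything to the fact, immediate from the definition of the induced stability condition $\sigma=(\pi^*)^{-1}(\sigma')$, that $E\in\PP_\sigma(\phi)$ if and only if $\pi^*E\in\PP_{\sigma'}(\phi)$; thus $\sigma$-stability of $E$ gives for free that $\pi^*E$ is $\sigma'$-\emph{semi}stable of phase $\phi$, and the whole content is to decide when it fails to be stable. I would first record three structural inputs. The projection formula and the \'etale push--pull give $\pi_*\pi^*E\cong E\oplus(E\otimes\omega_Y)$ and $\pi^*\pi_*F\cong F\oplus\iota^*F$, together with the biadjunction $\pi^*\dashv\pi_*\dashv\pi^*$ (valid since $\pi$ is \'etale, so $\pi^!=\pi^*$). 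Using the $\iota^*$-invariance of $\sigma'$, the functor $\iota^*$ preserves $\PP_{\sigma'}(\phi)$, so for any $\sigma'$-semistable $G$ of phase $\phi$ the object $\pi^*\pi_*G=G\oplus\iota^*G$ lies in $\PP_{\sigma'}(\phi)$; hence $\pi_*G\in\PP_\sigma(\phi)$, i.e.\ $\pi_*$ carries $\sigma'$-semistables of phase $\phi$ to $\sigma$-semistables of phase $\phi$. Finally, since a triangle with all three vertices in a slicing heart is a short exact sequence, $\pi^*\colon\PP_\sigma(\phi)\to\PP_{\sigma'}(\phi)$ is exact; being also faithful (it is a summand $\id$ of $\pi_*\pi^*$), it reflects nonvanishing and properness of subobjects, whence $\mathrm{length}_{\sigma}(A)\le\mathrm{length}_{\sigma'}(\pi^*A)$ for every $A\in\PP_\sigma(\phi)$.

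With these in hand I would argue by a length dichotomy. Assume $\pi^*E$ is \emph{not} $\sigma'$-stable. As $\PP_{\sigma'}(\phi)$ has finite length, choose a $\sigma'$-stable subobject $F\hookrightarrow\pi^*E$; by the adjunction $\Hom(\pi_*F,E)\cong\Hom(F,\pi^*E)\ne0$, so there is a nonzero map $\pi_*F\to E$, which is surjective in $\PP_\sigma(\phi)$ because $E$ is simple. Now $\pi_*F\in\PP_\sigma(\phi)$ and $\mathrm{length}_\sigma(\pi_*F)\le\mathrm{length}_{\sigma'}(\pi^*\pi_*F)=\mathrm{length}_{\sigma'}(F\oplus\iota^*F)=2$. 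If this length were $2$, the kernel $K$ of $\pi_*F\twoheadrightarrow E$ would be $\sigma$-stable, and applying the exact functor $\pi^*$ to $0\to K\to\pi_*F\to E\to0$ would exhibit $\pi^*E$ as the quotient of $F\oplus\iota^*F$ (length $2$) by the nonzero $\pi^*K$, forcing $\mathrm{length}_{\sigma'}(\pi^*E)\le1$, i.e.\ $\pi^*E$ $\sigma'$-stable --- contradicting the assumption. Hence $\mathrm{length}_\sigma(\pi_*F)=1$, the surjection is an isomorphism $\pi_*F\cong E$, and therefore $\pi^*E\cong\pi^*\pi_*F\cong F\oplus\iota^*F$; moreover $E\cong\pi_*F$ forces $E\cong E\otimes\omega_Y$ by \cite[Proposition~2.5(2)]{BM}.

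It then remains to establish $F\not\cong\iota^*F$ and to run the converse. In the situation above, were $F\cong\iota^*F$ we would have $\pi^*E\cong F^{\oplus2}$, so $\Hom(\pi^*E,\pi^*E)\cong M_2(\End F)$ is $4$-dimensional; but via $\pi^*\dashv\pi_*$ it equals $\Hom(E,E\oplus E\otimes\omega_Y)=\End E\oplus\Hom(E,E\otimes\omega_Y)$, which is $2$-dimensional since $E\cong E\otimes\omega_Y$ and $\End E=\C$ --- a contradiction; thus $F\not\cong\iota^*F$, with both $F$ and $\iota^*F$ $\sigma'$-stable of phase $\phi$ (the latter by $\iota^*$-invariance of $\sigma'$) and $\pi_*\iota^*F\cong\pi_*F\cong E$ since $\pi\circ\iota=\pi$. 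For the converse, if $E\cong E\otimes\omega_Y$ then \cite[Proposition~2.5(2)]{BM} provides $F_0$ with $\pi_*F_0\cong E$, so $\pi^*E\cong F_0\oplus\iota^*F_0$ is a sum of two nonzero objects, hence not simple and therefore not $\sigma'$-stable; the full structural description then follows from the dichotomy just proved. Contrapositively, when $E\not\cong E\otimes\omega_Y$ the object $\pi^*E$ must be $\sigma'$-stable.

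I expect the main obstacle to be precisely the length-two case. Because $\Hom(\pi^*E,\pi^*E)=\C$ does \emph{not} by itself imply simplicity of $\pi^*E$ in the length category $\PP_{\sigma'}(\phi)$, one cannot deduce stability from an endomorphism count alone; the crux is instead the bound $\mathrm{length}_\sigma(\pi_*F)\le2$ together with the pullback length count that excludes a rank-one kernel. The two technical points underpinning this step --- that $\pi^*$ is genuinely exact on the slicing hearts, and that the \'etale biadjunction really produces the surjection $\pi_*F\twoheadrightarrow E$ --- are where I would be most careful.
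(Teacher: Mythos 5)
Your proof is correct, and it takes a genuinely different route from the paper's. The paper argues upstairs on $\tilde{Y}$: given a $\sigma'$-stable subobject $F\subset\pi^*E$ of phase $\phi$, it studies the subobjects $F\cap\iota^*F$ and $F+\iota^*F$, forces all their phases to equal $\phi$ by the see-saw principle, deduces $F\cap\iota^*F=0$ from stability of $F$ (after disposing of the case $F\cong\iota^*F$ by descent), and then descends the $\iota^*$-invariant subobject $F\oplus\iota^*F\cong F+\iota^*F\subset\pi^*E$ to a subobject of $E$, which stability of $E$ forces to be all of $E$; adjunction computations then yield $E\cong\pi_*F$ and $E\cong E\otimes\omega_Y$, and the converse is the count $\Hom(\pi^*E,\pi^*E)\cong\Hom(E,E)^{\oplus 2}$. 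You instead argue downstairs on $Y$: you push the stable subobject forward, use the \'{e}tale biadjunction $\pi_*\dashv\pi^*$ (i.e.\ $\pi^!=\pi^*$) to produce a nonzero, hence surjective, map $\pi_*F\to E$ in $\PP_{\sigma}(\phi)$, and settle everything with a Jordan-H\"older length count based on exactness and faithfulness of $\pi^*$ on the slicing hearts together with $\pi^*\pi_*F\cong F\oplus\iota^*F$. Your route buys two things: it avoids entirely the descent of the invariant subobject together with its inclusion morphism (the one delicate point in the paper's argument, since an $\iota^*$-invariant object mapping to $\pi^*E$ must be shown to come from $Y$ as a subobject, not merely as an object), and it collapses the paper's two-case analysis into a single length dichotomy, recovering $F\ncong\iota^*F$ afterwards by your endomorphism count; it also isolates the reusable fact that $\pi_*$ carries $\sigma'$-semistable objects of phase $\phi$ to $\sigma$-semistable objects of phase $\phi$. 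The only extra input is the second adjunction $\Hom(\pi_*F,E)\cong\Hom(F,\pi^*E)$, valid because $\pi$ is finite \'{e}tale, whereas the paper only ever invokes $\pi^*\dashv\pi_*$; both proofs call on \cite[Proposition 2.5]{BM} at the same spot, namely to convert $E\cong\pi_*F$ into $E\cong E\otimes\omega_Y$ and to run the converse. Your closing caution is also well placed: Schur semistable objects need not be stable (the paper itself constructs strictly semistable Schur objects in the proof of Theorem \ref{num dim}), so no endomorphism count alone could prove the forward direction, and some subobject-level argument like yours or the paper's is genuinely necessary.
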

\begin{proof} By definition $\pi^*E$ is $\sigma'$-semistable, so suppose that it is strictly semistable.  Let $F\subset \pi^*E$ be a proper nontrivial $\sigma'$-stable subobject of the same phase $\phi$.  If $F\cong \iota^*F$, then there is a proper nontrivial $\sigma$-stable object $E'\subset E$ of phase $\phi$, contradicting stability of $E$.  

Otherwise, $F\ncong \iota^*F$ and $\iota^*F\subset \pi^*E$ is also $\sigma'$-stable of phase $\phi$.  Consider the short exact sequence $$0\rightarrow F\cap \iota^*F\rightarrow F\oplus\iota^*F\rightarrow F+\iota^*F\rightarrow 0,$$ which gives $$2Z(F)=Z(F\oplus\iota^*F)=Z(F\cap\iota^*F)+Z(F+\iota^*F),$$ where we write $Z(-)=Z_{\sigma'}(-),\phi(-)=\phi_{\sigma'}(-)$ to be concise.  By the see-saw principle and semistability of $F\oplus \iota^*F$, we must have either $$\phi(F\cap \iota^*F)<\phi<\phi(F+\iota^*F),\text{ or }\phi(F\cap\iota^*F)=\phi=\phi(F+\iota^*F).$$  Since $F+\iota^*F\subset \pi^*E$, semistability implies that we must have equality everywhere.  But then $F\cap\iota^*F\subset F$ of the same phase, so either $$F\cap\iota^*F=0\text{ or }F,$$ by the stability of $F$.  We assumed $F\ncong \iota^*F$, so we must be in the first case.  Thus $F\oplus\iota^*F\cong F+\iota^*F$ is an $\iota^*$-invariant nontrivial subobject of $\pi^*E$ of phase $\phi$.  It must thus come from a nontrivial subobject $F'\subset E$ of phase $\phi$.  If $F'$ is proper, equivalently $F\oplus\iota^*F$ is proper, then this contradicts the stability of $E$.  Thus we must have $F\oplus\iota^*F\cong \pi^*E$.  Pushing forward gives that $$E\oplus E\otimes\omega_Y\cong \pi_*(F)\oplus\pi_*(\iota^*F)\cong\pi_*(F)^{\oplus 2}.$$  From this and adjunction we deduce that \begin{align*}\Hom(E,E)\oplus\Hom(E,E\otimes \omega_Y)&=\Hom(E,\pi_*(F))^{\oplus 2}=\Hom(\pi^*E,F)^{\oplus 2}\\&=\Hom(F,F)^{\oplus 2}\oplus\Hom(F,\iota^*F)^{\oplus 2}.\end{align*}  Since $F$ and $\iota^*F$ are non-isomorphic $\sigma'$-stable objects of the same phase, $\Hom(F,\iota^*F)=0$, while $\Hom(E,E)=\Hom(F,F)=\C$.  Thus $\Hom(E,E\otimes\omega_Y)=\C$ which implies $E\cong E\otimes\omega_Y$ by stability.  Similar considerations show that $E\cong \pi_*(F)$.

For the converse, we have by adjunction that \begin{align*}\text{Hom}(\pi^*E,\pi^*E)&\cong\text{Hom}(E,\pi_*\pi^*E)\\&\cong\text{Hom}(E,E\oplus(E\otimes\omega_Y))\cong\text{Hom}(E,E)^{\oplus 2}.\end{align*}  Thus $\C^{\oplus 2}=\text{Hom}(\pi^*E,\pi^*E)$ implies $\pi^*E$ cannot be stable.
\end{proof}

To deduce the boundedness of $M_{\sigma,Y}(v)$ we again would like to compare this set with the corresponding set on $\Y$.  We start with a few quick observations:

\begin{Lem}\label{m vector} If $E$ is $\sigma$-stable, then $v(E)^2\geq -1$, unless $v(E)^2=-2$ which occurs precisely when $E$ is spherical.  
\end{Lem}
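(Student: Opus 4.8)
The plan is to compute $v(E)^2=-\chi(E,E)$ directly in terms of the groups $\Ext^i(E,E)$, exploiting that $E$ is $\sigma$-stable together with Serre duality on $Y$. Since $\omega_Y$ is nontrivial but numerically trivial, the object $E\otimes\omega_Y$ has the same Mukai vector as $E$ (up to the torsion class $K_Y$) and plays the role that $E$ itself plays in the analogous K3 computation, the new feature being that $E\otimes\omega_Y$ need not be isomorphic to $E$. This dichotomy is exactly what produces the two possible bounds $-1$ and $-2$.

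First I would record the key compatibility: the autoequivalence $-\otimes\omega_Y$ of $\Db(Y)$ fixes $\sigma$. This is because $\pi^*\omega_Y\cong\OO_{\Y}$ (the covering $\pi$ is \'etale and $\Y$ is a K3 surface), so that $\pi^*(E\otimes\omega_Y)\cong\pi^*E$; by the definition of the induced stability condition $\sigma=(\pi^*)^{-1}(\sigma')$ this gives $Z_\sigma(E\otimes\omega_Y)=Z_\sigma(E)$ and the equivalence $E\otimes\omega_Y\in\PP_\sigma(\phi)\Leftrightarrow E\in\PP_\sigma(\phi)$. Hence $-\otimes\omega_Y$ preserves both the central charge and the slicing, so it carries $\sigma$-stable objects of phase $\phi$ to $\sigma$-stable objects of the same phase; in particular $E\otimes\omega_Y$ is $\sigma$-stable of phase $\phi$.

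Next I would pin down which $\Ext$-groups can be nonzero. As recalled in the previous section, $\Ext^i(E,E)=0$ for $i<0$ because $E\in\PP_\sigma(\phi)$. For $i>2$, Serre duality on the surface $Y$ gives $\Ext^i(E,E)\cong\Ext^{2-i}(E,E\otimes\omega_Y)^{\vee}$, and since both $E$ and $E\otimes\omega_Y$ lie in $\PP_\sigma(\phi)$ while $(E\otimes\omega_Y)[2-i]\in\PP_\sigma(\phi+2-i)$ with $\phi+2-i<\phi$, this group vanishes. Therefore $\chi(E,E)=\ext^0(E,E)-\ext^1(E,E)+\ext^2(E,E)$, where $\ext^0(E,E)=1$ by stability and $\ext^2(E,E)=\ext^0(E,E\otimes\omega_Y)$ by Serre duality.

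Finally I would split into the two cases dictated by the dichotomy above. If $E\ncong E\otimes\omega_Y$, then $\ext^0(E,E\otimes\omega_Y)=0$, since $E$ and $E\otimes\omega_Y$ are non-isomorphic $\sigma$-stable objects of the same phase, whence $v(E)^2=-\chi(E,E)=\ext^1(E,E)-1\geq -1$. If instead $E\cong E\otimes\omega_Y$, then $\ext^2(E,E)=\ext^0(E,E)=1$ and $v(E)^2=\ext^1(E,E)-2\geq -2$, with equality exactly when $\ext^1(E,E)=0$, i.e.\ when $\Ext^{\bullet}(E,E)=\C\oplus\C[-2]$, which is precisely the condition that $E$ be spherical. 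Combining the two cases yields $v(E)^2\geq -1$ unless $v(E)^2=-2$, the latter occurring if and only if $E$ is spherical. The one point requiring care --- and the only genuine obstacle --- is the invariance of $\sigma$ under $-\otimes\omega_Y$ and the consequent control of $\ext^2(E,E)$ in both cases; everything else is the formal Serre-duality bookkeeping familiar from the K3 setting.
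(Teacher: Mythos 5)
Your proposal is correct and follows essentially the same route as the paper's own proof: compute $v(E)^2=-\chi(E,E)$ via Serre duality, use stability to get $\hom(E,E)=1$ and $\ext^2(E,E)=\hom(E,E\otimes\omega_Y)\in\{0,1\}$, and split into the cases $E\ncong E\otimes\omega_Y$ (giving $v(E)^2\geq -1$) and $E\cong E\otimes\omega_Y$ (giving $v(E)^2\geq -2$, with equality exactly for spherical $E$). The only difference is that you spell out the implicit ingredients --- the invariance of $\sigma$ under $-\otimes\omega_Y$ coming from the induced-stability-condition structure, and the vanishing of $\Ext^i(E,E)$ outside $0\leq i\leq 2$ --- which the paper uses without comment.
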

\begin{proof} From Serre duality and the definition of the Mukai pairing $$v(E)^2=\ext^1(E,E)-\hom(E,E)-\ext^2(E,E)=\ext^1(E,E)-\hom(E,E)-\hom(E,E\otimes\omega_Y).$$  By stability $\hom(E,E)=1$ and $\hom(E,E\otimes\omega_Y)=0$ or 1.  In the first case, $$v(E)^2+1=\ext^1(E,E)\geq 0.$$  In the latter case, $$v(E)^2+2=\ext^1(E,E)\geq 0,$$ and $v(E)^2<-1$ implies $v(E)^2=-2$, so $E$ is spherical as $\ext^2(E,E)=1$ implies $E\cong E\otimes\omega_Y$.
\end{proof}

\begin{Prop}\label{stable bounded}  Denote by $M^s_{\sigma,Y}(v)\subset M_{\sigma,Y}(v)$ the subset of $\sigma$-stable objects.  Then $M^s_{\sigma,Y}(v)$ is bounded.
\end{Prop}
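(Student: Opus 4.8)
The plan is to transfer boundedness from the K3 universal cover $\Y$, where it is already available, down to $Y$ using the adjoint pair $\pi^*=\For$ and $\pi_*=\Inf$. The input on the cover is that the set $\mX$ of $\sigma'$-semistable objects of class $\v$ is bounded, which is precisely the boundedness established in \cite[Section 4]{Tod08} for the K3 surface $\Y$ together with the lifted condition $\sigma'\in\Gamma_{\Y}\cap\Stab^{\dagger}(\Y)$ satisfying $(\pi^*)^{-1}(\sigma')=\sigma$.

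First I would note that each $E\in\mYs$ pulls back to a $\sigma'$-semistable object $\pi^*E$ of Mukai vector $\pi^*v(E)=\v$: this is immediate from the definition of the induced stability condition, under which $E\in\PP_\sigma(\phi)$ forces $\pi^*E\in\PP_{\sigma'}(\phi)$. (Lemma \ref{exclude} refines this, saying $\pi^*E$ is actually $\sigma'$-stable unless $E\cong E\otimes\omega_Y$, in which case $\pi^*E\cong F\oplus\iota^*F$ with $F$ stable; but for boundedness semistability suffices.) Thus $\{\pi^*E:E\in\mYs\}\subseteq\mX$ is bounded. Since $\pi$ is finite, $\pi_*$ carries a bounded family on $\Y$ to a bounded family on $Y$, so applying it and using $\pi_*\pi^*E\cong E\oplus(E\otimes\omega_Y)$ shows that $\{E\oplus(E\otimes\omega_Y):E\in\mYs\}$ is a bounded family.

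It remains to extract $E$ from the sum, and this is where I expect the real work to be. The clean way is to split according to Lemma \ref{exclude}. In the split case $E\cong E\otimes\omega_Y$ one has $E\cong\pi_*F$ where $F$ is a $\sigma'$-stable factor of $\pi^*E$; by Lemma \ref{finite m vectors} the Mukai vector of $F$ ranges over a finite set, each corresponding $M^s_{\sigma',\Y}(\cdot)$ is bounded by \cite{Tod08}, and $\pi_*$ preserves boundedness, so this locus is bounded without any summand argument. In the non-split case $E$ is a genuine direct summand of the bounded family $\{E\oplus(E\otimes\omega_Y)\}$, and the main obstacle is to check that direct summands of a bounded family are again bounded; I would verify this by translating boundedness into uniform control of the amplitude and of the Castelnuovo--Mumford regularity and numerical invariants of the finitely many cohomology sheaves of the complexes, observing that the cohomology sheaves of a summand are subsheaves (indeed summands) of those of the whole and hence inherit these uniform bounds. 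Combining the two cases gives that $\mYs$ is bounded.
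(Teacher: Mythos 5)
Your proposal is correct, but in the crucial case it takes a genuinely different route from the paper's proof. The case split via Lemma \ref{exclude} and your treatment of the locus $E\cong E\otimes\omega_Y$ (finitely many possible vectors $v(F)$ by Lemma \ref{finite m vectors}, boundedness of the relevant stable loci on $\Y$ from \cite{Tod08}, then pushforward by $(1\times\pi)_*$) coincide with the paper. In the generic case, however, the paper never forms $\pi_*\pi^*E$: it takes Toda's finite-type scheme $Q$ parametrizing $M_{\sigma',\Y}(\v)$, passes to the locally closed locus $T\subset Q$ of $\iota^*$-invariant stable fibers, descends the restricted family $\FF_T$ through the \'{e}tale double cover $T\times\Y\to T\times Y$ using \cite[Proposition 2.5]{BM}, and then uses Lemma \ref{2 to 1}: two copies of $T$, carrying $\EE$ and $\EE\otimes p_Y^*\omega_Y$, exhaust the non-fixed part of $\mYs$. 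You instead push the whole bounded family down and extract $E$ as a direct summand of $E\oplus(E\otimes\omega_Y)$, which forces you to prove that direct summands of a bounded family are again bounded. That lemma is true and your sketch is the right one, but be aware it packages three separate facts: (i) the fiberwise cohomology sheaves of a family over a finite-type base form bounded families of sheaves (generic flatness/stratification of the base); (ii) a direct summand of an $m$-regular sheaf is $m$-regular (its cohomologies are summands of the ambient ones), and its Hilbert polynomial, being determined by its values at $\dim Y+1$ integers each trapped between $0$ and the corresponding value for the ambient sheaf, ranges over a finite set, whence boundedness by the standard criterion; (iii) reassembly of the complexes from their cohomology sheaves via truncation triangles and Lemma \ref{bounded ext}. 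As for what each approach buys: the paper stays entirely inside the descent formalism it has already set up, though at the cost that \cite[Proposition 2.5]{BM} is there applied to a family known only to have $\iota^*$-invariant fibers, which is a priori weaker than invariance of $\FF_T$ itself; your argument is completely insensitive to such equivariance questions, and once your summand lemma is in place the case division becomes unnecessary, since every $E\in\mYs$, split or not, is a summand of $\pi_*\pi^*E$.
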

\begin{proof} From Lemma \ref{exclude} we know that for any $E\in M^s_{\sigma,Y}(v)$ $\pi^*E$ is $\sigma'$-stable unless $E\cong E\otimes\omega_Y$ in which case $\pi^*E\cong F\oplus \iota^*F$ for $\sigma'$-stable objects $F\ncong\iota^*F$ of the same phase.  

Let us the consider the first case.  Then by boundedness of $M_{\sigma',\Y}(\pi^*v,\phi)$ \cite[Theorem 14.2]{Tod08}, there exists a scheme $Q$ of finite type over $\C$ and $\mathcal F\in \Db(Q\times\Y)$ such that every $F\in M_{\sigma',\Y}(\pi^*v,\phi)$ is equal to $\FF_q$ for some closed point $q\in Q$.  Consider the locally closed subscheme $$T:=\{q\in Q|\iota^*\mathcal F_q\cong \mathcal F_q,\FF_q\in M^s_{\sigma',\Y}(\pi^*v)\},$$ which is still of finite type over $\C$, and the restriction $\mathcal F_T$.  Then from \cite[Proposition 2.5]{BM} it follows that there exists $\mathcal E\in \Db(T\times Y)$ such that $(1\times\pi)^*(\mathcal E)\cong \mathcal F_T$.  Consider the disjoint union of two copies of $T$, which is still of finite type over $\C$, with $\mathcal E$ on the first copy of $T$ and $\mathcal E\otimes p_Y^*\omega_Y$ on the second.  Then by Lemma \ref{2 to 1} and the definition of induced stability conditions, it follows that $M^s_{\sigma,Y}(v)$ is bounded.

In the second case, consider $u\in\Hal(\Y,\Z)$ such that $\pi_*(u)=v$.  Note that by Lemma \ref{finite m vectors} only finitely many of these Mukai vectors appear as $v(F)$ for decompositions $E\cong F\oplus \iota^*F$. Then by boundedness of $M_{(\pi_*)^{-1}(\sigma),\Y}(u)$, we have a scheme $W$ of finite type over $\C$ and $\GG\in \Db(W\times \Y)$ representing every element $M_{(\pi_*)^{-1}(\sigma),\Y}(u)$.  Now consider the open set $$V:=\{w\in W|\iota^*\GG_w\ncong \GG_w,\GG_w\text{ is stable}\}$$ and $(1\times \pi)_*(\GG|_V)\in \Db(V\times Y)$.  Then $V$ is still of finite type.  Taking the finite union over the relevant $u$'s represents every member of $M^s_{\sigma,Y}(v)$.

Together these prove the claim.
\end{proof}

To prove boundedness in general, let us recall the following simple result:
\begin{Lem}[{\cite[Lemma 3.16]{Tod08}}]\label{bounded ext} Let $X$ be a smooth projective variety and subsets $\SS_i\subset \Db(X)$, $1\leq i\leq 3$, with $\SS_i$ bounded for $i=1,2$.  Suppose that any $E_3\in \SS_3$ sits in a distinguished triangle, $$E_1\rightarrow E_3\rightarrow E_2,$$ with $E_i\in \SS_i$ for $i=1,2$.  Then $\SS_3$ is also bounded.
\end{Lem}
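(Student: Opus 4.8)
The plan is to exhibit a single finite-type parameter scheme carrying a family in $\Db(\blank\times X)$ whose fibers contain all of $\SS_3$, by realizing each $E_3$ as the shifted cone of an extension class $E_2\to E_1[1]$ and parametrizing these classes in families. First I would unwind the definition of boundedness being used: for $i=1,2$ there is a finite-type $\C$-scheme $T_i$ together with a family $\EE_i\in\Db(T_i\times X)$ such that every object of $\SS_i$ is isomorphic to a fiber $(\EE_i)_{t}$ for some closed point $t\in T_i$. Setting $B:=T_1\times T_2$ and pulling the two families back along the projections $q_i\colon B\times X\to T_i\times X$, I obtain $\FF_i:=q_i^*\EE_i\in\Db(B\times X)$ with the property that for every pair $(E_1,E_2)\in\SS_1\times\SS_2$ there is a closed point $b\in B$ with $(\FF_1)_b\cong E_1$ and $(\FF_2)_b\cong E_2$. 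Note also that boundedness of $\SS_1,\SS_2$ forces only finitely many numerical classes to occur among their members, hence finitely many values of $\chi(E_2,E_1)$.

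Next I would form the relative Hom complex $p_{B*}\RlHom(\FF_2,\FF_1)$ on $B$, where $p_B\colon B\times X\to B$ is the projection; since $X$ is proper and the $\FF_i$ are $B$-perfect, this is a perfect complex on $B$. Using the standard flattening and base-change formalism for perfect complexes on a finite-type base, I would stratify $B$ into finitely many locally closed pieces on which the cohomology sheaves $\lExt^k_{p_B}(\FF_2,\FF_1)$ are locally free and commute with base change, so that the fiber of $\lExt^k_{p_B}(\FF_2,\FF_1)$ at $b$ computes $\Ext^k\big((\FF_2)_b,(\FF_1)_b\big)$. In particular $\VV:=\lExt^1_{p_B}(\FF_2,\FF_1)$ becomes a vector bundle on each stratum whose fiber is $\Ext^1(E_2,E_1)=\Hom(E_2,E_1[1])$, the group classifying exactly the extensions appearing in the statement.

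The heart of the construction is a universal extension. Let $\rho\colon W\to B$ denote the total space of $\VV$ (restricted to a stratum), so that a closed point of $W$ is a pair $(b,e)$ with $e\in\Ext^1\big((\FF_2)_b,(\FF_1)_b\big)$. Pulling $\FF_1,\FF_2$ back to $W\times X$ and contracting the tautological section of $\rho^*\VV$, I would produce a universal morphism $\rho^*\FF_2\to\rho^*\FF_1[1]$ in $\Db(W\times X)$ whose fiber over $(b,e)$ is the class $e$; taking the shifted cone yields a family $\EE_3\in\Db(W\times X)$ with $(\EE_3)_{(b,e)}$ isomorphic to the extension $E_3$ determined by $e$. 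Since there are finitely many strata and finitely many numerical types, replacing $W$ by the finite disjoint union of the resulting total spaces keeps it of finite type over $\C$. Because every $E_3\in\SS_3$ arises as such a cone for some $(E_1,E_2,e)$, hence as a fiber of $\EE_3$, this exhibits $\SS_3$ as bounded.

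The step I expect to be the main obstacle is the construction of the universal morphism $\rho^*\FF_2\to\rho^*\FF_1[1]$ realizing the tautological class, together with the verification that the fibers of its cone are genuinely the desired extensions $E_3$ and that $\EE_3$ is $W$-perfect. This rests on the relative $\lExt^1$ commuting with base change (secured by the stratification above) and on the relative universal-extension formalism in the derived category, which is precisely what turns the set-theoretic fact ``$E_3=\cone(E_2\to E_1[1])[-1]$'' into a morphism of families over a finite-type base.
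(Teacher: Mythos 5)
Your argument is correct, and it is essentially the proof of the source this lemma is quoted from: the paper itself gives no proof, deferring to \cite[Lemma 3.16]{Tod08}, and Toda's argument is exactly your construction — parametrize $\SS_1\times\SS_2$ by a finite-type product $B=T_1\times T_2$, stratify so that the relative Ext sheaves are locally free and compatible with base change, and realize all of $\SS_3$ as fibers of a universal family of cones over the total space of $\lExt^1_{p_B}(\FF_2,\FF_1)$. The one step you flag as delicate — lifting the tautological section of $\rho^*\VV$ to an honest morphism $\rho^*\FF_2\to\rho^*\FF_1[1]$ in $\Db(W\times X)$ — is settled by replacing $W$ with a finite affine open cover: on an affine base the relative local-to-global Ext spectral sequence has vanishing $H^{>0}$ terms, so $\Ext^1_{W_\alpha\times X}(\rho^*\FF_2,\rho^*\FF_1)\cong\Gamma(W_\alpha,\rho^*\VV)$ and the tautological section lifts, which is harmless here since boundedness only asks for \emph{some} finite-type parameter scheme.
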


\begin{Prop} $M_{\sigma,Y}(v)$ is bounded for any $\sigma$ and $v$.
\end{Prop}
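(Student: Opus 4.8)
The plan is to reduce the boundedness of the full set of $\sigma$-semistable objects to the already-established boundedness of the stable ones (Proposition \ref{stable bounded}) by an induction on the length of the Jordan--H\"older filtration, using the extension-boundedness criterion of Lemma \ref{bounded ext} at each step.

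First I would normalize the phase: as in Remark \ref{phi one} we may assume every $E\in M_{\sigma,Y}(v)$ lies in $\PP(\phi)$ for the single phase $\phi$ determined by $v$ and $\sigma$. Applying Lemma \ref{finite m vectors} to the bounded subset $\SS=M_{\sigma,Y}(v)$ and a compact $B\ni\sigma$, the set $\Sigma\subset\Hal(Y,\Z)$ of Mukai vectors of stable factors of objects of $\SS$ is finite. Now I would extract a \emph{uniform} bound on the number of Jordan--H\"older factors: a semistable $E$ of Mukai vector $v$ has mass $m_\sigma(E)=|Z(v)|$, and its stable factors $A_j$ all share the phase $\phi$, so their central charges are positive real multiples of $e^{i\pi\phi}$ and the masses add exactly, $|Z(v)|=\sum_j|Z(v(A_j))|$. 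Since each $v(A_j)\in\Sigma$ and $\delta:=\min_{w\in\Sigma}|Z(w)|>0$ by the positivity \eqref{eq:Zpositivity}, every $E\in M_{\sigma,Y}(v)$ has at most $N:=|Z(v)|/\delta$ stable factors. Set $\Sigma^{\ast}:=\{\sum_i m_i w_i : w_i\in\Sigma,\ m_i\in\Z_{\ge 0},\ \sum_i m_i\le N\}$, a finite subset of $\Hal(Y,\Z)$, and for $1\le n\le N$ let $U_n$ be the set of all $\sigma$-semistable objects of phase $\phi$ whose Mukai vector lies in $\Sigma^{\ast}$ and which have at most $n$ stable Jordan--H\"older factors.

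I would then show each $U_n$ is bounded by induction on $n$. The base case $U_1$ is the set of $\sigma$-stable objects with Mukai vector in $\Sigma^{\ast}$, i.e. the finite union $\bigcup_{w\in\Sigma^{\ast}}M^s_{\sigma,Y}(w)$, which is bounded by Proposition \ref{stable bounded}. For the inductive step, take strictly semistable $E\in U_n$ and a stable subobject $E_1\subset E$ of phase $\phi$ (e.g. its first Jordan--H\"older factor), giving a short exact sequence $0\to E_1\to E\to E_2\to 0$ in the abelian category $\PP(\phi)$. Then $E_1\in U_1$, while $E_2\in\PP(\phi)$ is again $\sigma$-semistable of phase $\phi$ with stable factors a subcollection of those of $E$, so $v(E_2)=v(E)-v(E_1)\in\Sigma^{\ast}$ and $E_2$ has at most $n-1$ stable factors, i.e. $E_2\in U_{n-1}$. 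Applying Lemma \ref{bounded ext} with $\SS_1=U_1$, $\SS_2=U_{n-1}$ (bounded, the latter by induction) and $\SS_3=U_n$ yields that $U_n$ is bounded. Since $v=\sum_j v(A_j)\in\Sigma^{\ast}$ and every object of $M_{\sigma,Y}(v)$ has at most $N$ stable factors, we get $M_{\sigma,Y}(v)\subseteq U_N$, which is bounded. Combined with Lemma \ref{main lemma} and Proposition \ref{open}, this also completes the proof of Theorem \ref{main theorem}.

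The step I expect to require the most care is the derivation of the uniform bound $N$ on the length of the Jordan--H\"older filtrations, since it is exactly what makes $\Sigma^{\ast}$ finite and forces the induction to terminate; it hinges on the fact that all Jordan--H\"older factors of a semistable object lie on the same ray $\R_{>0}e^{i\pi\phi}$, so that masses add without cancellation, together with the strict positivity $\delta>0$ guaranteed by $Z(w)\neq 0$ for $w\in\Sigma$. Everything after that is a purely formal extension argument, resting on Proposition \ref{stable bounded} for the base case and on the closure of $\PP(\phi)$ under subobjects and quotients to produce the destabilizing triangles fed into Lemma \ref{bounded ext}.
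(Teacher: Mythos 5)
Your proof is correct and follows essentially the same route as the paper's, which argues in one line: finiteness of the Mukai vectors of stable factors (Lemma \ref{finite m vectors}), then induction on the number of stable factors using Proposition \ref{stable bounded} and Lemma \ref{bounded ext}. Your write-up simply makes explicit the details the paper leaves implicit, notably the uniform bound $N$ on Jordan--H\"older length via additivity of masses along a common phase ray.
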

\begin{proof}  By Lemma \ref{finite m vectors}, the number of Mukai vectors of possible stable factors for $E\in M_{\sigma,Y}(v)$ is finite.  By induction on the number of stable factors, we see that the claim follows from Proposition \ref{stable bounded} and Lemma \ref{bounded ext} above.
\end{proof}

As above we denote by $\MMM^s_{\sigma,Y}(v)\subset\MMM_{\sigma,Y}(v)$ the open substack parametrizing stable objects (and analogously for the corresponding sets of objects).  Inaba proved in \cite{Ina02} that $\MMM^s_{\sigma,Y}(v)$ is a $\mathbb G_m$-gerbe over a separated algebraic space that we denote by $M_{\sigma,Y}^s(v)$.  We have the following further result:

\begin{Lem} Fix $v\in\Hal(Y,	\Z)$.
\begin{enumerate}
\item The moduli stack $\MMM_{\sigma,Y}(v)$ satisfies the valuative criterion of properness.

\item Assume that $\MMM_{\sigma,Y}(v)=\MMM^s_{\sigma,Y}(v)$.  Then the course moduli space $M_{\sigma,Y}(v)$ is a proper algebraic space.
\end{enumerate}
\end{Lem}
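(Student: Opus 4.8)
The plan is to deduce both parts from the corresponding facts on the K3 universal cover $\tilde{Y}$, where the valuative criterion for Bridgeland moduli was established by Toda in \cite[Section 4]{Tod08}, transferring them through the induced-stability machinery $(\pi^*,\pi_*)$ of Theorem \ref{induced} in the same spirit as the proofs of Proposition \ref{open} and Proposition \ref{stable bounded}. For (1), let $R$ be a DVR with fraction field $K$ and residue field $k$, and let $\EE_K\in\MMM_{\sigma,Y}(v)(\Spec K)$. First I would pull back along $\pi$ to form $\tilde{\EE}_K:=(1\times\pi)^*\EE_K$ on $\tilde{Y}_K$; by the definition of the induced stability condition (as used in Proposition \ref{open}) this is a family of $\sigma'$-semistable objects of class $\pi^*v$, and it is $\iota^*$-invariant since it is a pull-back. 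Applying Toda's valuative criterion on $\tilde{Y}$ yields, after a finite extension $R\subset R'$ with fraction field $K'$, an extension $\tilde{\EE}_{R'}$ whose special fiber is $\sigma'$-semistable of class $\pi^*v$; moreover, by the closedness of semistability the $S$-equivalence class of this special fiber is independent of the chosen extension.

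The remaining and genuinely delicate step is to descend $\tilde{\EE}_{R'}$ to a family on $Y_{R'}$, i.e.\ to extend the $G$-equivariant structure (equivalently the given isomorphism $\iota^*\tilde{\EE}_{K'}\cong\tilde{\EE}_{K'}$) across the special fiber; this is the main obstacle, because $S$-equivalence does not supply an honest isomorphism $\iota^*\tilde{\EE}_{R'}\cong\tilde{\EE}_{R'}$, and it is precisely the non-uniqueness at strictly semistable limits that must be circumvented. Here I would use that $\sigma'$ lies in $\Gamma_{\tilde{Y}}$, so $\iota^*\tilde{\EE}_{R'}$ is a second extension of $\tilde{\EE}_{K'}$ with $S$-equivalent special fiber; consequently the multiset of Jordan--H\"older factors of the limit is $\iota^*$-invariant, so its polystable representative $\bigoplus_i\tilde{G}_i$ satisfies $\iota^*\bigl(\bigoplus_i\tilde{G}_i\bigr)\cong\bigoplus_i\tilde{G}_i$ and hence descends to $Y$ by \cite[Proposition 2.5]{BM}. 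Since $\sigma=(\pi^*)^{-1}(\sigma')$ is itself a genuine stability condition in $\Stab^{\dagger}(Y)$ for which all the finiteness inputs have been verified (boundedness above, together with Lemma \ref{bounded mass} and Proposition \ref{pseudo-walls}), an alternative I would keep in reserve is to run Toda's Langton-type limit construction directly in the $\iota^*$-invariant heart on $Y$, which produces the extended family on $Y_{R'}$ without a separate descent step; in either approach $\pi^*$ exactness and the reflection of (semi)stability are what make the K3 facts usable downstairs.

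For (2), assume $\MMM_{\sigma,Y}(v)=\MMM^s_{\sigma,Y}(v)$, so that every $\sigma$-semistable object of class $v$ is in fact $\sigma$-stable. By Inaba's theorem quoted above, $\MMM^s_{\sigma,Y}(v)$ is a $\mathbb{G}_m$-gerbe over the \emph{separated} algebraic space $M_{\sigma,Y}(v)=M^s_{\sigma,Y}(v)$, which is moreover of finite type by Theorem \ref{main theorem}. Since a finite-type, separated algebraic space is proper as soon as it is universally closed, it suffices to verify the existence half of the valuative criterion, and this is exactly the content of (1): the limit produced there is $\sigma$-semistable of class $v$, hence $\sigma$-stable under our hypothesis, so it defines a point of $M_{\sigma,Y}(v)$ extending the given $\Spec K$-point after finite base change. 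Separatedness is already guaranteed by Inaba, but one can also re-derive it from (1): two extensions over $R$ with $\sigma$-stable special fibers have $S$-equivalent limits, and a $\sigma$-stable object is the unique representative of its $S$-equivalence class, so the special fibers are isomorphic. Combining finite type, separatedness, and universal closedness gives that $M_{\sigma,Y}(v)$ is proper.
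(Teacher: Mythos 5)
Your main route through the K3 cover has a genuine gap at exactly the step you flag as delicate, and your proposed fix does not close it. The valuative criterion for the stack $\MMM_{\sigma,Y}(v)$ demands a family $\EE_{R'}$ on $\Spec R'\times Y$ whose \emph{generic} fiber is the given $\EE_{K'}$ and whose special fiber is semistable. After extending $\tilde{\EE}_{K'}=(1\times\pi)^*\EE_{K'}$ over $R'$ on $\tilde{Y}$, the two extensions $\tilde{\EE}_{R'}$ and $\iota^*\tilde{\EE}_{R'}$ of the same generic fiber are in general \emph{not} isomorphic as families: semistable limits are unique only up to $S$-equivalence of the special fiber, so the equivariant structure on the generic fiber need not extend over the closed point, and descent along the double cover fails. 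Your observation that the polystable representative of the limit is $\iota^*$-invariant and descends to $Y$ only produces an object over the residue field of $R'$; it does not produce a flat ($R'$-perfect) family on $Y_{R'}$ restricting to $\EE_{K'}$, and there is no canonical way to ``glue'' that descended object to $\EE_{K'}$ into such a family. To make this strategy work one would need an equivariant Langton-type modification of $\tilde{\EE}_{R'}$, which is precisely the missing argument.

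The paper avoids the cover entirely and the proof is much shorter: following \cite[Lemma 6.6]{BaMa}, use the $\widetilde{\GL}{}^+_2(\R)$-action to normalize $\phi=1$ and $Z(v)=-1$, replace $\sigma$ by a nearby algebraic stability condition with the same (semi)stable objects of class $v$ (Lemma \ref{lem:Yukinobualgebraic}), so that the heart $\PP(1)$ is a Noetherian abelian category, and then invoke Abramovich--Polishchuk \cite[Theorem 4.1.1]{AP06}, which gives the valuative criterion for semistable objects in a Noetherian heart (and, with Inaba's result, properness of the coarse space in the all-stable case). This is essentially the direction you ``keep in reserve'' --- running the limit construction directly in the invariant heart on $Y$ --- except that the correct citation making it rigorous is Abramovich--Polishchuk rather than Toda's K3-specific setup; once you argue on $Y$ itself, no descent step is needed at all.
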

\begin{proof} We follow \cite[Lemma 6.6]{BaMa}.  Then we can use the $\widetilde{\text{GL}}_2^+(\R)$ action to assume that $\phi=1$ and $Z(v)=-1$.  We may assume that $\sigma$ is algebraic and thus that $\mathcal P(1)$ is Noetherian.  But then \cite[Theorem 4.1.1]{AP06} implies the lemma.
\end{proof}

We finish this section by proving the non-emptiness of these moduli spaces.  To do so we must first recall a result of Bridgeland comparing Bridgeland stability and Gieseker stability in the large radius limit.

\begin{Thm}[{\cite[Proposition 14.1]{Bri08}}]\label{BG comp} Let $v\in\Hal(\Y,\Z),\beta\in\NS(\Y)_{\Q},$ and $H\in\Amp(\Y)$ with $\mu_{H,\beta}(v)>0$, and set $\omega=tH$.  Then $M_{\sigma'_{\omega,\beta},\Y}(v)=M_{H,\Y}^{\beta}(v)$ for $t\gg0$.
\end{Thm}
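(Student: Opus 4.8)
The plan is to reduce the equality of moduli spaces to an objectwise comparison of the two stability notions in the limit $t\to\infty$, and then to invoke the finiteness of walls. First I would expand the central charge along the ray $\omega=tH$: for a sheaf $E$ of rank $r>0$, writing $d:=H.(c_1(E)-r\beta)$ and collecting the $t$-independent remainder into a constant $\rho$, one finds
\[
\Im Z_{\omega,\beta}(E)=t\,d,\qquad \Re Z_{\omega,\beta}(E)=\tfrac{rH^2}{2}\,t^2+\rho.
\]
Thus $\Im Z$ grows like $t$ while $\Re Z$ grows like $t^2$, so the phase $\phi_{\omega,\beta}(E)\to 0$, and the comparison of phases of two such sheaves for $t\gg0$ is lexicographic: it is governed first by the slope $\mu_{H,\beta}=d/r$ (the leading term of the relevant phase cross-difference) and, among sheaves of equal slope, by $\rho/r$. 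A direct computation identifies $\rho/r$ with the constant term of the reduced $\beta$-twisted Hilbert polynomial $P(E,m)/(\tfrac{rH^2}{2})$, so this lexicographic rule is exactly the one defining $\beta$-twisted $H$-Gieseker (semi)stability.

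Next I would show the two sets of semistable objects coincide for $t\gg0$. Since $\mu_{H,\beta}(v)>0$, a $\beta$-twisted $H$-Gieseker semistable sheaf $E$ of class $v$ is in particular $\mu_{H,\beta}$-semistable of positive slope, hence lies in $\TT(\omega,\beta)\subset\AA(\omega,\beta)$; the phase dictionary above then makes it $\sigma'_{\omega,\beta}$-semistable, once one checks that an $\AA$-subobject of a sheaf in $\TT$ is, after the standard torsion/torsion-free analysis of the tilt, a genuine subsheaf whose phase is compared by the lexicographic rule. Conversely I must rule out that a $\sigma'_{\omega,\beta}$-semistable object $A$ of class $v$ is a genuine complex. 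Here the positivity $\mu_{H,\beta}(v)>0$ keeps $\phi_{\omega,\beta}(A)$ bounded away from $1$, whereas in the canonical sequence
\[
0\to \HH^{-1}(A)[1]\to A\to \HH^0(A)\to 0
\]
in $\AA$, the subobject $\HH^{-1}(A)[1]$ (with $\HH^{-1}(A)\in\FF$) has $\Re Z<0$ of order $t^2$ and $\Im Z\geq 0$ of order $t$, so its phase tends to $1$. For $t$ large this produces a destabilizing subobject unless $\HH^{-1}(A)=0$, reducing $A$ to a sheaf, which is then twisted-Gieseker semistable by the same dictionary.

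Finally I would globalize. By Proposition \ref{prop:chambers} there are only finitely many walls for $v$ meeting the ray $\{\sigma'_{tH,\beta}:t>0\}$, so there is $t_0$ beyond which the set of $\sigma'_{\omega,\beta}$-semistable objects of class $v$ is constant --- the Gieseker chamber --- and by the above it equals the set of twisted Gieseker semistable sheaves. Comparing Jordan--H\"older factors in $\PP(\phi)$ with Gieseker $S$-equivalence then promotes this set-theoretic identity to an equality of the coarse moduli spaces.

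The step I expect to be the main obstacle is the sheafiness argument: to produce a single $t_0$ valid for all $A$, the estimate $\phi_{\omega,\beta}(\HH^{-1}(A)[1])\to 1$ must be made uniform over the finitely many classes that $\HH^{-1}(A)$ and the Harder--Narasimhan/Jordan--H\"older factors can take, which is precisely what boundedness and the finiteness in Lemma \ref{finite m vectors} provide; the companion bookkeeping matching $\AA$-subobjects with subsheaves through the tilt is the other delicate point. As $\Y$ is a K3 surface, the whole statement is \cite[Proposition 14.1]{Bri08}, whose proof follows exactly this outline.
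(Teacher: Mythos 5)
Your outline is correct and is essentially the argument behind the cited result: the paper gives no internal proof of this statement at all, importing it verbatim from Bridgeland (\cite[Proposition 14.1]{Bri08}), and your sketch (central charge expansion along $\omega=tH$, lexicographic phase comparison matching the reduced twisted Hilbert polynomial, sheafiness of semistable objects via the subobject $\HH^{-1}(A)[1]$ of phase tending to $1$, and uniformity from boundedness/finiteness of the relevant classes) is precisely Bridgeland's large-volume-limit proof. Since you reproduce the same approach as the source the paper relies on, there is nothing further to compare.
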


A final preparatory step before proving non-emptiness is a result which is of great interest in its own right.  While unnodal Enriques surfaces admit no spherical objects \cite[Lemma 3.17]{MMS} and thus no spherical (Seidel-Thomas) twists, they do have closely related derived auto-equivalences corresponding to \emph{weakly-spherical} objects.  These are objects $E\in\Db(Y)$ with $\ext^i(E,E)=0$ for $i\neq 0$ and $\hom(E,E)=1$, so in particular $E\ncong E\otimes \omega_Y$ and $v(E)^2=-1$.  It follows immediately that $\pi^*E$ is a spherical object on $\tilde{Y}$.  We have the following result about the associated spherical twist $\ST_{\pi^*E}(-)$:

\begin{Prop} Let $E\in\Db(Y)$ be a weakly-spherical object and $\ST_{\pi^*E}(-)$ the spherical twist associated to $\pi^*E$, i.e. the derived auto-equivalence defined by the exact triangle $$\Hom^{\bullet}(\pi^*E,F)\otimes \pi^*E\to F\to \ST_{\pi^*E}(F)$$ for every $F\in\Db(Y)$.  Then $\ST_{\pi^*E}$ preserves $\Db(\tilde{Y})_G\cong\Db(Y)$ and thus descends to an auto-equivalence on $\Db(Y)$.  The effect on cohomology is the map $$v(F)\mapsto v+2(v(F),v(E))v(E).$$
\end{Prop}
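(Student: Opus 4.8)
The plan is to first show that $\ST_{\pi^*E}$ commutes with the $G$-action on $\Db(\tilde{Y})$, so that it preserves $\Db(\tilde{Y})_G$ and descends to an autoequivalence of $\Db(Y)$, and then to obtain the cohomological formula by transporting the standard spherical-reflection formula from $\tilde{Y}$ to $Y$ along $\pi^*$.

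For the descent, I would invoke the standard conjugation identity for spherical twists: for any autoequivalence $\Phi$ of $\Db(\tilde{Y})$ and any spherical object $S$, one has $\Phi\circ\ST_S\circ\Phi^{-1}\cong\ST_{\Phi(S)}$. Apply this with $\Phi=\iota^*$. Since $\iota^2=\id$, the functor $\iota^*$ is its own inverse, and since $\pi\circ\iota=\pi$ we have $\iota^*\pi^*E\cong\pi^*E$; hence $\iota^*\circ\ST_{\pi^*E}\circ\iota^*\cong\ST_{\iota^*\pi^*E}\cong\ST_{\pi^*E}$, i.e.\ $\ST_{\pi^*E}$ commutes with $\iota^*$. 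Consequently, if $\iota^*F\cong F$ then $\iota^*\ST_{\pi^*E}(F)\cong\ST_{\pi^*E}(\iota^*F)\cong\ST_{\pi^*E}(F)$, so by \cite[Proposition 2.5]{BM} the twist carries objects pulled back from $Y$ to objects pulled back from $Y$. To see that this is a genuine descent rather than a mere object-level statement, I would observe that $\pi^*E$ is canonically a $G$-equivariant object, so that in the defining triangle $\Hom^{\bullet}(\pi^*E,F)\otimes\pi^*E\to F\to\ST_{\pi^*E}(F)$ every term inherits a $G$-equivariant structure whenever $F$ does; this makes $\ST_{\pi^*E}$ an autoequivalence of $\Db_G(\tilde{Y})\cong\Db(Y)$, which I denote $\Psi$.

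For the cohomological action I work first on $\tilde{Y}$. The defining triangle gives, in $K_{\num}(\tilde{Y})$, the identity $[\ST_{\pi^*E}(N)]=[N]-\chi(\pi^*E,N)\,[\pi^*E]$ for every $N$, and since $(\blank,\blank)=-\chi(\blank,\blank)$ on the Mukai lattice this reads at the level of Mukai vectors as $v(\ST_{\pi^*E}(N))=v(N)+(v(\pi^*E),v(N))\,v(\pi^*E)$, the reflection in the $(-2)$-class $v(\pi^*E)$. The descent satisfies $\pi^*\Psi(F)\cong\ST_{\pi^*E}(\pi^*F)$ for $F\in\Db(Y)$, so taking Mukai vectors and using $v(\pi^*E)=\pi^*v(E)$, $v(\pi^*F)=\pi^*v(F)$, and the relation $(\pi^*a,\pi^*b)=2(a,b)$ recalled in the excerpt yields $\pi^*v(\Psi(F))=\pi^*v(F)+2(v(F),v(E))\,\pi^*v(E)=\pi^*\bigl(v(F)+2(v(F),v(E))v(E)\bigr)$. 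Because $\pi^*\colon\Hal(Y,\Z)\hookrightarrow\Hal(\tilde{Y},\Z)$ is injective, I conclude $v(\Psi(F))=v(F)+2(v(F),v(E))v(E)$, which is the asserted formula.

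The main obstacle is confined to the descent step: the conjugation identity together with $\iota^*\pi^*E\cong\pi^*E$ makes the object-level invariance immediate, but upgrading ``commutes with $\iota^*$'' to ``descends to a genuine autoequivalence of $\Db_G(\tilde{Y})$'' requires choosing the comparison isomorphisms compatibly, i.e.\ the $G$-equivariant (cocycle) bookkeeping, and it is exactly this compatibility that also underlies the relation $\pi^*\Psi(F)\cong\ST_{\pi^*E}(\pi^*F)$ used in the lattice computation. Once that is in place, both the reflection formula and its transport along $\pi^*$ are routine.
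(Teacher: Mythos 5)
Your proof is correct and follows essentially the same route as the paper: the decisive descent step --- that $\pi^*E$ and $F$ both lie in $\Db(\tilde{Y})_G$, so every term of the defining triangle inherits a $G$-equivariant structure and $\ST_{\pi^*E}$ becomes an autoequivalence of $\Db_G(\tilde{Y})\cong\Db(Y)$ --- is exactly the paper's argument, and your cohomological computation is just the explicit form of the paper's citation to \cite[Lemma 8.12]{Hu1} combined with the lattice relation $(\pi^*a,\pi^*b)=2(a,b)$, which is indeed where the factor of $2$ comes from. The only difference is your preliminary conjugation argument $\iota^*\circ\ST_{\pi^*E}\circ\iota^*\cong\ST_{\iota^*\pi^*E}\cong\ST_{\pi^*E}$, which, as you yourself observe, only gives object-level invariance and is subsumed by the equivariant-triangle argument, so it could be dropped without loss.
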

\begin{proof}  The important observation here is that both $\pi^*E$ and $F\in \Db(\tilde{Y})_G$, so $\Hom^{\bullet}(\pi^*E,F)$ is $G$-invariant and thus the first morphism is in $\Db(\tilde{Y})_G$.  Completing it to an exact triangle stays inside $\Db(\tilde{Y})_G$, and thus we see that $\ST_{\pi^*E}(F)$ is $G$-invariant as well.  It follows that $\ST_{\pi^*E}$ descends to an auto-equivalence on $\Db(Y)$.

The statement about the action on cohomology follows from the above description and \cite[Lemma 8.12]{Hu1}.
\end{proof}

These auto-equivalences were referred to as Fourier-Mukai transforms associated to $(-1)$-reflection in \cite{Yos03}, but the above interpretation strengthens and elucidates the connection with the covering spherical twist on the K3 surface $\tilde{Y}$.  For brevity we call these \emph{weakly-spherical twists}.  Now we are ready for the proof of non-emptiness.

\begin{Thm}\label{nonempty} Let $v=mv_0$ with $m\in\Z_{>0}$ and $v_0=(r,C,s)$ a primitive vector with $v_0^2\geq -1$.  Suppose that $M_{H',Y}(w)\neq \varnothing$ for all primitive and positive $w\in\Hal(Y,\Z)$ (obtainable from $v_0$ by applying weakly-spherical twists) and polarization $H'$ generic with respect to $w$,  then $\MMM_{\sigma,Y}(v)(\C)\neq\varnothing$ for all $\sigma\in \Stab^{\dagger}(Y)$.
\end{Thm}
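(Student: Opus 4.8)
The plan is to peel off, in two reductions, everything except the Gieseker non-emptiness assumed in the hypothesis, and then to transport that input across the wall-and-chamber structure. \textbf{Multiplicity.} First I would reduce to the primitive case $m=1$: if some $E$ is $\sigma$-semistable of phase $\phi$ and class $v_0$, then the polystable object $E^{\oplus m}$ is $\sigma$-semistable of phase $\phi$ and class $mv_0=v$, so $\MMM_{\sigma,Y}(v_0)(\C)\neq\varnothing$ forces $\MMM_{\sigma,Y}(v)(\C)\neq\varnothing$. Hence I may assume $v=v_0$ is primitive. \textbf{From walls to the generic locus.} The locus of stability conditions making a fixed object semistable is closed (the closedness of semistability used throughout the paper). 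Thus, if $\sigma$ lies on a wall, I would pick a generic $\sigma_0$ in a chamber $\CC$ whose closure contains $\sigma$; a single $E\in M_{\sigma_0,Y}(v)$ is $\sigma_0$-semistable on all of $\CC$, hence $\sigma$-semistable in the limit. So non-emptiness over the generic locus propagates to every $\sigma\in\Stab^{\dagger}(Y)$, and it suffices to treat $\sigma$ off all walls for $v$.

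For generic $\sigma$ and primitive $v$, Proposition \ref{prop:chambers} exhibits the generic locus as a disjoint union of chambers, and the set of chambers on which $\MMM_{\sigma,Y}(v)(\C)\neq\varnothing$ is automatically open. The argument then needs an anchor and a propagation. \textbf{Anchor.} In the large-volume region I would produce one non-empty chamber directly from the hypothesis. Writing $\sigma=(\pi^*)^{-1}(\sigma')$ with $\sigma'\in\Gamma_{\Y}\cap\Stab^{\dagger}(\Y)$ and choosing $G$-invariant $\omega=tH$ and $\beta$ with $t\gg0$, Theorem \ref{BG comp} on $\Y$ identifies $\sigma'$-semistability with $\beta$-twisted Gieseker stability. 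For a positive class $w$ with $M_{H',Y}(w)\neq\varnothing$, as granted by the hypothesis, the étale pullback of a Gieseker-stable sheaf is $\sigma'$-semistable of class $\pi^*w$, so by the induction of stability conditions (Theorem \ref{induced}) together with the comparison of stability on $Y$ and $\Y$ in Lemmas \ref{2 to 1} and \ref{exclude} it descends to a $\sigma$-semistable object of class $w$ on $Y$. This anchors non-emptiness in the Gieseker chamber of each positive $w$ that occurs.

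\textbf{Propagation, the main obstacle.} The crux is to carry non-emptiness from a Gieseker chamber to an arbitrary chamber, and this is exactly where the weakly-spherical twists and the \emph{full} orbit in the hypothesis become essential. Each weakly-spherical twist $\ST_{\pi^*E}$ descends to an auto-equivalence of $\Db(Y)$ preserving $\Stab^{\dagger}(Y)$ and acting on Mukai vectors by the reflection $w\mapsto w+2(w,v(E))v(E)$; since auto-equivalences take semistable objects to semistable objects, non-emptiness is equivariant, i.e. $\MMM_{\sigma,Y}(w)(\C)\neq\varnothing$ if and only if $\MMM_{\ST_{\pi^*E}(\sigma),Y}(\ST_{\pi^*E}w)(\C)\neq\varnothing$. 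I would then show that, modulo the group generated by these twists and the $\widetilde{\GL}_2^+(\R)$-action, every chamber for $v$ can be transported into the large-volume region for some class in the weakly-spherical orbit of $v_0$: the walls for a primitive $v$ are controlled by weakly-spherical classes $s$ (those with $s^2=-1$ pairing suitably with $v$), crossing such a wall exchanges the moduli problem for $v$ with that of its weakly-spherical reflection, and iterating drives the class toward a positive one sitting in a Gieseker chamber. Establishing this transitivity, the Enriques counterpart of the statement that spherical twists and shifts generate the deck transformations of $\eta\colon\Sigma(Y)\to\PP_0^+(Y)$, together with the bookkeeping that the reflected class stays primitive and becomes positive so that Theorem \ref{BG comp} and the hypothesis apply, is the technical heart; because reflections are isometries one has $w^2=v_0^2\geq-1$ throughout, keeping every orbit element eligible. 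Once this is in place, the non-empty chambers form a set that is open and stable under crossing every wall, hence all of the connected generic locus, and non-emptiness over all of $\Stab^{\dagger}(Y)$ follows from the first paragraph.
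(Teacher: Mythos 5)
Your first two reductions (to $m=1$ via polystable objects, and to generic $\sigma$ via closedness of semistability) and your anchor in the large-volume limit all match the paper. But your propagation step has a genuine gap, and it is precisely the step you flag as the technical heart. You propose to carry non-emptiness across walls by showing that the group generated by weakly-spherical twists (and $\widetilde{\GL}_2^+(\R)$) acts transitively enough to move any chamber for $v$ into a Gieseker chamber of some class in the orbit of $v_0$. This mechanism does not work as described: walls for a primitive $v$ on an Enriques surface are \emph{not} controlled by weakly-spherical classes --- any decomposition $v=v_1+v_2$ into classes of semistable objects whose central charges align produces a potential wall, with no constraint that $v_i^2=-1$ --- and crossing such a wall is not implemented by any autoequivalence, so equivariance of non-emptiness under $\Aut(\Db(Y))$ gives you nothing at a wall. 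A priori, crossing a wall could destabilize every object of class $v$ without any new object becoming semistable; ruling this out is exactly the content of the theorem, so the transitivity claim you would need is at least as hard as what you are trying to prove, and is not established (nor needed) in the paper.

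What the paper does instead is a counting argument: it invokes Toda's construction of the Joyce invariant $J(v)$ via the motivic Hall algebra, checking that \cite[Theorem 5.24 and Corollary 5.26]{Tod08} apply because the algebra $A(\mathcal A_{\phi},\Lambda,\chi)$ is still commutative on an Enriques surface ($\omega_Y$ is numerically trivial, so the Mukai pairing is symmetric). The invariant $J(v)$ equals the motivic invariant of the proper coarse moduli space $M_{\sigma,Y}(v)$ and --- this is the key point your proposal is missing --- it is \emph{independent of $\sigma$} and invariant under autoequivalences. In the paper, the weakly-spherical twists play a much more modest role than in your proposal: they are not used to cross walls but only to normalize the Mukai vector, i.e.\ to replace $v_0$ by a positive class (shift to make $r>0$; twist through $\OO_Y$ when $r=0$, $s\neq 0$; tensor by $\OO(D)$ with $D.C=1$ and then twist when $v=(0,C,0)$), which is why the hypothesis asks for Gieseker non-emptiness for the classes \emph{obtainable from $v_0$ by weakly-spherical twists}. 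Once $v_0$ is positive, Theorem \ref{BG comp} identifies one chamber with the Gieseker chamber, where the moduli space is non-empty by hypothesis, so $J(v)\neq 0$ there; the $\sigma$-independence of $J(v)$ then forces $\MMM_{\sigma,Y}(v)(\C)\neq\varnothing$ for every $\sigma$. If you want to repair your write-up, replace your propagation paragraph with this wall-crossing invariance of the counting invariant.
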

\begin{proof} Since we are interested at the moment in semistable objects, it suffices to consider the case when $m=1$, i.e. $v$ is primitive.  Indeed, if $E_0\in \MMM_{\sigma,Y}(v_0)(\C)$, then $E=E_0^{\oplus m}\in \MMM_{\sigma,Y}(v)(\C)$.  By the remarks preceeding \cite[Lemma 8.2]{Bri07}, semistability is a closed condition, so it also suffices to suppose that $\sigma$ is generic with respect to $v$ so that every $\sigma$-semistable object of class $v$ is stable since these remain at least semistable at the boundary.

Now the construction of the Joyce invariant $J(v)$ of \cite[Section 5]{Tod08} is quite general, and Lemma 5.12 there applies.  Likewise the analogous algebra $A(\mathcal A_{\phi},\Lambda,\chi)$ is still commutative since $\omega_Y$ is numerically trivial and thus the Mukai pairing is commutative.  This and the results above show that \cite[Theorem 5.24 and Corollary 5.26]{Tod08} still apply.  In particular, $J(v)$ is the motivic invariant of the proper coarse moduli space $M_{\sigma,Y}(v)$, and is invariant under autoequivalences and changes in $\sigma$.  We can thus assume that $v_0$ is positive.  Indeed, if $r\neq 0$, then we can shift by 1, i.e. $E\mapsto E[1]$, to make $r>0$ if necessary.  If $r=0$ but $s\neq 0$, then we can apply the weakly-spherical twist through $\OO_Y$ and a shift, if necessary, to make $v$ positive.  Finally, we are reduced to the case $v=(0,C,0)$.  We can tensor with $\OO(D)$ for any $D\in\Pic(Y)$ such that $D.C\neq 0$ (we can even choose $D$ such that $D.C=1$ since $\Pic(Y)$ is unimodular), and then apply the weakly-spherical twist through $\OO_Y$ and a shift to make $v$ positive.

Now let $H=\pi^*H'$ where $H'\in \Amp(Y)$ is generic with respect to $v$ such that $\mu_{H,0}(\pi^*v)>0$.  Theorem \ref{BG comp} shows that for $\omega=tH$ and $t\gg 0$, being $\sigma'_{\omega,0}$-semistable of class $\pi^*v$ is equivalent to being Gieseker semistable.  Let $\omega'=tH'$ and set $\sigma_{\omega',0}=(\pi^*)^{-1}(\sigma'_{\omega,0})$, so that $E\in M_{\sigma_{\omega',0},Y}(v)$ if and only if $\pi^*E$ is $H$-Gieseker semistable.  Of course, it thus follows that $E$ is $H'$ Gieseker semistable on $Y$.  $H'$ being generic gaurantees that all Gieseker semistable objects are stable.  

Thus we may choose $\sigma$ such that the coarse moduli space $M_{\sigma,Y}(v)$ is the moduli space $M_{H',Y}(v)$ of Gieseker stable sheaves on $Y$ with Mukai vector $v$ for a generic polarization $H'$.    Then $J(v)$ is the motivic invariant of $M_{H',Y}(v)$ which is non-trivial by assumption, so it follows that $\MMM_{\sigma,Y}(v)(\C)\neq\varnothing$ for all $\sigma$.
\end{proof}

\begin{Rem} By Theorems \ref{yosh odd} and \ref{hauz even} above, the hypothesis of the theorem is satisfied for $Y$ unnodal.  Moreover, in this case there are no spherical objects \cite[Proposition 3.17]{MMS}, so we need not consider the exceptional case $v_0^2=-2$.  Thus all relevant cases are covered by the above theorem.
\end{Rem}

\section{The Geometry of the Morphism $\Phi$}

We begin here our investigation of the relationship between the geometry of the moduli spaces $\MMM_{\sigma,Y}(v)$ and $\MMM_{\sigma',\Y}(\pi^*v)$, where again $\sigma=(\pi^*)^{-1}(\sigma')$ for some invariant $\sigma'$.  Notice that $\pi^*$ induces a morphism of stacks $$\Phi:\MMM_{\sigma,Y}(v,\phi)\rightarrow\MMM_{\sigma',\Y}(\pi^*v,\phi).$$  Since $\iota$ induces an autoequivalence of $\Db(\Y)$, and we've chosen $\sigma'\in\Gamma_{\Y}$, $\iota$ induces an involution on $\MMM_{\sigma',\Y}(\pi^*v)$.  It follows that $\Phi$ factors through the fixed point substack $\Fix$, a closed substack, to give a morphism $$\Phi:\MMM_{\sigma,Y}(v)\rightarrow \Fix,$$ which we still denote by $\Phi$\footnote{To avoid too many stack-theoretic complications, we will define the fixed point stack as $\Fix:=M_{\sigma',\Y}(\pi^*v)^G\times_{M_{\sigma',\Y}(\pi^*v)}\MMM_{\sigma',\Y}(\pi^*v)$, where $M_{\sigma',\Y}(\pi^*v)^G\subset M_{\sigma',\Y}(\pi^*v)$ is the fixed-point subscheme of the coarse moduli space.  This ensures that the fixed point substack is in fact a closed substack, smooth if the ambient stack is.  It also ensures that the morphism to the fixed point substack descends through Inaba's rigidification by $\G_m$.  For a more general and thorough discussion of these issues, see \cite{Rom}}.   

As usual, we start by considering the stable locus and generalize the results and arguments of \cite{Kim},\cite{Sac} to the case of Bridgeland moduli spaces.  First we note the following fact:

\begin{Lem}\label{isotropic} $\Fix\cap M^s_{\sigma',\Y}(\pi^*v)$ is a union of isotropic algebraic subspaces of $M^s_{\sigma',\Y}(\pi^*v)$.
\end{Lem}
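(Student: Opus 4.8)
The plan is to identify $\Fix\cap\mXs$ with a smooth subspace whose tangent spaces are isotropic for the holomorphic symplectic form on $\mXs$, the isotropy being forced by the anti-invariance of that form under $\iota$.

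First I would recall the local structure on the K3 side. By the results of \cite{BaMa}, $\mXs$ is a smooth holomorphic symplectic algebraic space of dimension $(\pi^*v)^2+2=2v^2+2$: at a $\sigma'$-stable point $[F]$ the tangent space is $\Ext^1(F,F)$, and the symplectic form $\Omega_{[F]}$ is the Yoneda pairing
\[
\Ext^1(F,F)\times\Ext^1(F,F)\xrightarrow{\ \circ\ }\Ext^2(F,F)\xrightarrow{\Tr}H^2(\Y,\OO_{\Y})\cong\C,
\]
where the final identification is the canonical one $H^2(\Y,\OO_{\Y})\cong H^0(\Y,\omega_{\Y})^{\vee}$ together with a trivialization $\omega_{\Y}\cong\OO_{\Y}$, i.e.\ the choice of a nonzero holomorphic two-form $s\in H^0(\Y,\omega_{\Y})$. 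Since $\Fix$ is the fixed locus of the involution $\iota$ acting on the smooth space $\mXs$, each of its connected components is smooth, and at an $\iota$-fixed stable point $[F]$ (so that $\iota^*F\cong F$) the tangent space to $\Fix$ is the invariant subspace $\Ext^1(F,F)^{\iota^*}\subset\Ext^1(F,F)$. It therefore suffices to prove that this invariant subspace is isotropic for $\Omega_{[F]}$.

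The crux, and the main obstacle, is to show that $\iota$ acts on $\Omega$ by $-1$. This is exactly where Enriques geometry diverges from the K3 case: because $\iota$ is fixed-point free, it acts by $-1$ on $H^0(\Y,\omega_{\Y})$, so $\iota^*s=-s$. Both the Yoneda product and the trace are natural with respect to $\iota^*$, giving $\iota^*(a\circ b)=(\iota^*a)\circ(\iota^*b)$ and $\Tr\circ\,\iota^*=\iota^*\circ\Tr$; but the last identification defining $\Omega_{[F]}$ is twisted by $s$, and since $\iota$ acts on $H^0(\Y,\omega_{\Y})^{\vee}$, hence on this identification, by $-1$, one obtains
\[
\Omega_{[F]}(\iota^*a,\iota^*b)=-\Omega_{[F]}(a,b)\qquad\text{for all }a,b\in\Ext^1(F,F).
\]
Consequently, for $a,b\in\Ext^1(F,F)^{\iota^*}$ we find $\Omega_{[F]}(a,b)=\Omega_{[F]}(\iota^*a,\iota^*b)=-\Omega_{[F]}(a,b)$, whence $\Omega_{[F]}(a,b)=0$. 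Thus every component of $\Fix\cap\mXs$ is smooth with isotropic tangent spaces, i.e.\ an isotropic algebraic subspace, which is the assertion.

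The only delicate point is to phrase the symplectic form via the manifestly $\iota$-equivariant identification $H^2(\Y,\OO_{\Y})\cong H^0(\Y,\omega_{\Y})^{\vee}$, so that the single sign $\iota^*s=-s$ cleanly accounts for the anti-invariance and no ad hoc choice intervenes. I expect this equivariance of Serre duality, together with the functoriality of the Atiyah-class description of $\Omega$, to be the part requiring care; it is precisely the step in which the arguments of \cite{Kim} and \cite{Sac} for Gieseker moduli must be transcribed into the Bridgeland setting, the sheaf-theoretic Mukai form being replaced by the Yoneda form on $\Ext^1(F,F)$ for a $\sigma'$-stable complex $F$.
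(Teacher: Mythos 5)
Your proposal is correct and follows essentially the same route as the paper: both identify the tangent space via $\Ext^1(F,F)$ with Inaba's (Mukai-type) trace-pairing symplectic form, note that the fixed-point-free involution $\iota$ acts by $-1$ on $H^0(\Y,\omega_{\Y})$ so the form is anti-invariant, and conclude that the smooth fixed locus has isotropic tangent spaces. The only cosmetic difference is that you spell out the computation $\Omega(a,b)=\Omega(\iota^*a,\iota^*b)=-\Omega(a,b)$ on the invariant subspace, which the paper leaves implicit in the phrase ``isotropic follows from the fact that $\iota^*$ is anti-symplectic.''
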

\begin{proof}  In \cite[Theorem 3.3]{Ina11} Inaba generalized the by-now classical result from \cite{Muk84} that the moduli space of stable sheaves on a K3 surface $\Y$ carries a non-degenerate symplectic form.  Recall that for $F\in M_{\sigma',\Y}(\pi^*v)$ Inaba defined the sympletic form $\omega$ on the smooth algebraic space $M^s_{\sigma',\Y}(\pi^*v)$ by considering the composition \begin{align*}\Ext^1(F,F)\times \Ext^1&(F,F) \rightarrow \Ext^2(F,F)\rightarrow H^2(\Y,\mathcal O_{\Y})=H^2(\Y,\omega_{\Y})\cong \C\\&(e,f)\longmapsto e\cup f\text{              }\longmapsto \text{tr}(e\cup f),\end{align*} where the identification of $H^2(\Y,\omega_{\Y})$ with $\C$ is dual to the isomorphism between $H^0(\Y,\omega_{\Y})$ and $\C$, where the former is generated by the unique holomorphic 2-form $\alpha$ up to scaling.  Since $\iota^*$ sends $\alpha$ to $-\alpha$, it follows that $\omega$ is anti-sympletic, i.e. $\omega(\iota^*e,\iota^*f)=-\omega(e,f)$.  

Moreover, as $M^s_{\sigma',\Y}(\pi^*v)$ is a smooth algebraic space, $\Fix\cap M^s_{\sigma',\Y}(\pi^*v)$ is the union of smooth subspaces.  The fact that it is isotropic follows from the fact that $\iota^*$ is anti-symplectic.
\end{proof}

\begin{Prop}\label{lagrange} The morphism of stacks $$\Phi:\MMM_{\sigma,Y}(v)\rightarrow \Fix\subset \MMM_{\sigma',\Y}(\v)$$ is onto.  The induced morphism $${\Phi}^s:\MYs\rightarrow\Fix$$ is a 2-to-1 cover onto its image, \'{e}tale away from those points with $E\cong E\otimes\omega_Y$.
\end{Prop}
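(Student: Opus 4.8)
The plan is to prove the two assertions separately, using Lemmas \ref{2 to 1} and \ref{exclude} as the main inputs, together with a tangent--space computation via adjunction for the \'{e}taleness. Throughout I may check \'{e}taleness and fiber cardinality on the underlying algebraic spaces, since $\MMM^s$ is a $\G_m$-gerbe over its coarse space and these properties are insensitive to the gerbe.

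\emph{Surjectivity.} First, the image does land in $\Fix$: since $\pi\circ\iota=\pi$ we have $\iota^*\pi^*E\cong\pi^*E$, so $\pi^*E$ represents a $G$-fixed point. For surjectivity I would start with a $G$-fixed point of $\Fix$, represented by a $\sigma'$-semistable $F$ on $\Y$ with $v(F)=\v$, and replace it by its polystable (Jordan--H\"older) representative $F^{\mathrm{poly}}=\bigoplus_i F_i^{\oplus m_i}$, which represents the same point. Because the point is $G$-fixed, $\iota^*F^{\mathrm{poly}}$ is $S$-equivalent to $F^{\mathrm{poly}}$; as both are polystable of the same phase they are isomorphic, so $\iota^*F^{\mathrm{poly}}\cong F^{\mathrm{poly}}$. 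By \cite[Proposition 2.5]{BM} there is then $E\in\Db(Y)$ with $\pi^*E\cong F^{\mathrm{poly}}$; injectivity of $\pi^*$ on $\Hal(Y,\Z)$ forces $v(E)=v$, and $E$ is $\sigma$-semistable because any nontrivial HN filtration of $E$ would pull back (exactly as in the proof of Lemma \ref{bounded mass}) to a nontrivial HN filtration of the semistable $F^{\mathrm{poly}}$. Hence $\Phi(E)$ is the given point, and every point of $\Fix$ is attained.

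\emph{Fibers of $\Phi^s$.} Consider the involution $\tau\colon E\mapsto E\otimes\omega_Y$ of $\MYs$, which preserves $v$ and stability. Since $\pi^*\omega_Y\cong\OO_{\Y}$, the map $\Phi^s$ is $\tau$-invariant, and by Lemma \ref{2 to 1} its fibers are precisely the $\tau$-orbits $\{E,\,E\otimes\omega_Y\}$; thus $\Phi^s$ has degree two and is injective on the quotient $\MYs/\tau$. The $\tau$-fixed points are exactly the $E$ with $E\cong E\otimes\omega_Y$: away from them $\tau$ acts freely and the fiber has two distinct points, while over such a point the fiber is a single point.

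\emph{\'{E}taleness away from the fixed locus (the heart).} For $E$ with $E\not\cong E\otimes\omega_Y$, Lemma \ref{exclude} shows $F:=\pi^*E$ is $\sigma'$-stable, so it lies in $\mXs\cap\Fix$. The differential of $\Phi^s$ at $[E]$ is the pullback $\pi^*\colon\Ext^1_Y(E,E)\to\Ext^1_{\Y}(F,F)$. By adjunction $\Ext^1_{\Y}(F,F)\cong\Ext^1_Y(E,\pi_*\pi^*E)=\Ext^1_Y(E,E)\oplus\Ext^1_Y(E,E\otimes\omega_Y)$, and under this splitting the $\iota^*$-invariant part --- which is the tangent space $T_F\Fix$ to the smooth fixed locus --- is exactly the summand $\pi^*\Ext^1_Y(E,E)$. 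Hence $d\Phi^s$ is an isomorphism onto $T_F\Fix$, so $\Phi^s$ is unramified; being a finite (two-to-one) morphism of smooth spaces it is \'{e}tale at $[E]$, and combined with the previous paragraph $\Phi^s$ is an \'{e}tale double cover of its image over $\mXs\cap\Fix$. (As a byproduct the image has dimension $v^2+1=\tfrac12(\v^2+2)$, upgrading the isotropy of Lemma \ref{isotropic} to Lagrangian.) Finally, for $E\cong E\otimes\omega_Y$ Lemma \ref{exclude} gives $\pi^*E\cong F\oplus\iota^*F$ with $F\ncong\iota^*F$ strictly $\sigma'$-semistable; these are the branch points, lying over $\Fix\setminus\mXs$, where the two sheets collide. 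The main obstacle is the tangent-space identification in this last paragraph --- pinning down that the image of $\pi^*$ on $\Ext^1$ is precisely the $\iota^*$-invariant part and that this equals $T\Fix$ --- while the reduction to polystable representatives in the surjectivity step is the other point requiring care, as it hinges on distinguishing $S$-equivalence from genuine isomorphism.
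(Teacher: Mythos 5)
Your proof is correct, and while your surjectivity and two-to-one arguments follow essentially the paper's own route, your \'{e}taleness argument is genuinely different. For surjectivity the paper simply takes $F$ with $\iota^*F\cong F$ and applies \cite[Proposition 2.5]{BM}, deducing semistability of the descended object from the definition of induced stability conditions; your reduction of a fixed $S$-equivalence class to its polystable representative handles the footnoted definition of $\Fix$ via the coarse fixed locus more carefully (and is the most one can prove, since a strictly semistable, non-polystable object whose class is merely fixed need not itself be $\iota^*$-invariant, hence cannot be a pullback). The two-to-one claim is Lemma \ref{2 to 1} in both treatments. The divergence is at \'{e}taleness: the paper argues in two stages, first proving injectivity of $d\Phi=\pi^*$ on $\Ext^1(E,E)$ by an extension-splitting argument (pull back an extension, push it forward, and use $\Hom(E',E\otimes\omega_Y)=0$ to descend a splitting), and then invoking Lemma \ref{isotropic} --- the fixed locus is isotropic because $\iota^*$ is anti-symplectic --- to cap $\dim T_F\Fix$ by $v^2+1$ and force $d\Phi$ to be an isomorphism onto $T_F\Fix$. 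You instead get the whole picture at once from the equivariant adjunction decomposition $\Ext^1_{\Y}(F,F)\cong\Ext^1_Y(E,E)\oplus\Ext^1_Y(E,E\otimes\omega_Y)$, on which $\iota^*$ acts by $+1$ and $-1$ respectively (since $\pi_*\OO_{\Y}=\OO_Y\oplus\omega_Y$ with $\omega_Y$ the anti-invariant eigensheaf): this gives injectivity of $d\Phi$ (it is split) and identifies its image with the invariant subspace, and the sandwich $\im d\Phi\subseteq T_F\Fix\subseteq\Ext^1_{\Y}(F,F)^{\iota^*}=\im d\Phi$ then pins down $T_F\Fix$ with no symplectic input at all. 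The trade-off: the paper's proof leans on the holomorphic symplectic geometry of the K3 moduli space, needing Lemma \ref{isotropic} as input and recording the Lagrangian property only as a remark afterwards, whereas yours is a pure averaging/eigenspace argument in characteristic zero that is independent of the symplectic form and yields the Lagrangian dimension count as a byproduct. One small slip: your closing justification ``being a finite (two-to-one) morphism of smooth spaces it is \'{e}tale'' is both unnecessary and unjustified at that stage (finiteness of $\Phi^s$ on the stable locus has not been established there); bijectivity of the differential between spaces smooth at the relevant points already gives \'{e}taleness, which is all you need.
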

\begin{proof}  First we show that $\Phi$ is surjective onto $\Fix$.  Indeed, suppose $F\in \mX$ is invariant under $\iota^*$.  From \cite[Proposition 2.5]{BM}, there exists an object $E\in \Db(Y)$ such that $\pi^*E\cong F$.  From the definition of induced stability conditions, it follows that $E\in \mY$.  Moreover, it clearly follows that if $F$ is stable, then so is $E$.  

Now we prove that $\Phi^s$ is unramified.  Lieblich and Inaba (in \cite{Lie} and \cite{Ina02}, respectively) generalized the well-known results about the deformation theory of coherent sheaves to complexes of such.  In particular, for $E\in \mYs$ and $F=\pi^*E$, the tangent spaces are $$T_{E}\MY\cong \Ext^1(E,E),\text{ and }T_{F}\MX\cong \Ext^1(F,F),$$ and the differential is just the natural map $$d\Phi:\Ext^1(E,E)\rightarrow \Ext^1(F,F).$$  Note that it follows from Riemann-Roch that if $E$ and $F=\pi^*E$ are both stable, $\MY$ is smooth at $E$ (since the obstruction space vanishes because $\Ext^2(E,E)=0$) of dimension $\dim T_E=v^2+1$ while $\MX$ is smooth at $F$ of dimension $(\v)^2+2=2\dim \MY$.

We claim that the differential must be injective for $E\ncong E\otimes \omega_Y$.  Indeed, suppose $E'\in \text{Ext}^1(E,E)$, i.e. $E'$ is an extension $$0\rightarrow E\rightarrow E'\rightarrow E\rightarrow 0,$$ in $\mathcal P_{\sigma}(\phi)$.  Notice from applying $\Hom(-,E\otimes \omega_Y)$ and noting that $E$ and $E\otimes\omega_Y$ are nonisomorphic and stable of the same phase so that $\Hom(E,E\otimes \omega_Y)=0$, we must have $\Hom(E',E\otimes \omega_Y)=0$.  Suppose that $\pi^*E'=0\in\Ext^1(F,F)$, i.e. the short exact sequence $$0\rightarrow F\rightarrow \pi^*E'\rightarrow F\rightarrow 0$$ in $\mathcal P_{\sigma'}(\phi)$ splits.  But then so does the short exact sequence $$0\rightarrow \pi_*(F)\rightarrow \pi_*(\pi^*E')\rightarrow \pi_*(F)\rightarrow 0.$$  But this is precisely the sequence $$0\rightarrow E\oplus (E\otimes \omega_Y)\rightarrow E'\oplus (E'\otimes \omega_Y)\rightarrow E\oplus (E\otimes \omega_Y)\rightarrow 0.$$  Since $\Hom(E',E\otimes \omega_Y)=\Hom(E'\otimes \omega_Y,E)=0$, it follows that any morphism $$E'\oplus (E'\otimes \omega_Y)\rightarrow E\oplus(E\otimes \omega_Y)$$ must be component wise, and thus any splitting of this short exact sequence induces a splitting of the original exact sequence $$0\rightarrow E\rightarrow E'\rightarrow E\rightarrow 0,$$ proving injectivity.

Finally, note that $d\Phi$ factors through $T_F\Fix$.  Since $\Fix\cap \MXs$ is smooth and isotropic by Lemma \ref{isotropic}, it follows that $d\Phi$ is isomorphic onto $T_F\Fix$, so $\Phi$ is \'{e}tale at $E$.  That it is 2-to-1 follows from Lemma \ref{2 to 1}.
\end{proof}

\begin{Rem} If $\Fix\cap \MXs$ is nonempty, then it follows that every component is a Lagrangian substack from the above proposition.
\end{Rem}

\section{Projectivity of Coarse Moduli Spaces for Unnodal Enriques Surfaces}\label{sec:Projectivity}

Throughout this section we assume that $Y$ is unnodal.  In this case, we know that $\iota^*$ acts as the identity on $\Hal(\Y,\Z)$, $\Stab^{\dagger}(\Y)$ is mapped isomorphically to $\Sigma(Y)$ so that $\Stab^{\dagger}(\Y)=\Sigma(Y)=\Stab^{\dagger}(Y)$, and $\Db(Y)$ contains no spherical objects (see \cite[Lemma 3.10, Proposition 3.12, and Lemma 3.14]{MMS}).  We emphasize the most important consequence of this assumption with the following observation:
\begin{Obs}\label{gen un} A wall in the wall-and-chamber decomposition of $\Stab^{\dagger}(Y)$ corresponding to $v$ is still a wall in $\Stab^{\dagger}(\Y)=\Stab^{\dagger}(Y)$ corresponding to $\v$, though not necessarily conversely.  As such, we may choose $\sigma=\sigma'$ to be generic with respect to both wall-and-chamber decompositions.  We assume this for the remainder of the paper unless we explicitly drop the assumption that $Y$ is unnodal.
\end{Obs}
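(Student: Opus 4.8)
The plan is to show that the set of walls for $v$ in $\Stab^{\dagger}(Y)$ is \emph{contained in} the set of walls for $\v$ in $\Stab^{\dagger}(\Y)=\Stab^{\dagger}(Y)$; the Observation then follows formally. Recall from Proposition \ref{prop:chambers} and the pseudo-wall construction preceding it that a wall for $v$ is a real codimension-one locus of the form $\{\sigma=(Z,\mathcal A):Z(u)/Z(v)\in\R_{>0}\}$, where $u=v(A)$ is the class of a \emph{genuine} $\sigma$-destabilizing subobject $A\subset E$ of some $\sigma$-semistable $E$ of class $v$. Equivalently, $\sigma$ lies on such a wall exactly when some $\sigma$-semistable $E$ with $v(E)=v$ admits a proper nonzero subobject $A$ in $\mathcal P_{\sigma}(\phi)$ with $\phi_{\sigma}(A)=\phi_{\sigma}(E)$. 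I would like to transport such a configuration along $\pi^*$.

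First I would record the two facts that drive the argument. By the definition of the induced stability condition $\sigma=(\pi^*)^{-1}(\sigma')$ (Theorem \ref{induced}), we have $Z_{\sigma}=Z_{\sigma'}\circ\pi^*$ on $K_{\num}(Y)$, and $E\in\mathcal P_{\sigma}(\phi)$ if and only if $\pi^*E\in\mathcal P_{\sigma'}(\phi)$; in particular $\phi_{\sigma'}(\pi^*(-))=\phi_{\sigma}(-)$ on $\sigma$-semistable objects. Now suppose $\sigma=\sigma'$ lies on a wall for $v$, witnessed by a short exact sequence $0\to A\to E\to E/A\to 0$ in $\mathcal P_{\sigma}(\phi)$ as above. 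Applying the exact triangulated functor $\pi^*$ yields a triangle $\pi^*A\to\pi^*E\to\pi^*(E/A)\to(\pi^*A)[1]$ all of whose terms lie in the abelian category $\mathcal P_{\sigma'}(\phi)$ (each being the pullback of a $\sigma$-semistable object of phase $\phi$); hence it is a short exact sequence there. Since $\pi$ is finite \'etale of degree two, $\pi^*$ is faithful, so $\pi^*A\neq 0$ and $\pi^*(E/A)=\pi^*E/\pi^*A\neq 0$; thus $\pi^*A$ is a proper nonzero subobject of $\pi^*E$ in $\mathcal P_{\sigma'}(\phi)$, of class $\pi^*u$. Therefore $\pi^*E$ is strictly $\sigma'$-semistable of class $\v$, so $\sigma'$ lies on a wall for $\v$. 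Moreover the defining condition $Z_{\sigma}(u)/Z_{\sigma}(v)\in\R_{>0}$ coincides with $Z_{\sigma'}(\pi^*u)/Z_{\sigma'}(\v)\in\R_{>0}$ by $Z_{\sigma}=Z_{\sigma'}\circ\pi^*$, so the two walls agree as subsets of the shared manifold. This establishes the containment.

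The converse can fail because a $\sigma'$-destabilizing subobject $B\subset\pi^*E$ on $\Y$ need not be $\iota^*$-invariant, and hence need not descend to a subobject of $E$ on $Y$ (compare the dichotomy in Lemma \ref{exclude}); such a $B$ produces a wall for $\v$ that is invisible to $v$. Granting the containment, the Observation is immediate: the walls for $\v$ form a locally finite collection of codimension-one submanifolds (Proposition \ref{prop:chambers}), so their complement is dense and open, and any $\sigma'$ in it is generic for $\v$; by the containment it is then automatically generic for $v$, and under the identification $\Stab^{\dagger}(\Y)=\Stab^{\dagger}(Y)$ we simply set $\sigma=\sigma'$. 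The one point demanding care---and the only genuine content of the argument---is the middle step: verifying that $\pi^*$ carries a genuine destabilizing subobject to a genuine one (not merely to a pseudo-wall), which is precisely the assertion that the pulled-back triangle is a short exact sequence in $\mathcal P_{\sigma'}(\phi)$ with proper nonzero sub- and quotient-objects. Everything else is a formal consequence of $Z_{\sigma}=Z_{\sigma'}\circ\pi^*$ together with the faithful exactness of $\pi^*$.
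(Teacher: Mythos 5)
Your proof is correct and is precisely the argument the paper leaves implicit by stating this as an Observation: by the definition of induced stability conditions (Theorem \ref{induced}), $Z_{\sigma}=Z_{\sigma'}\circ\pi^*$ and $E\in\PP_{\sigma}(\phi)$ if and only if $\pi^*E\in\PP_{\sigma'}(\phi)$, so a genuine destabilizing subobject $A\subset E$ on $Y$ pulls back to a genuine destabilizing subobject $\pi^*A\subset\pi^*E$ of the same phase on $\Y$, identifying each wall for $v$ with a wall for $\v$ under the identification $\Stab^{\dagger}(Y)=\Stab^{\dagger}(\Y)$. The genericity claim then follows formally from this containment of walls, exactly as you say.
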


Let us begin this section with the following corollary of all of the work above:
\begin{Cor}\label{primitive} Suppose $Y$ is unnodal, $v\in\Hal(Y,\Z)$ is primitive with $v^2\geq -1$, and  $\sigma\in\Stab^{\dagger}(Y)$ generic with respect to $v$.  Then there is a coarse moduli space $\mY$ which is a non-empty projective variety parametrizing only stable objects.  It has dimension $v^2+1$ unless $v^2=0$ and $\v$ is divisible by 2, in which case it has dimension $2=v^2+2$.
\end{Cor}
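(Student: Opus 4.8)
The plan is to harvest the three assertions from the machinery of the previous sections, using the finite pull-back morphism $\Phi$ to transport the K3 results of \cite{BaMa}. For existence, $\MY$ is an Artin stack of finite type over $\C$ by Theorem \ref{main theorem}, and it is nonempty by Theorem \ref{nonempty} and the remark following it, whose hypotheses hold for unnodal $Y$ by Theorems \ref{yosh odd} and \ref{hauz even}. Since $v$ is primitive and $\sigma$ is generic it lies on no wall, so by the dichotomy recorded after Proposition \ref{prop:chambers} there are no strictly $\sigma$-semistable objects of class $v$; hence $\MY=\MYs$ and every parametrized object is stable. By Inaba's theorem \cite{Ina02}, $\MYs$ is a $\G_m$-gerbe over a separated algebraic space $\mY$, and this space is proper precisely because $\MY=\MYs$, by the valuative-criterion lemma above.

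For projectivity I would use the morphism $\Phi\colon\mY\to\Fix$ of Proposition \ref{lagrange}, whose target is a closed subspace of the K3 Bridgeland moduli space $\mX$. The latter is projective by \cite[Theorem 1.3(a)]{BaMa}, hence so is $\Fix$. By Lemma \ref{2 to 1} the fibres of $\Phi$ consist of at most the two points $E$ and $E\otimes\omega_Y$, so $\Phi$ is quasi-finite; as a morphism of proper spaces it is proper, hence finite. A finite morphism is affine, so $\mY$ is affine over a scheme and is therefore itself a scheme, and pulling back an ample class along the finite $\Phi$ produces an ample class on $\mY$. Thus $\mY$ is a projective variety.

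The dimension I would compute by splitting on the divisibility of $\v$, governed by Lemma \ref{exclude} and the identity $(\v)^2=2v^2$. On the open locus $U=\{E:E\ncong E\otimes\omega_Y\}$ the map $\Phi$ is \'etale onto $\Fix\cap\mXs$, which is Lagrangian by the remark following Proposition \ref{lagrange}; equivalently $\Ext^2(E,E)\cong\Hom(E,E\otimes\omega_Y)^\vee=0$, so $\mY$ is smooth along $U$ with $\dim T_E\mY=\ext^1(E,E)=v^2+1$. On the complementary locus $E\cong E\otimes\omega_Y$, Lemma \ref{exclude} gives $\pi^*E\cong F\oplus\iota^*F$ with $\v=2w$ and $w=v(F)$, realizing this locus as the finite image of the K3 surface $M_{\sigma',\Y}(w)$ under $F\mapsto\pi_*F$, of dimension $w^2+2=\tfrac{v^2}{2}+2$. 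Since $\v$ is divisible by $2$ only when $v^2\equiv 0\pmod 8$ by \cite[Lemma 2.5]{Hau10}, we have $\tfrac{v^2}{2}+2<v^2+1$ exactly for $v^2\geq 8$; hence whenever $\v$ is primitive or $v^2\geq 8$ the locus $U$ is dense and $\dim\mY=v^2+1$. The remaining case is $v^2=0$ with $\v$ divisible by $2$: then $w$ is isotropic, so $\mXs=\varnothing$, forcing $U=\varnothing$ and $\dim T_E\mY=\ext^1(E,E)=v^2+2$ at every point; the identification $\mY\cong M_{\sigma',\Y}(w)/(F\sim\iota^*F)$ then gives $\dim\mY=2=v^2+2$, consistent with the structure result $\mY\cong Y$ for such $v$ recalled in Section \ref{sec:ReviewGieseker}.

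The step I expect to be the main obstacle is the dimension count in the divisible case: one must prove that the special locus $\{E\cong E\otimes\omega_Y\}$ strictly drops in dimension when $v^2>0$ and exhausts $\mY$ when $v^2=0$. This rests on the K3 input that $\mX$ carries $\sigma'$-stable objects exactly when $w=\tfrac{1}{2}\v$ is non-isotropic, together with the lattice arithmetic of \cite[Lemma 2.5]{Hau10}; that $w\notin\pi^*\Hal(Y)$ (as $v$ is primitive) is what makes the involution $\iota^*$ act freely on $M_{\sigma',\Y}(w)$ in the exceptional case. A lesser point is confirming that $\Phi$ is finite rather than merely quasi-finite onto the possibly reducible or non-reduced $\Fix$, which the properness of source and target settles.
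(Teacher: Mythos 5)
Your proof is correct, and for existence, non-emptiness and projectivity it follows the paper's route exactly: primitivity of $v$ plus genericity of $\sigma$ gives $\mY=\mYs$ proper and non-empty, and projectivity comes from the finite morphism $\Phi$ onto the closed subspace $\Fix$ of the projective K3 moduli space $\mX$ (your extra steps --- quasi-finite plus proper implies finite, a finite pull-back of an ample class is ample, an algebraic space finite over a scheme is a scheme --- are precisely what the paper leaves implicit in ``Thus $\mY$ is projective as well''). Where you genuinely diverge is in how the dimension dichotomy is closed. The paper bounds $v^2+1\leq\dim_E\mY\leq v^2+1+\hom(E,E\otimes\omega_Y)$, uses the \'etale/Lagrangian structure of $\Phi$ to conclude $\dim\mY=v^2+1$ unless every object satisfies $E\cong E\otimes\omega_Y$, and then \emph{defers} the claim that this exceptional situation forces $v^2=0$ and $2\mid\v$ to the later Theorem \ref{singular}, a forward reference. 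You instead settle it on the spot: by \cite[Lemma 2.5]{Hau10}, $2\mid\v$ with $v$ primitive forces $v^2\equiv 0\ (\mathrm{mod}\ 8)$; the fixed locus $\{E\cong E\otimes\omega_Y\}$ is dominated via $\pi_*$ by $M^s_{\sigma',\Y}(\tfrac{1}{2}\v)$ of dimension $\tfrac{v^2}{2}+2$, which is strictly less than $v^2+1$ once $v^2\geq 8$, so the locus $U=\{E\ncong E\otimes\omega_Y\}$ is dense there; and for $v^2=0$ the emptiness of $\mXs$ (the stable locus on the K3 side would have to be simultaneously dense of dimension $4$ and smooth of dimension $2$) forces $U=\varnothing$ and dimension $2=v^2+2$, with $\iota^*$ acting freely on $M^s_{\sigma',\Y}(\tfrac{1}{2}\v)$ because $\tfrac{1}{2}\v\notin\pi^*\Hal(Y,\Z)$ by primitivity of $v$. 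In effect you have inlined the relevant parts of Lemma \ref{isotrop} and Theorem \ref{singular}: the paper's version buys brevity at the cost of a forward dependency, while yours is self-contained and makes visible exactly which lattice arithmetic and which K3 input drive the exceptional case.
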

\begin{proof} Since $v$ is primitive and $\sigma$ generic, $\mYs=\mY$ is a proper algebraic space, nonempty as $Y$ is unnodal.  Proposition \ref{lagrange} shows that $\mY$ is a finite cover of $\Fix$, a (smooth) closed subvariety of the projective coarse moduli space $\mX$ (which exists and is projective by \cite[Theorem 1.3]{BaMa}).  Thus $\mY$ is projective as well.  

For the statement about dimension, note that by standard deformation theory arguments, $$v^2+1\leq \dim_E \mY\leq \dim T_E \mY=v^2+1+\hom(E,E\otimes\omega_Y)\leq v^2+2,$$ for any $E\in\mYs$.  Singular points must then satisfy $E\cong E\otimes\omega_Y$.  But then $\pi^*E=F\oplus \iota^*F$ as in Lemma \ref{exclude}, so  $$\v=v(\pi^*E)=v(F)+v(\iota^*F)=v(F)+\iota^*v(F)=2v(F).$$  Thus this is only possible if $\v$ is divisible by 2.  Now since $\Phi$ is \'{e}tale away from the fixed locus of $-\otimes\omega_Y$ and $\Fix$ is a Lagrangian submanifold, we get that $\mY$ has dimension $v^2+1$ unless it consists entirely of objects fixed by $-\otimes\omega_Y$ and has dimension $v^2+2$.  We will see in Theorem \ref{singular} that this is only possible if $v^2=0$ and $\v$ is divisible by 2.
\end{proof}
\begin{Rem} The above proof still works for nodal $Y$ as long as $\MY(\C)\neq\varnothing$ and $\sigma=(\pi^*)^{-1}(\sigma')$ can be chosen such that $\sigma'$ is generic with respect to $\v$ or projective coarse moduli spaces for non-generic $\sigma'$ on K3 surfaces are constructed.
\end{Rem}

Armed with the above preparation, we can begin to tackle the non-primitive case.  Let $v=mv_0\in\Hal(Y,\Z)$ with $v_0$ primitive and $m\in\Z_{>0}$.  By Lemma \ref{m vector} we must assume $v_0^2\geq -1$ for $Y$ unnodal.  As a warm-up we begin with the cases with $v_0^2\leq 0$.

\begin{Lem}\label{point} Assume $v_0^2=-1$.  Then for all $\sigma\in\Stab^{\dagger}(Y)$ generic with respect to $v$, $\MY$ admits a projective coarse moduli space $\mY$ consisting of $m+1$ points.
\end{Lem}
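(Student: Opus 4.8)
The plan is to reduce the entire statement to the single primitive class $v_0$ (the case $m=1$) and then read off the higher multiples from the combinatorics of Jordan--H\"older factors. Throughout I would use that genericity of $\sigma$ with respect to $v=mv_0$ already forces genericity with respect to $v_0$: if some $E$ of class $v_0$ were strictly $\sigma$-semistable, then $E^{\oplus m}$ would be a strictly $\sigma$-semistable object of class $v$, so $\sigma$ would lie on a wall for $v$ by Proposition \ref{prop:chambers}.

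First I would record a rigidity statement. Let $E$ be $\sigma$-stable of class $v_0$. Running the computation in Lemma \ref{m vector}, if $\hom(E,E\otimes\omega_Y)=1$ then $E\cong E\otimes\omega_Y$ by stability, whence $\pi^*E\cong F\oplus\iota^*F$ by Lemma \ref{exclude} and
\[
-2=(\pi^*v_0)^2=2\,v_0^2=4\,v(F)^2,
\]
which is impossible over $\Z$. Hence $\hom(E,E\otimes\omega_Y)=0$, so $E\not\cong E\otimes\omega_Y$ and $\ext^1(E,E)=v_0^2+1=0$; the same bookkeeping with $\chi(E,E\otimes\omega_Y)=-v_0^2=1$ gives $\ext^1(E,E\otimes\omega_Y)=0$. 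Thus the $\sigma$-stable objects of class $v_0$ are weakly-spherical and pairwise rigid.

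For $m=1$, Lemma \ref{exclude} now shows $F:=\pi^*E$ is $\sigma'$-stable spherical of class $\pi^*v_0$, and any two $\sigma'$-stable objects of phase $\phi(v_0)$ and class $\pi^*v_0$ are isomorphic: if $F\not\cong F'$ then $\hom(F,F')=\ext^2(F,F')=0$, so $\chi(F,F')=-\ext^1(F,F')$, and since $\chi(F,F')=-(\pi^*v_0)^2=2$ this forces $\ext^1(F,F')=-2<0$, absurd. Since $\iota^*$ acts trivially on $\Hal(\tilde Y,\Z)$ for unnodal $Y$, the object $\iota^*F$ is $\sigma'$-stable of the same class and phase, hence $\iota^*F\cong F$ and $F\in\Fix$. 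By Proposition \ref{lagrange} the induced map $\Phi\colon M_{\sigma,Y}(v_0)\to\{F\}$ is an \'etale $2{:}1$ cover (\'etale because no point satisfies $E\cong E\otimes\omega_Y$), so, using Lemma \ref{2 to 1} and the nonemptiness from Corollary \ref{primitive}, $M_{\sigma,Y}(v_0)=\{E_0,\,E_0\otimes\omega_Y\}$ consists of exactly two reduced points. For general $m$, genericity guarantees that every Jordan--H\"older factor of a $\sigma$-semistable object of class $mv_0$ has phase $\phi(v_0)$ and Mukai vector on the ray $\R_{>0}v_0$ (by the consequence of Proposition \ref{prop:chambers}), hence of the form $kv_0$ with $k\in\Z_{>0}$; a stable factor of class $kv_0$ has $(kv_0)^2=-k^2\geq-1$ by Lemma \ref{m vector} (no spherical objects on unnodal $Y$), so $k=1$ and the factor is $E_0$ or $E_0\otimes\omega_Y$. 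Thus each semistable object is an iterated extension of $a$ copies of $E_0$ and $b=m-a$ copies of $E_0\otimes\omega_Y$, and two such are $S$-equivalent exactly when the multiplicity $a\in\{0,\dots,m\}$ agrees. This yields precisely $m+1$ $S$-equivalence classes, each realized by the $\sigma$-semistable polystable object $E_0^{\oplus a}\oplus(E_0\otimes\omega_Y)^{\oplus b}$, which is rigid by the vanishing of $\ext^1$ among its summands established above. Granting properness of the semistable moduli, $\mY$ is then $m+1$ reduced isolated points, i.e.\ a $0$-dimensional projective variety.

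The step I expect to require the most care is the base case $m=1$: pinning down that there are \emph{exactly} two objects of class $v_0$ rests on the uniqueness of the stable spherical object upstairs together with the \'etale $2{:}1$ structure of $\Phi$ from Proposition \ref{lagrange}, and on checking that the genericity hypotheses upstairs and downstairs are compatible. The only genuinely external input for the final clause is properness of the moduli space of $\sigma$-semistable objects (e.g.\ via a Langton-type semistable reduction), after which the finiteness of the underlying point set makes projectivity automatic.
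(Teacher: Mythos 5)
Your proposal is correct and follows essentially the same route as the paper's proof: reduce to the primitive class, pass to the K3 cover where $\pi^*v_0$ is spherical, show there is a unique $\sigma'$-stable object $F$ upstairs, deduce via Lemmas \ref{2 to 1} and \ref{exclude} that downstairs there are exactly two rigid stable objects $E_0$ and $E_0\otimes\omega_Y$, and then identify the semistable objects of class $mv_0$ with the $m+1$ polystable direct sums. The differences are cosmetic: where the paper cites \cite[Lemma 7.1]{BaMa} both for the uniqueness of the stable spherical object and (for $m>1$) for the classification upstairs followed by an induction downstairs, you prove the uniqueness directly by the $\hom/\ext^2$-vanishing argument and handle $m>1$ by a Jordan--H\"older analysis downstairs, using Lemma \ref{m vector} together with the absence of spherical objects on an unnodal surface to force every stable factor to have class exactly $v_0$; and where the paper obtains existence by descending the $\iota^*$-fixed object $F_0$, you quote nonemptiness from Corollary \ref{primitive}. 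Both routes end with the same $\ext^1$-vanishing computation and the same $m+1$ reduced points.

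One step needs repair. Your justification that genericity with respect to $v=mv_0$ forces genericity with respect to $v_0$ runs: a strictly $\sigma$-semistable $E$ of class $v_0$ would make $E^{\oplus m}$ strictly $\sigma$-semistable of class $v$, ``so $\sigma$ would lie on a wall for $v$ by Proposition \ref{prop:chambers}.'' For non-primitive $v$ (i.e.\ $m\geq 2$) that last inference is false: strictly semistable objects of class $v$, such as $E_0^{\oplus m}$ itself, exist for \emph{every} $\sigma$, wall or no wall; the characterization ``$\sigma$ lies on a wall if and only if a strictly semistable object of that class exists,'' quoted in the paper from \cite{Bri08}, holds only for primitive classes. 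The statement you want is nevertheless true, and the fix uses exactly the ingredient you invoke later anyway: if $E$ of class $v_0$ were strictly semistable, its stable factors would have Mukai vectors of the same phase as $v_0$ but off the ray $\R_{>0}v_0=\R_{>0}v$, and these are also stable factors of the semistable object $E^{\oplus m}$ of class $v$, contradicting the consequence of the pseudo-wall construction that, for $\sigma$ in a chamber for $v$, all such vectors lie on $\R_{>0}v$. Alternatively you can sidestep the issue entirely: nonemptiness of $\MMM_{\sigma,Y}(v)$ from Theorem \ref{nonempty} applied to $v$ itself, combined with your Jordan--H\"older analysis, already produces a stable object of class $v_0$, so Corollary \ref{primitive} (and hence genericity with respect to $v_0$) is never needed.
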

\begin{proof}  The proof of \cite[Lemma 7.1]{BaMa} shows that for $m=1$ the stack $\MX$ is a $\mathbb G_m$-gerbe over a point representing a single object $F_0$, which must be spherical and fixed by $\iota^*$.  Thus it descends to an object $E_0$ in $\mYs=\mY$.  By Lemma \ref{exclude} we must have $\Hom(E_0,E_0\otimes\omega_Y)^{\vee}=\Ext^2(E_0,E_0)=0$ since $F_0$ is stable.  Thus $\MY$ is smooth at $E_0$.  It follows that $\mY$ consists of two reduced points $E_0$ and $E_0\otimes \omega_Y$.  

If $m>1$, then the argument in \cite[Lemma 7.1]{BaMa} shows that every $\sigma'$-semistable object with Mukai vector $\v$ must be of the form $F_0^{\oplus m}$.  We notice that $$\ext^1(E_0,E_0)=\ext^1(E_0\otimes\omega_Y,E_0\otimes\omega_Y)=\ext^1(E_0,E_0\otimes\omega_Y)=\ext^1(E_0\otimes\omega_Y,E_0)=0.$$  Indeed  \begin{align*}-1=v_0^2&=(v(E_0),v(E_0\otimes\omega_Y))\\&=\ext^1(E_0,E_0\otimes\omega_Y)-\hom(E_0,E_0\otimes\omega_Y)-\hom(E_0,E_0)\\&=\ext^1(E_0,E_0\otimes\omega_Y)-1.\end{align*}  By genericity of $\sigma$, all stable factors of an element of $\mY$ must have Mukai vector $m'v_0$ for $m'<m$, so by induction we conclude that the only $\sigma$-semistable objects with Mukai vector $v$ are precisely $E_0^{\oplus m},E_0^{\oplus m-1}\oplus (E_0\otimes \omega_Y),...,E_0\oplus (E_0\otimes\omega_Y)^{\oplus m-1},(E_0\otimes\omega_Y)^{\oplus m}.$
\end{proof}

\begin{Lem}\label{isotrop} Assume that $v_0^2=0$.  Let $\sigma$ be generic with respect to $v$.  Then:
\begin{enumerate}
\item for $m=1$, $\mY$ is an irreducible smooth projective curve if $\v$ is primitive or isomorphic to $Y$ itself if $\v$ is divisible by 2.

\item for $m>1$, 
\begin{itemize}
\item if $\v$ is primitive, then a projective coarse moduli space $\mY$ exists and $$\mY\cong\coprod_{2m_1+m_2=m} \Sym^{m_1}(M^s_{\sigma,Y}(2v_0))\times \Sym^{m_2}(M_{\sigma,Y}(v_0)).$$
\item if $\v$ is divisible by 2, then a projective coarse moduli space $\mY$ exists and $$\mY\cong \Sym^m(M_{\sigma,Y}(v_0)).$$
\end{itemize}
\end{enumerate}
\end{Lem}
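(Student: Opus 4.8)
The plan is to reduce everything to the K3 universal cover $\Y$ through the morphism $\Phi$ of Proposition \ref{lagrange}, combining the well-understood structure of Bridgeland moduli of \emph{isotropic} Mukai vectors on K3 surfaces with the $\iota^*$-equivariant bookkeeping of Lemmas \ref{2 to 1} and \ref{exclude}. Since $(\pi^*v_0)^2=2v_0^2=0$, on $\Y$ we are in the isotropic case throughout, and the dichotomy ``$\pi^*v_0$ primitive versus divisible by $2$'' will govern the two regimes of the statement.

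For $m=1$ the vector $v=v_0$ is primitive, so Corollary \ref{primitive} already furnishes $\mY$ as a smooth projective variety of stable objects, of dimension $1$ when $\pi^*v_0$ is primitive and of dimension $2$ when $\pi^*v_0$ is divisible by $2$. To identify it, I would deform $\sigma$ into the Gieseker chamber: by the construction in Theorem \ref{nonempty} together with Theorem \ref{BG comp} there is a generic $\sigma$ for which $\mY$ equals a Gieseker moduli space $M_{\omega}(v_0,L)$, and for $v_0^2=0$ its structure is recorded in \cite{Nue14a} --- a smooth irreducible elliptic curve when $\gcd(r,c,2s)=1$ and a copy of $Y$ when $\gcd(r,c,2s)=2$, these two cases being exactly $\pi^*v_0$ primitive and $\pi^*v_0$ divisible by $2$ by \cite[Lemma 2.5]{Hau10}. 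That the isomorphism type does not depend on the chamber is most transparent from the K3 picture: for every generic $\sigma'$ the space $M_{\sigma',\Y}(\pi^*v_0)$ is a K3 surface, and $\Phi$ exhibits $\mY$ either as an \'{e}tale double cover of the fixed curve $\Fix$ (when $\pi^*v_0$ is primitive, there being no points with $E\cong E\otimes\omega_Y$), or as the quotient $M_{\sigma',\Y}(\tfrac{1}{2}\pi^*v_0)/\iota^*$ by a \emph{free} involution (when $\pi^*v_0=2u$ is divisible; here $u=\tfrac12\pi^*v_0\notin\pi^*\Hal(Y,\Z)$, since otherwise $v_0$ would be divisible, so no class-$u$ object can be $\iota^*$-fixed).

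For $m>1$ every $\sigma$-semistable object of class $mv_0$ is strictly semistable, and $\mY$ parametrizes $S$-equivalence classes, i.e. polystable objects. The crucial input on $\Y$ is that for a primitive isotropic class $w$ and generic $\sigma'$, two distinct $\sigma'$-stable objects $F\ne F'$ of class $w$ have $\ext^1(F,F')=0$: indeed $\hom(F,F')=\hom(F',F)=0$ by stability, whence $\ext^1(F,F')=-\chi(F,F')=(w,w)=0$. Consequently every semistable object of class $kw$ is a direct sum of stable objects of class $w$, so $M_{\sigma',\Y}(kw)\cong\Sym^k(M_{\sigma',\Y}(w))$. Taking $w=\pi^*v_0$ (primitive case) or $w=\tfrac12\pi^*v_0$ (divisible case), I would pull back a polystable $E$ of class $mv_0$ to $\pi^*E=\bigoplus_iF_i$ with $F_i$ stable of class $w$ and the multiset $\{F_i\}$ invariant under $\iota^*$. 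Separating the $\iota^*$-fixed summands from the $\iota^*$-pairs and descending via Lemma \ref{exclude}, each fixed $F_i\in\Fix$ comes from a stable $E_i$ of class $v_0$ with $E_i\not\cong E_i\otimes\omega_Y$, while each pair $\{F_k,\iota^*F_k\}$ descends to $\pi_*F_k$, stable with $E_k\cong E_k\otimes\omega_Y$, of class $2v_0$ in the primitive case and of class $v_0$ in the divisible case. Writing $m_1$ for the number of pairs and $m_2$ for the number of fixed summands, one gets $2m_1+m_2=m$ and the asserted decomposition in the primitive case; in the divisible case $\iota^*$ acts freely on $M_{\sigma',\Y}(\tfrac12\pi^*v_0)$, so only pairs occur and one lands in $\Sym^m(M_{\sigma,Y}(v_0))$ with $M_{\sigma,Y}(v_0)\cong Y$ from the $m=1$ analysis.

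The main obstacle is to upgrade these bijections on closed points to isomorphisms of \emph{projective coarse moduli spaces} without invoking the general projectivity theorem, which this lemma is meant to help establish. I would obtain existence and projectivity by descent from $\Y$: the constructions above are compatible with families, and the target --- a disjoint union of symmetric products of the already constructed projective varieties $M_{\sigma,Y}(v_0)$ and $M^s_{\sigma,Y}(2v_0)$ --- is projective, so the remaining task is to verify that this variety corepresents the moduli functor, precisely as in the corresponding isotropic-class argument on K3 surfaces in \cite{BaMa}. The secondary subtlety, chamber-independence of the isomorphism type for these isotropic vectors, I would settle through the K3 description as indicated above.
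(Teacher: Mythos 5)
Your overall route coincides with the paper's (pull back along $\pi$, sort stable summands into $\iota^*$-pairs and $\iota^*$-fixed ones via Lemmas \ref{2 to 1} and \ref{exclude}, descend to get the $2m_1+m_2=m$ decomposition, and deduce projectivity from finiteness of $\mY\to\mX$), but your ``crucial input on $\Y$'' is not established by the argument you give, and this is a genuine gap. The vanishing $\ext^1(F,F')=0$ for \emph{distinct} stable $F,F'$ of class $w$ does not imply that every semistable object of class $kw$ is (S-equivalent to) a direct sum of stables of class $w$: what must be excluded is the existence of a $\sigma'$-\emph{stable} object $G$ of class $k_iw$ with $k_i\geq 2$, and no Ext computation among class-$w$ objects can do this, since such a $G$ need not be an extension of class-$w$ objects at all (indeed $\hom(F,G)=\ext^2(F,G)=0$ and $\chi(F,G)=0$ give no contradiction). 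That this is substantive and not a formality is shown by the Enriques side itself: stable objects of class $2v_0$ on $Y$ \emph{do} exist (namely $\pi_*F$ for $F$ stable of class $\pi^*v_0$ with $F\ncong\iota^*F$), despite class-$v_0$ stables being rigid relative to one another; the presence of such objects downstairs versus their absence upstairs is exactly the dichotomy that produces the $\Sym^{m_1}(M^s_{\sigma,Y}(2v_0))$ factors. The paper takes the K3 statement $\mX\cong\Sym^m(M_{\sigma',\Y}(\pi^*v_0))$ as a black box from \cite[Lemma 7.2(b)]{BaMa} (whose proof uses the Fourier-Mukai transform to the moduli K3, not Ext computations), and then deduces $\mXs=\varnothing$ for $m>1$ by a dimension count: a nonempty stable locus would be dense open in the irreducible $2m$-dimensional space $\Sym^m(\cdot)$ yet smooth of dimension $(m\pi^*v_0)^2+2=2$. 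Once you replace your derivation by this citation, the rest of your part (b) matches the paper's proof, including the finiteness argument for projectivity (the paper makes your ``corepresentability'' step concrete by observing that $\pi^*\colon\mY\to\mX$ is quasi-finite and proper, hence finite, with projective target).

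For part (a) your detour through the Gieseker chamber also has an unproved step: chamber-independence of the isomorphism type. The paper's wall-crossing machinery (Section \ref{sec: wall-crossing}) is developed only for primitive $v$ with $v^2\geq 1$, so it is not available for isotropic classes, and your ``K3 picture'' argument only shows that in every chamber $\mY$ has the right \emph{shape} (a double \'{e}tale cover of a curve, resp.\ a free quotient of a K3, hence \emph{some} Enriques surface); it shows neither irreducibility of the curve nor that the Enriques surface is $Y$ itself outside the Gieseker chamber, which is what the lemma asserts. The paper avoids this entirely by working in an arbitrary generic chamber from the start: Corollary \ref{primitive} and \cite[Lemma 7.2(a)]{BaMa} produce the fixed-point-free 2:1 covering $\pi_*\colon M_{\sigma',\Y}(\frac{1}{2}\pi^*v)\to\mY$ directly, and then irreducibility and the identification $\mY\cong Y$ are obtained by running the arguments of \cite[Section 8]{Nue14a} verbatim in that chamber, rather than by transporting the Gieseker-chamber answer across walls. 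So either supply a wall-crossing statement for isotropic vectors (which you do not have) or restructure the $m=1$ argument as the paper does.
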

\begin{proof}  Corollary \ref{primitive} shows that for $m=1$, $\mY=\mYs$ is a non-empty smooth projective variety of dimension 1 if $\v$ is primitive.  If $\v$ is divisible by 2, then $\frac{1}{2}\v$ is primitive and isotropic, so $M_{\sigma',\tilde{Y}}(\frac{1}{2}\v)$ is a smooth projective K3 surface parametrizing stable objects by \cite[Lemma 7.2(a)]{BaMa}.  Then $$\pi_*:M_{\sigma',\tilde{Y}}(\frac{1}{2}\v)\to \mY,$$  gives a 2-to-1 covering, which can easily see to be \'{e}tale.  Indeed, only $F$ and $\iota^*F$ are sent to the same object $E$, which is necessarily stable, so there cannot be any object of $M_{\sigma',\tilde{Y}}(\frac{1}{2}\v)$ fixed by $\iota^*$.  This gives a component of $\mY$ which is the quotient of a projective K3 surface by a fixed-point free involution, i.e. an Enriques surface.  Irreducibility in both cases and the fact that $\mY$ is in fact isomorphic to $Y$ itself in case $\v$ is divisible by 2 follow exactly as in Section 8 of \cite{Nue14a}.  This proves part (a).

 For the proof of (b), first consider the case recall that $\mX\cong\Sym^m(M_{\sigma',\Y}(\pi^*v_0))$ from \cite[Lemma 7.2(b)]{BaMa}.  It follows that the stable locus $\mXs=\varnothing$ since on the one hand it would be a dense open subset of $\mX$ which has dimension $2m$, and on the other hand it would also have to be smooth of dimension $(\pi^*mv_0)^2+2=2$, which is impossible for $m>1$.  Thus for any $E\in\mYs$, we would have to be in the exceptional case of Lemma \ref{exclude}, so $E\cong E\otimes\omega_Y$ (and the same is true on the entire component containing $E$) and $\pi^*E\cong F\oplus \iota^*F$ where $F$ is stable of Mukai vector $\frac{m}{2}\pi^*v_0$ and $F\ncong \iota^*F$.  As noted above, for stable objects to exist on $\Y$ we must have $m/2=1$, i.e. $m=2$.  As for the semistable locus, it follows from the genericity of $\sigma$ that any stable factor of a semistable object $E\in \mY$ must have Mukai vector $m'v_0$ for $m'<m$.  Repeating the above argument inductively, we find that a canonical representative of the $S$-equivalence class of an object $E\in \mY$ is a direct sum of objects in $M^s_{\sigma,Y}(2v_0)$ and objects in $M_{\sigma,Y}(v_0)$.  Thus the coarse moduli space parametrizing $S$-equivalence classes is $$\coprod_{2m_1+m_2=m} \Sym^{m_1}(M^s_{\sigma,Y}(2v_0))\times \Sym^{m_2}(M_{\sigma,Y}(v_0)).$$  

Since the morphism $\pi^*$ from $\mY$ to $\mX$ is quasi-finite (as follows from the above decomposition) and proper (as the two Artin stacks themselves were proper), we find that the morphism of coarse moduli spaces is finite.  Thus $\mY$ is projective.

Now consider the second case in (b).  As usual, from the genericity of $\sigma$ it follows that any stable factors of an object in $\mY$ must be of the form $m'v_0$ for $m'<m$.  By the arguments above, $\mYs=\varnothing$ for $m>1$, so it follows that the $S$-equivalence classes of the objects in $\mY$ are represented by $\Sym^m(M_{\sigma,Y}(v_0))$.  
\end{proof}

We can now generalize the above argument to show projectivity in general:
\begin{Thm}\label{projective general} Let $v=mv_0$, $m>0$, be a Mukai vector with $v_0$ primitive and $v_0^2>0$.  Then a projective coarse moduli space $\mY$ exists.
\end{Thm}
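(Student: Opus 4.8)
The plan is to reduce once more to the K3 cover $\tilde Y$ through the morphism $\Phi$ and to leverage the projectivity of the corresponding moduli space there, exactly as in the proof of Lemma~\ref{isotrop}(b) but now without an explicit symmetric-product description. By Observation~\ref{gen un} we may take $\sigma=\sigma'$ generic with respect to both $v$ on $Y$ and $\v$ on $\tilde Y$. By \cite[Theorem 1.3]{BaMa}, $\mX$ exists as a normal projective variety whose closed points are the $S$-equivalence classes of $\sigma'$-semistable objects of class $\v$; the covering involution acts on it, and, by the very definition of $\Fix$ in the footnote to Section~6 as a fibre product over the fixed subscheme, the good moduli space of $\Fix$ is the fixed locus $\mX^{\iota}\subset\mX$, a closed and hence projective subvariety.

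First I would construct $\mY$ as a proper good moduli space of the stack $\MY$. After invoking Lemma~\ref{lem:Yukinobualgebraic} and Remark~\ref{phi one} we may assume $\sigma$ algebraic with $\phi=1$, without changing the sets of semistable and stable objects, so that the heart $\mathcal P(1)$ is of finite length; every $\sigma$-semistable $E$ of class $v$ then has a finite Jordan--H\"older filtration with polystable representative $\bigoplus_i E_i^{\oplus a_i}$, where the $E_i$ are $\sigma$-stable with $v(E_i)=m_iv_0$ and $\sum_i a_im_i=m$. The stack $\MY$ is of finite type over $\C$ by Theorem~\ref{main theorem} and satisfies the valuative criterion of properness established above. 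This is precisely the setting in which a good moduli space parametrizing $S$-equivalence classes exists as a proper algebraic space, by the construction of \cite{BaMa} on the K3 side transferred through $\Phi$ (equivalently, by a general good-moduli-space existence theorem, with properness supplied by the valuative criterion).

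Next I would descend $\Phi$ to coarse spaces. The exact functor $\pi^*$ carries a Jordan--H\"older filtration of $E$ in $\mathcal P_\sigma(\phi)$ to one of $\pi^*E$ in $\mathcal P_{\sigma'}(\phi)$; by Lemma~\ref{exclude} each stable factor $E_i$ either pulls back to a $\sigma'$-stable object (when $E_i\not\cong E_i\otimes\omega_Y$) or splits as $F_i\oplus\iota^*F_i$ with $F_i$ stable and $F_i\not\cong\iota^*F_i$ (when $E_i\cong E_i\otimes\omega_Y$). In either case $S$-equivalent objects on $Y$ have $S$-equivalent pull-backs, so $\Phi$ descends to a morphism $\Phi\colon\mY\to\mX^{\iota}$; it is surjective because $\MY\to\Fix$ is onto by Proposition~\ref{lagrange} (and every $\iota^*$-invariant polystable object descends by \cite[Proposition 2.5]{BM}). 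Finally I would show $\Phi$ is finite: properness is automatic since $\mY$ is proper and $\mX^{\iota}$ is separated, while for quasi-finiteness I fix $[F]\in\mX^{\iota}$ and write its polystable representative $F\cong\bigoplus_j G_j^{\oplus b_j}$ with $G_j$ stable. The action of $\iota^*$ permutes the summands $G_j$ preserving multiplicities, and an $S$-equivalence class on $Y$ over $[F]$ is assembled by choosing, for each $\iota^*$-fixed $G_j$, how many of its $b_j$ copies descend as $E_j$ rather than $E_j\otimes\omega_Y$ (Lemma~\ref{2 to 1}), and by taking the single object $\pi_*G_j$ for each $\iota^*$-swapped pair $\{G_j,\iota^*G_j\}$ (Lemma~\ref{exclude}). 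Since $F$ has finitely many stable summands there are only finitely many such classes, so the fibres of $\Phi$ are finite; being quasi-finite and proper, $\Phi$ is finite, and as $\mX^{\iota}$ is projective so is $\mY$.

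The main obstacle I anticipate is not the finiteness of $\Phi$, which merely generalizes the polystable bookkeeping already performed for $v_0^2=0$, but the clean construction of the coarse space $\mY$ in the presence of genuinely strictly semistable objects. In the primitive case of Corollary~\ref{primitive} we could invoke Inaba's algebraic space of \emph{stable} objects and the finite cover $\Phi^s$; here one must instead produce a good moduli space for $S$-equivalence classes and verify that the $S$-equivalence relation on $Y$ matches, under $\pi^*$ and the $\iota^*/\!\otimes\omega_Y$ dictionary of Lemma~\ref{exclude}, the relation on $\tilde Y$ that \cite{BaMa} already coarsely represents. Once that compatibility is in hand, the finiteness of $\Phi$ transports projectivity from $\mX$ to $\mY$.
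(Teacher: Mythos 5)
Your proposal is correct and follows essentially the same route as the paper: genericity forces all stable factors to have Mukai vector $m'v_0$ with $m'<m$, the $\iota^*$/$\otimes\,\omega_Y$ dictionary (Lemmas \ref{2 to 1} and \ref{exclude}) shows that objects with $S$-equivalent pull-backs have stable factors agreeing up to $\omega_Y$-twists, so the induced map $\pi^*\colon\mY\to\mX$ is quasi-finite and proper, hence finite, and projectivity descends from $\mX$ via \cite[Theorem 1.3]{BaMa}. The only notable difference is that you are more explicit about first constructing $\mY$ as a proper (good) moduli space of $S$-equivalence classes before invoking finiteness, a step the paper's proof passes over silently.
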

\begin{proof} As usual we notice that the genericity of $\sigma$ means that any stable factor of an object of $\mY$ must have Mukai vector $m'v_0$ for $m'<m$, which implies that the strictly semistable locus is the image of the natural map $$\SSL: \coprod_{m_1+m_2=m,m_i>0} M_{\sigma,Y}(m_1v_0)\times M_{\sigma,Y}(m_2v_0)\rightarrow \mY.$$  Now if for two strictly semistable objects $E$ and $E'$, $\pi^*E$ and $\pi^*E'$ are $S$-equivalent, then the stable factors coincide and appear with the same multiplicities in the graded object.  But a stable factor of $E$ (or $E'$) remains stable after pull-back unless it is fixed under $-\otimes\omega_Y$.  For such a stable factor $S$ we have stable $Q\in \Db(\Y)$ such that $S\cong \pi_*(Q)$, or equivalently $\pi^*S=Q\oplus \iota^*Q$ with $Q\ncong \iota^*Q$.  All of this implies that $E$ and $E'$ have the same stable factors that are fixed under $-\otimes\omega_Y$, with the same multiplicites, and their stable factors that are not invariant under $-\otimes\omega_Y$ can only differ by tensoring with $\omega_Y$.  Thus again the proper morphism $$\pi^*:\mY\rightarrow\mX$$ between coarse moduli spaces is quasi-finite and thus finite.  Since the latter is projective by \cite[Theorem 1.3]{BaMa}, $\mY$ must be projective as well.  
\end{proof}

\begin{table}[ht]
\caption{Dimension of moduli spaces and their semistable loci}
\begin{tabular}{c c c c c c}
\hline
$v_0^2$ & $\v_0$ & $m$ & $\mYs$ & $\dim \mY$ & $\codim \mYss$ \\
\hline
-1 & $-$ & 1& $\neq\varnothing$ & $0=v^2+1$ & $\infty$\\
-1 & $-$ & $>1$ & $\varnothing$ & $0\neq v^2+1$ & 0\\
0 & primitive & 1 & $\neq\varnothing$ & $1=v^2+1$ & $\infty$\\
0 & primitive & 2 & $\neq\varnothing$ & $2\neq v^2+1$ & 0\\
0 & primitive & $>2$ & $\varnothing$ & $m\neq v^2+1$ & 0\\
0 & non-primitive & 1 & $\neq\varnothing$ & $2\neq v^2+1$ & $\infty$\\
0 & non-primitive & $>1$ & $\varnothing$ & $2m\neq v^2+1$ & 0\\
1 & $-$ & $1,>2$ & $\neq\varnothing$ & $v^2+1$ & $\infty,>1$\\
1 & $-$ & 2 & $\neq\varnothing$ & $v^2+1$ & 1\\
$>1$ & $-$ & $m\geq 1$ & $\neq\varnothing$ & $v^2+1$ & $>1$\\

\hline
\end{tabular}
\label{table:dimension}
\end{table}

Inspired by \cite[Theorem 2.15]{BaMa13}, we can use the above technique to determine the dimension of $\mY$ and of its semistable locus, as well as to ensure the existence of stable objects.
\begin{Thm}\label{num dim} Let $v=mv_0$ be a Mukai vector with $v_0$ primitive and $m>0$ with $\sigma\in\Stab^{\dagger}(Y)$ generic with respect to $v$.  
\begin{enumerate}
\item The coarse moduli space $\mY\neq \varnothing$ if and only if $v_0^2\geq -1$.
\item The dimension and codimension of $\mY$ and $\mYss$, respectively, follow Table \ref{table:dimension}

\end{enumerate}
\end{Thm}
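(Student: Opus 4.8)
The plan is to handle Part (1) directly and then prove the dimension table in Part (2) by induction on $m$, citing the explicit descriptions already obtained for $v_0^2\le 0$ and concentrating the real work on $v_0^2\ge 1$. For Part (1), the direction $v_0^2\ge -1\Rightarrow\mY\neq\varnothing$ is precisely Theorem \ref{nonempty} together with the remark following it, since on an unnodal $Y$ the required non-emptiness of Gieseker moduli is furnished by Theorems \ref{yosh odd} and \ref{hauz even}. For the converse I would take any $\sigma$-semistable object of class $v$ and one of its $\sigma$-stable Jordan--H\"older factors $F$; genericity forces $v(F)=m'v_0$ with $1\le m'\le m$, and Lemma \ref{m vector} (there are no spherical objects on an unnodal $Y$) gives $m'^2v_0^2=v(F)^2\ge -1$, so $v_0^2\ge -1$ because $v_0^2\in\Z$.

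For Part (2) the rows with $v_0^2=-1$ and $v_0^2=0$ are read off from Lemmas \ref{point} and \ref{isotrop}, so I focus on $v_0^2\ge 1$, where the assertions reduce to $\dim\mY=v^2+1$, $\mYs\neq\varnothing$, and $\codim\mYss=2(m-1)v_0^2-1$ (read as $\infty$ when $m=1$, there being no strictly semistable objects). The base case $m=1$ is Corollary \ref{primitive}: since $v^2=v_0^2>0$ its exceptional clause does not occur, so $\mY=\mYs$ is smooth projective of dimension $v^2+1$. Assuming the statement for all $m'<m$, I would first establish the upper bound $\dim\mY\le v^2+1$ structurally. The strictly semistable locus is the image of $$\SSL:\coprod_{m_1+m_2=m,\,m_i>0}M_{\sigma,Y}(m_1v_0)\times M_{\sigma,Y}(m_2v_0)\longrightarrow\mY$$ from Theorem \ref{projective general}; this map is generically finite onto its image, so by the inductive equality $\dim M_{\sigma,Y}(m_iv_0)=m_i^2v_0^2+1$ its image has dimension at most $\max_{m_1+m_2=m}[(m_1^2+m_2^2)v_0^2+2]<v^2+1$. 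On the other hand $\Phi^s$ is finite, and it carries $\mYs$ either into the isotropic subvariety $\Fix\cap\mXs$ of the K3 moduli space (Lemma \ref{isotropic}; here $(\v)^2=2v^2$, so $\dim\mX=2v^2+2$ by \cite{BaMa13}) or into the locus of $\otimes\omega_Y$-fixed stable objects, and both have dimension $\le v^2+1$; hence $\dim\mYs\le v^2+1$.

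Next I would pin down the codimension. The stratum of $\mathrm{im}(\SSL)$ coming from a splitting $(m_1,m_2)$ has dimension $(m_1^2+m_2^2)v_0^2+2$, i.e.\ codimension $2m_1m_2v_0^2-1$ in $\mY$, which is smallest for the most unbalanced splitting $(1,m-1)$; this yields $\codim\mYss=2(m-1)v_0^2-1$. Tightness requires the $(1,m-1)$-stratum to attain its dimension, which holds because $M_{\sigma,Y}(v_0)=M^s_{\sigma,Y}(v_0)$ by Corollary \ref{primitive} and $M_{\sigma,Y}((m-1)v_0)$ has non-empty stable locus by induction, so a generic pair of non-isomorphic stable objects has $S$-equivalence class determined by the unordered pair and the map is generically finite. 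Comparing $2(m-1)v_0^2-1$ with the last column of the table (equal to $1$ exactly when $v_0^2=1$ and $m=2$, and $>1$ otherwise) produces every remaining entry.

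The genuinely hard point is the matching lower bound $\dim\mY\ge v^2+1$, equivalently $\mYs\neq\varnothing$, since the strictly semistable locus is now known to have strictly smaller dimension. I would obtain it in a large-volume chamber exactly as in Theorem \ref{nonempty}: there is a generic $\sigma_{\omega',0}=(\pi^*)^{-1}(\sigma'_{\omega,0})$ with $M_{\sigma_{\omega',0},Y}(v)=M_{\omega'}(v)$ by Theorem \ref{BG comp}, and this Gieseker space contains a $\mu_{\omega'}$-stable sheaf $E_0$ with $E_0\ncong E_0\otimes\omega_Y$ (existence of slope-stable sheaves for $v_0^2>0$ and generic polarisation). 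At such an $E_0$ one has $\ext^2(E_0,E_0)=\hom(E_0,E_0\otimes\omega_Y)=0$, so the space is unobstructed of local dimension $\ext^1(E_0,E_0)=v^2+1$; thus $\dim M_{\sigma_{\omega',0},Y}(v)=v^2+1$. Since the Joyce/motivic invariant of the proper coarse space is independent of the generic stability condition (Theorem \ref{nonempty}) and determines its dimension, $\dim\mY=v^2+1$ for every generic $\sigma$, and $\mYs\neq\varnothing$ follows. The existence of this one non-$\otimes\omega_Y$-fixed stable sheaf is the crux: it simultaneously forces the exact dimension and the non-emptiness of the stable locus, and it is the only input that cannot be produced formally from the K3 cover together with the inductive bookkeeping.
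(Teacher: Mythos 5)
Most of your proposal coincides with the paper's own argument: Part (1), the reduction of the $v_0^2\le 0$ rows to Lemmas \ref{point} and \ref{isotrop}, and the codimension bookkeeping via the map $\SSL$ are exactly what the paper does. The genuine gap is in the crux you yourself flag, namely the lower bound $\dim\mY\ge v^2+1$ and non-emptiness of $\mYs$ for $m>1$. Your route rests on two claims that are not available. First, the existence of a $\mu_{\omega'}$-stable sheaf $E_0$ of the \emph{non-primitive} class $v=mv_0$ with $E_0\ncong E_0\otimes\omega_Y$: every non-emptiness result in the paper (Theorems \ref{yosh odd}, \ref{hauz even}, \ref{nonemptiness Enriques}, and Theorem \ref{nonempty}, whose proof reduces to $E_0^{\oplus m}$ for $E_0$ of class $v_0$) concerns primitive Mukai vectors or produces only strictly semistable objects; the existence of \emph{stable} sheaves of non-primitive class on an Enriques surface is essentially the statement being proved, so it cannot be quoted. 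Second, the transport step: the identification in Theorem \ref{nonempty} of Toda's invariant $J(v)$ with the motivic invariant of the coarse moduli space is made only after reducing to the situation where every semistable object of class $v$ is stable. For $m>1$ there are strictly semistable objects at \emph{every} generic $\sigma$, the stack $\MY$ is not a $\mathbb{G}_m$-gerbe over $\mY$, and $J(v)$ is a weighted, stacky invariant that does not compute $e(\mY)$; so invariance of $J(v)$ does not give invariance of $\dim\mY$ across chambers. (A smaller unproven assertion is that the locus of stable $E$ with $E\cong E\otimes\omega_Y$ has dimension $\le v^2+1$; this needs the Hodge-index computation of Theorem \ref{singular}, which the paper forward-references explicitly.)

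The paper closes this gap chamber by chamber, with no wall-crossing transport and no external existence input. At the given generic $\sigma$ it constructs a strictly semistable \emph{Schur} object $E'$ as a non-split extension $0\to E\to E'\to F\to 0$, where $E\in M^s_{\sigma,Y}((m-1)v_0)$ exists by the inductive hypothesis, $F\in M_{\sigma,Y}(v_0)$, and $\chi(F,E)=-(m-1)v_0^2<0$ forces $\Ext^1(F,E)\ne 0$; non-splitness and stability of $E,F$ give $\Hom(E',E')=\C$. Deformation theory for a Schur object then yields $\dim_{E'}\mY\ge v^2+1$, which strictly exceeds the dimension of the strictly semistable locus, so the component through $E'$ must contain stable objects — this is exactly the non-emptiness of $\mYs$ you could not produce. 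Choosing $E$ and $F$ not fixed by $-\otimes\omega_Y$ (and $F\ncong E\otimes\omega_Y$ when $m=2$), a diagram chase gives $\Hom(E',E'\otimes\omega_Y)=0$, hence smoothness at $E'$ and the exact dimension $v^2+1$. To repair your proposal you would need either this extension construction or an independent proof of existence of stable sheaves with non-primitive Mukai vector on unnodal Enriques surfaces; the large-volume-plus-invariance mechanism cannot substitute for it.
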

\begin{proof} If $v_0^2\geq -1$, then part (a) follows from Theorem \ref{nonempty} above.  For the converse, note that any stable factor of an element of $\mY\neq \varnothing$ would have to have Mukai vector $m'v_0$ for $m'<m$ by genericity of $\sigma$.  But then $m'^2v_0^2=(m'v_0)^2\geq -1$, so $v_0^2\geq -1$.

For (b), again notice that the genericity of $\sigma$ means that any stable factor of an object of $\mY$ must have Mukai vector $m'v_0$ for $m'<m$, which implies that the strictly semistable locus is the image of the natural map 
$$\SSL: \coprod_{m_1+m_2=m,m_i>0} M_{\sigma,Y}(m_1v_0)\times M_{\sigma,Y}(m_2v_0)\rightarrow \mY.$$  

Assume $v_0^2>0$.  Then for $m=1$, $\mY=\mYs$, and we have seen already that $\dim \mY=v^2+1$.  If $m>1$, then by the induction, we deduce that the image of the map $\SSL$ has dimension equal to the maximum of $(m_1^2+m_2^2)v_0^2+2$ for $m_1+m_2=m,m_i>0$.  

We can construct a semistable object $E'$ with Mukai vector $v$ which is also Schur, i.e. $\Hom(E',E')=\C$.  By the inductive assumption, we can consider $E\in M_{Y,\sigma}^s((m-1)v_0)$, and let $F\in M_{Y,\sigma}(v_0)$.    Now $\chi(F,E)=-(v(F),v(E))=-(m-1)v_0^2<0$, so $\text{Ext}^1(F,E)\neq 0$.  Take $E'$ to be a nontrivial extension $$0\rightarrow E\rightarrow E'\rightarrow F\rightarrow 0.$$  Then any endomorphism of $E'$ gives rise to a homomorphism $E\rightarrow F$, of which there are none since these are both stable of the same phase and have different Mukai vectors (or can be chosen to be non-isomorphic if $m=2$).  Thus any endomorphism of $E'$ induces an endomorphism of $E$, and the kernel of this induced map $\Hom(E',E')\rightarrow \Hom(E,E)=\C$ is precisely $\Hom(F,E')$, which vanishes since the extension is non-trivial.  Thus $\Hom(E',E')=\C$.

We can deduce non-emptiness of $\mYs$ from a dimension estimate as follows.  Since $E'$ is Schur, we get $$v^2+1\leq \dim_{E'} \mY\leq \dim T_{E'}\mY=v^2+1+\hom(E',E'\otimes\omega_Y).$$  Notice that the strictly semistable locus must have dimension smaller than $v^2+1$.  So even though $E'$ is not stable, it lies on a component which must contain stable objects.  Moreover, as we will see in the next section, (smooth) components of the stable locus of dimension greater than $v^2+1$ can occur only if $v_0^2=0$, so in the current situation the locus of points fixed by $-\otimes\omega_Y$ has positive codimension.  Then we may choose $E\in M_{Y,\sigma}^s((m-1)v_0)$ such that $E\ncong E\otimes\omega_Y$ and $F$ such that $F\ncong F\otimes\omega_Y$ (and such that $F\ncong E\otimes\omega_Y$ if $m=2$).  Stability of $E$ and $F$ and a diagram chase then show that $\Hom(E',E'\otimes\omega_Y)=0$, so $\mY$ is smooth at $E'$ of dimension $v^2+1$ as claimed.

Furthermore, observe that the strictly semistable locus has codimension $$v^2+1-(m_1^2v_0^2+m_2^2v_0^2+2)=(m_1+m_2)^2v_0^2+1-(m_1^2v_0^2+m_2^2v_0^2+2)=2m_1m_2v_0^2-1\geq 2,$$ if $v_0^2>1$ or $m>2$, hence part (c).

The cases with $v_0^2\leq 0$ have already been covered in Lemmas \ref{point} and \ref{isotrop}.
\end{proof}

\section{Singularities of Bridgeland Moduli Spaces and Kodaira dimension}\label{sec:SingKod}

We've seen above that for $Y$ an unnodal Enriques surface, $v$ a primitive Mukai vector such that $\v$ is primitive as well, and $\sigma\in\Stab^{\dagger}(Y)$ generic with respect to $v$, the coarse moduli space $\mY$ is a smooth projective variety of dimension $v^2+1$ representing only stable objects.  It follows that singularities can occur only for $Y$ nodal, $v$ not primitive, or $v$ primitive with $\v$ divisible by 2.  For the remainder of this section we drop the assumption that $Y$ is unnodal and instead assume that one has shown that a non-empty coarse moduli space exists.  Assuming this we can nevertheless describe quite well the structure these moduli spaces must have, generalizing the results of \cite{Kim} and \cite{Yamada}.  As usual we denote by $\Y$ the K3 cover and $\sigma=(\pi^*)^{-1}(\sigma')$.

The main theorem of \cite{Kim} generalizes to Bridgeland moduli spaces without much change, but we present it here for the sake of completeness:
\begin{Thm}\label{singular} Let $Y$ be an Enriques surface, $v\in\Hal(Y,\Z)$, and $\sigma\in\Stab^{\dagger}(Y)$ (not necessarily generic).  Then the algebraic space $\mYs$ is singular at $E$ if and only if $E\cong E\otimes\omega_Y$ and $E$ lies on a component of dimension $v^2+1$.  The singular locus of $\mYs$ is the union of the images under $\pi_*$ of finitely many components of the algebraic spaces $$M^s_{\sigma',\Y}(w)^{\circ}=\{F\in M^s_{\sigma',\Y}(w)|F\ncong\iota^*F\},$$ as $w\in\Hal(\Y,\Z)$ ranges over classes such that $\pi_*(w)=v$.   Consequently, $$\dim \Sing(\mYs)\leq \frac{1}{2}(\dim \mYs+3)$$ so that $\mYs$ is generically smooth.  It is possible that $\mYs$ has irreducible components of dimension 0 and 2, which are necessarily smooth, if $E\cong E\otimes \omega_Y$ and $$\dim_E \mYs=v^2+2,$$ i.e. $v^2=-2$ or $0$.
\end{Thm}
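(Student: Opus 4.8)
The plan is to reduce the whole statement to a pointwise deformation computation together with a single lattice inequality on the K3 cover $\Y$. First I would record the infinitesimal structure: by the deformation theory of Lieblich and Inaba, for $E\in\mYs$ the tangent space is $\Ext^1(E,E)$ and the obstructions lie in $\Ext^2(E,E)$. Since $K_Y=\omega_Y$, Serre duality gives $\Ext^2(E,E)\cong\Hom(E,E\otimes\omega_Y)^\vee$, and $\sigma$-stability forces $\hom(E,E)=1$ and $\hom(E,E\otimes\omega_Y)\in\{0,1\}$, the latter equal to $1$ exactly when $E\cong E\otimes\omega_Y$ (any nonzero map between stable objects of equal phase is an isomorphism). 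Thus if $E\ncong E\otimes\omega_Y$ the obstruction space vanishes and $\mYs$ is smooth at $E$ of dimension $\ext^1(E,E)=v^2+1$; whereas if $E\cong E\otimes\omega_Y$ then $\ext^2(E,E)=1$ and $\ext^1(E,E)=v^2+2=\dim T_E\mY$. Since obstruction theory gives $\ext^1(E,E)-\ext^2(E,E)\le\dim_E\mY\le\ext^1(E,E)$, the component through such an $E$ has dimension $v^2+1$ or $v^2+2$; it is singular at $E$ in the former case and smooth in the latter. This yields the first assertion.

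Next I would identify the fixed locus geometrically. By Lemma~\ref{exclude}, the condition $E\cong E\otimes\omega_Y$ is equivalent to $\pi^*E\cong F\oplus\iota^*F$ with $F\ncong\iota^*F$ both $\sigma'$-stable, and then $E\cong\pi_*F$. Hence the locus $\{E\cong E\otimes\omega_Y\}$ is exactly $\bigcup_w\pi_*\bigl(M^s_{\sigma',\Y}(w)^\circ\bigr)$ as $w$ runs over classes with $\pi_*(w)=v$; by the finiteness of Lemma~\ref{finite m vectors} only finitely many such $w$ (those realized as Mukai vectors of stable factors) occur, and each $M^s_{\sigma',\Y}(w)^\circ$ has finitely many components. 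Since $\pi_*$ is finite and generically two-to-one onto its image there (Lemma~\ref{2 to 1}), and the $\sigma'$-stable K3 moduli space is smooth of dimension $w^2+2$, each such image has dimension $w^2+2$. The singular locus is then the union of those of these images landing in components of $\mYs$ of dimension $v^2+1$, giving the stated description.

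The crux is the dimension bound, which I would get from a lattice computation on $\Y$. Writing $u:=w-\tfrac12\v$, the relation $w+\iota^*w=\v$ forces $\iota^*u=-u$, so $u$ is anti-invariant; since the $\pm1$-eigenspaces of $\iota^*$ in $\Hal(\Y)_\Q$ are mutually orthogonal and $(\v)^2=2v^2$, I obtain $w^2=\tfrac12 v^2+u^2$ and $(w,\iota^*w)=\tfrac12 v^2-u^2$. The anti-invariant part of $\Hal(\Y)$ is the anti-invariant part of $\NS(\Y)$, which is negative definite because the unique positive direction of $\NS(\Y)$ (of signature $(1,\rho-1)$) already lies in the invariant sublattice $\pi^*\NS(Y)$, which contains a pullback ample class. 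Hence $u^2\le0$ and $w^2\le\tfrac12 v^2$, so every component of the singular locus has dimension $w^2+2\le\tfrac12 v^2+2=\tfrac12(\dim\mYs+3)$. I expect this negative-definiteness, and the bookkeeping of when $u=0$ versus $u\neq0$, to be the main technical point.

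Finally, for generic smoothness and the exceptional components I would argue as follows. If a component of dimension $v^2+1$ were contained in the fixed locus it would lie in a single $\pi_*\bigl(M^s_{\sigma',\Y}(w)^\circ\bigr)$, forcing $v^2+1\le w^2+2\le\tfrac12 v^2+2$, i.e.\ $v^2\le2$; so for $v^2\ge3$ the generic point of each $(v^2+1)$-dimensional component is not fixed, hence smooth, while the finitely many small cases $v^2\le2$ can be checked directly (cf.\ Lemmas~\ref{point} and~\ref{isotrop}). An entirely fixed smooth component must have $\dim=\dim T_E=v^2+2$, which needs $w^2=v^2$, i.e.\ $u^2=\tfrac12 v^2\le0$; thus $v^2\le0$, while $v^2=w^2$ is even (the Mukai pairing on $\Y$ is even) and $w^2\ge-2$ by non-emptiness, leaving exactly $v^2=-2$ or $0$, i.e.\ components of dimension $0$ or $2$, necessarily smooth since there $\dim=\dim T_E\mY$.
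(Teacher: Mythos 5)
Your overall route is the same as the paper's: reduce singularity at $E$ to the two conditions $E\cong E\otimes\omega_Y$ and $\dim_E\mYs=v^2+1$ via Lieblich--Inaba deformation theory and Serre duality; identify the fixed locus with pushforwards from the K3 cover via Lemma \ref{exclude}; and bound its dimension by a lattice inequality. Your decomposition $w=\tfrac12\v+u$ with $u$ anti-invariant and $u^2\le 0$ is literally equivalent to the paper's computation $v^2+1=2(w^2+2)+\bigl(c_1(F).\iota^*c_1(F)-c_1(F)^2\bigr)-3$ combined with the Hodge Index Theorem (indeed $4u^2=(c_1(F)-\iota^*c_1(F))^2$). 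One genuine, and welcome, variation: you get finiteness of the relevant classes $w$ from the bounded-mass Lemma \ref{finite m vectors}, where the paper counts inside the negative definite lattice $(\pi^*H)^{\perp}$; both work. Two citations are off but easily repaired: the two-to-one property of $\pi_*$ is not Lemma \ref{2 to 1} (which concerns $\pi^*$) but follows from adjunction, $\Hom(\pi_*F,\pi_*G)\cong\Hom(F\oplus\iota^*F,G)$, as in the paper's proof; and the "exactly" in your description of the fixed locus needs the converse statement that $\pi_*F$ is stable whenever $F\ncong\iota^*F$ is stable.

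The genuine gap is your treatment of generic smoothness when $v^2\le 2$. Lemmas \ref{point} and \ref{isotrop} cannot be invoked there: they assume $Y$ unnodal, $\sigma$ generic, and treat only $v_0^2\in\{-1,0\}$, whereas Theorem \ref{singular} allows nodal $Y$, arbitrary $\sigma\in\Stab^{\dagger}(Y)$, and in particular the cases $v^2=1,2$, which those lemmas never address. What actually closes these cases --- and what the paper extracts from its \'{e}tale-degree-two analysis of $\pi_*$ --- is that the singular locus is \emph{even-dimensional}, being a quasi-finite image of the smooth, even-dimensional spaces $M^s_{\sigma',\Y}(w)^{\circ}$. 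If $v^2$ is even, an entirely singular component would have odd dimension $v^2+1$, yet by irreducibility and maximality it would have to coincide with the closure of one of these even-dimensional images, a contradiction; this disposes of $v^2=0,2$ (for $v^2=2$ your own evenness remark also suffices, since it would force $w^2=v^2-1=1$). If $v^2$ is odd, a parity argument absent from your proposal shows the singular locus is empty: $E\cong E\otimes\omega_Y$ forces $\rk(E)=2\rk(F)$ even, while $v^2=c^2-2rs$ odd forces $\rk(v)$ odd because $c^2$ is even on an Enriques surface (this is the observation recorded in the Remark following Theorem \ref{K trivial}). With these two parity arguments substituted for the citation, your proof is complete.
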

\begin{proof} If $E\in \mYs$ is a singular point, then the obstruction space does not vanish, so $$\ext^2(E,E)=\hom(E,E\otimes\omega_Y)=\dim T_E \mYs-(v^2+1)>0,$$ so $E\cong E\otimes \omega_Y$, and we assume this to be the case for the time being.  Then as usual $E\cong \pi_*F$ for some $F\in M^s_{\sigma',\Y}(w)^{\circ}$.  It follows that $\pi^*E\cong F\oplus \iota^*F$.  If the Mukai vector of $F$ is $v(F)=(r,c_1(F),r+\frac{1}{2}c_1(F)^2-c_2(F))$, then the rank of $v$ is $2r$ and \begin{equation}\label{c2}c_2(E)=\frac{1}{2}c_1(F).\iota^*c_1(F)+c_2(F).\end{equation}  Choosing any ample divisor $H$ on $Y$, $\pi^*H$ is an $\iota^*$-invariant ample divisor on $\Y$, so $ c_1(F).\pi^*H=\iota^*c_1(F).\pi^*H$ and by the Hodge Index Theorem \begin{equation}\label{hodge}2(c_1(F)^2-c_1(F).\iota^*c_1(F))=(c_1(F)-\iota^*c_1(F))^2\leq 0\end{equation} with equality if and only if $\iota^*c_1(F)=c_1(F)$.  A direct computation shows that $$v(E)^2+1=2(v(F)^2+2)+(c_1(F).\iota^*c_1(F)-c_1(F)^2)-3.$$  Since $F$ is stable on the K3 surface $\Y$, the moduli space $M_{\sigma',\Y}(v(F))$ is smooth of dimension $v(F)^2+2$ at $F$.  It follows that $$\dim M_{\sigma',\Y}(v(F))\leq \left\{\begin{array}{l} \frac{1}{2}(\dim \mYs+3)\mbox{ if }\dim_E \mYs=v^2+1\\ \frac{1}{2}(\dim \mYs+2)\mbox{ if }\dim_E \mYs=v^2+2\end{array}\right\},$$ with equality if and only if $c_1(F)=\iota^*c_1(F)$.  Note that if $E$ is on a component of dimension $v^2+2$, then the entire component is smooth and $E'\cong E'\otimes\omega_Y$ for every other point $E'$ on this component.  

If $E$ is indeed a singular point, then we must have that $E$ is on a component of the expected dimension $v^2+1$.  Let $B=-\dim_E M_{Y,\sigma}(v)-3$ so that from the above inequality we must have $$c_1(F)^2-c_1(F).\iota^*c_1(F)\geq B,\text{ and}$$ $$2B\leq (c_1(F)-\iota^*c_1(F))^2\leq 0.$$  Thus there can only be finitely many numbers $(c_1(F)-\iota^*c_1(F))^2$ and for any fixed value there can be only finitely many choices for $c_1(F)$ since $(\pi^*H)^{\perp}$ is a negative definite lattice.  Since $c_2(F)$ is determined by \ref{c2} above, there are only finitely many possible Mukai vectors. 

Furthermore notice that $\pi_*F$ is $\sigma$-stable if and only if $F\ncong\iota^*F$.  Indeed, $\pi_*F$ stable implies that $$\C=\Hom(\pi_*F,\pi_*F)=\Hom(F,\pi^*\pi_*F)=\Hom(F,F)\oplus\Hom(F,\iota^*F),$$ which implies that $F\ncong\iota^*F$.  The converse follows analogously.  Thus the singular locus of $\mYs$ is the image under $\pi_*$ of finitely many $M^s_{\sigma',\Y}(v(F))^{\circ}$'s for the finitely many $v(F)$'s satisfying the above necessary requirements.

Now we show that the push-forward map $\pi_*:M^s_{\sigma',\Y}(v(F)^{\circ})\rightarrow \mYs$ is \'{e}tale of degree 2.  Suppose that $\pi_*(F)\cong\pi_*(G)$, then pulling back gives $F\oplus\iota^*F\cong G\oplus\iota^*G$, so $$\C^2=\Hom(F\oplus\iota^*F,G\oplus\iota^*G)=\Hom(F,G)^{\oplus 2}\oplus\Hom(F,\iota^*G)^{\oplus 2},$$ so either $F\cong G$ or $F\cong \iota^*G$, but not both.  Thus the singular locus is of even dimension and smooth itself.

If $E$ instead lies on a component of dimension $v^2+2$, and is thus a smooth point, then letting $C=-\dim_E \mYs-2$ we again get the same bound $$2C\leq (c_1(F)-\iota^*c_1(F))^2\leq 0,$$ and thus again the component $M$ containing $E$ is the image under $\pi_*$ of some components of $M^s_{\sigma',\Y}(v(F))^{\circ}$ for the finitely many $v(F)$ satisfying this bound.  The argument above shows that this map $\pi_*$ is finite \'{e}tale so that if $\overline{M}$ is one such component, then $$\dim M=\dim \overline{M}\leq \frac{1}{2}(\dim M+2).$$  This forces the dimension of $M$ to be 0 or 2.  If $\dim M=2$, then we get equality in this inequality so that $c_1(F)=\iota^* c_1(F)$ and thus $\v=2v(F)$ is not primitive.  If $\dim M=0$, then $M$ has a unique stable object $E$ and and $\overline{M}$ has two objects $F$ and $\iota^*F$ with $c_1(F)^2=c_1(F).\iota^*c_1(F)-2$.
\end{proof}

Having obtained some global description of the singular locus, we now generalize the results of \cite{Yamada} on the nature of these singularities and the canonical bundle to Bridgeland moduli spaces:
\begin{Thm}\label{K trivial} Suppose that $Y$ is an Enriques surface, $v\in\Hal(Y,\Z)$, and $\sigma\in\Stab^{\dagger}(Y)$.  Suppose that the fixed locus of $-\otimes\omega_Y$ has codimension at least 2.  Then $\mYs$ is normal and Gorenstein with only canonical l.c.i. singularities.  Furthermore, $\omega_{\mYs}$ is torsion in $\Pic(\mYs)$.
\end{Thm}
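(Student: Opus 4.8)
The plan is to reduce everything to the local structure of $\mYs$ along the fixed locus $\Sigma:=\{E\in\mYs:E\cong E\otimes\omega_Y\}$. By Theorem \ref{singular} the complement $\mYs\setminus\Sigma$ is smooth, and our hypothesis forces $\Sigma$ to have codimension at least $2$; in particular no component of $\mYs$ can be entirely fixed by $-\otimes\omega_Y$, so every component has the expected dimension $v^2+1$ (the anomalous $v^2+2$–dimensional components of Theorem \ref{singular} are excluded). Fix $E\in\Sigma$. Since $E$ is stable with $E\cong E\otimes\omega_Y$, Serre duality gives $\Ext^2(E,E)\cong\Hom(E,E\otimes\omega_Y)^{\vee}\cong\C$, so the Kuranishi (analytic) germ of $\mYs$ at $E$ is the zero locus of a single holomorphic function on the smooth space $\Ext^1(E,E)$. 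Hence $\mYs$ is locally a hypersurface: it is a local complete intersection, in particular Gorenstein and Cohen--Macaulay. Normality then follows from Serre's criterion, since a Cohen--Macaulay space (property $S_2$) that is regular in codimension one (property $R_1$, because $\Sing(\mYs)\subseteq\Sigma$ has codimension $\geq2$) is normal.

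To identify the singularity type I will compute the quadratic term of the Kuranishi map, the Yoneda square $q\colon\Ext^1(E,E)\to\Ext^2(E,E)\cong\C$, $q(\xi)=\xi\cup\xi$. Passing to the cover, $\pi^*E\cong F\oplus\iota^*F$ with $F\not\cong\iota^*F$ both $\sigma'$-stable (Lemma \ref{exclude}), and $\pi^*$ identifies $\Ext^1_Y(E,E)$ with the $G$-invariants of $\Ext^1_{\Y}(F\oplus\iota^*F,F\oplus\iota^*F)$, splitting it into a \emph{diagonal} part $\cong\Ext^1_{\Y}(F,F)$ and an \emph{off-diagonal} part $\cong\Ext^1_{\Y}(F,\iota^*F)$. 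On the diagonal part $q$ vanishes, because the Yoneda square of a class in $\Ext^1_{\Y}(F,F)$ is its obstruction to deforming $F$ on the K3 surface $\Y$, which is zero as those moduli are smooth (trace-free $\Ext^2$ vanishes). On the off-diagonal part $q$ is nondegenerate: it is exactly the Serre-duality pairing $\Ext^1_{\Y}(F,\iota^*F)\times\Ext^1_{\Y}(\iota^*F,F)\to\Ext^2_{\Y}(F,F)\cong\C$, which is perfect. The mixed terms vanish since $\Ext^2_{\Y}(F,\iota^*F)\cong\Hom_{\Y}(\iota^*F,F)^{\vee}=0$. Thus the radical of $q$ is precisely the diagonal part, which is the tangent space to the smooth locus $\Sigma$ (its smoothness and dimension come from Theorem \ref{singular}). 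Restricting the Kuranishi map to a slice transverse to $\Sigma$ and applying the holomorphic splitting (Morse) lemma, the transverse germ becomes $\{q|_{\mathrm{off}}=0\}$, an ordinary double point of dimension $\ext^1_{\Y}(F,\iota^*F)-1=\codim_{\mYs}\Sigma\geq2$. Hence $\mYs$ is analytically a product of a smooth factor with an ordinary double point of dimension $\geq2$; such a point is a canonical (rational Gorenstein) singularity, and a product of a canonical germ with a smooth one is canonical.

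It remains to show $\omega_{\mYs}$ is torsion. On the smooth, $\Sigma$-free locus $U$ the involution $\tau=-\otimes\omega_Y$ acts freely, and I will compute $\omega_U$ from a (quasi-)universal family $\EE$ on $U\times Y$ via relative Serre duality for the projection $p\colon U\times Y\to U$, whose relative dualizing sheaf is $p_Y^*\omega_Y$:
$$\RlHom_p(\EE,\EE)^{\vee}\cong\RlHom_p(\EE,\EE\otimes p_Y^*\omega_Y)[2].$$
Taking determinants expresses $\omega_U$ through $\det\RlHom_p(\EE,\EE\otimes p_Y^*\omega_Y)$. Since $\EE\otimes p_Y^*\omega_Y$ and $(\tau\times\mathrm{id})^*\EE$ are both families of the $\tau$-translated objects and hence differ only by a line bundle pulled back from $U$, and since $\omega_Y^{\otimes2}\cong\OO_Y$, combining the determinant computations for $\EE$ and for $\EE\otimes p_Y^*\omega_Y$ produces an isomorphism $\omega_U^{\otimes2}\cong\OO_U$. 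As $\mYs$ is normal and $\Sigma$ has codimension $\geq2$, the reflexive sheaf $\omega_{\mYs}^{\otimes2}$ is determined by its restriction to $U$, so this isomorphism extends to $\omega_{\mYs}^{\otimes2}\cong\OO_{\mYs}$; thus $\omega_{\mYs}$ is (two-)torsion. This is the direct generalization of Yamada's computation.

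The main obstacle is the second paragraph: checking that the Yoneda square is nondegenerate on the off-diagonal summand while vanishing on the diagonal one, and that the higher-order terms of the Kuranishi map do not disturb the transverse ordinary-double-point model. This is exactly what pins down canonicity and distinguishes the Enriques case from an arbitrary anti-symplectic involution, whose Lagrangian fixed locus need not be $K$-trivial. The bookkeeping of the third paragraph is delicate but routine: one must track the determinant line bundles and account for the non-canonical choice of (quasi-)universal family, yet the essential input is only the relation $\omega_Y^{\otimes2}\cong\OO_Y$, which has no analogue for a general holomorphic-symplectic involution and is precisely what forces the canonical bundle to be torsion.
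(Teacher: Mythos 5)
Your first two paragraphs follow essentially the paper's own route. The paper also deduces l.c.i., Gorenstein and normal from Theorem \ref{singular} (hypersurface Kuranishi germs at fixed points of $-\otimes\omega_Y$, singular locus of codimension $\geq 2$, then Serre's criterion), and for canonicity it simply cites Yamada; your diagonal/off-diagonal computation of the Yoneda square and the transverse ordinary-double-point model is a legitimate expansion of that citation rather than a different argument (the nondegeneracy of the off-diagonal form is exactly Yamada's key lemma, and your Morse-lemma reduction is justified because Theorem \ref{singular} tells you the singular locus is smooth of dimension equal to the kernel of the Hessian).

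The genuine gap is in your third paragraph: the determinant identities you propose are circular and cannot output $\omega_U^{\otimes 2}\cong\OO_U$. Concretely, Grothendieck duality gives $\det\RlHom_p(\EE,\EE)=\omega_U$ and $\det\RlHom_p(\EE,\EE\otimes p_Y^*\omega_Y)=\omega_U^{-1}$, while universality gives $(\tau\times\id)^*\EE\cong\EE\otimes p_Y^*\omega_Y\otimes p_U^*L$ for an \emph{unknown} line bundle $L$ on $U$. Substituting the latter into the former, $L$ enters through $\det(C\otimes p_U^*L)=\det(C)\otimes L^{\mathrm{rk}(C)}$ with $\mathrm{rk}(C)=\chi(E,E\otimes\omega_Y)=-v^2$, so what you actually obtain is
\begin{equation*}
\omega_U^{-1}\;\cong\;\det\RlHom_p\bigl(\EE,(\tau\times\id)^*\EE\bigr)\otimes L^{v^2},
\end{equation*}
where neither $L$ nor the mixed determinant $\det\RlHom_p(\EE,(\tau\times\id)^*\EE)$ is controlled by anything in your argument (and the mixed determinant remains uncontrolled even when $v^2=0$). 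Applying $\tau^*$ and using $\tau^2=\id$ only returns the tautologies $\omega_U\cong\tau^*\omega_U$ and $(L\otimes\tau^*L)^{v^2}\cong\OO_U$, which are consistent with $\omega_U$ being an arbitrary $\tau$-invariant line bundle; the same circularity appears on the K3 cover, where relative duality identifies $\lExt^1_p(\pi^*\EE,\pi^*\EE)\cong T_U\oplus\Omega^1_U$, whose determinant is canonically trivial for any $U$ whatsoever. What breaks the circle in the paper (following Yamada) is a positive computation of a determinant class via Grothendieck--Riemann--Roch: $\ch(\EE^{\vee}\otimes\EE)$ is self-dual and hence has no odd-degree components, and $\td(Y)$ has no degree-one part because $K_Y$ is numerically trivial, so $c_1\bigl(p_!(\EE^{\vee}\otimes\EE)\bigr)=0$ in $\mathrm{CH}^1(\mYs)_{\Q}$; combined with $c_1(\lExt^0_p(\EE,\EE))=0$ and with the codimension-$\geq 2$ hypothesis (which kills $c_1(\lExt^2_p(\EE,\EE))$, as that sheaf is supported on the fixed locus), this yields $c_1(\mathcal T_{\mYs})=c_1(\lExt^1_p(\EE,\EE))=0$ rationally, whence $K_{\mYs}$ is torsion. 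Some such computation is indispensable: Serre duality plus translation by $\omega_Y$ alone produces no information, so your claim that the remaining bookkeeping is ``routine'' is where the proof fails.
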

\begin{proof} Since we do not need the entire statement for the sequel, we only sketch the proof.  From the proof of Theorem \ref{singular}, we see that any singularities are hypersurface singularities which comprise a locus of codimension at least 2 by assumption.  Thus $\mYs$ is l.c.i. and smooth in codimension 1, so it's normal and Gorenstein as well.  That $\mYs$ has canonical singularities follows precisely as in \cite{Yamada}.

For the claim about the canonical divisor, denote by $M_0$ the open locus of objects $E$ such that $E\ncong E\otimes\omega_Y$.  Let $\mathcal E\in \Db(\mYs\times Y)$ be a quasi-universal family (which can always be constructed \'{e}tale locally), and $p:\mYs\times Y\rightarrow \mYs,q:\mYs\times Y\rightarrow Y$ the two projections.  Then $$[\lExt_p(\mathcal E,\mathcal E)]=[\lExt^0_p(\mathcal E,\mathcal E)]-[\lExt_p^1(\mathcal E,\mathcal E)]+[\lExt_p^2(\mathcal E,\mathcal E)]=p_!(\mathcal E^{\vee}\otimes \mathcal E),$$ where $\lExt_p(\EE,\EE)^i$ are the relative ext sheaves in the flat base change theorem of \cite{BPS}.  Since the objects are all stable we must have $\lExt_p^0(\mathcal E,\mathcal E)\cong \mathcal O_{\mYs}$.  Now by the Grothendieck-Riemann-Roch theorem, $$c_1([\lExt_p(\mathcal E,\mathcal E)])=c_1(p_!(\mathcal E^{\vee}\otimes\mathcal E))=\{p_*(ch(\mathcal E^{\vee}).ch(\mathcal E).q^*(td(Y)))\}_1.$$  Denoting the rank of $\mathcal E$ by $k$, we get $$ch(\mathcal E^{\vee}).ch(\mathcal E)=k^2-c_2(\mathcal E^{\vee}\otimes\mathcal E)+...,$$ where ... denotes terms of degree $\geq4$.  Of course $td(Y)=1+\frac{1}{12}c_2(Y)=1+\pt$, so combining things we get \begin{align*}ch(\mathcal E^{\vee}).ch(\mathcal E).q^*(td(Y))&=(k^2-c_2(\mathcal E^{\vee}\otimes\mathcal E)+...).(1+[\mYs\times \{\pt\}])\\&=k^2+...,\end{align*} where again ... denotes terms of degree $\geq 4$.  But upon pushing down by $p_*$, the only terms that contribute to degree 1 would be of degree 3 on $\mYs\times Y$, of which there are none.  Thus $$0=c_1([\lExt_p(\mathcal E,\mathcal E)])=-c_1([\lExt^1_p(\mathcal E,\mathcal E)])+c_1([\lExt^2_p(\mathcal E,\mathcal E)]),$$ since $c_1([\lExt^0_p(\mathcal E,\mathcal E)])=0$.  But the assumption that the complement of $M_0$ has codimension at least 2 implies that the support of $\lExt^2_p(\mathcal E,\mathcal E)$ has codimension at least 2 so that its $c_1$ vanishes.  Thus $$K_{\mYs}=-c_1(\mathcal T_{\mYs})=0,$$ since $\mathcal T_{\mYs}\cong \lExt^1_p(\mathcal E,\mathcal E)$.  Since Grothendieck-Riemann-Roch is a statement about Chow groups with $\Q$ coefficients, this shows that $K_{\mYs}$ is torsion.
\end{proof}

\begin{Rem} Recall from Theorem \ref{singular} that $M_0=\mYs$ if the rank of $v$ is odd.  As this is the case if $v^2$ is odd, the hypothesis of the theorem above is certainly satisfied in this case.  Furthermore, the codimension of the complement of $M_0$ is larger than 2 if $v^2\geq 5$ by Theorem \ref{singular} since it must be of even dimension at most $\frac{1}{2}(\dim \mYs+3)$.  Moreover, we have equality in this dimension estimate only if $c_1(F)=\iota^*c_1(F)$, in which case $v$ cannot be primitive.  From Section \ref{sec:ReviewGieseker} we know that if $\v$ is divisible by 2 for primitive $v$ then $v^2\equiv 0(\mod 8)$.  So if $v^2=4$, then the complement of $M_0$ has codimension at least 2 if $v$ is primitive, but if $v^2=2$, it is possible that the complement of $M_0$ is a divisor.  If $Y$ is unnodal, however, then $\v$ being primitive precludes this since we automatically have $c_1(F)=\iota^*c_1(F)$ in this case.  Finally, let us note that on exceptional components of dimension 2, i.e. $v^2=0$ and $E\cong E\otimes\omega_Y$, $\lExt^2_p(\mathcal E,\mathcal E)\cong \OO_{\mYs}$, so the conclusion of the theorem continues to hold.
\end{Rem}

Let us conclude this section by summarizing the consequences of the above theorem in the unnodal case.
\begin{Cor} Let $Y$ be an unnodal Enriques surface, $v=mv_0\in\Hal(Y,\Z)$ with $m\in\Z_{>0}$, $v_0^2>0$ and $v_0$ primitive.  Furthermore, let $\sigma\in\Stab^{\dagger}(Y)$ be generic with respect to $v$.  The projective variety $\mY$ of dimension $v^2+1$ is normal and $K$-trivial unless $v_0^2=1$ and $m=2$.  In particular, for $m=1$, $\mY$ is a normal projective $K$-trivial variety of dimension $v^2+1$, smooth unless $v^2\equiv 0(\mod 8)$.
\end{Cor}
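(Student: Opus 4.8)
The plan is to assemble the corollary from the structural results already in place, the real work being a codimension count that licenses Theorem \ref{K trivial}. Projectivity of $\mY$ is Theorem \ref{projective general}, and the equality $\dim\mY=v^2+1$ for $v_0^2>0$ is recorded in Theorem \ref{num dim}; so only normality, $K$-triviality, and (when $m=1$) smoothness need to be addressed.

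The heart of the argument is to check the hypothesis of Theorem \ref{K trivial}, that the fixed locus $\{E:E\cong E\otimes\omega_Y\}$ of $-\otimes\omega_Y$ has codimension at least $2$, in every case except $v_0^2=1,\ m=2$. I would split this locus according to whether its points are strictly semistable or stable. On the strictly semistable side, the computation in the proof of Theorem \ref{num dim} gives $\codim\mYss=2m_1m_2v_0^2-1$ on the stratum of split type $(m_1,m_2)$; this is minimized at $(1,m-1)$ and is $\geq 2$ exactly when $v_0^2>1$ or $m>2$, the sole borderline value being $2v_0^2-1=1$ at $v_0^2=1,\ m=2$. On the stable side the fixed points are precisely $\Sing(\mYs)$ by Theorem \ref{singular}; since $\iota^*$ acts trivially on $\Hal(\Y,\Z)$ for unnodal $Y$ we always have $c_1(F)=\iota^*c_1(F)$, so a contributing $F$ satisfies $v(F)^2=\tfrac{v^2}{2}$ and $\Sing(\mYs)$ has codimension $\tfrac{v^2}{2}-1$. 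This is $\geq 2$ once $v^2\geq 6$, and the only value $v^2=m^2v_0^2<6$ carrying a nonempty fixed locus is again $v^2=4$, i.e.\ $v_0^2=1,\ m=2$.

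With the codimension-two condition verified, Theorem \ref{K trivial} shows that $\mYs$ is normal and Gorenstein with canonical l.c.i.\ singularities and that $\omega_{\mYs}$ is torsion; as $\mYss$ also has codimension $\geq 2$ away from the excluded case, $\mY$ and $\mYs$ agree in codimension one and $K_{\mY}$ is therefore torsion. For the refined $m=1$ statement I would instead appeal to Corollary \ref{primitive}: when $\v$ is primitive, $\mY=\mYs$ is smooth of dimension $v^2+1$, while the lemma of \cite{Hau10} recalled in Section \ref{sec:ReviewGieseker} forces $v^2\equiv 0\pmod 8$ whenever $\v$ is divisible by $2$; contrapositively $v^2\not\equiv 0\pmod 8$ makes $\v$ primitive and hence $\mY$ smooth.

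The step I expect to be the main obstacle is upgrading from $\mYs$ to the full $\mY$ across the strictly semistable locus: Theorem \ref{K trivial} speaks only of the stable locus, so one must argue that the codimension-two vanishing of $\mYss$, together with the finiteness of $\pi^*:\mY\to\mX$ onto the normal variety $\mX$, genuinely transports normality and torsion-triviality of the canonical class to all of $\mY$, rather than merely in codimension one. This is exactly where the hypothesis $v_0^2=1,\ m=2$ must be excluded, since there $\mYss$ is an honest divisor and the argument breaks down.
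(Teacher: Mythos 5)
Your proposal matches the paper's own proof in all essentials: the paper likewise combines the codimension bound $2m_1m_2v_0^2-1$ on the strictly semistable locus from Theorem \ref{num dim} with the bound on the fixed locus of $-\otimes\omega_Y$ inside \mYs{} coming from Theorem \ref{singular} and the remark following Theorem \ref{K trivial} (which, in the unnodal case, forces $c_1(F)=\iota^*c_1(F)$ and hence your codimension $\tfrac{v^2}{2}-1$), then applies Theorem \ref{K trivial} and pins down $K_{\mY}$ by its restriction to $M_0$, exactly as you do, with the $m=1$ refinement via Corollary \ref{primitive} and the lemma of \cite{Hau10}. The passage from \mYs{} to \mY{} that you flag as the main obstacle is treated in the paper by the very codimension-one determination of the canonical class that you propose, and no more is said there about normality across the strictly semistable locus, so this is not a gap of your argument relative to the paper's.
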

\begin{proof} The only thing left to note is that by Theorem \ref{num dim} the strictly semistable locus has codimension at least 2 unless $v_0^2=1$ and $m=2$ in which case it forms a divisor.  Furthermore, in this case the complement of $M_0\subset\mYs$ has codimension 1.  Except for this case, the semistable locus has high codimension and by Theorems \ref{singular},\ref{K trivial}, and the remark above, so does the complement of $M_0\subset\mYs$.  Then the class of $K_{\mY}$ is determined by its restriction to $M_0$ where it is numerically trivial.  
\end{proof}

\section{A Natural Nef Divisor}\label{sec:BM divisor}

In this section we will again restrict ourselves to the case that $Y$ is unnodal and $v\in\Hal(Y,\Z)$ is primitive with $v^2\geq 1$, but first let $X$ be an arbitrary smooth complex projective variety and $v\in\Hal(X,\Z)$.  By Remark \ref{phi one} we may assume that $\sigma\in\Stab(X)$ is algebraic and $Z(v)=-1$.  Let us recall the definition and properties of the divisor class $\ell_{\sigma}$ \cite[Proposition and Definition 3.2]{BaMa} on a proper algebraic space $S$ of finite type over $\C$ associated to such a $\sigma$.  First we have the general definition:

\begin{PropDef}\label{divisor def}To any projective curve $C$ with a morphism $C\to \MMM_{\sigma,X}(v)$ we associate a number $\ell_{\sigma}.C$ as follows:  let $\mathcal E\in \Db(C\times X)$ be the corresponding universal family on $C$, and let $\Phi_{\mathcal E}:\Db(C)\rightarrow \Db(X)$ be the associated Fourier-Mukai transform.  Then $$\ell_{\sigma}.C:=\Im Z(\Phi_{\mathcal E}(\mathcal O_C)).$$  This has the following properties:

(a) Modifying the universal family by tensoring with the pull-back of a line bundle from $C$ does not modify $\ell_{\sigma}.C$.

(b) We can replace $\mathcal O_C$ by any line bundle on $C$ without changing $\ell_{\sigma}.C$.

\end{PropDef}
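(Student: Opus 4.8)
The plan is to derive both (a) and (b) from the single observation that the assignment $w \colon K_0(C) \to \C$, $w(\mathcal F) := Z(\Phi_{\EE}(\mathcal F))$, is a group homomorphism: it factors through $K_{\num}(X)$ because $Z$ does, and $\Phi_{\EE}$ is exact, so it descends to $K_0(C)$. I would organize the argument so that the only input about the stability condition is the normalization $Z(v) = -1$ from Remark \ref{phi one}, after which everything is forced by the $\R$-linearity of $\Im$.

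First I would prove (b). Given a line bundle $L$ on $C$, the key point is that $[L] - [\OO_C]$ lies in the subgroup $F_0 \subseteq K_0(C)$ generated by sheaves with $0$-dimensional support, since $L$ and $\OO_C$ have the same generic rank on each component of $C$, so their difference has generic rank zero. (When $C$ is integral one sees this at once by writing $L \cong \OO_C(D - D')$ for effective Cartier divisors and using sequences of the form $0 \to \OO_C \to \OO_C(D) \to \OO_D \to 0$.) Next I compute $\Phi_{\EE}$ on a skyscraper: $\Phi_{\EE}(k(p)) = (p_X)_*(\EE \otimes p_C^* k(p)) = \EE_p$, the derived restriction of the family to the fibre $\{p\} \times X \cong X$, which by the defining property of a morphism $C \to \MMM_{\sigma,X}(v)$ is a $\sigma$-semistable object of Mukai vector $v$; hence $[\EE_p] = v$ in $K_{\num}(X)$. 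Since every class in $F_0$ is a $\Z$-linear combination of skyscraper classes, $\Phi_{\EE}$ maps $F_0$ into $\Z \cdot v$, and therefore $w(F_0) \subseteq \Z\cdot Z(v) = \Z \cdot (-1) \subset \R$. Applying this to $[L] - [\OO_C] \in F_0$ gives $\Im w(L) = \Im w(\OO_C)$, which is exactly (b).

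For (a), I would simply observe that replacing the universal family $\EE$ by $\EE \otimes p_C^* L$ gives $\Phi_{\EE \otimes p_C^* L}(\OO_C) = (p_X)_*(\EE \otimes p_C^* L) = \Phi_{\EE}(L)$, so (a) is immediate from (b). Since the ambiguity in the universal family attached to a morphism $C \to \MMM_{\sigma,X}(v)$ is precisely this tensoring by a pulled-back line bundle coming from the $\mathbb{G}_m$-gerbe structure, property (a) is at the same time what makes $\ell_\sigma . C$ well defined.

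I do not anticipate a serious obstacle. The one place needing care is the base-change identity $\Phi_{\EE}(k(p)) \cong \EE_p$ together with the identification $[\EE_p] = v$, which rests on the $S$-perfectness and flatness built into the definition of $\MMM_X$ and on the fact that $\EE$ parametrizes objects of fixed class $v$; once these are in hand, the conclusion follows formally from the reality of $Z(v) = -1$.
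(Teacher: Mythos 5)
Your proposal is correct and follows essentially the same route as the argument this paper relies on: the statement is recalled verbatim from \cite[Proposition and Definition 3.2]{BaMa} without reproof, and the proof there is exactly your reduction of (a) to (b) via $\Phi_{\EE\otimes p_C^*L}(\OO_C)=\Phi_{\EE}(L)$, together with the observation that $[L]-[\OO_C]$ is a combination of skyscraper classes, each of which $\Phi_{\EE}$ sends to an object of class $v$, so that the normalization $Z(v)=-1\in\R$ forces the imaginary part to be unchanged. There is no gap.
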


It follows from \cite[Lemma 3.3 and Theorem 4.1]{BaMa} that this association gives a well-defined nef divisor class on any proper algebraic space $S$ with a family $\EE\in\MMM_X$ which we denote by $\ell_{\sigma,\EE}$ to emphasize the dependence on the family $\EE$:
\begin{Lem} $\ell_{\sigma,\EE}.C\geq0$ for every effective curve $C\subset S$ and depends only on the numerical curve class $[C]\in N_1(S)$.  Thus $\ell_{\sigma,\EE}$ defines a nef numerical divisor class in $N^1(S)$, unchanged by tensoring the family $\EE$ with a line bundle pulled back from $S$.  Further, we have $\ell_{\sigma,\EE}.C>0$ if and only if for two general closed points $c,c'\in C$, the corresponding objects $\mathcal E_c,\mathcal E_{c'}\in \Db(X)$ are not $S$-equivalent.  
\end{Lem}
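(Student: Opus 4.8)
The plan is to deduce the three assertions from the structural results of Bayer--Macr\`i, normalising so that $Z(v)=-1$, whence every fibre $\EE_c$ lies in the phase-$1$ category $\PP(1)$. First I would establish numerical well-definedness and line-bundle invariance. By Grothendieck--Riemann--Roch for $p_X$ one has $\ch(\Phi_{\EE}(\OO_C))=p_{X*}(\ch(\EE)\cdot p_S^*\ch(\OO_C)\cdot p_X^*\td(X))$, so $Z(\Phi_{\EE}(\OO_C))$ is a $\Z$-linear function of $\ch(\OO_C)$; taking $\Im Z$ and isolating the term pairing with the class of $C$ exhibits $\ell_{\sigma,\EE}.C$ as a linear functional of $[C]\in N_1(S)$, i.e. as $D_{\EE}\cdot C$ for a fixed $D_{\EE}\in N^1(S)$ built from $\ch(\EE)$ and $Z$. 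Thus the number is additive and depends only on $[C]$, giving a class in $N^1(S)$. Invariance under $\EE\mapsto\EE\otimes p_S^*L$ is then immediate from Proposition and Definition \ref{divisor def}(b), since $\Phi_{\EE\otimes p_S^*L}(\OO_C)=\Phi_{\EE}(L|_C)$ and replacing $\OO_C$ by a line bundle does not change the intersection number.

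For nefness the heart of the matter is to show $\Phi_{\EE}(\OO_C)\in\AA=\PP((0,1])$ for every curve $C\to S$; granting this, the positivity axiom of a stability condition gives $Z(\Phi_{\EE}(\OO_C))\in\H$, so $\ell_{\sigma,\EE}.C=\Im Z(\Phi_{\EE}(\OO_C))\ge 0$. To prove membership in $\AA$ I would proceed in two steps. A skyscraper $k(c)$ transforms to the fibre $\EE_c\in\PP(1)\subset\AA$; since every torsion sheaf on the curve $C$ is an iterated extension of skyscrapers and $\PP(1)$ is extension-closed, torsion sheaves transform into $\PP(1)$, and in particular have $\Im Z=0$. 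For $\OO_C$ itself one must bound the $\sigma$-phases of the Harder--Narasimhan factors of $\Phi_{\EE}(\OO_C)$: because $\Phi_{\EE}$ has transform-amplitude controlled by $\dim C=1$ and all fibres concentrate in the single phase $\PP(1)$, these factors are confined to phases in $(0,1]$, which is exactly the assertion $\Phi_{\EE}(\OO_C)\in\AA$.

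For the equality case I would use that, for $E\in\AA$, $\Im Z(E)=0$ if and only if $E\in\PP(1)$, since a phase-$\phi$ semistable summand contributes $|Z|\sin(\pi\phi)$, which vanishes on $(0,1]$ only at $\phi=1$. Hence $\ell_{\sigma,\EE}.C=0$ if and only if $\Phi_{\EE}(\OO_C)\in\PP(1)$. I would then translate this phase-$1$ condition into a statement about the fibres: if two general fibres $\EE_c,\EE_{c'}$ had distinct Jordan--H\"older factors, the genuine variation of these factors along $C$ would force a Harder--Narasimhan factor of $\Phi_{\EE}(\OO_C)$ of phase strictly less than $1$, contradicting $\Phi_{\EE}(\OO_C)\in\PP(1)$; conversely, if the general fibres are $S$-equivalent then, up to the twists permitted by \ref{divisor def}, the family is assembled from constant simple factors and its transform remains in $\PP(1)$. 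This yields $\ell_{\sigma,\EE}.C>0$ exactly when $\EE_c,\EE_{c'}$ are not $S$-equivalent for general $c,c'$.

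The decisive and most delicate point is the phase-amplitude bound of the second paragraph together with the $S$-equivalence translation of the third; these are precisely \cite[Lemma 3.3 and Theorem 4.1]{BaMa}, whose proofs use only formal properties of $Z$ and the heart and therefore apply to an arbitrary smooth projective $X$ and proper $S$. In the Enriques case of interest I could sidestep these estimates by transfer to the covering K3: since $\sigma=(\pi^*)^{-1}(\sigma')$ gives $Z_{\sigma}=Z_{\sigma'}\circ\pi^*$ and $(1\times\pi)^*\EE$ is a flat family of $\sigma'$-semistable objects on $\Y$, flat base change gives $\pi^*\Phi_{\EE}(\OO_C)=\Phi_{(1\times\pi)^*\EE}(\OO_C)$, so that $\ell_{\sigma,\EE}.C=\ell_{\sigma',(1\times\pi)^*\EE}.C$. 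Nefness and the equality criterion then descend directly from \cite[Theorem 4.1]{BaMa} on $\Y$, using the compatibility of Harder--Narasimhan filtrations with $\pi^*$ (as in \cite[Lemma 2.8]{MMS}) and the fact that $S$-equivalence downstairs matches $S$-equivalence of the $\iota^*$-invariant pullbacks upstairs.
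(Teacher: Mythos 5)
Your proposal is, in its load-bearing logic, the same as the paper's: the paper offers no independent proof of this lemma, but simply observes that it follows from \cite[Lemma 3.3 and Theorem 4.1]{BaMa}, whose proofs work for an arbitrary smooth projective $X$ and proper algebraic space $S$, and your argument ultimately rests on exactly that citation for the two delicate points (the phase bound giving nefness, and the $S$-equivalence criterion). So the proposal is acceptable as it stands, since everything you could not prove you correctly attributed to \cite{BaMa}.

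Two caveats are worth recording. First, your heuristic for why $\Phi_{\EE}(\OO_C)\in\PP((0,1])$ (``transform-amplitude controlled by $\dim C=1$'') is not a proof and is not the mechanism in \cite{BaMa}: there the argument reduces to nearby algebraic stability conditions via Lemma \ref{lem:Yukinobualgebraic} (so that the heart is Noetherian), invokes the Abramovich--Polishchuk constant-family t-structure on $\Db(S\times X)$ from \cite{AP06}, and exploits the freedom granted by Proposition and Definition \ref{divisor def}(b) to replace $\OO_C$ by $\OO_C(n)$ for $n\gg 0$; since you defer to the citation this does not affect correctness, but your sketch should not be mistaken for the proof. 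Second, your proposed Enriques-specific ``sidestep'' via pullback to the covering K3 proves nefness cleanly (it is essentially the computation in Proposition \ref{pull-back} of the paper, using $Z_{\sigma}=Z_{\sigma'}\circ\pi^*$ and flat base change), but the $S$-equivalence biconditional does \emph{not} descend as you state it: if $E$ is $\sigma$-stable with $E\ncong E\otimes\omega_Y$, then $E$ and $E\otimes\omega_Y$ are non-isomorphic stable objects, hence not $S$-equivalent on $Y$, yet $\pi^*E\cong\pi^*(E\otimes\omega_Y)$, so their pullbacks are trivially $S$-equivalent on $\Y$. More generally, $S$-equivalence upstairs only determines the downstairs Jordan--H\"older factors up to twisting individual factors by $\omega_Y$. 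To rescue the criterion along a curve $C$ one would need an additional constructibility/genericity argument showing that the downstairs $S$-equivalence type is constant on a dense open subset of $C$; absent that, the ``if and only if'' in the statement should be obtained from \cite[Theorem 4.1]{BaMa} applied directly on $Y$, not by descent from $\Y$.
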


Now we recall the definition of the Donaldson morphism \cite[Section 8.1]{HL} associated to such a family $\mathcal E$ on $S$: $$\lambda_{\mathcal E}:v^{\perp}\subset K_{\num}(X)\rightarrow N^1(S),$$ defined by $$\lambda_{\mathcal E}(u):=\det((p_{S})_*([\mathcal E].(p_X)^*(u))).$$  Since the Euler characteristic is non-degenerate, for a stability condition $\sigma=(Z,\mathcal A)$ with $Z(v)=-1$ as above we can write $\Im(Z(-))=\chi(w_Z,-)$ for a unique vector $w_Z\in v^{\perp}$.  Then \cite[Proposition 4.4]{BaMa} gives the following comparison result:

\begin{Prop}\label{donaldson compare} For any integral curve $C\subset S$ $$\lambda_{\mathcal E}(w_Z).C=\Im Z(\Phi_{\mathcal E}(\mathcal O_C))=:\ell_{\sigma,\mathcal E}.C.$$
\end{Prop}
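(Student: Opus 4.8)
The plan is to carry over the Grothendieck--Riemann--Roch computation of \cite[Proposition 4.4]{BaMa}, which only uses the definitions of $\ell_{\sigma,\mathcal{E}}$ and $\lambda_{\mathcal{E}}$ together with the orthogonality $w_Z\in v^{\perp}$, and so applies verbatim for the arbitrary smooth projective $X$ considered here. First I would reduce to the curve $C$. Since both $\ell_{\sigma,\mathcal{E}}$ and $\lambda_{\mathcal{E}}(w_Z)$ are numerical classes in $N^1(S)$, I may assume $C$ is smooth of genus $g$. Writing $\mathcal{E}_C:=\mathcal{E}|_{C\times X}$ and letting $p_C,p_X$ be the projections from $C\times X$, the definition of the Fourier--Mukai functor gives $\Phi_{\mathcal{E}}(\mathcal{O}_C)=(p_X)_*\mathcal{E}_C$ because $p_C^*\mathcal{O}_C=\mathcal{O}_{C\times X}$. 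Hence, using $\Im Z(-)=\chi(w_Z,-)$,
\[
\ell_{\sigma,\mathcal{E}}.C=\chi\big(w_Z,\Phi_{\mathcal{E}}(\mathcal{O}_C)\big),
\]
and it suffices to establish the more general identity $\lambda_{\mathcal{E}}(u).C=\chi\big(u,\Phi_{\mathcal{E}}(\mathcal{O}_C)\big)$ for every $u\in v^{\perp}$ and then specialize to $u=w_Z$.

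To prove this identity I would compute the left-hand side directly. By compatibility of the determinant construction with base change I may restrict $\lambda_{\mathcal{E}}(u)$ to $C$ and obtain
\[
\lambda_{\mathcal{E}}(u).C=\deg_C\det\big((p_C)_*([\mathcal{E}_C]\cdot p_X^*u)\big).
\]
On the smooth curve $C$ the degree of the determinant of a $K$-theory class $\alpha$ satisfies $\deg_C\det\alpha=\chi(C,\alpha)-\rk(\alpha)(1-g)$ by Riemann--Roch, and $\chi\big(C,(p_C)_*\beta\big)=\chi_{C\times X}(\beta)$ for $\beta=[\mathcal{E}_C]\cdot p_X^*u$. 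Applying Grothendieck--Riemann--Roch for $p_C$ to $\chi_{C\times X}(\beta)$ and, on the other side, for $p_X$ to $\chi\big(u,(p_X)_*\mathcal{E}_C\big)$, together with the projection formula, converts both into one and the same integral over $C\times X$. Thus $\lambda_{\mathcal{E}}(u).C$ and $\chi\big(u,\Phi_{\mathcal{E}}(\mathcal{O}_C)\big)$ agree up to the single correction term $\rk\big((p_C)_*([\mathcal{E}_C]\cdot p_X^*u)\big)\cdot(1-g)$.

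The decisive step, which I expect to be the main obstacle, is to see that this correction term vanishes. The rank appearing there is computed by restricting $\beta$ to a fibre $\{c\}\times X$ and taking the Euler characteristic over $X$, which (up to the duality conventions fixed in \cite[Section 8.1]{HL}) is exactly the Mukai pairing $(v,u)$; this is zero precisely because $u\in v^{\perp}$. With the correction term gone the two expressions coincide, giving $\lambda_{\mathcal{E}}(u).C=\chi\big(u,\Phi_{\mathcal{E}}(\mathcal{O}_C)\big)$ and hence, for $u=w_Z$, the asserted equality $\lambda_{\mathcal{E}}(w_Z).C=\ell_{\sigma,\mathcal{E}}.C$. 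Aside from this orthogonality bookkeeping, the only technical points are the careful matching of Chern-character degrees in the two Grothendieck--Riemann--Roch computations and the compatibility of the determinant line bundle with restriction to the test curve $C$; since $C$ is smooth and $\mathcal{E}$ is $S$-perfect, the latter is standard.
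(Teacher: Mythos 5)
Your proof is correct and is essentially the same argument the paper relies on: the paper gives no independent proof of this proposition, quoting it from \cite[Proposition 4.4]{BaMa}, and your computation---restrict to the curve, use orthogonality to force the pushforward class $(p_C)_*\bigl([\mathcal{E}_C]\cdot p_X^*w_Z\bigr)$ to have rank zero so that the degree of its determinant is an Euler characteristic, then move everything to $X$ by the projection formula---is precisely the proof of that cited result, and it does apply verbatim to any smooth projective $X$. The duality bookkeeping you defer to \cite[Section 8.1]{HL} is genuinely just bookkeeping: reading both $v^{\perp}$ and the defining property of $w_Z$ with respect to the tensor-product Euler pairing $\chi(X,u\otimes -)$ (as \cite{HL} and \cite{BaMa} do, and as the paper implicitly does when it later inserts the dual $w^{*}$ in comparing $\lambda_{\mathcal{E}}$ with $\theta_v$), the rank above equals $\chi(X,v\otimes w_Z)=\Im Z(v)=0$ automatically from the normalization $Z(v)=-1$, so the correction term vanishes exactly as you predicted.
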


In general, there is no guarantee that the moduli spaces we have constructed are actually fine moduli spaces, and we would like to associate a nef divisor class to our coarse moduli spaces depending only on $\sigma$, as in Definition \ref{divisor def} above.  To remedy the possible lack of a universal family, Mukai \cite{Muk87} came up with the following substitute, which is usually good enough for most purposes:
\begin{Def} Let $S$ be an algebraic space of finite-type over $\C$.
\begin{enumerate} 
\item A family $\EE$ on $T\times X$ is called a \emph{quasi-family} of objects in $\MMM_{\sigma,X}(v)$ if for all closed points $t\in T$, there exists $E\in \MMM_{\sigma,X}(v)(\C)$ such that $\EE_t\cong E^{\oplus \rho}$, where $\rho>0$ is an integer which is called the \emph{similitude} and is locally constant on $T$.
\item Two quasi-families $\EE$ and $\EE'$on $T$, of similitudes $\rho$ and $\rho'$, respectively, are called \emph{equivalent} if there are locally free sheaves $\NN$ and $\NN'$ on $T$ such that $\EE\otimes p_T^*\NN\cong \EE'\otimes p_T^*\NN'$.  It follows that the similitudes are related by $\rk \NN \cdot \rho=\rk \NN'\cdot \rho'$.
\item A quasi-family $\EE$ is called \emph{quasi-universal} if for every scheme $T'$ and quasi-family $\EE'$ on $T'$, there exists a unique morphism $f:T'\to T$ such that $f^*\EE$ is equivalent to $\EE'$.
\end{enumerate}
\end{Def}

As follows from \cite[Lemma 8.1.2]{HL}, if $\NN$ is a locally free sheaf of rank $n$ on $S$, then $$\ell_{\sigma,\EE\otimes p_S^*\NN}=\lambda_{\EE\otimes p_S^*\NN}(w_Z)=n\lambda_{\EE}(w_Z)=n\cdot\ell_{\sigma,\EE}.$$ Thus, if we define $\ell_{\sigma}:=\frac{1}{\rho}\ell_{\sigma,\mathcal E}$, where $\rho$ is the similitude of $\mathcal E$, then (b) in the definition above shows that this gives a divisor class that is independent of the equivalence class of the quasi-family.  The usual techniques (see for example \cite[Theorem A.5]{Muk87} or \cite[Section 4.6]{HL}) show that a quasi-universal family exists on $M_{\sigma,X}^s(v)$ and is unique up to equivalence.  In particular, if $\sigma$ is generic and $v$ primitive, then we get a well-defined nef divisor class on $M_{\sigma,X}(v)$.

To go further in the case of Enriques surfaces, we must note the following general result relating this divisor class to the pull-back of the corresponding divisor class on the inducing variety:

\begin{Prop}\label{pull-back} Suppose $G$ acts fixed-point-freely on a smooth projective variety $X$ with $Y=X/G$ and projection $\pi$.  Set $\sigma=(\pi^*)^{-1}(\sigma')$, and denote the corresponding pull-back morphism of stacks $$\rho:\MY\rightarrow \MMM_{\sigma',X}(\v).$$  Then $$\rho^*\ell_{\sigma'}=\ell_{\sigma}.$$
\end{Prop}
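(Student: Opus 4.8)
The plan is to check the equality of these two numerical divisor classes on $\mY$ by pairing with test curves, using the description of $\ell_\sigma$ in Proposition and Definition \ref{divisor def}. First I would note that a morphism $\nu\colon C\to\MY$ from a smooth projective curve, with associated (quasi-)universal family $\mathcal E$ on $C\times Y$, is taken by $\rho$ to the test curve $\rho\circ\nu\colon C\to\MMM_{\sigma',X}(\pi^*v)$, and that the family realizing this composite is simply $(1\times\pi)^*\mathcal E$ on $C\times X$; this is immediate from the definition of $\rho$ as fibrewise pull-back along $\pi$, and the similitudes of the two (quasi-)families coincide, so they cancel in what follows. By the projection formula $\rho^*\ell_{\sigma'}.[C]=\ell_{\sigma'}.\rho_*[C]$, together with the independence of the Bayer--Macr\`i number from the choice of realizing family, it therefore suffices to prove
\[
\Im Z_{\sigma'}\bigl(\Phi_{(1\times\pi)^*\mathcal E}(\OO_C)\bigr)=\Im Z_{\sigma}\bigl(\Phi_{\mathcal E}(\OO_C)\bigr)
\]
for every such $C$, since numerical divisor classes are determined by their intersection with curves.

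The heart of the argument is a base-change identity. Writing $p_C,p_Y$ and $q_C,q_X$ for the projections off $C\times Y$ and $C\times X$, one has $\Phi_{\mathcal E}(\OO_C)=(p_Y)_*\mathcal E$ and $\Phi_{(1\times\pi)^*\mathcal E}(\OO_C)=(q_X)_*(1\times\pi)^*\mathcal E$. The square with horizontal arrows $1\times\pi\colon C\times X\to C\times Y$ and $\pi\colon X\to Y$ and vertical arrows $q_X$ and $p_Y$ is cartesian, since $C\times X\cong(C\times Y)\times_Y X$. As $\pi$ is the quotient map of the fixed-point-free $G$-action it is finite \'etale, hence flat, so flat base change applies to this square and yields $\pi^*(p_Y)_*\mathcal E\cong(q_X)_*(1\times\pi)^*\mathcal E$ in $\Db(X)$. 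In other words,
\[
\Phi_{(1\times\pi)^*\mathcal E}(\OO_C)\cong\pi^*\Phi_{\mathcal E}(\OO_C).
\]

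Finally I would invoke the defining property of the induced stability condition $\sigma=(\pi^*)^{-1}(\sigma')$, namely $Z_\sigma=Z_{\sigma'}\circ\pi^*$ on $K_{\num}(Y)$. Passing to classes in $K_{\num}$ and combining with the previous step gives
\[
Z_{\sigma'}\bigl(\Phi_{(1\times\pi)^*\mathcal E}(\OO_C)\bigr)=Z_{\sigma'}\bigl(\pi^*\Phi_{\mathcal E}(\OO_C)\bigr)=Z_{\sigma}\bigl(\Phi_{\mathcal E}(\OO_C)\bigr),
\]
and taking imaginary parts closes the argument. I expect the only genuinely delicate point to be the flat base-change step: one must be sure the relevant square is cartesian and that $\pi$ is flat (which is automatic here since $\pi$ is finite \'etale), so that derived pull-back commutes with derived push-forward; the remaining manipulations are formal consequences of the definitions of the Fourier--Mukai transform, the induced central charge, and the Bayer--Macr\`i pairing.
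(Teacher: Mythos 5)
Your proposal is correct and follows essentially the same route as the paper: reduce to test curves, identify the family for the composite as $(1\times\pi)^*\mathcal E$, use flat base change along the cartesian square (which the paper performs implicitly in its chain of equalities $(p_X)_*(1\times\pi)^*(\cdot)=\pi^*(p_Y)_*(\cdot)$) to get $\Phi_{(1\times\pi)^*\mathcal E}(\OO_C)\cong\pi^*\Phi_{\mathcal E}(\OO_C)$, and conclude via $Z_\sigma=Z_{\sigma'}\circ\pi^*$. Your explicit justification of the base-change step (cartesianness plus flatness of the finite \'etale $\pi$) and the remark that the similitudes of the quasi-families agree are the only points the paper leaves tacit.
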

\begin{proof} From the definition, it suffices to check that $$\rho^*\ell_{\sigma'}.C=\ell_{\sigma}.C$$ for every projective curve $C$ with a morphism $C\rightarrow \MY$.  For such a curve $C$ we get a universal object $\mathcal E\in \Db(C\times Y)$, and the universal object corresponding to the composition with $\rho$ is $\mathcal F=(1\times \pi)^*(\mathcal E)\in \Db(C\times X)$.  Consider the following commutative diagram 

\[\begin{CD}
C@ <p_C << C\times X@ >p_X >>X\\
@VV 1 V  @VV1\times\pi V  @VV \pi V\\
C @<p_C<<C\times Y@>p_Y >>Y
\end{CD}\] of schemes. 

Then we have \begin{align*} \rho^*\ell_{\sigma'}.C&=\ell_{\sigma'}.\rho_*(C)=\Im Z_{\sigma'}(\Phi_{(1\times\pi)^*\mathcal E}(\mathcal O_C))=\Im Z_{\sigma'}((p_X)_*(\FF\otimes (p_C)^*(\mathcal O_C)))\\
&=\Im Z_{\sigma'}((p_X)_*(\mathcal F\otimes(1\times\pi)^*(p_C)^*(\mathcal O_C))))\\
&=\Im Z_{\sigma'}((p_X)_*((1\times \pi)^*(\mathcal E\otimes (p_C)^*(\mathcal O_C))))\\
&=\Im Z_{\sigma'}(\pi^*((p_Y)_*(\mathcal E\otimes(p_C)^*\mathcal O_C))))\\
&=\Im Z_{\sigma}(\Phi_{\mathcal E}(\mathcal O_C))=\ell_{\sigma}.C,
\end{align*} as required.
\end{proof}

Finally, we may use this proposition to deduce the following result for an unnodal Enriques surface $Y$:

\begin{Thm} Let $Y$ be an unnodal Enriques surface, $v=mv_0\in\Hal(Y,\Z)$, and $\sigma\in\Stab^{\dagger}(Y)$ generic with respect to $v$, where $v_0$ is primitive and $m\in\Z_{>0}$.  Then $\ell_{\sigma}$ is ample on the projective variety $\mY$.
\end{Thm}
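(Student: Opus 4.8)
The plan is to deduce ampleness on $\mY$ from the corresponding statement on the K3 cover $\Y$ by exhibiting $\ell_{\sigma}$ as the pull-back of an ample class under a finite morphism. First I would fix the stability condition: by Observation \ref{gen un}, unnodality of $Y$ guarantees that every wall for $v$ in $\Stab^{\dagger}(Y)$ is also a wall for $\v$ in $\Stab^{\dagger}(\Y)=\Stab^{\dagger}(Y)$, so I may take $\sigma=\sigma'$ simultaneously generic with respect to $v$ on $Y$ and to $\v$ on $\Y$. This double genericity is exactly what makes the Bayer-Macr\`{i} ampleness theorem applicable on the cover.

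With this choice, I would assemble three ingredients. First, since $\sigma'$ is generic with respect to $\v$, \cite[Theorem 1.3(b)]{BaMa} gives that $\ell_{\sigma'}$ is an \emph{ample} class on the projective variety $\mX$, regardless of whether $\v$ is primitive. Second, the projectivity arguments already carried out (Corollary \ref{primitive} in the primitive case, Lemmas \ref{point} and \ref{isotrop} in the cases $v_0^2\leq 0$, and Theorem \ref{projective general} in general) all proceed by showing that the pull-back morphism $\Phi=\pi^*\colon\mY\to\mX$ on coarse moduli spaces is quasi-finite and proper, hence \emph{finite}; recall also from Proposition \ref{lagrange} that $\Phi$ factors through the closed subvariety cut out by $\Fix$. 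Third, Proposition \ref{pull-back} yields the key identity $\Phi^*\ell_{\sigma'}=\ell_{\sigma}$ of numerical divisor classes on $\mY$.

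Combining these, I would conclude as follows: the pull-back of an ample class along a finite morphism of projective varieties is again ample (equivalently, a proper morphism is finite precisely when the pull-back of one, hence of every, ample class is ample), so $\ell_{\sigma}=\Phi^*\ell_{\sigma'}$ is ample on $\mY$. The conceptual point, as emphasized in the introduction, is that $Y$ carries no hyperk\"{a}hler structure and so no intrinsic lattice-theoretic positivity is available to prove ampleness directly; the entire argument is therefore forced through the cover. Accordingly, the step demanding the most care, and the one I would regard as the main obstacle, is the finiteness of $\Phi$, which must hold uniformly across every row of Table \ref{table:dimension}, including the totally semistable and isotropic cases where $\mX$ is singular (e.g.\ a symmetric product) and no universal family exists; this is precisely why the earlier case-by-case projectivity results, rather than a single uniform construction, are needed to underwrite the present theorem.
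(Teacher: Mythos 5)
Your proposal is correct and follows essentially the same route as the paper: ampleness of $\ell_{\sigma'}$ on $\mX$ from Bayer--Macr\`{i}, finiteness of $\Phi$ supplied by Theorem \ref{projective general} (and the earlier case analysis), and the identity $\Phi^*\ell_{\sigma'}=\ell_{\sigma}$ from Proposition \ref{pull-back}, whence pull-back of ample under a finite morphism is ample. The paper's proof is exactly this three-line argument, so nothing further is needed.
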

\begin{proof} By \cite[Corollary 7.5]{BaMa} and the discussion after it, $\ell_{\sigma'}$ is ample on $\mX$.    By Theorem \ref{projective general} it follows that the morphism $\rho$ of Proposition \ref{pull-back} (we called it $\Phi$ above in our case) is a finite morphism, and thus $\ell_{\sigma}=\Phi^*\ell_{\sigma'}$ is ample.
\end{proof}

\section{Flops via Wall-Crossing}\label{sec: wall-crossing}

With the above preparations, we can now explain the main tool we need to investigate the birational operation induced by crossing a wall $W$ in $\Stab^{\dagger}(Y)$ associated to a primitive Mukai vector $v$ with $v^2\geq 1$ on an unnodal Enriques surface $Y$.  

Let $\sigma_0=(W_0,\mathcal A_0)\in W$ be a generic point on the wall.  Let $\sigma_+=(Z_+,\mathcal A_+),\sigma_-=(Z_-,\mathcal A_-)$ be two algebraic stability conditions in the two adjacent chambers on each side of $W$.  From Section \ref{sec:SingKod}, the two moduli spaces $M_{\pm}:=M_{\sigma_{\pm},Y}(v)$ are non-empty $K$-trivial normal projective varieties, smooth outside of codimension two, and parametrize only stable objects.  Choosing (quasi-)universal families $\mathcal E_{\pm}$ on $M_{\pm}$ of $\sigma_{\pm}$-stable objects, we obtain (quasi-)families of $\sigma_0$-semistable objects from the closedness of semistability.  By \cite[Theorem 4.1]{BaMa}, these two families give two nef divisor classes $\ell_{0,\pm}:=\ell_{\sigma_0,\mathcal E_{\pm}}$ on $M_{\pm}$.

Following \cite{BaMa}, we enumerate four different possible phenomena at the wall $W$ depending on the codimension of the locus of strictly $\sigma_0$-semistable objects and the existence of curves $C\subset M_{\pm}$ with $\ell_{0,\pm}.C=0$, i.e. curves parametrizing $S$-equivalent objects.  We call the wall $W$

\begin{enumerate}
\item a \emph{fake wall} if there are no curves in $M_{\pm}$ of objects that are $S$-equivalent to each other with respect to $\sigma_0$,

\item a \emph{totally semistable wall}, if $M_{\sigma_0}^s(v)=\varnothing$,

\item a \emph{flopping wall}, if $W$ is not a fake wall and $M_{\sigma_0}^s(v)\subset M_{\pm}$ has complement of codimension at least two,

\item a \emph{bouncing wall}, if there is an isomorphism $M_+\cong M_-$ that maps $\ell_{0,+}$ to $\ell_{0,-}$, and there are divisors $D_{\pm}\subset M_{\pm}$ that are covered by curves of objects that are $S$-equivalent to each other with respect to $\sigma_0$.
\end{enumerate}

We may assume that $\sigma_0$ is algebraic, $W_0(v)=-1$, and $\phi=1$.  Then $\ell_{0,\pm}$ is the pull-back by the finite-morphism $\Phi$ of a semi-ample divisor \cite[Section 8]{BaMa}.  Thus $\ell_{0,\pm}$ is itself semi-ample (as are its restrictions to each irreducible component of $M_{\pm}$).

We thus get induced contraction morphisms \cite[Theorem 2.1.27]{Laz04} $$\pi_{\sigma_{\pm}}:M_{\pm}\rightarrow Z_{\pm},$$ where $Z_{\pm}$ are normal projective varieties.  We denote the induced ample divisor class on $Z_{\pm}$ by $\ell_0$, i.e., the ample divisor pulling back to $\ell_{0,\pm}$.  If $M_{\sigma_0,Y}^s(v)\neq \varnothing$, then by the openness of stability for primitive Mukai vectors \cite[Proposition 9.4]{Bri08} these objects remain $\sigma_{\pm}$-stable, and we denote by $f_{\sigma_0}:M_+\dashrightarrow M_-$ the induced birational map.

As observed in \cite{BaMa}, $\pi_{\sigma_{\pm}}$ is an isomorphism if and only if the wall $W$ is a fake wall, a divisorial contraction if $W$ is a bouncing wall, and a flopping contraction if $W$ is a flopping wall.

The proof of \cite[Proposition 8.1]{BaMa} carries through unchanged to yield the following result which shows that $Z_{\pm}$ is a union of components of a coarse moduli space of $\sigma_0$-semistable objects up to a finite cover:
\begin{Prop} The space $Z_{\pm}$ has the following universal property: For any proper irreducible scheme $S\in \Sch_{\C}$, and for any family $\mathcal E\in \MMM_{\sigma_0,Y}(v)(S)$ such that there exists a closed point $s\in S$ such that $\mathcal E_s\in \MMM_{\sigma_{\pm},Y}(v)(\C)$, there exists a finite morphism $q:T\rightarrow S$ and a natural morphism $f_{q^*\mathcal E}:T\rightarrow Z_{\pm}$.
\end{Prop}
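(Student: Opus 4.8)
The plan is to transcribe the proof of \cite[Proposition 8.1]{BaMa}, all of whose ingredients are now available for $Y$: the nef class $\ell_{\sigma_0,\EE}$ attached to an arbitrary family $\EE\in\MMM_{\sigma_0,Y}(v)(S)$ together with the characterization of its $\EE$-trivial curves (the Lemma following Proposition and Definition \ref{divisor def}, and \cite[Theorem 4.1]{BaMa}), the semiampleness of $\ell_{0,\pm}$ deduced from the finiteness of $\Phi$ and \cite[Section 8]{BaMa}, and the openness of stability for primitive $v$ \cite[Proposition 9.4]{Bri08}. First I would pass to the locus $$S^{\circ}=\{s\in S:\EE_s\in M_{\sigma_{\pm},Y}(v)(\C)\},$$ which is open by openness of stability, nonempty since it contains the given point $s$, and dense as $S$ is irreducible. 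Over $S^{\circ}$ the restriction $\EE|_{S^{\circ}}$ is a (quasi-)family of $\sigma_{\pm}$-stable objects, so it induces a classifying morphism $g\colon S^{\circ}\to M_{\pm}$ to the coarse space; setting $h:=\pi_{\sigma_{\pm}}\circ g$ and using the functoriality of the Bayer-Macr\`i construction exactly as in Proposition \ref{pull-back}, one gets $h\colon S^{\circ}\to Z_{\pm}$ with $\ell_{\sigma_0,\EE}|_{S^{\circ}}=g^{*}\ell_{0,\pm}=h^{*}\ell_0$, where $\ell_0$ is ample on $Z_{\pm}$.

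The remaining point is to extend $h$ to a morphism after a finite base change. I would take the closure $\overline{\Gamma}\subset S\times Z_{\pm}$ of the graph of $h$, with projections $p\colon\overline{\Gamma}\to S$ and $q'\colon\overline{\Gamma}\to Z_{\pm}$, and set $T:=\overline{\Gamma}$, $q:=p$, and $f_{q^{*}\EE}:=q'$. Since $Z_{\pm}$ is proper, $p$ is proper, and it restricts to an isomorphism over $S^{\circ}$, hence is birational onto the irreducible $S$; note that when $S$ is normal a finite birational morphism onto it is an isomorphism, so $h$ then extends directly and the finite cover is only genuinely needed to absorb the non-normality of $S$. The whole content is to prove that $p$ is finite, which by Zariski's Main Theorem reduces to showing that $p$ has finite fibres, i.e.\ that no curve $C\subset\overline{\Gamma}$ is contracted by $p$.

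To exclude such a $C$, I would show that over any boundary point $s_0\in S\setminus S^{\circ}$ the map $q'$ is constant on $p^{-1}(s_0)$. A point of $p^{-1}(s_0)$ is a limit of $h(s_t)$ along a curve $s_t\to s_0$ inside $S^{\circ}$; applying the valuative criterion to the separated $Z_{\pm}$ and using that the $\sigma_0$-Jordan--H\"older factors of a flat degeneration specialize to those of $\EE_{s_0}$, this limit equals the point of $Z_{\pm}$ determined by the $S$-equivalence class of $\EE_{s_0}$, which is intrinsic by the characterization of $\ell_{0,\pm}$-trivial curves. Hence $q'$ is constant on $p^{-1}(s_0)$; a curve $C$ contracted by $p$ would then also be contracted by $q'$, and since a point of $\overline{\Gamma}\subset S\times Z_{\pm}$ is determined by its two projections, this forces $C$ to be a point, a contradiction. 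This gives $p$ finite and produces the desired $q\colon T\to S$ and $f_{q^{*}\EE}\colon T\to Z_{\pm}$. I expect this last step --- controlling the degeneration across the strictly $\sigma_0$-semistable boundary and showing the limiting $S$-equivalence class in $Z_{\pm}$ is independent of the approach --- to be the main obstacle, since it is exactly here that the separatedness of $Z_{\pm}$ and the uniqueness of $\sigma_0$-Jordan--H\"older factors must be combined.
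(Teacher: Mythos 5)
Your skeleton is the right one, and it is essentially the route the paper itself takes (the paper's entire proof is the remark that the argument of \cite[Proposition 8.1]{BaMa} carries through unchanged): restrict to the open dense locus $S^{\circ}$, use the coarse moduli property and $\pi_{\pm}$ to get $h\colon S^{\circ}\to Z_{\pm}$, pass to the closure of the graph, and reduce finiteness of $p\colon\overline{\Gamma}\to S$ to quasi-finiteness via Zariski's Main Theorem. The problem is that the one step carrying all the content --- that $p$ has finite fibres --- is not actually proved, and the two assertions you substitute for it do not hold up as written. First, the phrase ``this limit equals the point of $Z_{\pm}$ determined by the $S$-equivalence class of $\mathcal{E}_{s_0}$'' is circular: at this stage no map from $\sigma_0$-$S$-equivalence classes to $Z_{\pm}$ has been constructed (constructing one is what the proposition is for). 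The object $\mathcal{E}_{s_0}$ is in general not $\sigma_{\pm}$-semistable, hence not a point of $M_{\pm}$ at all, and the Positivity Lemma only says that curves in $M_{\pm}$ whose general points are $S$-equivalent are contracted by $\pi_{\pm}$; it does \emph{not} say that any two $S$-equivalent points of $M_{\pm}$ have the same image under $\pi_{\pm}$, since the natural degeneration connecting them passes through the polystable object, which typically leaves $M_{\pm}$ at the wall. Making this precise requires showing that the locus $W\subset M_{\pm}$ of objects $S$-equivalent to $\mathcal{E}_{s_0}$ is constructible and that $\pi_{\pm}(W)$ is finite (connect two points of an irreducible component of $\overline{W}$ by a curve whose general point lies in $W$, then apply the Positivity Lemma); this yields \emph{finiteness} of the fibres of $p$, not the constancy of $q'$ on them that you claim.

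Second, the statement that ``the $\sigma_0$-Jordan--H\"older factors of a flat degeneration specialize to those of $\mathcal{E}_{s_0}$'' is not the correct mechanism: JH factors do not specialize in flat families. What you actually need is the following. Along an arc $\Delta\to S$ with generic point in $S^{\circ}$, properness of $M_{\pm}$ (equivalently the valuative criterion for the stack, via \cite{AP06}) produces a limit object $F\in M_{\pm}$; the resulting family and $\mathcal{E}|_{\Delta}$ agree away from the closed point, so their special fibres are related by a Langton-type chain of elementary modifications, and each such modification exchanges a subobject and a quotient of the same $\sigma_0$-phase, hence preserves both the $\sigma_0$-$S$-equivalence class and (because $\Im\bigl(Z_0(B)/Z_0(v)\bigr)=0$ for the modifying objects $B$) the degree of $\ell_{\sigma_0}$ on curves. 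This separatedness-type argument is genuinely needed and is not a formal consequence of the separatedness of $Z_{\pm}$ plus the valuative criterion, as your write-up suggests. Once both ingredients are in place --- every arc-limit lies in $\pi_{\pm}(W)$, and $\pi_{\pm}(W)$ is finite --- your ZMT conclusion goes through; but without them the proof is incomplete at exactly the point you yourself flag as the main obstacle, and the stronger claims you make there (single-point fibres, no cover needed for normal $S$) are not justified by what precedes them.
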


Now, as $M_{\pm}$ are $K$-trivial, the existence of a $\sigma_0$-stable object of class $v$ inducing the birational map $f_{\sigma_0}$ can be extended to an isomorphism away from a locus of codimension at least 2 (see for example \cite[Proposition 3.52(2)]{KM98}).  The proof of \cite[Lemma 10.10]{BaMa} then carries through unchanged to give the following identification of $\ell_{0,+}$ with $\ell_{0,-}$:
\begin{Lem}\label{id ns} Let $Y$ be an unnodal Enriques surface and $v\in\Hal(Y,\Z)$ primitive with $v^2\geq 1$.  Assume that there exists a $\sigma_0$-stable object of class $v$ and identify the N\'{e}ron-Severi groups of $M_{\pm}(v)$ by extending the common open subset $M^s_{\sigma_0,Y}(v)$ to an isomorphism outside of codimension two.  Under this identification, $\ell_{0,+}=\ell_{0,-}$.
\end{Lem}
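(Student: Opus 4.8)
The plan is to reduce the asserted equality to the common open locus of $\sigma_0$-stable objects, where the two families $\mathcal E_{\pm}$ become equivalent as quasi-families, and then to propagate the equality across all of $M_{\pm}$ using that the complement of this locus has codimension at least two in a normal variety.

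First I would set $U := M^s_{\sigma_0,Y}(v)$ and record that $U$ is a common open subset of $M_+$ and $M_-$. Since $v$ is primitive and $\sigma_0$ is generic on $W$, openness of stability \cite[Proposition 9.4]{Bri08} shows that each $\sigma_0$-stable object of class $v$ remains $\sigma_{\pm}$-stable; hence $U$ embeds as an open subscheme of both $M_{\pm}$, and the birational map $f_{\sigma_0}$ restricts to the identity $\id_U$. By the $K$-triviality of $M_{\pm}$ together with the extension of $f_{\sigma_0}$ to an isomorphism outside codimension two (established just before the statement), the complement of $U$ in each $M_{\pm}$ has codimension at least two, and the identification $N^1(M_+)\cong N^1(M_-)$ restricts to the identity on $N^1(U)$.

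Next I would compare the families on $U$. By construction $\mathcal E_{\pm}|_U$ is a quasi-universal family of the $\sigma_0$-stable objects of class $v$ parametrized by $U$, and since $f_{\sigma_0}|_U=\id_U$ the two families parametrize literally the same objects fibrewise. By uniqueness of the quasi-universal family up to equivalence (recalled after the definition of quasi-family), $\mathcal E_+|_U$ and $\mathcal E_-|_U$ are equivalent. Because $\ell_{0,\pm}$ is computed from the \emph{same} central charge $Z_{\sigma_0}$ on both sides, and because the normalized class $\ell_{\sigma_0}=\frac{1}{\rho}\ell_{\sigma_0,\mathcal E}$ depends only on the equivalence class of the quasi-family — as follows from Proposition \ref{donaldson compare} and the invariance of $\ell_{\sigma_0,\mathcal E}$ under tensoring $\mathcal E$ by a locally free sheaf pulled back from the base — we conclude $\ell_{0,+}|_U=\ell_{0,-}|_U$.

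Finally I would propagate this equality to all of $M_{\pm}$. Both moduli spaces are normal, and writing $j\colon U\hookrightarrow M_{\pm}$ for the open immersion with complement of codimension at least two, normality gives $j_*\mathcal O_U=\mathcal O_{M_{\pm}}$; hence a line bundle trivial on $U$ is trivial, the restriction $\Pic(M_{\pm})\to\Pic(U)$ is injective, and likewise $N^1(M_{\pm})\hookrightarrow N^1(U)$. Under the identification $N^1(M_+)\cong N^1(M_-)$, which is the identity on $N^1(U)$, the equality $\ell_{0,+}|_U=\ell_{0,-}|_U$ then forces $\ell_{0,+}=\ell_{0,-}$. The step I expect to require the most care is the quasi-family bookkeeping rather than the codimension argument: one must check that the equivalence of $\mathcal E_+|_U$ and $\mathcal E_-|_U$, combined with the normalization $Z_{\sigma_0}(v)=-1$, correctly accounts for the similitudes so that the normalized classes — and not merely the unnormalized $\ell_{\sigma_0,\mathcal E_{\pm}}$ — agree on $U$; this is exactly the equivalence-invariance of $\ell_{\sigma_0}$ noted after the definition of quasi-universal family, and the remaining steps are the standard normality and codimension-two arguments.
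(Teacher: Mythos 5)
Your overall strategy is the one the paper intends: it matches the proof of \cite[Lemma 10.10]{BaMa}, which the paper cites as carrying over verbatim (restrict the quasi-universal families $\EE_{\pm}$ to the common stable locus $U=M^s_{\sigma_0,Y}(v)$, use uniqueness of quasi-universal families with the similitude bookkeeping to get $\ell_{0,+}|_U=\ell_{0,-}|_U$, then propagate across the N\'eron--Severi identification). However, the propagation step contains a genuine gap. You claim that the complement of $U$ in $M_{\pm}$ has codimension at least two, ``by the $K$-triviality of $M_{\pm}$ together with the extension of $f_{\sigma_0}$ to an isomorphism outside codimension two.'' That deduction is a non sequitur: what $K$-triviality and \cite[Proposition 3.52(2)]{KM98} produce are open subsets $V_{\pm}\subseteq M_{\pm}$ with $\codim(M_{\pm}\setminus V_{\pm})\geq 2$ between which $f_{\sigma_0}$ extends to an isomorphism, and $V_{\pm}$ is in general strictly larger than $U$; nothing in those statements bounds $\codim(M_{\pm}\setminus U)$. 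Under the lemma's actual hypothesis (only $M^s_{\sigma_0,Y}(v)\neq\varnothing$) this complement can be a divisor: this is exactly the situation at a bouncing wall, where the strictly $\sigma_0$-semistable objects fill up divisors $D_{\pm}\subset M_{\pm}$ covered by curves of $S$-equivalent objects, while $f_{\sigma_0}$ extends to an isomorphism $M_+\cong M_-$ (so $V_{\pm}=M_{\pm}$ but $U\subseteq M_{\pm}\setminus D_{\pm}$). In that situation your final step fails as well: restriction $N^1(M_{\pm})\to N^1(U)$ is \emph{not} injective when $M_{\pm}\setminus U$ contains a divisor $D$, since $\OO_{M_{\pm}}(D)$ is trivial on $U$ and pairs to zero with every curve avoiding $D$. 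So, as written, your argument proves the lemma only under the additional assumption $\codim\bigl(M_{\pm}\setminus M^s_{\sigma_0,Y}(v)\bigr)\geq 2$.

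That extra codimension input is available in the situations where the paper actually applies the lemma, but it must be supplied, not derived the way you attempt: in the flop case it is an explicit hypothesis of Theorem \ref{contraction}, and in the case where one of $\ell_{0,\pm}$ is ample one needs a separate argument that the strictly $\sigma_0$-semistable locus contains no divisor (a divisorial family of strictly semistable objects is larger-dimensional than the space of their $S$-equivalence classes, hence would be covered by curves of $S$-equivalent objects, on which the ample class $\ell_{0,\pm}$ would have to vanish). With that granted, the rest of your proof is correct and coincides with the paper's, though I would phrase the last step through the actual $\R$-line bundles $\lambda_{\EE_{\pm}}(w_{Z_0})$ computing $\ell_{0,\pm}$ (Proposition \ref{donaldson compare}): for a normal variety, restriction to an open subset with complement of codimension two is injective on Picard groups, so the two line-bundle classes agree, and hence so do their numerical classes. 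Your ``likewise $N^1(M_{\pm})\hookrightarrow N^1(U)$'' is a genuinely different statement (numerical equivalence on $U$ is tested only against curves lying in $U$) and does not follow formally from the Picard-group injectivity you establish.
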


Finally, we have enough preparation to prove our main result about the relationship between wall-crossing and birational geometry:
\begin{Thm} \label{contraction} Let $Y$ be an unnodal Enriques surface and $v\in\Hal(Y,\Z)$ a primitive Mukai vector.
\begin{enumerate}
\item The divisor classes $\ell_{0,\pm}$ are semiample (and remain so when restricted to each component of $M_{\pm}$), and they induce contraction morphisms $$\pi_{\pm}:M_{\pm}\rightarrow Z_{\pm},$$ where $Z_{\pm}$ are normal projective varieties.

\item Suppose that $M_{\sigma_0,Y}^s(v)\neq\varnothing$.  
\begin{itemize}
\item If either $\ell_{0,\pm}$ is ample, then the other is ample, and the birational map $$f_{\sigma_0}:M_+\dashrightarrow M_-$$ obtained by crossing the wall in $\sigma_0$ extends to an isomorphism.
\item If $\ell_{0,\pm}$ are not ample and the complement of $M_{\sigma_0,Y}^s(v)$ has codimension at least 2, then $f_{\sigma_0}:M_+\dashrightarrow M_-$ is the flop induced by $\ell_{0,+}$.  More precisely, we have a commutative diagram of birational maps
\begin{equation*}
\xymatrix{ M_{\sigma_{+},Y}(v)\ar@{-->}[rr]^{f_{\sigma_0}}\ar[dr]_{\pi_+} && M_{\sigma_{-},Y}(v)\ar[dl]^{\pi_{-}}\\
& Z_+=Z_{-} &
},
\end{equation*}
and $f_{\sigma_0}^*\ell_{0,-}=\ell_{0,+}$.
\end{itemize}
\end{enumerate}
\end{Thm}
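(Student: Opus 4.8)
The plan is to treat part (1) as essentially already recorded in the discussion preceding the statement: there we observed that $\ell_{0,\pm}$ is the pull-back under the finite morphism $\Phi$ of a semiample class on $\mX$, hence is itself semiample (and remains so on each component of $M_{\pm}$), and that \cite[Theorem 2.1.27]{Laz04} then produces the contractions $\pi_{\pm}\colon M_{\pm}\to Z_{\pm}$ onto the normal projective varieties $Z_{\pm}=\Proj\bigoplus_{m\geq0}H^0(M_{\pm},m\ell_{0,\pm})$. So for part (1) I would simply assemble these observations.

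For part (2) I would first record the consequences of $M_{\sigma_0,Y}^s(v)\neq\varnothing$. By openness of stability for primitive $v$ \cite[Proposition 9.4]{Bri08} the $\sigma_0$-stable objects remain $\sigma_{\pm}$-stable, so $f_{\sigma_0}$ is a genuine birational map, an isomorphism on the dense open $M_{\sigma_0,Y}^s(v)$; moreover each $\pi_{\pm}$ is injective there, since stable objects are not $S$-equivalent to anything else, whence $\pi_{\pm}$ is birational and $\ell_{0,\pm}$ is big. Because $M_{\pm}$ are $K$-trivial (Theorem \ref{K trivial}), $f_{\sigma_0}$ extends to an isomorphism outside a locus of codimension $\geq 2$ \cite[Proposition 3.52(2)]{KM98}; normality lets global sections extend across this locus, and Lemma \ref{id ns} ($\ell_{0,+}=\ell_{0,-}$ under the resulting identification of Néron--Severi groups) identifies the two graded section rings. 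Hence $Z_{+}\cong Z_{-}=:Z$, the triangle commutes (both $\pi_{\pm}$ restrict to the open immersion $M_{\sigma_0,Y}^s(v)\hookrightarrow Z$), and $f_{\sigma_0}^*\ell_{0,-}=\ell_{0,+}$ is exactly the content of Lemma \ref{id ns}.

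The ample case is the crux. Suppose $\ell_{0,+}$ is ample; then $f_{\sigma_0}\colon M_+\dashrightarrow M_-$ is a birational map, an isomorphism in codimension one, carrying the ample class $\ell_{0,+}$ to the nef class $\ell_{0,-}$, and I would show that any such map is an isomorphism via the negativity lemma \cite{KM98}. On a common resolution $p\colon W\to M_+$, $q\colon W\to M_-$ the class $E:=p^*\ell_{0,+}-q^*\ell_{0,-}$ satisfies $p_*E=0$ and $q_*E=0$ (using that $f_{\sigma_0}$ and its inverse, being isomorphisms in codimension one, match $\ell_{0,+}$ and $\ell_{0,-}$), so $E$ is both $p$- and $q$-exceptional; as $-E$ is $p$-nef and $E$ is $q$-nef (each of $p^*\ell_{0,+}$, $q^*\ell_{0,-}$ being trivial on the fibres of its own morphism and nef globally), the negativity lemma forces $E=0$. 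Intersecting $p^*\ell_{0,+}=q^*\ell_{0,-}$ with any $q$-contracted curve $C$ gives $\ell_{0,+}.p_*C=0$, so $p_*C=0$ by ampleness; thus $p$ and $q$ contract the same curves, $f_{\sigma_0}$ is a morphism, and it is an isomorphism by Zariski's main theorem, whence $\ell_{0,-}=f_{\sigma_0,*}\ell_{0,+}$ is ample. This is precisely the step where the holomorphic-symplectic machinery used in the K3 case of \cite{BaMa} is unavailable and must be replaced by this intrinsic minimal-model-program input. In the flopping case, $\ell_{0,\pm}$ are not ample and $M_{\sigma_0,Y}^s(v)$ has complement of codimension $\geq 2$, so the curves contracted by $\pi_{\pm}$ (those parametrizing $S$-equivalent objects) lie in this codimension-$\geq 2$ locus; hence $\pi_{\pm}\colon M_{\pm}\to Z$ are small contractions, crepant by $K$-triviality, onto the common base $Z$, and $f_{\sigma_0}$ together with the commutative triangle and $f_{\sigma_0}^*\ell_{0,-}=\ell_{0,+}$ exhibits $f_{\sigma_0}$ as the flop.

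The main obstacle I anticipate is the validity of the negativity-lemma step, which requires that $E=p^*\ell_{0,+}-q^*\ell_{0,-}$ be $\Q$-Cartier, i.e. that $M_{\pm}$ be $\Q$-factorial. This is automatic when $\pi^*v$ is primitive, for then $M_{\pm}$ is smooth by Theorem \ref{singular} (singular points would force $E\cong E\otimes\omega_Y$, whence $\pi^*v$ divisible by $2$). The delicate case is $\pi^*v$ divisible by $2$, where $M_{\pm}$ carries the Gorenstein l.c.i. canonical singularities of Theorem \ref{K trivial}; there one must either establish $\Q$-factoriality directly or transport the argument through the finite Lagrangian map $\Phi$ to the $\Q$-factorial K3 moduli space $\mX$, verifying that the image of $\Phi$ is disjoint in codimension one from the flopping locus downstairs.
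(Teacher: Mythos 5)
Your part (1), your setup for part (2) (openness of stability, bigness, the codimension-one extension of $f_{\sigma_0}$ via $K$-triviality and \cite[Proposition 3.52(2)]{KM98}, Lemma \ref{id ns}, and the identification $Z_+\cong Z_-$ through the section rings) and your treatment of the flopping case all track the paper's own proof: the paper obtains (1) from the discussion preceding the theorem, and settles the flop case exactly by combining Lemma \ref{id ns}, the codimension-two condition, and the construction of $Z_\pm$ in \cite[Theorem 2.1.27]{Laz04}.

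The genuine gap is in the ample case. Your negativity-lemma computation correctly gives $p^*\ell_{0,+}=q^*\ell_{0,-}$ on a common resolution (and note that $\Q$-factoriality is \emph{not} needed for this step: the classes $\ell_{0,\pm}$ are $\R$-Cartier by construction, being real combinations of determinant line bundles, so their pullbacks are defined). From ampleness of $\ell_{0,+}$ you then deduce that every $q$-contracted curve is $p$-contracted --- but only that direction. By rigidity this shows that $f_{\sigma_0}^{-1}$ extends to a projective birational morphism $h\colon M_-\to M_+$ with $h^*\ell_{0,+}=\ell_{0,-}$; it does \emph{not} show that ``$p$ and $q$ contract the same curves,'' and Zariski's main theorem cannot finish from here: ZMT yields an isomorphism only once $h$ is known to be quasi-finite, whereas a priori $h$ could be a nontrivial small contraction. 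Indeed the $h$-contracted curves are exactly the $\ell_{0,-}$-trivial curves, so ``$h$ is an isomorphism'' is \emph{equivalent} to ``$\ell_{0,-}$ is ample,'' which is precisely the assertion to be proven --- the argument is circular at its last step, and nothing in it excludes the flip-like configuration in which $\ell_{0,+}$ is ample while $\pi_-$ is a nontrivial small contraction onto $Z_-\cong M_+$.

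To close the gap you need an input that rules out small contractions onto $M_+$: either $\Q$-factoriality of $M_+$, after which the standard fact that a small projective birational morphism onto a normal $\Q$-factorial variety is an isomorphism applies (this covers the cases where $M_+$ is smooth, i.e.\ $\pi^*v$ primitive, but is not established in the paper when $\pi^*v$ is divisible by $2$), or the input the paper actually uses, namely the theorem of Matsusaka and Mumford \cite[Exercise 5.6]{KSC} combined with Lemma \ref{id ns}, which is stated for normal projective varieties and so sidesteps $\Q$-factoriality. Your closing paragraph correctly senses that $\Q$-factoriality is the delicate issue, but attaches it to the wrong step (the negativity lemma, where it is harmless) rather than to the exclusion of small contractions, where your argument as written does not go through.
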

\begin{proof} 

It only remains to prove part (b).  The first case of (b) follows from Lemma \ref{id ns} and the theorem of Matsusaka and Mumford \cite[Exercise 5.6]{KSC}.

For the second case, note that by the discussion preceeding Lemma \ref{id ns}, we need not specify in which moduli space we assume the complement of the $\sigma_0$-stable locus to have codimension 2.  From this codimension condition and projectivity of these moduli spaces, it follows that numerical divisor classes are determined by their intersection numbers with curves contained in $M_{\sigma_0,Y}^s(v)$.  By Lemma \ref{id ns} we have $f_{\sigma_0}^*\ell_{0,-}=\ell_{0,+}$.  Since $\ell_{0,+}$ is not ample, $f_{\sigma_0}$ does not extend to an isomorphism.  The identification of $\ell_{0,\pm}$ and the codimension condition force $Z_+=Z_-$ from their construction in \cite[Proposition 2.1.27]{Laz04}.  This gives the claimed commutativity of the diagram and thus the description of the birational map as a flop.
\end{proof}

\begin{Rem} We believe the semiample divisors $\ell_{0,\pm}$ are big as well.  If they weren't, then every irreducible component of $M_{\pm}$ would fiber over a component of $Z_{\pm}$ with positive dimensional fibers, i.e. $\dim Z_{\pm}<\dim M_{\pm}$.  From the description of semistable objects as extensions of their stable factors, we suspect that it would follow that $M_{\pm}$ are then covered by a family of rational curves contracted by $\pi_{\pm}$.  By \cite[Remark 4.2 (4)]{Deb} $M_{\sigma_{\pm}}$ would then be uniruled, which is impossible as they are $K$-trivial \cite[Corollary 4.12]{Deb}.  Since being $K$-trivial applies to each component, the same argument shows that the restriction to each component is big as well.
\end{Rem}
\section{Moduli of stable sheaves}\label{sec: stable sheaves}

We'd like to now use the above work to setup the investigation of the birational geometry of the classical moduli spaces of sheaves on an unnodal Enriques surface.  From Lemma \ref{donaldson compare} above we see that $\ell_{\sigma}$ and $\lambda_{\EE}(w_Z)$ agree up to scaling by a positive real number.  We can define a dual version of the Donaldson morphism which is suited for the Mukai lattice.  We define the \emph{Mukai homomorphism} $\theta_v:v^{\perp}\to N^1(\mY)$ by $$\theta_v(w).C:=\frac{1}{\rho}(w,\Phi_{\EE}(\OO_C)),\text{ for every projective integral curve }C\subset \mY,$$ where $\EE$ is a quasi-universal family of similitude $\rho$ and $\Phi_{\EE}$ is the associated Fourier-Mukai transform.  The relationship between the Donaldson and Mukai homomorphisms is $$\theta_v(v(w))=-\frac{1}{\rho}\lambda_{\EE}(w^*),$$ where $w^*$ denotes the dual of $w$ in $K_{\num}(Y)_{\R}$.  We can relate $\theta_v$ and $\ell_{\sigma}$ explicitly as in \cite[Lemma 9.2]{BaMa}:

\begin{Lem}\label{donaldson}  Let $Y$ be an Enriques surface, $v=(r,c,s)$ a primitive Mukai vector with $v^2\geq-1$, and let $\sigma=\sigma_{\omega,\beta}\in \Stab^{\dagger}(Y)$ be a generic stability condition with respect to $v$.  Then the divisor class $\ell_{\sigma}\in N^1(\mY)$ is a positive multiple of $\theta_v(w_{\sigma_{\omega,\beta}})$, where $w_{\sigma_{\omega,\beta}}=(R_{\omega,\beta},C_{\omega,\beta},S_{\omega,\beta})$ is given by 
$$R_{\omega,\beta}=c.\omega-r\beta.\omega,$$
$$C_{\omega,\beta}=(s-\beta.c+r\frac{\beta^2-\omega^2}{2})\omega+(c.\omega-r\beta.\omega)\beta,\text{ and}$$
$$S_{\omega,\beta}=c.\omega\frac{\beta^2-\omega^2}{2}+s\beta.\omega-(c.\beta)(\beta.\omega).$$
\end{Lem}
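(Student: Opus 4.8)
The plan is to verify the asserted formula by a direct computation parallelling \cite[Lemma 9.2]{BaMa}, the only structural inputs being the definitions of $\ell_\sigma$ and of the Mukai homomorphism $\theta_v$. Recall that for a quasi-universal family $\EE$ of similitude $\rho$ one has $\ell_\sigma.C=\tfrac1\rho\Im\!\big(-Z(\Phi_\EE(\OO_C))/Z(v)\big)$ and $\theta_v(w).C=\tfrac1\rho(w,\Phi_\EE(\OO_C))$ for every integral curve $C\subset\mY$. Since both classes are determined by their intersection numbers with such curves, it suffices to produce a positive constant $\lambda$, independent of $C$, with $\Im\!\big(-Z(u)/Z(v)\big)=\lambda\,(w_{\sigma_{\omega,\beta}},u)$ for every $u\in\Hal(Y,\Z)$, applied to $u=\Phi_\EE(\OO_C)$. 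First I would record that the central charge of $\sigma_{\omega,\beta}$ is a positive real multiple of $(\exp(\beta+\sqrt{-1}\omega),-)$ on $Y$ (the scalar produced by the relation $(\pi^*a,\pi^*b)=2(a,b)$ under induction is harmless, as the statement is only up to positive multiple), so we may compute with the latter.

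Next I would expand the central charge on an arbitrary class $u=(r',c',s')$ via the Mukai pairing $((a,b,c),(a',b',c'))=b.b'-ac'-a'c$, obtaining
\begin{align*}
\Re Z(u)&=\beta.c'-s'-r'\tfrac{\beta^2-\omega^2}{2},\\
\Im Z(u)&=\omega.c'-r'(\beta.\omega),
\end{align*}
and likewise for $v=(r,c,s)$. Writing $\Im\!\big(-Z(u)/Z(v)\big)=\tfrac{1}{|Z(v)|^2}\big(\Re Z(u)\,\Im Z(v)-\Im Z(u)\,\Re Z(v)\big)$, the core of the argument is to regard the bilinear expression on the right as a linear functional in $u$ and collect the coefficients of $s'$, of $r'$, and of the $c'$-linear part separately. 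Matching these with the coefficients $-R,\,-S,\,C$ in $(w_{\sigma_{\omega,\beta}},u)=C.c'-Rs'-Sr'$, I expect to read off exactly $R=\Im Z(v)$, $C=\Im Z(v)\,\beta-\Re Z(v)\,\omega$, and $S=\tfrac{\beta^2-\omega^2}{2}\Im Z(v)-(\beta.\omega)\Re Z(v)$; substituting the expressions for $\Re Z(v),\Im Z(v)$ and simplifying (several cross terms cancel because $\beta.c=c.\beta$) should reproduce verbatim the three displayed formulas for $R_{\omega,\beta},C_{\omega,\beta},S_{\omega,\beta}$.

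With this identity in hand, taking $\lambda=1/|Z(v)|^2>0$ (note $Z(v)\neq0$, since $v$ is the class of a nonzero $\sigma$-semistable object and so $Z(v)\in\H\setminus\{0\}$) gives $\ell_\sigma.C=\lambda\,\theta_v(w_{\sigma_{\omega,\beta}}).C$ for every curve $C$, whence $\ell_\sigma$ and $\theta_v(w_{\sigma_{\omega,\beta}})$ are positive multiples of one another. A preliminary check I would insert is that $w_{\sigma_{\omega,\beta}}\in v^\perp$, so that $\theta_v(w_{\sigma_{\omega,\beta}})$ is even defined; this follows from the same cancellations, the sum $(v,w_{\sigma_{\omega,\beta}})=c.C-rS-sR$ telescoping to $0$. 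The main obstacle here is not conceptual but bookkeeping: one must keep the sign and the division by $Z(v)$ in the definition of $\ell_\sigma$ consistent throughout, and track the Mukai-pairing conventions on $Y$ (including the half-integral degree-four component) carefully, so that the coefficient matching lands exactly on the stated formulas rather than off by a scalar or a sign.
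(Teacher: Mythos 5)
Your proposal is correct and is essentially the paper's own argument: the paper likewise writes $Z(u)=(u,e^{\beta+i\omega})$, pulls the division by $Z(v)$ inside the Mukai pairing to get $\ell_\sigma\sim_{\R^+}\theta_v\bigl(-\Im(\overline{Z(v)}\cdot e^{\beta+i\omega})\bigr)$, and expands $e^{\beta+i\omega}=(1,\beta,\tfrac{\beta^2-\omega^2}{2})+i(0,\omega,\omega.\beta)$, which is exactly your coefficient-matching with $\lambda=1/|Z(v)|^2$ and $R=\Im Z(v)$, $C=\Im Z(v)\,\beta-\Re Z(v)\,\omega$, $S=\tfrac{\beta^2-\omega^2}{2}\Im Z(v)-(\beta.\omega)\Re Z(v)$. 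Your two added checks (that $w_{\sigma_{\omega,\beta}}\in v^\perp$ and that the factor of $2$ from $(\pi^*a,\pi^*b)=2(a,b)$ is harmless since the claim is only up to positive scalar) are correct and consistent with what the paper leaves implicit.
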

\begin{proof}  For these stability conditions we have $Z(E)=(v(E),e^{\beta+i\omega})$, so for any integral curve $C\subset \mY$ \begin{align*}\ell_{\sigma}.C=\frac{1}{\rho}\Im\left(-\frac{Z(\Phi_{\EE}(\OO_C))}{Z(v)}\right)&=\frac{1}{\rho}\Im\left(-\frac{(v(\Phi_{\EE}(\OO_C)),e^{\beta+i\omega})}{(v,e^{\beta+i\omega})}\right)\\&=\frac{1}{\rho}\left(v(\Phi_{\EE}(\OO_C)),\Im\left(-\frac{e^{\beta+i\omega}}{(v,e^{\beta+i\omega})}\right)\right).\end{align*}  From the definition of $\theta_v$ it follows that the vector is given by $$w_{\sigma_{\omega,\beta}}=\Im\frac{e^{i\omega+\beta}}{-(e^{i\omega+\beta},v)}\sim_{\R^+}-\Im(\overline{(e^{i\omega+\beta},v)}\cdot e^{i\omega+\beta}),$$ where $\sim_{\R^+}$ means that the vectors are positive scalars of each other.  The lemma follows from $$e^{i\omega+\beta}=(1,\beta,\frac{\beta^2-\omega^2}{2})+i(0,\omega,\omega.\beta).$$
\end{proof}

Writing $\omega=tH$ for $H\in \NS(Y)$, we can let $t\to 0$ or $\infty$, even though these do not correspond to genuine Bridgeland stability conditions.  For Mukai vectors of Gieseker stable sheaves, we will see below that letting $t\to \infty$ often gives us a boundary of the nef cone of the Gieseker moduli space, while letting $t\to 0$ often gives us a nef divisor on a possibly different birational model.  For the sake of future use then we record the Mukai vectors sent by $\theta_v$ to these nef divisors: taking $t\to 0$ and rescaling gives the vector $w_{0\cdot H,\beta}$ with components
\begin{align*}
& R_{0\cdot H,\beta}= c.H-r\beta.H\\
& C_{0\cdot H,\beta}= \left(c.H-r\beta.H\right)\beta + \left(s-c.\beta + r\,\frac{\beta^2}{2} \right) H\\
& S_{0\cdot H,\beta}= c.H\, \frac{\beta^2}{2} + s \beta.H - (c.\beta) \cdot (\beta.H).
\end{align*}

Taking $t\to\infty$, we obtain a
vector $w_{\infty\cdot H,\beta}$ with components
\begin{align*}
& R_{\infty\cdot H,\beta}= 0\\
& C_{\infty\cdot H,\beta}= -r\, \frac{H^2}{2} H\\
& S_{\infty\cdot H,\beta}= -c.H\,\frac{H^2}{2}.
\end{align*}

We give a bound now for the walls of the ``Gieseker chamber" for any (primitive) Mukai vector $v$, i.e. the chamber for which Bridgeland stability of objects of class $v$ is equivalent to $\beta$-twisted Gieseker stability.  Fix a class $\beta\in \NS(Y)_{\Q}$, and let $\omega$ vary on a ray in the ample cone.  Given $v$ with positive rank and slope, we saw above that for $\omega\gg 0$ stable objects of class $v$ are exactly the $\beta$-twisted Gieseker stable sheaves.  Below we give explicit bounds for the Gieseker chamber that depend only on $\omega^2,\beta,$ and $v$.

\begin{Def} Given divisor classes $\beta$ and $\omega=tH$ with $H\in\Pic(Y)$ ample, and given a class $v=(r,c,s)$ with $v^2\geq -1$, we write $(r,c_{\beta},s_{\beta})=e^{-\beta}(r,c,s)$ so that $c_{\beta}=c-r\beta,s_{\beta}=r\frac{\beta^2}{2}-c.\beta+s$.  Define the $\beta$-twised slope and discrepancy of $v$ with respect to $\omega$ by $$\mu_{\omega,\beta}(v)=\frac{\omega.c_{\beta}}{r},\text{ and }\delta_{\omega,\beta}(v)=-\frac{s_{\beta}}{r}+1+\frac{1}{2}\frac{\mu_{\omega,\beta}(v)^2}{\omega^2},$$ respectively.  
\end{Def}

Note that scaling $\omega$ rescales $\mu_{\omega,\beta}$ by the same factor while leaving $\delta_{\omega,\beta}$ invariant.  Recall that a torsion-free coherent sheaf $F$ is called $\beta$-twisted Gieseker stable with respect to $\omega$ if for every proper subsheaf $0
\neq G\subset F$ we have $$\mu_{\omega,\beta}(G)\leq\mu_{\omega,\beta}(F)\text{ and }\delta_{\omega,\beta}(G)>\delta_{\omega,\beta}(F),\text{ if }\mu_{\omega,\beta}(G)=\mu_{\omega,\beta}(F).$$  This is an unravelling of the usual definition via reduced $\beta$-twisted Hilbert polynomials.  It follows that the definition of twisted Gieseker stability only depends on $H$ and not $t$.  Also, notice that the Hodge Index theorem implies that $\omega^2(c_{\beta})^2\leq (\omega.c_{\beta})^2$, so \begin{equation}\label{delta}\delta_{\omega,\beta}(v)\geq -\frac{s_{\beta}}{r}+1+\frac{c_{\beta}^2}{2r^2}=\frac{v^2}{2r^2}+1\geq 1-\frac{1}{2r^2}\geq \frac{1}{2}>0,\end{equation} where the second to last inequality follows from the assumption that $v^2\geq -1$.  Using this notation, we can write the central charge $Z_{\omega,\beta}(v)$ in terms of the slope and the discrepancy as in \cite{BaMa}: 

\begin{equation}\label{Gieseker charge}\frac{1}{r}Z_{\omega,\beta}(v)=i\mu_{\omega,\beta}(v)+\frac{\omega^2}{2}-1-\frac{\mu_{\omega,\beta}(v)^2}{2\omega^2}+\delta_{\omega,\beta}(v).\end{equation}  

For now, we fix a Mukai vector $v=(r,c,s)$ with $r>0$ and $\mu_{\omega,\beta}(v)>0.$  We have the following lemma whose proof is the same as in \cite[Lemma 9.10]{BaMa} and essentially follows from Figure \ref{fig:deltabound} and equation (\ref{Gieseker charge}):

\begin{Lem}\label{see-saw} Assume $\omega^2>1$ so that $\sigma_{\omega,\beta}$ is gauranteed to be a stability condition.  Then any Mukai vector $w\in \Hal(Y)$ with $r(w)>0$ and $0<\mu_{\omega,\beta}(w)<\mu_{\omega,\beta}(v)$ such that the phase of $Z_{\omega,\beta}(w)$ is bigger than or equal to the phase of $Z_{\omega,\beta}(v)$ satisfies $\delta_{\omega,\beta}(w)<\delta_{\omega,\beta}(v)$, as long as $\sigma_{\omega,\beta}$ is a stability condition.
\end{Lem}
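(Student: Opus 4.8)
The plan is to reformulate the phase comparison as an inequality between Bridgeland slopes and then contradict the lower bound $\delta_{\omega,\beta}(v)\geq\frac12$ from \eqref{delta}. First I would note that since $r(v),r(w)>0$ and $\mu_{\omega,\beta}(v),\mu_{\omega,\beta}(w)>0$, both central charges lie strictly in the upper half-plane: indeed $\Im Z_{\omega,\beta}(u)=r(u)\mu_{\omega,\beta}(u)>0$ for $u\in\{v,w\}$. On the open upper half-plane the phase is a strictly increasing function of the Bridgeland slope $-\Re Z/\Im Z$, so the hypothesis that the phase of $Z_{\omega,\beta}(w)$ is at least that of $Z_{\omega,\beta}(v)$ is precisely
\[
-\frac{\Re Z_{\omega,\beta}(w)}{\Im Z_{\omega,\beta}(w)}\ \geq\ -\frac{\Re Z_{\omega,\beta}(v)}{\Im Z_{\omega,\beta}(v)}.
\]

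Next I would substitute the normalized central charge \eqref{Gieseker charge}. Since the factor $r(u)>0$ cancels from the ratio, for any such Mukai vector the slope is
\[
-\frac{\Re Z_{\omega,\beta}(u)}{\Im Z_{\omega,\beta}(u)}=-\frac{\frac{\omega^2}{2}-1+\delta_{\omega,\beta}(u)}{\mu_{\omega,\beta}(u)}+\frac{\mu_{\omega,\beta}(u)}{2\omega^2},
\]
which depends on $u$ only through the pair $(\mu_{\omega,\beta}(u),\delta_{\omega,\beta}(u))$. Abbreviating $m=\mu_{\omega,\beta}(w)$, $M=\mu_{\omega,\beta}(v)$ (so that $0<m<M$) and $d_w=\delta_{\omega,\beta}(w)$, $d_v=\delta_{\omega,\beta}(v)$, I would clear the positive denominators $mM$ in the slope inequality above to rewrite it as
\[
m\,d_v-M\,d_w\ \geq\ (M-m)\Bigl(\tfrac{\omega^2}{2}-1+\tfrac{mM}{2\omega^2}\Bigr).
\]

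Finally I would conclude by contradiction. If $d_w\geq d_v$, then $m\,d_v-M\,d_w\leq m\,d_v-M\,d_v=-(M-m)d_v$, and dividing the preceding inequality by $M-m>0$ yields $-d_v\geq\frac{\omega^2}{2}-1+\frac{mM}{2\omega^2}$, i.e. $d_v\leq 1-\frac{\omega^2}{2}-\frac{mM}{2\omega^2}$. But $\omega^2>1$ gives $\frac{\omega^2}{2}>\frac12$ and $m,M>0$ gives $\frac{mM}{2\omega^2}>0$, so the right-hand side is strictly less than $\frac12$, contradicting $\delta_{\omega,\beta}(v)\geq\frac12$ of \eqref{delta}. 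Hence $\delta_{\omega,\beta}(w)<\delta_{\omega,\beta}(v)$. The step I expect to be the main obstacle is not any hard estimate but the sign bookkeeping: I would verify carefully that higher phase corresponds to \emph{larger} $-\Re Z/\Im Z$ (valid precisely because both imaginary parts are positive) and that each denominator cleared is genuinely positive, so that no inequality is inadvertently reversed. This is exactly the geometric content of Figure \ref{fig:deltabound}, whose left-opening parabolas $\Re=\frac{\omega^2}{2}-1+\delta-\frac{\mu^2}{2\omega^2}$, one for each value of $\delta$, make the comparison transparent.
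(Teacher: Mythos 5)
Your proof is correct and is in substance the same argument the paper invokes: the paper defers to \cite[Lemma 9.10]{BaMa} together with Figure \ref{fig:deltabound} and equation \eqref{Gieseker charge}, and that parabola-and-ray picture encodes exactly your chain of reductions --- the phase comparison rewritten as a comparison of $-\Re Z/\Im Z$ (legitimate since both imaginary parts are positive), substitution of \eqref{Gieseker charge}, and a contradiction with $\delta_{\omega,\beta}(v)\geq\tfrac{1}{2}$ from \eqref{delta} using $\omega^2>1$. Spelling the convexity picture out as an explicit cleared-denominator inequality is a presentational difference only, so your proposal matches the paper's approach.
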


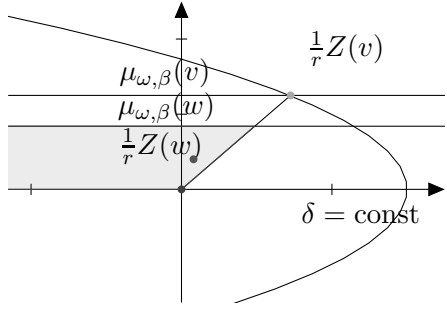
\begin{figure}
\begin{centering}
\definecolor{zzttqq}{rgb}{0.27,0.27,0.27}
\definecolor{qqqqff}{rgb}{0.33,0.33,0.33}
\definecolor{uququq}{rgb}{0.25,0.25,0.25}
\definecolor{xdxdff}{rgb}{0.66,0.66,0.66}
\begin{tikzpicture}[line cap=round,line join=round,>=triangle 45,x=1.0cm,y=1.0cm]
\draw[->,color=black] (-2.3,0) -- (3.5,0);
\foreach \x in {-2,2}
\draw[shift={(\x,0)},color=black] (0pt,2pt) -- (0pt,-2pt);
\draw[->,color=black] (0,-1.5) -- (0,2.5);
\foreach \y in {,2}
\draw[shift={(0,\y)},color=black] (2pt,0pt) -- (-2pt,0pt);
\clip(-2.3,-1.5) rectangle (3.5,2.5);
\fill[color=zzttqq,fill=zzttqq,fill opacity=0.1] (0,0) -- (0.97,0.84) -- (-2.4,0.84) -- (-2.4,0) --
cycle;
\draw [samples=50,rotate around={-270:(3,0)},xshift=3cm,yshift=0cm] plot (\x,\x*\x);
\draw (1.54,2.24) node[anchor=north west] {$ \frac 1r Z(v) $};
\draw (1.45,1.25)-- (0,0);
\draw (1.46,-0.04) node[anchor=north west] {$\delta = \textrm{const}$};
\draw [line width=0.4pt,domain=-2.3:3.5] plot(\x,{(--1.25-0*\x)/1});
\draw [domain=-2.3:3.5] plot(\x,{(--0.84-0*\x)/1});
\draw (-0.96,1.88) node[anchor=north west] {$ \mu_{\omega, \beta}(v) $};
\draw (-0.98,1.42) node[anchor=north west] {$ \mu_{\omega, \beta}(w) $};
\draw (-0.96,0.94) node[anchor=north west] {$ \frac 1r Z(w) $};
\draw [color=zzttqq] (0,0)-- (0.97,0.84);
\draw [color=zzttqq] (0.97,0.84)-- (-2.4,0.84);
\draw [color=zzttqq] (-2.4,0.84)-- (-2.4,0);
\draw [color=zzttqq] (-2.4,0)-- (0,0);
\begin{scriptsize}
\fill [color=xdxdff] (1.45,1.25) circle (1.5pt);
\fill [color=uququq] (0,0) circle (1.5pt);
\fill [color=qqqqff] (0.16,0.4) circle (1.5pt);
\end{scriptsize}
\end{tikzpicture}
\caption{Destabilizing subobjects must have smaller $\delta$}
\label{fig:deltabound}
\end{centering}
\end{figure}

We define an analogue of the set defined in \cite[Definition 9.11]{BaMa}.  Let $D_v$ be the subset of the lattice $\Hal(Y,\Z)$ defined by $$\{w:0<r(w)\leq r(v),w^2\geq-1,0<\mu_{\omega,\beta}(w)<\mu_{\omega,\beta}(v),\delta_{\omega,\beta}(w)<\delta_{\omega,\beta}(v)\}.$$   The discussion there extends to show that $D_v$ is finite and depends on $H$ but not $t$.  Furthermore, they define $$\mu^{\max}(v):=\max(\{\mu_{\omega,\beta}(w):w\in D_v\}\cup\{\frac{r(v)}{r(v)+1}\mu_{\omega,\beta}(v)\}).$$  We can use this definition obtain an effective lower bound for the Gieseker chamber:

\begin{Lem}\label{Gieseker chamber} Let $E$ be a $\beta$-twisted Gieseker-stable sheaf with $v(E)=v$.  If $$\omega^2>1+\frac{\mu^{\max}(v)}{\mu_{\omega,\beta}-\mu^{\max}(v)}\delta_{\omega,\beta}(v)+\sqrt{\left(1+\frac{\mu^{\max}(v)}{\mu_{\omega,\beta}-\mu^{\max}(v)}\delta_{\omega,\beta}(v)\right)^2-\mu^{\max}(v)\mu_{\omega,\beta}(v)}$$ then $E$ is $Z_{\omega,\beta}$-stable.
\end{Lem}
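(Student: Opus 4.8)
The plan is to argue by contradiction: suppose $E$ is $\beta$-twisted Gieseker-stable with $v(E)=v$ but fails to be $Z_{\omega,\beta}$-stable, and show this forces $\omega^2$ below the stated bound. First I would note that since $r>0$ and $\mu_{\omega,\beta}(v)>0$, Gieseker stability makes $E$ a $\mu_{\omega,\beta}$-semistable torsion-free sheaf of positive slope, hence $E\in\TT(\omega,\beta)\subset\AA(\omega,\beta)$ sits in the heart in degree $0$. If $E$ is not $Z_{\omega,\beta}$-stable it has a proper subobject in $\AA(\omega,\beta)$ of phase $\geq\phi_{\omega,\beta}(E)$; passing to a Jordan--H\"older factor of the maximal destabilizer, I may take this subobject $F\hookrightarrow E$ to be $\sigma_{\omega,\beta}$-\emph{stable} with $\phi_{\omega,\beta}(F)\geq\phi_{\omega,\beta}(E)$.

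Next I would pin down the numerical class $w:=v(F)$. From the inclusion $F\hookrightarrow E$ in $\AA(\omega,\beta)$ and the vanishing $\HH^{-1}(E)=0$, the long exact cohomology sequence (using $\HH^{-2}(E/F)=0$) gives $\HH^{-1}(F)=0$, so $F$ is itself a sheaf lying in $\TT(\omega,\beta)$; in particular $r(w)>0$, and a short computation with the cohomology sheaves, comparing against the $\mu_{\omega,\beta}$-semistable sheaf $E$, yields $0<\mu_{\omega,\beta}(w)\leq\mu_{\omega,\beta}(v)$. Since $F$ is $\sigma_{\omega,\beta}$-stable, Lemma \ref{m vector} (via the K3 cover) gives $w^2\geq-1$. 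The equal-slope case $\mu_{\omega,\beta}(w)=\mu_{\omega,\beta}(v)$ is excluded by $\beta$-twisted Gieseker stability of $E$, since equal slope turns the phase comparison into a comparison of discrepancies; hence $0<\mu_{\omega,\beta}(w)<\mu_{\omega,\beta}(v)$, and $\delta_{\omega,\beta}(w)<\delta_{\omega,\beta}(v)$ by Lemma \ref{see-saw}. Thus $w\in D_v$, and the definition of $\mu^{\max}(v)$ (its second term guaranteeing a strict gap) gives $\mu_{\omega,\beta}(F)\leq\mu^{\max}(v)<\mu_{\omega,\beta}(v)$.

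The heart of the argument is then the quantitative phase estimate. Using the formula (\ref{Gieseker charge}) for $\tfrac1r Z_{\omega,\beta}$, the inequality $\phi_{\omega,\beta}(F)\geq\phi_{\omega,\beta}(E)$ (both central charges in the upper half-plane) is equivalent to a comparison of the ratios of real to imaginary parts; clearing denominators and multiplying by $2\omega^2$ turns it into a polynomial inequality in $\mu_{\omega,\beta}(F),\delta_{\omega,\beta}(F),\mu_{\omega,\beta}(v),\delta_{\omega,\beta}(v)$ and $\omega^2$, exactly as depicted in Figure \ref{fig:deltabound}. Substituting the extremal bounds $\mu_{\omega,\beta}(F)=\mu^{\max}(v)$ and $\delta_{\omega,\beta}(F)\geq 0$ (legitimate once one checks the relevant expression is monotone in the slope over the allowed range) collapses this to the quadratic inequality in $\omega^2$
$$(\omega^2)^2-2P\,\omega^2+\mu^{\max}(v)\,\mu_{\omega,\beta}(v)\leq 0,\qquad P=1+\frac{\mu^{\max}(v)}{\mu_{\omega,\beta}(v)-\mu^{\max}(v)}\,\delta_{\omega,\beta}(v).$$
Its larger root is precisely the bound in the statement, so if $\omega^2$ exceeds it the left-hand side is positive, no destabilizing $F$ can exist, and $E$ is $Z_{\omega,\beta}$-stable.

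I expect the main obstacle to be the passage from an abstract destabilizing subobject in the tilted heart $\AA(\omega,\beta)$ to the numerical control $w\in D_v$: because $F$ lives in a tilt and need not be an honest subsheaf of $E$, the bookkeeping relating $r(F)$ and $\mu_{\omega,\beta}(F)$ to the slope of $E$, together with the careful treatment of the equal-slope and near-full-rank borderline cases through the second term of $\mu^{\max}(v)$, requires the structure of $\AA(\omega,\beta)$ and is the delicate point. The concluding quadratic manipulation is essentially the computation of \cite{BaMa}, which transfers verbatim to the Enriques setting once Lemma \ref{m vector} supplies $w^2\geq-1$ in place of the K3 bound $w^2\geq-2$.
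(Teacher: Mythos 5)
Your overall strategy is the paper's: assume a destabilizing subobject exists in $\AA(\omega,\beta)$, force its slope into $(0,\mu^{\max}(v)]$, and observe that the hypothesis on $\omega^2$ says precisely that the quadratic $x^2-2Px+\mu^{\max}(v)\,\mu_{\omega,\beta}(v)$, with $P=1+\frac{\mu^{\max}(v)}{\mu_{\omega,\beta}(v)-\mu^{\max}(v)}\delta_{\omega,\beta}(v)$, is positive at $x=\omega^2$, so no class of slope at most $\mu^{\max}(v)$ and non-negative discrepancy can have phase $\geq\phi_{\omega,\beta}(E)$; your quadratic is exactly the paper's comparison of $\frac{1}{r}Z_{\omega,\beta}(v)$ with the auxiliary point $z$, and your equal-slope exclusion via twisted Gieseker stability is also the paper's. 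The intermediate differences are mild: the paper works with the slope-HN factors $A_i$ (in $\Coh Y$) of the destabilizing sheaf and the primitive generator of the ray of $v(A_i)$, whereas you pass to a Bridgeland-stable subobject $F$ and get $v(F)^2\geq -1$ from Lemma \ref{m vector}; both are viable.

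The genuine gap is the assertion ``Thus $w\in D_v$.'' Membership in $D_v$ requires $0<r(w)\leq r(v)$, and nothing in your argument bounds $r(F)$ above by $r(v)$ --- indeed this can fail, because $F$ is only a subobject in the tilted heart: the induced sheaf map $F\to E$ has kernel $\HH^{-1}(E/F)\in\FF(\omega,\beta)$, which may have positive rank, so $r(F)$ can exceed $r(v)$. This is not a cosmetic borderline issue but the very case the definition of $\mu^{\max}(v)$ is built to handle: its second term $\frac{r(v)}{r(v)+1}\mu_{\omega,\beta}(v)$ exists because, when $r(w)\geq r(v)+1$, one argues instead that $\Im Z_{\omega,\beta}$ is non-negative and additive on the heart, hence $\Im Z_{\omega,\beta}(F)\leq\Im Z_{\omega,\beta}(E)$, i.e.\ $r(F)\mu_{\omega,\beta}(F)\leq r(v)\mu_{\omega,\beta}(v)$, so $\mu_{\omega,\beta}(F)\leq\frac{r(v)}{r(v)+1}\mu_{\omega,\beta}(v)\leq\mu^{\max}(v)$; this is exactly what the paper does via $\Im Z(A_i)\leq\Im Z(A)\leq\Im Z(E)$. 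Your parenthetical that the second term ``guarantees a strict gap'' misreads its role, and flagging the ``near-full-rank'' case as delicate does not close it; as written, your chain of deductions is invalid whenever $r(F)>r(v)$. The fix is one line in your own setup, as above. Separately, be careful with the substitution $\mu_{\omega,\beta}(F)=\mu^{\max}(v)$, $\delta_{\omega,\beta}(F)\geq 0$: the monotonicity of $\bigl(\frac{\omega^2}{2}-1-\frac{\mu^2}{2\omega^2}\bigr)/\mu$ in $\mu$ that you invoke holds only when $\omega^2\geq 2$, which the stated bound does not transparently supply; the paper routes this step through Lemma \ref{see-saw} applied with $Z(v)$ replaced by $z$, together with the lower bound $\delta_{\omega,\beta}\geq\frac{1}{2}$ from (\ref{delta}) for the relevant sheaf classes, and your write-up should do likewise.
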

\begin{proof}  Although the proof only differs slightly from that of \cite[Lemma 9.13]{BaMa}, we explain it in full.  

Consider a destabilizing short exact sequence $A\into E\onto B$ in $\AA(\omega,\beta)$ with $\phi_{\omega,\beta}(A)\geq \phi_{\omega,\beta}(E)$.  From the long exact sequence on cohomology, it follows that $A$ is a sheaf.  Consider the HN-filtration of $A$ with respect to $\mu_{\omega,\beta}$-slope stability in $\Coh X$, $$0=\HN^0(A)\subset \HN^1(A)\subset...\subset\HN^n(A)=A,$$ and let $A_i=\HN^i/\HN^{i-1}$ be its HN-filtration factors.  From the definition of $\AA(\omega,\beta)$ it follows that $\mu_{\omega,\beta}(A_i)>0$ for all $i$.  Since the kernel of $A\to E$, $\HH^{-1}(B)$, lies in $\FF(\omega,\beta)$, we see that $\mu_{\omega,\beta}(A_i)\leq \mu_{\omega,\beta}(A_1)\leq \mu_{\omega,\beta}(v)$.  Indeed, if $i$ is minimal such that $\HN^i(A)$ has nonzero image in $E$, then $A_i$ admits a nontrivial morphism to $E$ and thus $\mu_{\omega,\beta}(A_i)\leq\mu_{\omega,\beta}(E)$.  Suppose $i>1$, then $\HN^1(A)=A_1$ maps to zero in $E$ and thus is contained in $\HH^{-1}(B)$.  If $j$ is minimal such that $A_1\subset \HN^j(\HH^{-1}(B))$, then it has nonzero image in the $j$-th HN-filtration factor of $\HH^{-1}(B)$, a contradiction since this has $\mu_{\omega,\beta}\leq 0$ from the definition of $\FF(\omega,\beta)$.  

Since $\phi_{\omega,\beta}(A)\geq \phi_{\omega,\beta}(E)$, we can choose some $i$ such that $\phi_{\omega,\beta}(A_i)\geq\phi_{\omega,\beta}(E)$ by the see-saw property.  We show that $\mu_{\omega,\beta}(A_i)<\mu_{\omega,\beta}(E)$.  If not, then $\mu_{\omega,\beta}(A_i)=\mu_{\omega,\beta}(E)$, so $i=1$.  Consider the composition $A_1\into A\to E$ with kernel $K$.  Then if $K\neq 0$, $\mu_{\omega,\beta}(K)=\mu_{\omega,\beta}(E)$.  But $K\subset \HH^{-1}(B)$, so as above we get a contradiction to the fact that $\HH^{-1}(B)\in\FF(\omega,\beta)$, and thus $K=0$.  But then $\mu_{\omega,\beta}(A_1)=\mu_{\omega,\beta}(E)$ and $\phi_{\omega,\beta}(A_1)\geq\phi_{\omega,\beta}(E)$ imply that $\Re Z(A_1)\leq\Re Z(E)$, so from equation (\ref{Gieseker charge}) we see that $\delta_{\omega,\beta}(A_1)\leq \delta_{\omega,\beta}(E)$, contradicting the $\beta$-twisted Gieseker stability of $E$.

Let $w$ be the primitive generator of the positive ray spanned by $v(A_i)$.  Then $\mu_{\omega,\beta}(w)=\mu_{\omega,\beta}(A_i)$, and from Lemma \ref{see-saw} and the definition of $D_v$ it follows that if $r(w)\leq r(v)$ we have $\mu_{\omega,\beta}(w)\leq \mu^{\max}(v)$.  If $r(w)\geq r(v)+1$, notice that $$\omega.c_{\beta}(w)\leq \omega.c_{\beta}(A_i)=\Im Z(A_i)\leq \Im Z(A)\leq \Im Z(E)=\omega.c_{\beta}(v),$$ so $\mu_{\omega,\beta}(w)\leq \frac{r(v)}{r(v)+1}\mu_{\omega,\beta}(v)\leq \mu^{\max}(v)$ in this case as well.

Consider the complex number $$z:=i\mu^{\max}(v)+\frac{\omega^2}{2}-1-\frac{\mu^{\max}(v)^2}{2\omega^2}.$$  Then it follows that $\phi_{\omega,\beta}(w)\leq \phi(z)$ from Lemma \ref{see-saw} with $Z(v)$ replaced with $z$.  Now $\Im \frac{\mu_{\omega,\beta}(v)}{\mu^{\max}(v)}z=\Im \frac{1}{r(v)}Z(v)$, and it is easy to see that $\Re \frac{\mu_{\omega,\beta}(v)}{\mu^{\max}(v)}z>\Re \frac{1}{r(v)}Z(v)$ for $\omega$ with $\omega^2$ as in the hypothesis.  Thus $\phi(z)<\phi_{\omega,\beta}(v)$.  This gives the contradiction $$\phi_{\omega,\beta}(E)\leq \phi_{\omega,\beta}(A_i)\leq \phi(z)<\phi_{\omega,\beta}(E).$$
\end{proof}

\begin{Rem}  As observed right before equation \ref{Gieseker charge}, $\delta_{\omega,\beta}(w)\geq\frac{1}{2}$, so we can in fact replace the complex number $z$ in the proof above by $$z:=i\mu^{\max}(v)+\frac{\omega^2}{2}-1-\frac{\mu^{\max}(v)^2}{2\omega^2}+\frac{1}{2}$$ to obtain a sharper bound.  We leave the necessary modifications to the reader as the bound above is usually an over estimate and in individual applications one can do better.
\end{Rem}

Nevertheless, the significance of this result is that it gives a lower bound on the $t$ required to ensure that $M_{\sigma_{tH,\beta},Y}(v)\cong M^{\beta}_H(v)$, the moduli space of ($\beta$-twisted) Gieseker stable sheaves.  Since we have an ample divisor on this moduli space, given by $\ell_{\sigma_{tH,\beta}}$, we get an explicit line segment of the ample cone.  This argument gives us the 

\begin{Cor} Let $v\in\Hal(Y,\Z)$ be primitive of positive rank with $v^2\geq -1$.  Let $\omega,\beta\in \NS(Y)_{\Q}$ be generic with respect to $v$ such that $\omega.c_{\beta}(v)>0$.  If $$\omega^2>1+\frac{\mu^{\max}(v)}{\mu_{\omega,\beta}-\mu^{\max}(v)}\delta_{\omega,\beta}(v)+\sqrt{\left(1+\frac{\mu^{\max}(v)}{\mu_{\omega,\beta}-\mu^{\max}(v)}\delta_{\omega,\beta}(v)\right)^2-\mu^{\max}(v)\mu_{\omega,\beta}(v)},$$ then $$\theta_v(w_{\omega,\beta})\subset \Amp(M^{\beta}_H(v)).$$
\end{Cor}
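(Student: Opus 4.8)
The plan is to deduce the statement by assembling three ingredients already in place: that the stated bound on $\omega^2$ forces $\sigma_{\omega,\beta}$ into the Gieseker chamber, so that the Bridgeland moduli space is identified with $M^{\beta}_H(v)$; that $\ell_{\sigma}$ is an ample class on such a moduli space for generic $\sigma$; and that $\ell_{\sigma}$ and $\theta_v(w_{\omega,\beta})$ agree up to a positive scalar. I would carry these out in this order, so that the corollary becomes a one-line synthesis of the preceding section.

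First I would set $\sigma=\sigma_{\omega,\beta}$ and note that, by hypothesis, $\omega,\beta$ are generic with respect to $v$ and satisfy $\omega.c_{\beta}(v)>0$; thus $\sigma$ is a stability condition generic with respect to $v$, and $M_{\sigma,Y}(v)$ parametrizes only stable objects. Applying Lemma \ref{Gieseker chamber}, the inequality on $\omega^2$ guarantees that every $\beta$-twisted Gieseker-stable sheaf $E$ with $v(E)=v$ is $Z_{\omega,\beta}$-stable. Combined with the large-volume equivalence (cf.\ Theorem \ref{BG comp}) and the local finiteness of walls from Proposition \ref{prop:chambers}, this shows that no wall for $v$ is crossed as $\omega^2$ decreases from the large-radius limit down to the stated bound. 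Hence $\sigma$ lies in the Gieseker chamber, and the identification $M_{\sigma,Y}(v)\cong M^{\beta}_H(v)$ of projective varieties holds, exactly as recorded in the Remark following Lemma \ref{Gieseker chamber}.

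With this identification in hand I would invoke the ampleness theorem proved at the end of Section \ref{sec:BM divisor}: since $v$ is primitive and $\sigma$ is generic with respect to $v$, the class $\ell_{\sigma}$ is ample on $M_{\sigma,Y}(v)=M^{\beta}_H(v)$. Finally, Lemma \ref{donaldson} expresses $\ell_{\sigma}$ as a positive real multiple of $\theta_v(w_{\sigma_{\omega,\beta}})$, and $w_{\sigma_{\omega,\beta}}=w_{\omega,\beta}$ by definition; thus $\theta_v(w_{\omega,\beta})\simpos\ell_{\sigma}$, so $\theta_v(w_{\omega,\beta})$, being a positive multiple of an ample class, is itself ample. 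That is, $\theta_v(w_{\omega,\beta})\in\Amp(M^{\beta}_H(v))$, as claimed.

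The main obstacle is the first step: upgrading the single implication supplied by Lemma \ref{Gieseker chamber} (Gieseker-stable $\Rightarrow$ $Z_{\omega,\beta}$-stable) to a genuine identification of moduli spaces valid all the way down to the explicit bound. The delicate point is to exclude a wall lying strictly between the bound and the large-radius limit that would change the set of stable objects of class $v$; this is precisely what the $\delta$-monotonicity of Lemma \ref{see-saw}, illustrated in Figure \ref{fig:deltabound}, is designed to control. The genericity of $(\omega,\beta)$ together with the primitivity of $v$ then ensures that only stable objects occur on both sides, so that the comparison of coarse spaces is an isomorphism and not merely a finite surjection; once this identification is secured, the remaining two steps are formal.
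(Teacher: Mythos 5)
Your proposal follows the same route as the paper: the paper's own justification of this corollary is precisely the synthesis you describe, namely Lemma \ref{Gieseker chamber} to identify the Bridgeland moduli space above the bound with $M^{\beta}_H(v)$, the ampleness theorem at the end of Section \ref{sec:BM divisor} for $\ell_{\sigma}$, and Lemma \ref{donaldson} to convert $\ell_{\sigma}$ into a positive multiple of $\theta_v(w_{\omega,\beta})$.

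One caveat on your first step, which is also the point the paper itself glosses over. Lemma \ref{Gieseker chamber} gives only the implication ``$\beta$-twisted Gieseker-stable $\Rightarrow$ $Z_{\omega,\beta}$-stable''; combined with openness of stability and properness, this yields an open and closed immersion $M^{\beta}_H(v)\hookrightarrow M_{\sigma,Y}(v)$, i.e.\ the Gieseker moduli space is a union of connected components of the Bridgeland one. It does \emph{not}, as you claim, show that ``no wall for $v$ is crossed'' between the large-radius limit and the stated bound: a priori there could be a wall above the bound at which only non-sheaf objects of class $v$ change stability, so that $M_{\sigma,Y}(v)$ acquires components not parametrizing sheaves. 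Lemma \ref{see-saw} controls destabilizing subobjects of sheaves, not the appearance of new stable objects, and the local finiteness of walls from Proposition \ref{prop:chambers} cannot exclude this either; on a K3 surface one would quote irreducibility of the Bridgeland moduli space at this point, but irreducibility is exactly the open question for Enriques surfaces flagged in the introduction. The corollary is nevertheless unaffected, and your argument can be repaired in either of two ways: (i) weaken the identification to the containment statement, since ampleness of $\ell_{\sigma}$ on $M_{\sigma,Y}(v)$ restricts to ampleness on the union of components $M^{\beta}_H(v)$, where it is computed by the restricted (quasi-)universal family of sheaves and hence equals a positive multiple of $\theta_v(w_{\omega,\beta})$ by Lemma \ref{donaldson}; or (ii) recover the full identification $M_{\sigma,Y}(v)\cong M^{\beta}_H(v)$ from the invariance of Toda's motivic invariant $J(v)$ under change of stability condition, as in the proof of Theorem \ref{nonempty}, since an extra nonempty projective component would change the invariant. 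With either repair your conclusion stands, and the overall structure coincides with the paper's.
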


Since all Enriques surfaces have the same lattice, this gives a universal bound for all Enriques surfaces provided we've shown that the Bridgeland moduli space has a projective coarse moduli space.

\section{Hilbert Schemes of points on an Enriques Surface}\label{sec: hilbert scheme}

In this section we apply the techniques developed above to determine the nef cone of $Y^{[n]}$ for unnodal $Y$.  Let $v=(1,0,\frac{1}{2}-n)$.  Then as above we consider an ample divisor $H\in \Pic(Y)$ on an unnodal Enriques surface $Y$ with K3 cover $\tilde{Y}$, and let $\omega=tH$ and $\beta\in \NS(Y)_{\Q}$ with $t>0$.  We remark that since $Y$ is unnodal, so is $\tilde{Y}$, and thus the ample cone of $Y$ is the connected component of the round cone $D^2>0$ containing an ample divisor.  It follows that the nef cone (and consequently the effective cone since $Y$ is unnodal) is the closure of this cone, given by $D^2\geq 0$.  For $t\gg 0$ and $\beta.H<0$, $M_{t,\beta}(v):=M_{\sigma_{\omega,\beta},Y}(v)=Y^{[n]}$, the Hilbert scheme of $n$ points on $Y$.  It is well known that $Y^{[n]}$ is a smooth irreducible projective variety of dimension $2n$, and the Hilbert-Chow morphism $h:Y^{[n]}\rightarrow Y^{(n)}$ to the $n$-th symmetric product of $Y$ is a crepant resolution of singularities \cite{F1}, and since $Y$ is a regular surface, i.e. $H^1(Y,\mathcal O_Y)=0$, $\Pic(Y^{[n]})\cong\Pic(Y)\times \Z$ \cite[Corollary 6.3]{F2}.  This identification can be described explicitly as follows:  let $L^{(n)}=\psi_*(\otimes_{i=1}^n pr_i^*(L))^{S_n}$, where $\psi:Y^n\rightarrow Y^{(n)}$ is the quotient map and $pr_i:Y^n\rightarrow Y$ is the $i$-th projection.  Then $\Pic(Y^{(n)})\cong \Pic(Y)$ via this identification, and thus $\Pic(Y^{[n]})$ is generated by $h^*\Pic(Y^{(n)})$ and the divisor class $B$ where $2B$ is the exceptional divisor parametrizing non-reduced 0-dimensional subschemes of length $n$ in $Y$.  For an ample divisor $H\in \Pic(Y)$ denote by $\tilde{H}$ the corresponding divisor on $Y^{[n]}$, which is nef and big but not ample, as the pull-back of an ample divisor under the projective birational morphism $h$.  Thus $\widetilde{\Nef(Y)}$ forms an entire face of the nef cone of $Y^{[n]}$.  

We'd like to apply the results and techniques of the preceeding sections to study the birational geometry of $Y^{[n]}$.  We have the following easy first result (see \cite[Example 9.1]{BaMa} for the corresponding discussion for K3 surfaces):

\begin{Prop} The (closure of the) wall consisting of stability conditions $\sigma_{tH,\beta}$, as $H$ ranges in $\Amp(Y)$ and $\beta\in H^{\perp}$ for each fixed $H$, is one wall of the Gieseker chamber $\CC_G$.  Moreover, it is a bouncing wall sent by $\ell:\overline{\mathcal C}_G\rightarrow \Nef(Y^{[n]})$ to the wall $\widetilde{\Nef(Y)}$ above.
\end{Prop}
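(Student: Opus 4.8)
The plan is to first pin down the image of this locus under the Bayer--Macr\`i map via Lemma~\ref{donaldson}, and only afterwards analyze the geometry of the contraction it induces.

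First I would compute $w_{\sigma_{tH,\beta}}$ for $v=(1,0,\tfrac12-n)$ with $\beta\in H^{\perp}$. Substituting $c=0$, $r=1$, $s=\tfrac12-n$ and $\beta.\omega=t\,\beta.H=0$ into Lemma~\ref{donaldson} kills the outer components, $R_{\omega,\beta}=0$ and $S_{\omega,\beta}=0$, while $C_{\omega,\beta}=\lambda H$ with $\lambda=t\bigl(\tfrac12-n+\tfrac{\beta^2-t^2H^2}{2}\bigr)$; thus $w_{\sigma_{tH,\beta}}=(0,\lambda H,0)$ is a nonzero multiple of $(0,H,0)$. The key point is that $\lambda$ never vanishes along the wall: since $Y$ is unnodal and $H$ is ample, the Hodge Index Theorem gives $\beta^2\le 0$ for $\beta\in H^{\perp}$, so for every $t>0$ and $n\ge1$ both $\tfrac12-n\le-\tfrac12$ and $\tfrac{\beta^2-t^2H^2}{2}<0$, whence $\lambda<0$. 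Therefore $\ell_{\sigma}$ is a nonzero multiple of $\theta_v(0,H,0)$ at every point of the wall, and in particular $\ell_{\sigma}$ never degenerates.

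Next I would identify the ray $\R_{>0}\,\ell_\sigma$ with $\R_{>0}\tilde H$. Using the explicit description $\Pic(Y^{[n]})\cong\Pic(Y)\times\Z$ recalled just before the proposition, the Donaldson/Mukai homomorphism $\theta_v\colon v^{\perp}\to N^1(Y^{[n]})$ carries the summand $(0,\NS(Y),0)$ onto the pull-back classes $\widetilde{\NS(Y)}$; testing $\theta_v(0,H,0)$ against the test curve $C$ obtained by moving a single point of $Z$ along a curve $\gamma\subset Y$ (the other $n-1$ points fixed and generic) yields $H.\gamma$ up to the fixed positive scalar, i.e. $\tilde H.C$ up to scale. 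Since $\ell_\sigma$ is nef by Lemma~\ref{donaldson} and \cite[Theorem 4.1]{BaMa}, and the line $\R\,\theta_v(0,H,0)$ meets $\Nef(Y^{[n]})$ exactly in $\R_{\ge0}\tilde H$, the sign is forced and $\ell_\sigma\sim_{\R^+}\tilde H$. Letting $\beta$ range over $H^{\perp}$ and $t$ over $\R_{>0}$ sweeps the whole ray $\R_{>0}\tilde H$, and letting $H$ range over $\Amp(Y)$ sweeps out $\widetilde{\Amp(Y)}$, whose closure is the face $\widetilde{\Nef(Y)}$. This simultaneously shows the locus is a wall of $\CC_G$: it is precisely the boundary $\mu_{\omega,\beta}(v)=0$ of the region $\beta.H<0$ in which Theorem~\ref{BG comp} identifies $\mY$ with $Y^{[n]}$, and across it the ideal sheaves $I_Z$ with $Z$ non-reduced cease to be stable.

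Finally I would verify that the wall is \emph{bouncing}. On the wall the semiample class $\ell_0\sim_{\R^+}\tilde H=h^{*}(\text{ample})$ induces exactly the Hilbert--Chow contraction $h\colon Y^{[n]}\to Y^{(n)}$, a divisorial contraction with exceptional divisor $2B$; its fibres over the discriminant are rational curves whose members $I_Z$ share one $0$-cycle, hence are $S$-equivalent for $\sigma_0$ (the destabilizing filtration of $I_Z$ for non-reduced $Z$ has graded pieces remembering only the underlying cycle). These curves cover the divisor $D_{+}=2B$, supplying the divisorial $S$-equivalence locus demanded by the definition. It then remains to produce the isomorphism $M_+\cong M_-$ matching $\ell_{0,+}$ with $\ell_{0,-}$: on the far side $\beta.H>0$ the class $v$ has negative slope, so no sheaf of class $v$ lies in the heart and the stable objects are instead the duals $\mathbb{D}(I_Z)$ under the derived-duality autoequivalence $\mathbb{D}=\RHom(-,\OO_Y)$ (suitably shifted and $\omega_Y$-twisted). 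Since $\mathbb{D}$ fixes $v=(1,0,\tfrac12-n)$ (as $c=0$), interchanges $\sigma_{tH,\beta}$ with $\sigma_{tH,-\beta}$, and fixes the wall $\beta.H=0$, I expect it to descend to an isomorphism $M_-\cong M_+\cong Y^{[n]}$ carrying $\ell_{0,-}$ to $\ell_{0,+}$; together with the divisorial contraction $h$ this is precisely the definition of a bouncing wall. I expect this last step to be the main obstacle: checking cleanly that $\mathbb{D}$ genuinely swaps the two chambers adjacent to $\beta.H=0$, preserves the relevant hearts, and matches the two Bayer--Macr\`i classes, and relatedly pinning down the Jordan--H\"older filtration of $I_Z$ precisely enough to confirm that the $S$-equivalence classes coincide exactly with the fibres of $h$.
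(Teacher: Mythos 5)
Your first two steps are sound and close to what the paper actually does: the explicit computation of $w_{\sigma_{tH,\beta}}$ from Lemma~\ref{donaldson} (with $R=S=0$ and $C=\lambda H$, $\lambda<0$ by Hodge index) correctly gives $\ell_{\sigma}\sim_{\R^+}\tilde{H}$, which is how the paper pins down the image of the wall (it invokes \cite[Examples 8.2.1 and 8.2.9]{HL} together with Lemma~\ref{donaldson}); and your derived-duality argument for the bouncing behaviour is precisely the paper's route, which it outsources to \cite[Theorem 3.1, Lemma 3.2]{Mar}. The genuine gap is in your justification that this locus is a wall of $\CC_G$ in the first place. You claim the Gieseker chamber is ``the region $\beta.H<0$ in which Theorem~\ref{BG comp} identifies $\mY$ with $Y^{[n]}$,'' but Theorem~\ref{BG comp} only applies for $t\gg 0$, and that half-space is emphatically not a single chamber: the paper's own Theorem~\ref{nef hilbert} exhibits semicircular walls inside it, where $\OO(-F)$ destabilizes $I_Z$ for half-pencils $F$ with $F.H=\phi(H)$. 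So your argument does not show that the closure of $\CC_G$ actually reaches the locus $\beta.H=0$, nor that crossing it changes stability.

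Relatedly, you misplace and understate the destabilization. It is not that ``the ideal sheaves $I_Z$ with $Z$ non-reduced cease to be stable'' across the wall: for $\beta\in H^{\perp}$ one has $Z_{tH,\beta}(I_Z)\in\R_{>0}$, so \emph{no} $I_Z$ lies in the heart; instead $I_Z[1]\in\PP(1)$, and for \emph{every} $Z$ (reduced or not) the short exact sequence $0\to\OO_Z\to I_Z[1]\to\OO_Y[1]\to 0$ in $\PP(1)$ makes $I_Z[1]$ strictly semistable, with Jordan--H\"older factors $\OO_Y[1]$ and the skyscrapers $k(p_i)$ with multiplicities. This single sequence is the paper's key step, and it does all the work your proposal leaves hanging: it shows the locus consists of non-generic points (hence is a wall bounding the chamber in which the $I_Z$ are stable), it gives the full characterization $I_Z\sim_S I_{Z'}$ if and only if $h(Z)=h(Z')$ (your parenthetical only gestures at one direction, and only for non-reduced $Z$), and it therefore identifies the curves contracted by $\ell_0$ exactly with the Hilbert--Chow fibers, confirming the bouncing-wall picture once the duality isomorphism is in place. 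Your proof can be repaired by replacing the appeal to Theorem~\ref{BG comp} with this exact sequence, keeping your Lemma~\ref{donaldson} computation as the identification of the image with $\widetilde{\Nef(Y)}$.
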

\begin{proof} Indeed, we have $$Z_{tH,\beta}(I_Z)=(e^{\beta+itH},1+(\frac{1}{2}-n)[pt])=(n-\frac{1}{2})+t^2\frac{H^2}{2}-\beta^2\in\R_{>0},$$ for any 0-dimensional subscheme $Z$ of length $n$ since $\beta.H=0$ implies $\beta^2\leq 0$ by the Hodge Index Theorem, so $I_Z\notin \AA(\omega,\beta)$ but $I_Z[1]$ is.  We have the following short exact sequence in $\mathcal P(1)$, $$0\rightarrow \mathcal O_Z\rightarrow I_Z[1]\rightarrow \mathcal O_Y[1]\rightarrow 0$$ which makes $I_Y$ strictly semistable.  Notice that  by filtering $\mathcal O_Z$ by structure sheaves of closed points, we see that $I_Z$ is $S$-equivalent to $\mathcal O_Y[1]\oplus \bigoplus_{i=1}^r (k(p_i))^{\oplus m_i}$, where $p_i$ are the closed points appearing in the support of $Z$ with multiplicities $m_i$.  It follows that $I_Z$ and $I_{Z'}$ are $S$-equivalent if and only if $Z,Z'$ get mapped to the same point by $h$.  Thus $\ell_{tH,\beta}$ contracts precisely the fibers of $h:Y^{[n]}\rightarrow Y^{(n)}$.  It follows that $\ell_{tH,\beta}=h^*A$ for some ample divisor $A$ on $Y^{(n)}$.  That $\ell_{tH,\beta}$ is in fact $\tilde{H}$ (or at least $\sim_{\R^+}\tilde{H}$) follows from \cite[Examples 8.2.1 and 8.2.9]{HL} and Lemma \ref{donaldson}.

Crossing this wall, i.e. taking $\beta$ such $1\gg\beta.H>0$, does not change the moduli space, i.e. $M_{t,\beta}(v)\cong Y^{[n]}$, but it does change the universal family by replacing $I_Z$ with its derived dual $\RlHom(I_Z,\OO_Y)[1]$ (see \cite[Theorem 3.1, Lemma 3.2]{Mar}).  Following a path from a point such that $\beta.H<0$ to one with $\beta.H>0$ causes the nef divisor $\ell_{tH,\beta}$ to hit the wall $\widetilde{\Nef(Y)}$ and bounce back into the interior of the ample cone.
\end{proof}

The above behavior is common for a wall inducing a divisor contraction, hence the name ``bouncing wall."  Before we describe further wall-crossing behavior, let us first point out a simple fact that is very helpful:

\begin{Lem} \label{rank one subobjects}  Let $0\to E\to I_Z\to Q\to 0$ be a non-trivial short exact sequence in $\AA_{t,b}$.  Then $E$ is a forsion free sheaf, $\HH^0(Q)$ is a quotient of $I_Z$ of rank 0, and the kernel of $I_Z\to \HH^0(Q)$ is an ideal sheaf $I_{Z'}(-D)$ for some effective curve $D$ and some zero-dimensional scheme $Z'$.
\end{Lem}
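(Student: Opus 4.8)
The plan is to translate the given short exact sequence in $\AA_{t,b}$ into the long exact sequence of coherent cohomology sheaves and read off all three assertions from there. First I would pass to the exact triangle $E\to I_Z\to Q\to E[1]$ in $\Db(Y)$ underlying the sequence and take the associated long exact sequence of cohomology sheaves. Since every object of $\AA_{t,b}$ has cohomology concentrated in degrees $-1$ and $0$, with $\HH^{-1}\in\FF$ and $\HH^0\in\TT$, and since $I_Z$ is a genuine sheaf (so $\HH^{-1}(I_Z)=0$ and $\HH^0(I_Z)=I_Z$), this collapses to
\[
0\to\HH^{-1}(E)\to 0\to\HH^{-1}(Q)\to\HH^0(E)\to I_Z\to\HH^0(Q)\to 0 .
\]
In particular $\HH^{-1}(E)=0$, so $E$ is a genuine sheaf sitting in degree $0$, and I obtain the four-term exact sequence of sheaves
\[
0\to K\to E\to I_Z\to \HH^0(Q)\to 0,\qquad K:=\HH^{-1}(Q).
\]

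For assertion (1) I would note that $K=\HH^{-1}(Q)\in\FF$ is torsion-free by the definition of the tilt, while the image $G:=\mathrm{im}(E\to I_Z)\cong E/K$ is a subsheaf of the torsion-free sheaf $I_Z$ and hence torsion-free. Thus $E$ is an extension of the torsion-free sheaf $G$ by the torsion-free sheaf $K$, so any torsion subsheaf of $E$ maps to $0$ in $G$, lands in $K$, and vanishes there; therefore $E$ is torsion-free. Since $E$ is a sheaf lying in $\AA_{t,b}$ it lies in $\TT$, so being torsion-free forces all its slope-HN factors to have $\mu_{\omega,\beta}>0$. If $G$ were zero, then $E=K\in\FF$ would have all HN factors with $\mu_{\omega,\beta}\le 0$, whence $E=0$, contradicting the non-triviality of the sequence. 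Hence $G\neq 0$, and being a nonzero subsheaf of the rank-one sheaf $I_Z$ it has rank one (a rank-zero subsheaf of $I_Z$ would be torsion, hence zero). Consequently $\HH^0(Q)=I_Z/G$ has rank $0$ and is visibly a quotient of $I_Z$, giving assertion (2).

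It then remains to identify $G=\ker(I_Z\to\HH^0(Q))$. Here $G$ is a rank-one torsion-free sheaf on the smooth surface $Y$, so its reflexive hull $L:=G^{\vee\vee}$ is a line bundle and the canonical inclusion $G\hookrightarrow L$ has zero-dimensional cokernel, i.e. $G\cong I_{Z'}\otimes L$ for a zero-dimensional subscheme $Z'$. Applying $(-)^{\vee\vee}$ to the composite inclusion $G\hookrightarrow I_Z\hookrightarrow \OO_Y$ and using naturality of the biduality map yields a nonzero morphism $L\to\OO_Y$ of line bundles, which is injective with cokernel supported on an effective divisor $D$; thus $L\cong\OO_Y(-D)$ and $G\cong I_{Z'}(-D)$, which is assertion (3). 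The only point that demands genuine care — and the one place where the geometry of the tilt enters rather than pure homological algebra — is the rank dichotomy for $G$: I expect the main (though modest) obstacle to be ruling out $G=0$, for which the cleanest route is the observation $\TT\cap\FF=\{0\}$ on nonzero torsion-free sheaves used above. Everything else is the formal long-exact-sequence bookkeeping together with the standard structure theory of rank-one torsion-free sheaves on a smooth surface.
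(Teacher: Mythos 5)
Your proof is correct and follows essentially the same route as the paper: take the long exact sequence of cohomology sheaves to get $0\to\HH^{-1}(Q)\to E\to I_Z\to\HH^0(Q)\to 0$, use $\TT(\omega,\beta)\cap\FF(\omega,\beta)=0$ together with torsion-freeness of $I_Z$ to force the quotient to have rank $0$, and conclude torsion-freeness of $E$ as an extension of torsion-free sheaves. The only differences are cosmetic: you rule out a zero image rather than a rank-one quotient (the same dichotomy viewed from the kernel side), and you spell out the reflexive-hull argument identifying the kernel as $I_{Z'}(-D)$, which the paper leaves as standard.
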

\begin{proof} As above, we consider the long exact sequence in cohomology to see that $E$ must be a sheaf fitting into the exact sequence $$0\to \HH^{-1}(Q)\to E\to I_Z\to \HH^0(Q)\to 0.$$  If $\HH^0(Q)$ had rank one, then it would have to be equal to $I_Z$, as it's torsion-free, and then we'd get that $\HH^{-1}(Q)=E$, which is only possible if they are both 0, since $\HH^{-1}(Q)\in\FF(\omega,\beta)$ while $E\in \TT(\omega,\beta)$, contrary to the assumption of non-triviality.  Thus $\HH^0(Q)$ is a quotient of $I_Z$ of rank 0, so its kernel must be of the form claimed in the lemma.  Since $I_{Z'}(-D)$ and $\HH^{-1}(Q)$ are both torsion-free, $E$ must also be torsion-free.
\end{proof}

Above we fixed $t$ and $H$ and varied $\beta$ across the hyperplane $H^{\perp}$ in $N^1(Y)$.  Now fix $H$ with $H^2=2d$ and $k\in\Z_{>0}$ such that $k^2\leq 2d$.  To simplify matters we consider stability conditions $\sigma_{t,b}:=\sigma_{tH,bH}$ in the real 2-dimensional slice of $\Stab^{\dagger}(Y)$ represented by the upper half-plane $\{(b,t)|b\in\R,t\in\R_{>0}\}$.  It is well-known (see \cite[Section 2]{Mac}) that pseudo-walls corresponding to possibly destabilizing subobjects intersect this plane in nested semi-circles with centers along the $b$-axis.  Recall that on an Enriques surface $Y$ one defines for any $D\in\Pic(Y)$ with $D^2>0$, $$\phi(D)=\inf\{|D.F|:F\in\Pic(Y),F^2=0,F\neq 0\},$$ as in \cite[Section 2.7]{CD}, where it is shown that $\phi(D)\leq \sqrt{D^2}$.  Now we are ready to prove our main theorem about $\Nef(Y^{[n]})$:

\begin{Thm}\label{nef hilbert} Let $Y$ be an unnodal Enriques surface and $n\geq 2$.  Then $\tilde{D}-aB\in\Nef(Y^{[n]})$ if and only if $D\in\Nef(Y)$ and $0\leq na\leq D.F$ for every $0<F\in\Pic(Y)$ with $F^2=0$, or in other words $0\leq a\leq \frac{\phi(D)}{n}$. Moreover, the face given by $a=0$ induces the Hilbert-Chow morphism, and for every ample $H\in\Pic(Y)$, $\tilde{H}-\frac{\phi(H)}{n}B$ induces a flop.
\end{Thm}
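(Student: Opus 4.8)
The plan is to pin down the two extremal rays of $\Nef(Y^{[n]})$ inside the plane $\langle\widetilde{D},B\rangle$ by combining explicit test-curve computations (for the ``only if'' direction) with the Bayer--Macr\`i nef-divisor machinery of Theorem \ref{contraction} (for the ``if'' direction), and then to read off the two contractions from the walls. \emph{For necessity} I would intersect $\widetilde{D}-aB$ with three families of curves. Moving one point of a length-$n$ subscheme along a curve $\gamma\subset Y$ (fixing $n-1$ general points) gives $C_\gamma$ with $\widetilde{D}.C_\gamma=D.\gamma$ and $B.C_\gamma=0$, forcing $D\in\Nef(Y)$. Letting a length-two subscheme run over the $\P^1$ of tangent directions at a point gives a Hilbert--Chow fiber $C_0$ with $\widetilde{D}.C_0=0$ and $B.C_0=-1$, forcing $a\geq 0$. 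Finally, for each half-pencil $F$ the genus-one curve $F$ yields $F^{[n]}\subset Y^{[n]}$, and a line $C_F$ in a fiber $\P^{n-1}$ of the Abel--Jacobi map $F^{[n]}\to\Jac^n(F)\cong F$ is a pencil of degree-$n$ divisors defining a degree-$n$ map $F\to\P^1$; a Riemann--Hurwitz count gives $2n$ non-reduced members, so $B.C_F=n$ while $\widetilde{D}.C_F=D.F$. Thus $(\widetilde{D}-aB).C_F=D.F-na\geq 0$ for every half-pencil, i.e. $na\leq\phi(D)$, giving $\Nef(Y^{[n]})\cap\langle\widetilde{D},B\rangle\subseteq\{0\leq na\leq\phi(D)\}$.

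\emph{For sufficiency}, by convexity of the nef cone it is enough, for $D=H$ ample, to show that both extremal classes $\widetilde{H}$ ($a=0$) and $\widetilde{H}-\tfrac{\phi(H)}{n}B$ are nef; general nef $D$ then follows from the density of ample classes in the round cone $\Nef(Y)$ and the closedness of $\Nef(Y^{[n]})$. The class $\widetilde{H}$ is nef as the pullback of an ample class under the Hilbert--Chow morphism $h$. For the other ray I would work in the slice $\sigma_{t,b}=\sigma_{tH,bH}$ with $v=(1,0,\tfrac12-n)$ and locate the wall $W_F$ on which the sequence $0\to\OO(-F)\to I_Z\to\OO_F(-Z)\to 0$ (for $Z\subset F$, $F$ a half-pencil with $H.F=\phi(H)$) makes $I_Z$ strictly semistable. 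Using Lemma \ref{donaldson} to compute $\ell_{\sigma_0}=\theta_v(w_{\sigma_0})$ for $\sigma_0\in W_F$, I would verify this class is a positive multiple of $\widetilde{H}-\tfrac{\phi(H)}{n}B$; equivalently, on $W_F$ the $I_Z$ in a fixed fiber of $F^{[n]}\to F$ become $S$-equivalent, so $\ell_{\sigma_0}.C_F=0$ by Theorem \ref{contraction}, which together with $\ell_{\sigma_0}\in\langle\widetilde{H},B\rangle$ pins down the class. Nefness of $\ell_{\sigma_0}$ on $Y^{[n]}$ then follows from Theorem \ref{contraction} once one knows each $I_Z$ is $\sigma_0$-semistable, or, robustly, from Proposition \ref{pull-back}: writing $\ell_{\sigma_0}=\Phi^*\ell_{\sigma_0'}$ with $\ell_{\sigma_0'}$ nef on the K3 moduli space $M_{\sigma_0',\widetilde{Y}}(\pi^*v)$ (where $\pi^*v=(1,0,1-2n)$ is the Mukai vector of $\widetilde{Y}^{[2n]}$), and using that a pullback of a nef class is nef.

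\emph{For the two contractions}, the face $a=0$ is exactly the bouncing-wall Proposition already proved in this section: $\ell=\widetilde{H}$ contracts the fibers of $h$ and so induces the Hilbert--Chow morphism. For $a=\phi(H)/n$ I would check that $W_F$ is a \emph{flopping} wall in the sense of Theorem \ref{contraction}(b): a generic $I_Z$ with $Z$ on no half-pencil stays $\sigma_0$-stable, so $M^s_{\sigma_0,Y}(v)\neq\varnothing$, while the strictly semistable locus is the union over $\phi(H)$-minimizing half-pencils $F$ of the images of $F^{[n]}$, of dimension $n$ and hence codimension $2n-n=n\geq 2$. Theorem \ref{contraction}(b) then identifies $f_{\sigma_0}$ with the flop contracting the $\P^{n-1}$-bundles $F^{[n]}\to F$ and replacing them by the dual bundles of extensions $0\to\OO_F(-Z)\to E\to\OO(-F)\to 0$, with $\widetilde{H}-\tfrac{\phi(H)}{n}B=\ell_0$ the pullback of the ample class on $Z_+=Z_-$.

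\emph{The main obstacle} is sufficiency at the flopping ray: I must know that $W_F$ (for the $\phi(H)$-minimizing $F$) is the \textbf{first} wall met on leaving the Gieseker chamber, i.e. that every $I_Z$ remains $\sigma_0$-semistable up to $W_F$. By Lemma \ref{rank one subobjects} a competing destabilizer is $I_{Z'}(-D)\hookrightarrow I_Z$ with $D$ effective (so $D^2\geq 0$ on the unnodal $Y$), and its numerical wall in the $(b,t)$-slice is a semicircle of largest radius when $Z'=\varnothing$, centered then at $b_0=-\tfrac{D^2+2n}{2\,H.D}$; one must show the half-pencil walls ($D^2=0$, $H.D=\phi(H)$) are outermost. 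The Hodge-index bound $H.D\geq\sqrt{H^2 D^2}$ alone fails already for $n=2$, so this step genuinely needs the lattice arithmetic encoded by the $\phi$-function (and $\phi(H)\leq\sqrt{H^2}$) to dominate all effective $D$ with $D^2>0$. I expect to handle it by descending to the K3 cover through Proposition \ref{pull-back}, where Bayer--Macr\`i's description of $\Nef(\widetilde{Y}^{[2n]})$ governs all walls and only the $\iota$-invariant (half-pencil) ones can destabilize the $\iota$-invariant ideal sheaves $\pi^*I_Z=I_{\pi^{-1}Z}$.
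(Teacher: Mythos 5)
Your necessity argument (the three test curves) and your identification of the extremal classes via Lemma \ref{donaldson} coincide with what the paper does, and your description of the flop matches as well. The gap is precisely the step you yourself flag as ``the main obstacle''---that the half-pencil wall is the \emph{first} wall, i.e.\ that every $I_Z$ remains stable for $t>t_0$---and neither of your two proposed routes closes it. For the first route: Lemma \ref{rank one subobjects} does \emph{not} reduce you to rank-one destabilizers. It only says that the \emph{image} in $I_Z$ of a destabilizing subobject $E$ is some $I_{Z'}(-D)$; the subobject $E$ itself can have arbitrary rank (its kernel is $\HH^{-1}(Q)$), and its phase is not controlled by the numerical wall of $I_{Z'}(-D)$, so comparing the semicircles of the rank-one sheaves says nothing about $E$ without a Bertram-type induction or a direct estimate. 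For the second route: the parenthetical claim that only $\iota$-invariant (half-pencil) classes can destabilize the invariant ideal sheaves $I_{\pi^{-1}Z}$ is not a formality but is exactly the hard point. Unnodality does not force $\NS(\tilde{Y})=\pi^*\NS(Y)$; when the inclusion is strict, a non-invariant class $C\in\NS(\tilde{Y})$ with $C^2=0$ can satisfy $\pi^*H.C<2\phi(H)$ (writing $C=C^++C^-$, Hodge index only gives $\pi^*H.C=H.D$ for some $D\in\NS(Y)$ with $D^2\geq 2$, which is compatible with $H.D<2\phi(H)$), so its numerical wall in the invariant slice lies \emph{outside} the half-pencil wall, and an invariant cluster $\pi^{-1}Z$ supported on $C\cap\iota(C)$ could a priori receive a destabilizing map from $\OO_{\tilde{Y}}(-C)$. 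Since $I_Z$ is $\sigma$-stable if and only if $I_{\pi^{-1}Z}$ is $\sigma'$-stable (Lemma \ref{exclude} plus the definition of induced stability), such a wall upstairs would be a genuine wall downstairs. Ruling this out requires an argument you have not given, and the wall classification of \cite{BaMa13} for $\tilde{Y}^{[2n]}$ does not by itself sort walls according to whether they admit invariant destabilizers.

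The paper closes this step with a short, self-contained estimate on $Y$ that handles all ranks simultaneously and never descends to the cover. For a destabilizing subobject $E\subset I_Z$ at $b=-\frac{k+\epsilon}{2d}$, each slope-HN factor $A_i$ of $E$, with $v(A_i)=(r,C,s)$, satisfies the integrality constraint $0<H.C+rk<k$ (from $0<\Im Z_{t,b}(A_i)<\Im Z_{t,b}(I_Z)$); and because on an unnodal Enriques surface every stable factor has Mukai square $\geq -1$ (Lemma \ref{m vector}, there being no spherical objects), the discrepancy bound (\ref{delta}) gives $\delta_{t,b}(A_i)\geq\frac{1}{2}$. Feeding both into (\ref{Gieseker charge}) yields $\Re Z_{t,b}(A_i)\geq rdt^2-\frac{r}{2}-\frac{k^2}{4dr}+O(\epsilon)$, hence an upper bound $\phi_0(t)$ on $\phi_{t,b}(A_i)$ that is strictly below $\phi_{t,b}(I_Z)$ for all $t>t_0$. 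This is exactly the ``lattice arithmetic beyond Hodge index'' you correctly sensed was needed, but it is intrinsic to $Y$: the covering K3 enters the theorem only through projectivity and the flop mechanism of Theorem \ref{contraction}, not through the wall bound. So your skeleton is the right one, but the engine that powers the sufficiency direction is missing, and the substitute you propose would itself require proving a statement at least as delicate as the one it is meant to replace.
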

\begin{proof}
Consider $0<F\in\Pic(Y)$ with $F^2=0$ and $H.F=k$, so in particular $k\geq \phi(H)$.  Set $b=-\frac{k+\epsilon}{2d}$ for $0<\epsilon\ll1$ and irrational so that there exists no weakly-spherical object $S$ such that $\Im Z_{t,b}(S)=0$.  It follows that $\sigma_{t,b}$ is a stability condition for all $t>0$.  By considering the equation of the pseudo-wall corresponding to $\OO(-F)$, we see that $I_Z$ and $\OO(-F)$ have the same phase for $Z_{t_0,b}$ where $t_0:=\frac{1}{2d}\sqrt{2d-k^2}+O(\epsilon)$.  Moreover, $\phi_{t,b}(\OO(-F))<\phi_{t,b}(I_Z)$ for $t>t_0$.

We claim that any $I_Z$ is stable for $t>t_0$.  As usual, consider a destabilizing subobject $E$ as in Lemma \ref{rank one subobjects}, and as in the proof of Lemma \ref{Gieseker chamber}, we consider the semistable factors appearing in its $\HN$-filtration with respect to $\mu_{t,b}$-slope stability.  Take one of them, say $A_i$, with $v(A_i)=(r,C,s)$ and $r>0$.  Then $$0<t(H.C+r(k+\epsilon))=\Im Z_{t,b}(A_i)<\Im Z_{t,b}(I_Z)=t(k+\epsilon),$$ from which it follows that $$-rk<H.C<-rk+k,$$ and thus $$0<H.C+rk<k.$$  The stable factors of $A_i$ have $v^2\geq -1$ and rank at least one.  Thus $\delta_{t,b}(A_i)\geq\frac{1}{2}$ from (\ref{delta}).   From (\ref{Gieseker charge}) we see that $$\Re Z_{t,b}(A_i)\geq rdt^2-\frac{r}{2}-\frac{k^2}{4dr}+O(\epsilon),$$ so that $$\phi_{t,b}(A_i)<\frac{t(k+r\epsilon)}{rdt^2-\frac{r}{2}-\frac{k^2}{4dr}+O(\epsilon)}=:\phi_0(t).$$  One can easily check that $\phi_0(t)<\phi_{t,b}(I_Z)$ for $t>t_0$.  

Thus we are reduced to considering the objects $\OO(-F)$ for effective $F\in\Pic(Y)$ with $F^2=0$ and $H.F=k$ and $k\geq \phi(H)$.  The largest such value of $t_0$ occurs for $k$ minimal, i.e. $k=\phi(H)$, so we assume this to be the case.  For those $Z$ admitting a morphism $\OO(-F)\to I_Z$, we see that the exact sequence $$0\to \OO(-F)\to I_Z\to \OO_F(-Z)\to 0,$$ destabilizes $I_Z$ at $t_0$.  The locus of such $Z$ is isomorphic to $F^{[n]}$ of dimension $n$ and can be described as the Brill-Noether locus consisting of those $Z$ such that $h^0(I_Z(F))>0$ (and thus necessarily equal to 1).  We get another (disconnected) component of the strictly semistable locus by considering the adjoint half-pencil $F+K_Y$ (i.e. the other double fiber of the elliptic fibration induced by $|2F|$) and those $Z$ with $h^0(I_Z(F+K_Y))\neq 0$ which get destabilized by the corresponding exact sequence $$0\to\OO(-F-K_Y)\to I_Z\to \OO_{F+K_Y}(-Z)\to 0.$$  Similarly, for any of the other finitely many half-pencils $F'$ such that $H.F'=k$ we get two additional components of the strictly semistable locus which must necessarily intersect the above components.  Indeed, for two half-pencils $F$ and $F'$ with $H.F=H.F'=k$, we see that $-2F.F'=(F-F')^2\leq 0$, and we get strict inequality unless $F'=F$ or $F+K_Y$.  Choosing the $n$ points to lie on this non-empty intersection $F\cap F'$ (with multiplicities if necessary), we see that the corresponding Brill-Noether loci intersect.  

To see what the contracted curves are, we first observe that $v(\OO_F(-Z))=(0,F,-n)$ is primitive since $F$ is, and one can show that $\OO_F(-Z)$ is stable.  Line bundles are always stable on an unnodal surface by \cite[Theorem 5.4 and Proposition 6.3]{AM14}, so $\OO(-F)$ is also stable in our case.  Then it follows that for $Z,Z'\in F^{[n]}$, $I_Z$ and $I_{Z'}$ are $S$-equivalent if and only if $\OO_F(-Z)\cong\OO_F(-Z')$, i.e. $Z$ and $Z'$ are linearly equivalent divisors on $F$.  Thus $\pi_+$ in Theorem \ref{contraction} contracts precisely the fibers of the classical Abel-Jacobi morphism $F^{[n]}\to \Jac^n(F)\cong F$ (since $F$ is a smooth elliptic curve) which associates to an effective divisor of degree $n$ on $F$ its associated line bundle.  Crossing the wall then induces a flop with exceptional locus of codimension $n$.  As $\OO(-F)$ and $\OO_F(-Z)$ are stable of the same phase along this wall, we see that $$\Ext^1(\OO(-F),\OO_F(-Z))=((1,-F,\frac{1}{2}),(0,F,-n))=n,$$ so by \cite[Section 4]{Mar} it follows that the objects $I_Z$ with $Z\in F^{[n]}$ are replaced by non-trivial extensions $$0\to \OO_F(-Z)\to E\to \OO(-F)\to 0$$ after crossing the wall.

Now we can plug $\omega=t_0 H, \beta=-\frac{k+\epsilon}{2d},r=1,c=0,s=\frac{1}{2}-n$ into the formulas from Lemma \ref{donaldson}, and letting $\epsilon\to 0$ we see that $w_{t_0\cdot H,-1/2d}\sim_{\R_+} (1,-nH,n-\frac{1}{2})$.  As $\theta_v(1,0,n-\frac{1}{2})=-B$ and $\theta_v(0,-H,0)=\tilde{H}$, we get $\theta_v(w_{t_0\cdot H,-1/2d})\sim_{\R_+}\tilde{H}-\frac{\phi(H)}{n}B$ and $\theta_v(w_{\infty\cdot H,-1/2d})\sim_{\R_+}\tilde{H}$.  From the above discussion we see that both of these are extremal in the nef cone, with the first ray corresponding to a flop and the second ray to the Hilbert-Chow morphism.  

The statement of the theorem follows from the above discussion and the density of rational rays in $N^1(Y^{[n]})$.
\end{proof}

\begin{Rem} One direction of the above theorem is more elementary and can be obtained directly as follows.  For any effective curve $F$, fix $p_1,...,p_{n-1}$ distinct points not on $F$ and consider the curve $$C_F(n)=\{Z\in Y^{[n]}|Z=\{p_1,...,p_{n-1},p\}\}\subset Y^{[n]},$$ where the point $p$ varies along $F$.  Then $\tilde{D}.C_F(n)=D.F$ and $B.C_F(n)=0$, so $\tilde{D}-aB\in\Nef(Y^{[n]})$ implies $D$ is nef by pairing with $C_F(n)$ for all effective $F$.  Denote by $C(n)$ the generic fiber of the Hilbert-Chow morphism.  Then $\tilde{D}.C(n)=0$ and $B.C(n)=-1$, so we see that $\tilde{D}-aB\in\Nef(Y^{[n]})$ implies $a\geq 0$.  Finally, for any half-pencil $F$, consider a pencil of degree $n$ effective divisors on $F$ and the corresponding $g^1_n:F\to\P^1$.  ${g^1_n}^*\OO_{\P^1}(-x)$ for $x\in\P^1$ gives a curve $R_F(n)$ on $Y^{[n]}$ consisting of those objects sitting in short exact sequences $$0\to\OO(-F)\to I_Z\to {g^1_n}^*(-x)\to 0.$$  Riemann-Hurwitz gives that the ramification divisor of $g^1_n$ has degree $2n$.  This is precisely $2B.R_F(n)$, and $\tilde{D}.R_F(n)=D.F$.  Then $\tilde{D}-aB\in\Nef(Y^{[n]})$ implies that $na\leq D.F$.

The reverse direction is the more surprising one.  Indeed, the fact that the nef cone is not strictly smaller than the above upper bounds is a beautiful manifestation of the fact that the half-pencils control the geometry of Enriques surfaces.
\end{Rem}

\section{Applications to linear systems} \label{sec: linear systems}

Following an idea from \cite{AB13}, we can apply the above theorem to study linear systems on $Y$ itself.  The first result in this direction is the following:

\begin{Prop}\label{vanishing} Let $Y$ be an unnodal surface and $H\in\Pic(Y)$ ample with $H^2=2d$.  Then for any $Z\in Y^{[n]}$, $$H^i(Y,I_Z(H+K_Y))=0,\text{ for }i>0,$$ provided that $$1\leq n< \frac{d\cdot \phi(d)}{2d-\phi(d)}.$$
\end{Prop}
\begin{proof} First notice that \begin{align*}H^i(Y,I_Z(H+K_Y))&\cong\Ext^i(\OO_Y,I_Z(H+K_Y))\\&\cong\Ext^{i+1}_{\Db(Y)}(\OO_Y[1],I_Z(H+K_Y))\cong\Ext^{1-i}_{\AA(\omega,\beta)}(I_Z(H),\OO_Y[1])^{\vee},\end{align*} as long as both $I_Z(H)$ and $\OO_Y[1]$ are both in $\AA(\omega,\beta)$.  Consider again the upper half $(b,t)$-plane representing stability conditions with $\omega=tH,\beta=bH$.  Then for $0\leq b<1$ and $t>0$, $I_Z(H)$ and $\OO_Y[1]$ are both in $\AA_{t,b}$ so we automatically get $H^2(Y,I_Z(H+K_Y))=0$.  Furthermore, we get $H^1(Y,I_Z(H+K_Y))^{\vee}$ is identified with $\Hom_{\AA_{t,b}}(I_Z(H),\OO_Y[1])$, and this vanishes if we can choose $b\in [0,1)$ and $t>0$ such that both $I_Z(H)$ and $\OO_Y[1]$ are $\sigma_{t,b}$-stable and $\phi_{t,b}(I_Z(H))\geq\phi_{t,b}(\OO_Y[1])$.  Of course, $\OO_Y[1]$ is always stable for $b,t>0$ by \cite[Proposition 6.3]{AM14}.  From the proof of Theorem \ref{hilbert nef cone}, we know that $I_Z$ is stable above the wall corresponding to the destabilizing subobject $\OO(-F)$, for a half-pencil $F$ with $H.F=\phi(H)$.  From the formulas given for pseudo-walls on arbitrary surfaces in \cite[Section 2]{Mac}, this wall is given by $$\left(b+\frac{n}{\phi(H)}\right)^2+t^2-\frac{1-2n}{2d}-\frac{n^2}{\phi(H)^2}=0,t>0$$ in the $(b,t)$-plane.  By \cite[Section 3]{AM14}, this means that $I_Z(H)$ is stable above the wall given by $$\left(b-1+\frac{n}{\phi(H)}\right)^2+t^2-\frac{1-2n}{2d}-\frac{n^2}{\phi(H)^2}=0,t>0.$$  

Now consider the pseudo-wall corresponding to when $\phi_{t,b}(I_Z(H))=\phi_{t,b}(\OO_Y[1])$, given by the equation $$1-2 b^2 d+2 b (d-n)-2 d t^2=0,t>0.$$  One can easily see that these semi-circles are either nested, coincide, or disjoint, and they intersect the line $b=\frac{1}{2}$ for $$t_0=\frac{\sqrt{2 d n+\phi(H)-\frac{d \phi(H)}{2}-2 n \phi(H)}}{\sqrt{2} \sqrt{d} \sqrt{\phi(H)}},t_1=\frac{\sqrt{1+\frac{d}{2}-n}}{\sqrt{2} \sqrt{d}},$$ respectively.  Then $t_1\geq t_0\geq 0$ or $t_1>0$ and $t_0\notin\R$ guarantee that both $I_Z(H)$ and $\OO_Y[1]$ are $\sigma_{t_1,1/2}$-stable and $\phi_{t_1,1/2}(I_Z(H))=\phi_{t_1,1/2}(\OO_Y[1])$, as required.  These conditions combined are equivalent to $$1\leq n<\frac{d\cdot\phi(H)}{2d-\phi(H)},$$ as required.
\end{proof}

This immediately allows us to recover some classical results about linear systems on unnodal Enriques surfaces (see \cite[Theorems 4.4.1 and 4.6.1]{CD}):
\begin{Cor}\label{lin sys} Let $Y$ be an unnodal Enriques surface and $H\in\Pic(Y)$ ample with $H^2=2d$.  Then
\begin{enumerate}
\item The linear system $|H|$ is base-point free if and only if $\phi(H)\geq 2$,
\item If $|H|$ is very ample, then $\phi(H)\geq 3$.  Conversely, if $\phi(H)\geq 4$ or $\phi(H)=3$ and $H^2=10$, then $|H|$ is very ample.
\item The linear system $|2H|$ is base-point free and $|4H|$ is very ample.
\end{enumerate}
\end{Cor}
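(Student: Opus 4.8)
The plan is to deduce all three parts from the vanishing of Proposition \ref{vanishing}, combined with the standard interpretation of base-point-freeness and very ampleness in terms of the surjectivity of restriction maps. The one preliminary point is that Proposition \ref{vanishing} produces vanishing for $I_Z(H+K_Y)$ rather than for $I_Z(H)$; to bridge this I would apply the proposition not to $H$ but to the numerically equivalent ample class $H+K_Y$. Since $K_Y$ is $2$-torsion on an Enriques surface, $(H+K_Y)+K_Y = H+2K_Y = H$, while $\phi(H+K_Y)=\phi(H)$ and $(H+K_Y)^2=2d$, as $\phi$ and the self-intersection depend only on the numerical class. Thus the proposition yields the clean statement that, for every $Z\in Y^{[n]}$ with $1\le n<\tfrac{d\,\phi(H)}{2d-\phi(H)}$, one has
\begin{equation*}
H^i(Y,I_Z(H))=0,\qquad i>0.
\end{equation*}
Feeding this into the ideal-sheaf sequence $0\to I_Z(H)\to\OO_Y(H)\to\OO_Z(H)\to 0$ shows that $H^0(\OO_Y(H))\to H^0(\OO_Z(H))$ is surjective whenever the length $n$ of $Z$ satisfies the displayed bound; in the language of the preceding corollary this says $\OO_Y(H)$ is $(n-1)$-very ample.

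Next I would carry out the two numerical reductions for the sufficiency directions. Base-point-freeness of $|H|$ is exactly $0$-very ampleness, i.e.\ surjectivity for every length-one $Z$, so it follows once $1<\tfrac{d\,\phi(H)}{2d-\phi(H)}$, which rearranges to $\phi(H)>\tfrac{2d}{d+1}$; since $\tfrac{2d}{d+1}\in[1,2)$ and $\phi(H)$ is a positive integer, this is equivalent to $\phi(H)\ge 2$, giving the ``if'' half of (1). Similarly, very ampleness is equivalent to $1$-very ampleness, i.e.\ surjectivity for every length-two $Z$, which follows once $2<\tfrac{d\,\phi(H)}{2d-\phi(H)}$, i.e.\ $\phi(H)>\tfrac{4d}{d+2}$. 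As $\tfrac{4d}{d+2}<4$, the hypothesis $\phi(H)\ge 4$ always suffices; and since $\phi(H)=3$ forces $2d\ge 9$ hence $d\ge 5$ (using $\phi(H)\le\sqrt{H^2}$ from \cite[Section 2.7]{CD}), while $\phi(H)=3$ with $\phi(H)>\tfrac{4d}{d+2}$ forces $d<6$, the only remaining case is $\phi(H)=3$, $d=5$, which indeed satisfies $3>\tfrac{20}{7}$. This establishes the ``if'' half of (2).

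For the converse (necessity) statements I would restrict to a half-pencil $F>0$ with $F^2=0$ realizing $\phi(H)$, which on an unnodal $Y$ is a smooth elliptic curve, via $0\to\OO_Y(H-F)\to\OO_Y(H)\to\OO_F(H)\to 0$. If $\phi(H)=1$ then $\deg\OO_F(H)=1$, so $|\OO_F(H)|$ has a base point $q$; as every global section of $H$ restricts to $F$ either as $0$ or as the unique section of a degree-one bundle (whose divisor is the base point), every such section vanishes at $q$, so $q$ is a base point of $|H|$. If $\phi(H)=2$ then $\deg\OO_F(H)=2$ defines a $2{:}1$ map $F\to\P^1$, whose generic fiber $p+q$ is not separated by $|\OO_F(H)|$; the same restriction argument shows no section of $H$ separates $p$ and $q$, so $|H|$ is not very ample. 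These give the ``only if'' halves of (1) and (2). Finally, part (3) is immediate from the multiplicativity $\phi(cH)=c\,\phi(H)$ together with $\phi(H)\ge 1$: one has $\phi(2H)=2\phi(H)\ge 2$, so $|2H|$ is base-point-free by (1), and $\phi(4H)=4\phi(H)\ge 4$, so $|4H|$ is very ample by (2).

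I expect the only genuinely delicate steps to be bookkeeping: the torsion twist identifying vanishing for $I_Z(H+K_Y)$ with vanishing for $I_Z(H)$, and the careful matching of the rational thresholds $\tfrac{2d}{d+1}$ and $\tfrac{4d}{d+2}$ against the integer values of $\phi(H)$, using the constraint $\phi(H)\le\sqrt{2d}$, to pin down exactly the cases $\phi(H)\ge 2$, $\phi(H)\ge 4$, and $\phi(H)=3,\ d=5$. The necessity directions, while classical, require only the restriction-to-$F$ argument above and present no real difficulty.
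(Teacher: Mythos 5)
Your proposal is correct and takes essentially the same route as the paper: sufficiency of both (1) and (2) via Proposition \ref{vanishing} applied with $n=1$ and $n=2$ (with the same threshold computations, including the case $\phi(H)=3$, $d=5$), necessity via restriction to a half-pencil $F$ computing $\phi(H)$, and part (3) from $\phi(cH)=c\,\phi(H)$ together with (1) and (2). If anything you are more careful than the paper in two spots: you make explicit the substitution $H\mapsto H+K_Y$ needed to turn the proposition's vanishing for $I_Z(H+K_Y)$ into vanishing for $I_Z(H)$, which the paper elides, and your separation argument on the fibers of the degree-two map $F\to\P^1$ handles the $\phi(H)=2$ necessity uniformly, where the paper splits into the cases $H^2=4$ and $H^2\geq 6$.
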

\begin{proof} Base-point freeness and very ampleness are equivalent to the surjectivity of the restriction map $$H^0(Y,\OO_Y(H))\to H^0(Z,\OO_Z(H))$$ as $Z$ ranges over all 0-dimensional subschemes of length 1 and 2, respectively.  Since $H$ ample implies the vanishing of $H^1(Y,\OO_Y(H))$ by \cite[Theorem 1.3.1]{CD}, this is equivalent to the vanishing of $H^1(Y,I_Z(H))$ in each case.  Although the easy directions of both (a) and (b) are classical and elementary (see \cite[Theorem 4.4.1(i) and Lemma 4.6.1]{CD}), we include their proofs here for completeness.

For (a), suppose $\phi(H)=1$, and let $F$ be a half-pencil such that $H.F=1$ and $Z$ be the reduced point of intersection $H\cap F$.  Then $|H|$ restricted to $F$ is a degree 1 linear system on the elliptic curve $F$, so $h^0(H|_F)=1$.  Thus $Z$ is a base-point of $|H|$, showing the easy direction of (a).  The converse follows from Proposition \ref{vanishing} as $$\frac{d\cdot\phi(d)}{2d-\phi(d)}>1$$ if $\phi(d)\geq2$.

The easy direction above also shows that $|H|$ cannot be very ample if $\phi(H)=1$.  To finish the easy direction of (b), suppose that $\phi(H)=2$.  If $H^2=4$, then $h^0(H)=3$, so $|H|$ induces a morphism of degree 4 onto $\P^2$, which is clearly not an embedding.  If $H^2\geq 6$, then we may choose a half-pencil $F$ with $H.F=2$ so that $(H-F)^2=H^2-4>0$.  Since $(H-F).F=2>0$, it follows that $H-F$ is also ample and thus that $H^1(Y,\OO_Y(H-F))=0$.  From the exact sequence $$0\to\OO_Y(H-F)\to\OO_Y(H)\to\OO_F(H)\to 0,$$ we see that $$h^0(\OO_Y(H-F))=h^0(\OO_Y(H))-h^0(\OO_F(H))=h^0(\OO_Y(H))-2,$$ since $|H|_F|$ is a degree 2 linear system on the elliptic curve $F$.  It follows that $|H|$ induces a degree 2 map of $F$ onto a line, so $|H|$ is not very ample.  The converse again follows directly from Proposition \ref{vanishing}

Part (c) follows immediately from parts (a) and (b).
\end{proof}

\begin{Rem}\label{full result} It is important to note that Corollary \ref{lin sys} is a weakening of the classical theory of linear systems on unnodal Enriques surfaces.  Indeed, \cite[Theorem 4.4.1]{CD} states that $|H|$ is very ample for all $H$ with $\phi(H)\geq 3$ without the degree restriction we impose above.  It thus follows that even $|3H|$ is very ample.  We believe the Bridgeland stability techniques employed above can be used to recover the remaining cases.  All that is required is a more careful analysis of what occurs at and beyond the first wall of the nef cone of $Y^{[n]}$.  At the wall, Theorem \ref{nef hilbert} describes what destabilizes $I_Z$, so it is possible in theory to determine precisely for what strictly semistable $I_Z$, if any, $\Hom_{\AA_{t,b}}(I_Z(H),\OO_Y[1])$ fails to vanish.  
\end{Rem}

We can also obtain some new results about $n$-very ample line bundles.  Recall that a line bundle $\OO_Y(H)$ is called $n$-very ample if the restriction map $$\OO_Y(H)\to\OO_Z(H)$$ is surjective for every 0-dimensional subscheme  $Z$ of length $n+1$.  Proposition \ref{vanishing} immediately gives the following result:
\begin{Cor} Let $Y$ be an unnodal Enriques surface and $H\in\Pic(Y)$ ample with $H^2=2d$.  Then $\OO_Y(H)$ is $n$-very ample provided that $$0\leq n\leq \frac{d\cdot\phi(H)}{2d-\phi(H)}-1.$$
\end{Cor}

\end{document}